\DeclareSymbolFont{cyrletters}{OT2}{wncyr}{m}{n}
\DeclareMathSymbol{\Sha}{\mathalpha}{cyrletters}{"58}
\newtheorem{thm}[equation]{Theorem}
\newtheorem*{thm*}{Theorem}
\newtheorem{lemma}[equation]{Lemma}
\newtheorem{prop}[equation]{Proposition}
\newtheorem{cor}[equation]{Corollary}
\theoremstyle{definition}
\newtheorem{remark}[equation]{Remark}
\newtheorem{notation}[equation]{Notation}
\newtheorem{dfn}[equation]{Definition}
\newtheorem{example}[equation]{Example}
\newtheorem{hyp}[equation]{Hypothesis}
\newtheorem*{guidquest*}{Guiding Question}
\newcommand{\m}[1]{#1}
\newcommand{\mbb}[1]{\mathbb #1}
\newcommand{\mc}[1]{\mathcal #1}
\newcommand{\oper}[1]{\operatorname{#1}}
\newcommand{\wh}{\widehat}
\newcommand{\GL}{\oper{GL}}
\newcommand{\FS}{F^{\operatorname{split}}}
\newcommand{\PP}{\oper{PP}}
\newcommand{\Hom}{\oper{Hom}}
\newcommand{\Epi}{\oper{Epi}}
\newcommand{\Vect}{\oper{Vect}}
\newcommand{\Spec}{\oper{Spec}}
\newcommand{\Gal}{\oper{Gal}}
\newcommand{\cha}{\oper{char}}
\newcommand{\id}{\oper {id}}
\newcommand{\Mat}{\oper {Mat}}
\newcommand{\sep}{\mathrm{sep}}
\newcommand{\iso}{\to^{\!\!\!\!\!\!\!\sim\,}}
\renewcommand{\labelenumi}{(\theenumi)}
\newcommand{\ind}{\oper{ind}}
\newcommand{\PGL}{\oper{PGL}}
\newcommand{\PGU}{\oper{PGU}}
\newcommand{\GU}{\oper{GU}}
\newcommand{\SB}{\oper{SB}}
\renewcommand{\U}{\oper{U}}
\newcommand{\Aut}{\oper{Aut}}
\def\<{\left<}
\def\>{\right>}
\newcommand{\dra}{\dashrightarrow}
\renewcommand{\split}{\mathrm{split}}
\newcommand{\s}{\split}
\newcommand{\Br}{\oper{Br}}
\newcommand{\Orth}{\oper{O}}
\newcommand{\SOrth}{\oper{SO}}
\renewcommand{\theenumi}{\alph{enumi}}
\renewcommand{\labelenumi}{(\alph{enumi})}
\newcommand{\T}{\mathcal{T}}
\title{Local-global principles for torsors over arithmetic curves}
\author{David Harbater, Julia Hartmann, and Daniel Krashen}
\date{}
\numberwithin{equation}{section}
\begin{document}
\maketitle

\begin{abstract}\let\thefootnote\relax\footnote{{\it 2010
Mathematics Subject Classification Codes}: 
Primary 11E72, 13F25, 14H25; Secondary 11E04, 16K50, 20G15.
{\it Key words and phrases}:   
local-global principles, torsors, linear algebraic groups, arithmetic curves, patching, quadratic forms, Brauer groups.}
We consider local-global principles for torsors under linear algebraic groups,
over function fields of curves over complete discretely valued fields. 
The obstruction to such a principle is a version of the Tate-Shafarevich
group; and for groups with rational components, we compute it explicitly and show that it is finite.  This yields
necessary and sufficient conditions for local-global
principles to hold.  Our results rely on first obtaining a Mayer-Vietoris sequence for Galois cohomology and then showing that torsors can be patched.  We also give
new applications to quadratic forms and central simple algebras. 
\end{abstract}

\section{Introduction}\label{intro}

Classical local-global principles study objects defined over a global
field $F$ by relating them to objects defined over the completions $F_v$ of the field.  
For example, the Hasse-Minkowski theorem states that a quadratic form over $F$ is isotropic if and only if it is isotropic over each completion.  
Similarly, the Albert-Brauer-Hasse-Noether theorem asserts
that a central simple algebra over $F$ is split if and only if it is split over each completion.  

Local-global principles can often be phrased in terms of torsors (principal homogeneous spaces) under group schemes $G$ over $F$, which are classified by $H^1(F,G)$ in Galois cohomology.  An advantage of this point of view 
is that one can then additionally consider the obstruction to the local-global principle, viz.\ the kernel $\Sha(F,G)$ of the local-global map $H^1(F,G) \to \prod H^1(F_v,G)$.  According to the conjecture of Birch and Swinnerton-Dyer, if $E$ is an elliptic curve over a global field $F$, 
the Tate-Shafarevich group $\Sha(F,E)$ has a specified finite order, which can be greater than one.  In the case of torsors under a linear algebraic group $G$, the local-global obstruction $\Sha(F,G)$ again need not vanish, but 
it is known to be finite.  For number fields this is due to Borel-Serre (\cite{BS}), and for function fields it was shown by Conrad (\cite{Conrad}) following earlier work in \cite{Oesterle} and \cite{BP}.

In recent years, local-global principles have also been studied for objects defined over ``higher dimensional'' fields, such as one-variable function fields over local or global fields, or completions of such fields.  
This work has been carried out by Kato, Colliot-Th\'el\`ene, Parimala, and others; e.g.\ see
\cite{COP}, \cite{CGP}, \cite{BKG}, and \cite{Kato}.  In \cite{HHK}, the present authors obtained local-global principles for one-variable function fields $F$ over arbitrary complete discretely valued fields $K$, and used them to obtain applications to quadratic forms and central simple algebras. 
Those principles, which concerned rational connected linear algebraic groups over $F$, were stated with respect to a finite set of overfields of $F$ arising from the patching framework of \cite{HH:FP}, rather than an infinite set arising from completions at discrete valuations.  Afterwards, in \cite{CPS}, it was shown that in the specific cases of quadratic forms and central simple algebras, such local-global principles also hold with respect to the set of discrete valuations.  

These last two papers, however, had limitations that provided motivation for the present manuscript.  In \cite{HHK}, the group was required to be connected.  
Although this condition was not essential in the case $F=K(x)$ (or any function field of good reduction), in the general case connectivity is needed in order to obtain local-global principles,  
even under a rationality assumption, as Colliot-Th\'el\`ene observed
(see \cite[Remark~4.4]{CPS}, \cite[Example~4.4]{HHK}).
As a consequence, there are counterexamples to the local-global principle for binary quadratic forms over such function fields, since the orthogonal group has two components (both of which are rational).  
Also, while \cite{CPS} obtains local-global principles with respect to discrete valuations in the case of quadratic forms and central simple algebras, the analogous assertion for more general linear algebraic groups (even in the rational connected case) was left open.  We are therefore led to the following

\begin{guidquest*} \label{quest}
For a one-variable function field $F$ over a complete discretely valued field $K$, do 
local-global principles hold for torsors under linear algebraic groups that are not necessarily rational and connected?  If not, what is the obstruction and is it finite?
\end{guidquest*}

In the present manuscript, we consider obstructions to local-global principles for linear algebraic $F$-groups $G$ that need not be connected, but which are {\em rational} in the sense that every connected component is a rational $F$-variety.  
(This includes the important case of orthogonal groups.)
We give a necessary and sufficient condition for such groups to satisfy these local-global principles with respect to a finite set of overfields; 
and we compute the precise obstruction, which turns out to be finite but not necessarily trivial.  The obstruction, which is given in terms of the reduction graph $\Gamma$ of a normal projective model $\wh X$ of $F$ over the valuation ring $T$ of $K$, explains the results in \cite{HHK} as well as the counterexample of Colliot-Th\'el\`ene.
Moreover this obstruction agrees with the
obstruction $\Sha_X(F,G) = \ker\bigl(H^1(F,G) \to \prod H^1(F_P,G)\bigr)$ to the local-global principle with respect to the infinite set of overfields $F_P$ that arise from completions at the points $P$ of the closed fiber $X$ of $\wh X$.  We show:

\begin{thm*} [Corollary~\ref{Sha_P-graph}] \label{intro_Sha_0}
For $G$ rational, the set $\Sha_X(F,G)$ is finite, and it is in natural bijection with the set $\Hom(\pi_1(\Gamma),G/G^0)$.  Thus the corresponding local-global principle holds (i.e.\ $\Sha_X(F,G)$ is trivial) if and only if $G$ is connected or $\Gamma$ is a tree.
\end{thm*}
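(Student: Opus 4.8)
The plan is to replace the infinite family $\{F_P\}$ by the finite family of patching fields associated to a model $\wh X$, use the Mayer--Vietoris sequence together with patching of $G$-torsors to present $\Sha_X(F,G)$ as a set of gluing data on the reduction graph, and then to divide out the connected part $G^0$ so that only the component group $G/G^0$ and the first homotopy of $\Gamma$ remain.

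Fix a normal projective model $\wh X/T$ with closed fiber $X$ and reduction graph $\Gamma$, together with its patching fields: $F_U$ for the irreducible components $U$ of $X$, $F_P$ for the crossing points $P$, and $F_\wp$ for the branches $\wp$, the latter sitting over the edges of $\Gamma$ while the $F_U$ and $F_P$ sit over its vertices. Since $F_U \hookrightarrow F_P$ whenever $P\in U$, a class of $H^1(F,G)$ dies over every $F_P$ if and only if it dies over every patching field (the non-formal direction being the reduction from the infinite to the finite collection), so $\Sha_X(F,G)$ is the kernel of $H^1(F,G)\to\prod_\xi H^1(F_\xi,G)$ taken over the patching fields. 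By the patching equivalence for torsors, such a kernel class amounts to a tuple $(g_\wp)$ with $g_\wp\in G(F_\wp)$, one for each edge $\wp$ of $\Gamma$, considered up to the re-trivialization action $g_\wp\mapsto h_{\xi_1}g_\wp h_{\xi_0}^{-1}$ of $\prod_\xi G(F_\xi)$, where $\xi_0,\xi_1$ are the endpoints of $\wp$.

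Now we strip off $G^0$. Because $G$ is rational, each connected component of $G$ is a rational $F$-variety and hence has an $F$-point; therefore $A:=G/G^0$ is a constant finite group and $G(L)\twoheadrightarrow A$ for $L$ equal to $F$ or to any of $F_U,F_P,F_\wp$ (rationality of the components being inherited by these field extensions). Writing $g_\wp=g^0_\wp\,\til a_\wp$ with $g^0_\wp\in G^0(F_\wp)$ and $\til a_\wp$ a fixed lift of $a_\wp\in A$, and invoking the factorization $G^0(F_\wp)=G^0(F_{\xi_0})\cdot G^0(F_{\xi_1})$ for connected rational groups together with the vanishing of $\Sha(F,G^0)$ and of its twists from \cite{HHK}, one checks that the gauge action absorbs the entire $G^0$-contribution edge by edge. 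Thus the class of $(g_\wp)$ depends only on $(a_\wp)\in\prod_\wp A$, modulo $a_\wp\mapsto h_{\xi_1}a_\wp h_{\xi_0}^{-1}$ with $h_\xi\in A$. Choosing a spanning tree of the connected graph $\Gamma$ and normalizing the $a_\wp$ along the tree edges to be trivial leaves exactly one element of $A$ for each of the remaining edges, i.e.\ a homomorphism from the free group $\pi_1(\Gamma)$ to $A$; this produces the asserted natural bijection $\Sha_X(F,G)\cong\Hom(\pi_1(\Gamma),G/G^0)$. The right-hand side is finite since $\pi_1(\Gamma)$ is finitely generated and $A$ is finite, and it reduces to a single element precisely when $\pi_1(\Gamma)=1$ (i.e.\ $\Gamma$ is a tree) or $A=1$ (i.e.\ $G$ is connected); otherwise it has more than one element, so the local--global principle fails. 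This yields all three assertions.

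The main obstacle is the d\'evissage to $A=G/G^0$ in the previous paragraph: one must show that the connected-rational factorization and the triviality of $\Sha(F,G^0)$ survive the twisting needed to analyze the fibers of $H^1(-,G)\to H^1(-,G/G^0)$ over the several patching fields, and that the gauge bookkeeping collapses the $G^0$-part cleanly without creating spurious identifications among the $(a_\wp)$. This is exactly the place where one uses that every component of $G$ (hence of the relevant inner twists arising in the d\'evissage) is rational, which is also what fails in Colliot-Th\'el\`ene's counterexample.
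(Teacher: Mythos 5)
Your outline is essentially the same as the paper's: reduce $\Sha_X$ to $\Sha_{\mc P}$ for a finite patching family, present $\Sha_{\mc P}(F,G)$ as gluing data on $\Gamma$ via the Mayer--Vietoris / double-coset description, strip off $G^0$ using rationality, and read off the answer from the combinatorics of $\Gamma$. The paper proceeds through the intermediary of split covers and $\pi_1^{\s}(\wh X)$ (Corollary~\ref{Sha_P-finite_gp}, Proposition~\ref{graph_corresp}, Corollary~\ref{graph_pi1}) rather than normalizing along a spanning tree directly, but these are equivalent ways of doing the same combinatorics.

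Two points in your argument deserve sharpening. First, after normalizing the $a_\wp$ along a spanning tree, the residual gauge freedom is the diagonal copy of $A=G/G^0$ acting by simultaneous conjugation at every vertex; your bookkeeping therefore produces $\Hom(\pi_1(\Gamma),A)/\!\sim$ (conjugation classes), not $\Hom(\pi_1(\Gamma),A)$. This is exactly what the paper's Corollary~\ref{Sha_P-graph} asserts; the introductory statement is abbreviated. The distinction matters when $A$ is nonabelian and $\Gamma$ is not a tree (e.g.\ $\pi_1(\Gamma)\cong\mbb Z$ and $A=S_3$ gives $6$ vs.\ $3$ elements). Second, your step ``the non-formal direction being the reduction from the infinite to the finite collection'' is the content of the paper's Proposition~\ref{approx} and Corollary~\ref{Sha_union}: triviality over $F_\eta$ for a generic point $\eta$ of a component must be propagated to triviality over some $F_U$, and this uses the Artin Approximation Theorem and excellence of $R_\eta$; it is not a formal consequence of field inclusions. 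Relatedly, the ``twisted'' factorization you invoke to absorb the $G^0$-contribution edge by edge (namely $h_\wp = g_\wp^{-1} g_U g_\wp g_P^{-1}$ rather than $g_U g_P^{-1}$) is not in \cite{HHK}; it is the new analytic content of this paper's Proposition~\ref{small_factorization} and Theorem~\ref{sha_inj}, which handles the conjugation by the non-$G^0$ element $g_\wp$. You correctly identified this as the crux, but the technique required goes somewhat beyond what you can quote.
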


\noindent Other characterizations are given in terms of split covers and split extensions
(Theorems~\ref{sha_fsplit} and~\ref{sha_fs}),
implying that the obstruction is independent of the choice of a regular model. 

We also address the question of whether local-global principles hold with respect to discrete valuations for fields $F$ as above.  This question has been a focus of recent interest (by Parimala and others) especially in the connected case, and it remains open in general.  We show here that for any linear algebraic group $G$ (even if disconnected), the obstruction $\Sha(F,G)$ to a local-global principle with respect to discrete valuations contains the above obstruction $\Sha_X(F,G)$ (Proposition~\ref{sha_exact_sequence}), thereby providing a necessary condition for a local-global principle in terms of discrete valuations if $G$ is rational.  Moreover, under certain extra hypotheses we are able to show that the inclusion $\Sha_X(F,G) \subseteq \Sha(F,G)$ is actually an equality.  For example, we have:

\begin{thm*} [Theorem~\ref{sha_conditions}, Corollary~\ref{Sha_descrip}]
\label{intro_Sha}
Suppose that the residue field of $T$ is algebraically closed of characteristic zero and that $G$ is rational.  Then $\Sha(F,G) = \Sha_X(F,G)$.  Hence the local-global principle with respect to discrete valuations then holds if and only if $G$ is connected or $\Gamma$ is a tree.
\end{thm*}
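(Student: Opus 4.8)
The plan is to establish the equality $\Sha(F,G)=\Sha_X(F,G)$ using Proposition~\ref{sha_exact_sequence}, and then to read off the ``if and only if'' statement from Corollary~\ref{Sha_P-graph}. Proposition~\ref{sha_exact_sequence} already provides the inclusion $\Sha_X(F,G)\subseteq\Sha(F,G)$ for arbitrary $G$, together with an exact sequence $1\to\Sha_X(F,G)\to\Sha(F,G)\to\prod_{P}\Sha(F_P,G)$, the product being over the closed points $P$ of the closed fiber $X$ of a normal projective model $\wh X$, and $\Sha(F_P,G)$ the obstruction over $F_P$, equal (as one checks) to the kernel of $H^1(F_P,G)\to\prod_{\wp\mid P}H^1(F_\wp,G)$ over the finitely many branches $\wp$ of $X$ at $P$. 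So it suffices to show $\Sha(F_P,G)=1$ for every $P$, under the hypotheses that $G$ is rational and $k$ is algebraically closed of characteristic zero.

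The first step in analyzing $\Sha(F_P,G)$ would be to exploit $\cha k=0$: by embedded resolution of singularities I may replace $\wh X$ by a blow-up along the closed fiber so that $\wh X$ is regular and $X$ has normal crossings at $P$; by Theorems~\ref{sha_fsplit} and~\ref{sha_fs} this does not change what must be proved. Then $\wh{\mc O}_{\wh X,P}\cong k[\![u,v]\!]$ with $X$ cut out locally by $u$ or by $uv$, the branches at $P$ are $\wp_1=(u)$ and (when $P$ lies on two components) $\wp_2=(v)$, and the branch completions $F_{\wp_i}$ are the explicit two-dimensional local fields $k(\!(v)\!)(\!(u)\!)$ and $k(\!(u)\!)(\!(v)\!)$, each containing $F_P=\operatorname{Frac}\bigl(k[\![u,v]\!]\bigr)$.

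The heart of the proof is then the local-global principle for $G$-torsors over $F_P$ with respect to its branches. I would prove it by patching over the complete local ring $\wh{\mc O}_{\wh X,P}$ in the style of \cite{HH:FP} and \cite{HHK}: since every connected component of $G$ is rational, a $G$-torsor over $F_P$ trivial over each $F_{\wp_i}$ can be glued, and the resulting torsor lives over a base built from $\operatorname{Spec}\wh{\mc O}_{\wh X,P}$ whose local structure contributes no loops (the relevant local reduction graph being a tree, reflecting that the spectrum of a complete regular local $k$-algebra is simply connected), so the component-group part of the obstruction vanishes; for the connected part $G^0$ one combines this patching with the hypothesis on $k$, which guarantees that the residue fields that occur --- finite extensions of $k(\!(v)\!)$, of $k(\!(u)\!)$, and of $k$ --- have cohomological dimension at most one, so that the residue exact sequences force the remaining obstruction to vanish. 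I expect this last step --- the comparison over $F_P$ for the connected group $G^0$ --- to be the main obstacle, and it is precisely here that the hypothesis ``$k$ algebraically closed of characteristic zero'' (providing both resolution of singularities and the vanishing of residual cohomology) is indispensable; without a strong hypothesis on $k$ this is the question left open in the introduction.

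Finally, once $\Sha(F,G)=\Sha_X(F,G)$ is known, Corollary~\ref{Sha_P-graph} identifies this set with $\Hom(\pi_1(\Gamma),G/G^0)$. As $\Gamma$ is a connected finite graph, $\pi_1(\Gamma)$ is free of rank $b_1(\Gamma)$, so $\Hom(\pi_1(\Gamma),G/G^0)\cong(G/G^0)^{b_1(\Gamma)}$ as a pointed set; this is trivial exactly when $b_1(\Gamma)=0$, i.e.\ $\Gamma$ is a tree, or $G/G^0$ is trivial, i.e.\ $G$ is connected. Hence the local-global principle with respect to discrete valuations holds if and only if $G$ is connected or $\Gamma$ is a tree, as claimed.
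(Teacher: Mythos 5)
There is a genuine gap, and it occurs exactly at the crucial step.

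You correctly reduce, via Proposition~\ref{sha_exact_sequence}, to showing that $\Sha(F_P,G)$ vanishes for every closed point $P$ of the closed fiber, and your final deduction from Corollary~\ref{Sha_P-graph} is fine.  But you then assert, with ``as one checks,'' that $\Sha(F_P,G)$ equals the kernel of $H^1(F_P,G)\to\prod_{\wp\mid P}H^1(F_\wp,G)$ over the \emph{finitely many branches} of $X$ at $P$.  That is not the definition and it is not a triviality: $\Sha(F_P,G)$ is the kernel with respect to \emph{all} discrete valuations $v\in\Omega_{F_P}$ on $F_P$, and only two of these (in the normal crossings case) correspond to branches along $t=0$.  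Nothing in the paper asserts or uses this equality, and proving it would itself be a substantial local-global theorem over the two-dimensional local field $F_P$.  Once that step is removed, the remaining patching sketch (``glue over $\Spec\wh{\mc O}_{\wh X,P}$, local graph is a tree, so the component part vanishes; $\cha k=0$ gives low residual cohomological dimension for $G^0$'') is a heuristic, not an argument.

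The paper's actual route at this point is quite different and you would want to align with it.  It separates the problem via the exact sequence $H^1(F_P,G^0)\to H^1(F_P,G)\to H^1(F_P,G/G^0)$.  For the finite constant quotient $G/G^0$, Lemma~\ref{local_sha_finite_group} proves $\Sha(F_P,G/G^0)=1$ using Purity of Branch Locus to extend the relevant cover to an \'etale cover of $\Spec\wh R_P$, then triviality along a single parameter axis plus Hensel's Lemma; this is the precise substitute for your ``local graph is a tree'' heuristic.  For the connected part, the surjectivity of $G((F_P)_v)\to(G/G^0)((F_P)_v)$ lets one descend the class into $\Sha(F_P,G^0)$, and the triviality of $\Sha(F_P,G^0)$ under the hypothesis ``$k$ algebraically closed of characteristic zero, $G^0$ rational'' is not reproved: it is cited from \cite[Corollary~7.7]{BKG}.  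Your sketch also introduces a resolution-of-singularities step to obtain normal crossings, but the paper works with an arbitrary regular model $\wh X$ (which exists and suffices); Lemma~\ref{local_sha_finite_group} needs only that $\wh R_P$ be a two-dimensional complete regular local ring, with no normal crossings hypothesis.
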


Using the above results, we obtain applications to quadratic forms and central simple algebras over $F$.  These include local-global principles for the Witt index of a quadratic form and for the Witt group of quadratic form classes (Section~\ref{subsec_quadforms}); and for the index of a central simple algebra and for whether such an algebra is split (Section~\ref{subsec_csa}).  Using the framework developed here, further applications are obtained in two sequels to this paper.  These include values of $u$-invariants and period-index bounds, in \cite{HHK:Weier}; and local-global principles for higher Galois cohomology, in \cite{HHK:Hi}.

As for local-global principles holding for groups that are not known to be rational, 
results of this sort are obtained in Section~4.3 of \cite{HHK:Hi} by applying
cohomological invariants to the higher cohomology results there. 
Also, connected linear algebraic groups that are merely {\em retract rational} satisfy such local-global principles (see \cite{Kra}).
But recently, in \cite{CPS13}, an example was given of a 
non-rational connected linear algebraic group over $F$ for which local-global principles do not hold.  For now the precise obstruction in the non-rational case, as well as its finiteness, remains open.

\bigskip

\noindent \textbf{Methods and structure of the manuscript.}  
Our approach relies heavily on the patching framework begun in~\cite{HH:FP} and expanded on in~\cite{HHK}.  Here we develop this further, in order to patch torsors and to obtain a ``Mayer-Vietoris'' exact sequence that in particular relates local-global principles to matrix factorization.

Given a normal projective model $\wh X$ of $F$ and a sufficiently large finite set of points $\mc P$ on the closed fiber $X$, we consider the set $\mc U$ of connected components $U$ of $X \smallsetminus \mc P$, and the set $\mc B$ of branches $\wp$ of $X$ at the points $P \in \mc P$.  We then 
obtain associated fields $F_U$ and $F_P$ and overfields $F_\wp$, which together form a connected simplicial set.  
This is unlike the situation for completions with respect to discrete valuations, or with respect to the points on the closed fiber, where distinct completions do not have natural common overfields.  In fact, the presence of the common overfields $F_\wp$ is what enables us to obtain our results.  

As shown in~\cite[Theorem~6.4]{HH:FP}, finite dimensional vector spaces (possibly with additional structure) may be patched in this setup, in the sense
that if objects are given over all the fields $F_U$ and $F_P$, together with isomorphisms over the fields $F_\wp$, then there is a unique object over $F$ that induces all of them compatibly.
This assertion can be viewed as a birational analog of Grothendieck's Existence Theorem (\cite[Corollary 5.1.6]{EGA3}), but it is better adapted to our situation here, where we work with structures over function fields, rather than over the underlying schemes.  
Note that both~\cite[Theorem~6.4]{HH:FP} and  
\cite[Corollary 5.1.6]{EGA3} require that the objects under consideration are finite over the base.  Nevertheless, we show in Theorem~\ref{patching_torsors} that patching holds for $G$-torsors over $F$, even if the group $G$ is infinite (in which case the coordinate ring of the torsor is an infinite dimensional $F$-vector space).  Building on this, we obtain our six-term Mayer-Vietoris sequence in Galois cohomology, which allows us to treat patches as though they corresponded to classical open sets.  (In the sequel \cite{HHK:Hi}, we show for certain types of commutative groups $G$ that this sequence can be continued into higher Galois cohomology.)

These results are shown in an abstract context in Section~\ref{patching}, and then for our function fields $F$ in Section~\ref{setup}.  The main technical part of the paper appears in Section~\ref{sec_rational}, where for $G$ rational
the study of the kernel $\Sha_{\mc P}(F,G)$ of
$H^1(F,G) \to \prod_{\xi \in \mc P \cup \mc U} H^1(F_\xi,G)$
is reduced to the case of finite groups via an analytic argument that draws on~\cite[Section~2]{HHK}.  
Section~\ref{sec_splitcovers} introduces $\Sha_X(F,G)$ and shows that it agrees with $\Sha_{\mc P}(F,G)$ for rational groups, using the Artin Approximation Theorem.  It then gives a description of $\Sha_X(F,G)$ in terms of $\pi_1^\s(\wh X)$, the quotient of the \'etale fundamental group $\pi_1(\wh X)$ that classifies split covers of $\wh X$ (i.e.\ covers that are split over every point other than the generic point).
In Section~\ref{sec_reduction}, the reduction graph $\Gamma$ of a model $\wh X$ is used to show that $\Sha_X(F,G)$ is finite and can be described explicitly for rational groups; this description
shows that $\Sha_X(F,G)$ is trivial if and only if $G$ is connected or $\Gamma$ is a tree.
Then in Section~\ref{sec_splitexten}, it is shown for $G$ rational that $\Sha_X(F,G)$ is independent of the choice of a regular model $\wh X$ of $F$, as a consequence of proving that $\Sha_X(F,G)=H^1(\FS/F,G)$, where $\FS$ is the maximal algebraic extension of $F$ that splits over every discrete valuation.  (Note that $\FS$ does not have a non-trivial analog in the case of global fields, because of the Chebotarev Density Theorem.)
These results make it possible in 
Section~\ref{sec_valuations} to analyze
the relationship between $\Sha(F,G)$ and $\Sha_X(F,G)$, showing in several situations that they are equal.  As a consequence, we can then obtain results in Section~\ref{sec_applications} that are analogous to classical local-global results over global fields with respect to discrete valuations.
In the case of quadratic forms, where 
Colliot-Th\'el\`ene's original example occurred, we prove local-global results concerning the Witt index and the Witt group, describing the obstruction explicitly.  We also prove local-global principles for central simple algebras concerning the index and splitting.
In that last section we also consider homogeneous spaces that are not necessarily torsors.  

\bigskip

\noindent \textbf{Terminology and notions.}

In this manuscript, discrete valuations are assumed to have value group isomorphic to $\mbb Z$.  In particular, they cannot be trivial.  Each codimension one point
on a connected Noetherian normal scheme defines a discrete valuation on the function field, although not every discrete valuation on the function field need arise in this way.

Given a discrete valuation ring $T$ with fraction field $K$, and a field $F$ of finite type and transcendence degree one over $K$, a {\em normal} (resp.\ {\em regular}) {\em model} of $F$ over $T$ 
is a normal (resp.\ regular) connected projective $T$-curve $\wh X$ with function field $F$.  
The closed fiber $X \subset \wh X$ contains all the closed points of $\wh X$, along with the generic points of the irreducible components of $X$, which are of codimension one in $\wh X$.  The other codimension one points of $\wh X$ are the closed points of the generic fiber~$\wh X_K$.

For any field $K$, we write $\Gal(K)$ for the absolute Galois group $\Gal(K^\sep/K)$. For any $K$-scheme $V$ and any field extension $L/K$, we write $V_L$ for the base change $V \times_K L$.  

All {\em group schemes} are assumed to be of finite type over the base.  By a {\em linear algebraic group} over a field $E$ we mean 
a smooth affine group scheme of finite type (cf.~\cite{Borel}); or equivalently, a geometrically reduced Zariski closed subgroup of some general linear group $\GL_{n,E}$. 
We will consider (right) {\em torsors} over $E$, under linear algebraic groups $G$;
i.e.\ $E$-schemes $H$ together with a right $G$-action such that $(h,g) \mapsto (h, h\cdot g)$ gives an isomorphism $H \times G \to H \times H$.  By the smoothness condition on $G$,
this is equivalent to considering $G$-torsors $H$ for the \'etale topology, or alternatively $G$-torsors $H$ for the flat topology.

We will consider Galois cohomology with respect to a linear algebraic group $G$ that need not be commutative, as in~\cite{Serre:CG}, Chapter~III.  Here $H^1(F,G)$ is just a pointed set, not a group.  The {\em kernel} of a map of pointed sets consists of the elements that map to the identity; and having trivial kernel is in general weaker than being injective.  Exactness is defined as usual in terms of kernel and image, with respect to this notion.

\medskip

\noindent{\bf Acknowledgments.}  The authors thank Jean-Louis
Colliot-Th\'el\`ene for a number of very helpful conversations concerning material in this
manuscript and related ideas.  We also thank Brian Conrad, Ofer Gabber, Philippe Gille, Qing
Liu, George McNinch, R.~Parimala, Florian Pop, and Steve Shatz for
helpful comments and discussions. We are indebted to Annette Maier for her
careful reading of the manuscript.

\section{Patching Problems and Torsors}\label{patching}
In this section we consider patching problems for torsors under linear
algebraic groups.
We show that these patching problems can be solved and that there is an equivalence of
categories (Theorem~\ref{patching_torsors}).  Since the coordinate ring of a
torsor is in general infinite dimensional as a vector space, this result goes
beyond the type of patching theorem obtained in \cite[Section~7]{HH:FP}. Using
these results, we obtain a six term exact sequence at the level
 of $H^0$ and $H^1$ (Theorem~\ref{general_6-term_sequence}).

{\em Patching} considers a field $F$ and a collection of overfields. It asserts that
if compatible structures are given over these overfields, then there is a
structure over $F$ inducing all of them compatibly. We first recall how it can
be described in the situation in which vector spaces are the structure under
consideration.  Let $\mc F := \{F_i\}_{i\in I}$ be a finite inverse system of
fields and inclusions, whose inverse limit (in the category of rings) is a field $F$.

For $i,j \in I$, write $i \succ  j$ if we are given a proper inclusion $F_i
\hookrightarrow F_j$.  A (vector space) \textit{patching problem} for $\mc
F$ is a system $\mathcal{V} := \{V_i\}_{i \in I}$ of finite dimensional
$F_i$-vector spaces for $i \in I$, together with $F_j$-isomorphisms
$\nu_{i,j}:V_i  \otimes_{F_i} F_j  \to V_j$ for all $i\succ j$ in $I$.  (This
is equivalent to the definition in \cite[Section~2]{HH:FP}.)  A morphism of
patching problems is defined in the obvious way.
We write $\PP(\mc F)$ for the
category of vector space patching problems for $\mc F$.

In the above setup, there is a functor
\[\beta: \Vect(F) \to \PP(\mc F)\]
from the category
of finite dimensional $F$-vector spaces to the category of vector space
patching problems for $\mc F$ defined by base change. A \textit{solution} to a patching problem
$\mathcal V$ for $\mc F$ is an $F$-vector space $V$ such that $\beta(V)$ is
isomorphic to $\mathcal V$ in the category of patching problems. (See
\cite[Section~2]{HH:FP} for more details.) If $\beta$ is an equivalence of
categories, then every patching problem has a unique solution up to (a unique) isomorphism.

The simplest type of patching problem consists of two fields $F_1,F_2$ with  a common overfield $F_0$ and intersection $F$. In this situation, $\beta$
is an equivalence of categories if and only if every matrix $A_0 \in
\GL_n(F_0)$ can be factored as $A_1^{-1}A_2$ with $A_i \in \GL_n(F_i)$.  (See
\cite{HH:FP}, Section~2.)

In this paper, we are concerned with a special type of inverse system.

\begin{dfn}\label{factorsystem}
A {\em factorization inverse
  system} over a field $F$ is a finite inverse system of fields whose inverse limit (in the category of rings) is $F$, and whose index set $I$ has the following property: There is a
partition $I=I_v\cup I_e$ into a disjoint union such that for each index $k\in
I_e$, there are exactly two elements $i,j \in I_v$ for   which $i,j\succ k$,
and there are no other relations in $I$.
\end{dfn}

A factorization inverse system determines a (multi-)graph $\Gamma$ whose vertices are
the elements of $I_v$ and whose edges are the elements of $I_e$.  The
vertices of an edge $k \in I_e$ correspond to the elements $i,j \in I_v$
such that $i,j \succ k$.  There are also associated fields $F_i, F_k$ ($i \in I_v$, $k \in I_e$) and inclusions $F_i \hookrightarrow F_k$ for $i$ a vertex of an edge $k$.  
Here $\Gamma$ is connected because the inverse limit $F$ has no zero divisors.  
Conversely, a connected graph $\Gamma$ together with fields and inclusions (called 
a {\em $\Gamma$-field} in \cite[Section~2.1]{HHK:Hi})
determines a factorization inverse system.
  
Notice that the simple inverse system consisting of two fields $F_1,F_2$ with a
common overfield $F_0$ is a factorization inverse system. In fact, our
interest in factorization inverse systems is based on a generalization of the
factorization property for $\GL_n(F_0)$ mentioned above. Namely, if
$\{F_i\}_{i\in I}$ is a factorization inverse system whose inverse limit is a field $F$, then the base change
functor $\beta$ is an equivalence of categories if and only if the {\em
  simultaneous   factorization} property holds, as we explain next.

Fix once and for all for each index $k\in I_e$ a labeling $l=l_k$, $r=r_k$ of the two
elements of $I_v$ that dominate $k$ in the inverse system (this corresponds to
orienting the edges of the graph, i.e., assigning to each edge a left and
right vertex), thereby associating to each $k\in I_e$ an ordered triple
$(l,r,k)$.  The oriented system is then determined by the set $S_I$ of such
triples.

Note that giving a patching problem for $\mc F$ is then equivalent to giving a
collection of $F_i$-vector spaces $V_i$ for $i\in I_v$ together with
isomorphisms $\mu_k:V_l \otimes_{F_l} F_k \to V_r \otimes_{F_r}
F_k$, where $(l,r,k)$ ranges over the set $S_I$.  In the notation above,
$\mu_k = \nu_{r,k}^{-1}\nu_{l,k}$.  (Compare the discussion at the end of
Section~2 of \cite{HH:FP}.)

In the above situation, if $G$ is a linear algebraic group over $F$, then 
we say that \textit{simultaneous factorization holds
for $G$ over} $\mc F$ 
if for any collection of elements $A_k \in
G(F_k)$, for $k\in I_e$, there exist elements $A_i\in G(F_i)$ for all $i\in I_v$ such that $A_k=A_r^{-1}A_l\in G(F_k)$ for all $(l,r,k)\in S_I$, with respect
to the inclusions $F_l,F_r \hookrightarrow F_k$. Notice that an index $i\in
I_v$ may dominate several indices $k \in I_e$, but the matrix $A_i$ is the
same for all corresponding inclusions (hence the use of the term {\em
  simultaneous}).  If simultaneous factorization holds for $\GL_n$ over $\mc F$ for every $n\ge 1$, then we simply say that \textit{simultaneous factorization holds
over}~$\mc F$.
Also note that whether simultaneous factorization holds over $\mc F$ is independent of the chosen left-right labels (since if the labels are reversed for $k \in I_e$, we can replace $A_k$ by its inverse).

We now have the following result, which in the 
case of the simple factorization system $F_1,F_2 \subseteq F_0$ was shown
in Proposition~2.1 of \cite{Ha:CAPS} (see also Proposition~2.1 of
\cite{HH:FP}).

\begin{prop} \label{factn_equiv_of_cats}
Let $\mc F$ be a factorization inverse system over a field $F$.
Then the functor $\beta: \Vect(F) \to \PP(\mc F)$ is an
equivalence of categories if and only if simultaneous factorization holds over
$\mc F$.
\end{prop}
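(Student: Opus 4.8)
The plan is to prove both implications by unwinding the definitions, using the explicit equivalence between patching problems and simultaneous factorization data that the preceding discussion sets up. Recall that after fixing left-right labels, a patching problem for $\mc F$ amounts to a collection of finite-dimensional $F_i$-vector spaces $V_i$ ($i \in I_v$) together with gluing isomorphisms $\mu_k : V_l \otimes_{F_l} F_k \to V_r \otimes_{F_r} F_k$ for each triple $(l,r,k) \in S_I$; and applying $\beta$ to an $F$-vector space $V$ produces the patching problem with $V_i = V \otimes_F F_i$ and $\mu_k$ the canonical identification. So the content is: $\beta$ is essentially surjective and fully faithful precisely when every such system of $(V_i,\mu_k)$ arises, up to isomorphism, from a single $F$-vector space in a unique way.

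First I would prove the direction ``simultaneous factorization $\Rightarrow$ $\beta$ is an equivalence.'' Given a patching problem $(V_i,\mu_k)$, the dimensions of the $V_i$ must all agree (since $\Gamma$ is connected and the $\mu_k$ force equality of dimensions along each edge), say all equal to $n$; fix $F_i$-bases of each $V_i$, so that each $\mu_k$ is given by a matrix $A_k \in \GL_n(F_k)$. Simultaneous factorization (for $\GL_n$, hence automatically, since it holds over $\mc F$) yields $A_i \in \GL_n(F_i)$ with $A_k = A_r^{-1} A_l$ for all $(l,r,k) \in S_I$. Changing the basis of each $V_i$ by $A_i$ replaces $A_k$ by $A_r A_k A_l^{-1} = \id$, so the patching problem becomes isomorphic to the ``trivial'' one in which every $\mu_k$ is the identity; this trivial problem is visibly $\beta$ of $F^n$. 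That gives essential surjectivity. For full faithfulness: $\beta$ is always faithful (a map of $F$-vector spaces is determined by any one of its base changes, since $F \hookrightarrow F_i$), so the issue is fullness. A morphism $\beta(V) \to \beta(W)$ is a compatible family of $F_i$-linear maps $\phi_i : V\otimes F_i \to W \otimes F_i$ agreeing on the $F_k$'s; writing these as matrices and using that $F = \varprojlim F_i$ (the inverse-limit hypothesis built into the definition of a factorization inverse system), the entries lie in $F$, producing the desired $F$-linear map inducing the $\phi_i$. Hence $\beta$ is fully faithful and essentially surjective, so an equivalence.

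Conversely, suppose $\beta$ is an equivalence; I would deduce simultaneous factorization. Given matrices $A_k \in \GL_n(F_k)$ for $k \in I_e$, build the patching problem with $V_i = F_i^n$ for every $i \in I_v$ and gluing maps $\mu_k$ given by $A_k$ (with respect to the standard bases). Essential surjectivity of $\beta$ provides an $F$-vector space $V$ and an isomorphism in $\PP(\mc F)$ between $\beta(V)$ and this problem; comparing dimensions, $\dim_F V = n$, and choosing an $F$-basis of $V$ the isomorphism is a family of matrices $A_i \in \GL_n(F_i)$ intertwining the identity gluings of $\beta(V)$ with the $A_k$, i.e.\ satisfying exactly $A_k = A_r^{-1} A_l$ for all $(l,r,k) \in S_I$. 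That is the required simultaneous factorization, and running over all $n$ gives the statement over $\mc F$.

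The only genuinely delicate point is ensuring that the isomorphisms produced are consistent over a vertex $i \in I_v$ that is an endpoint of several edges $k$: the matrix $A_i$ must be the same for all of them. But this is automatic from the formalism — a morphism of patching problems assigns a single isomorphism $V_i \to V_i'$ at each vertex, used simultaneously on every incident edge — so no extra argument is needed; the term ``simultaneous'' in the factorization property is precisely engineered to match this. I expect the write-up to be short, the bookkeeping of left-right orientations being the main thing to keep straight, exactly as in the two-field case of \cite[Proposition~2.1]{Ha:CAPS}.
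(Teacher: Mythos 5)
Your proof is correct and takes essentially the same approach as the paper's: both directions are established by translating between bases of the $V_i$ and matrices $A_i$, with essential surjectivity obtained by adjusting bases so the gluings become trivial, and fully faithfulness reduced to the inverse-limit equality $F=\varprojlim F_i$.
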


\begin{proof}
First suppose that $\beta$ is an equivalence of categories, and consider a
collection of matrices $A_k \in \GL_n(F_k)$ for $k\in I_e$.  For each $i \in
I_v$ choose an $n$-dimensional $F_i$-vector space $V_i$ with basis $B_i$.  For
each triple $(l,r,k) \in S_I$, and with respect to the bases $B_l,B_r$, the
matrix $A_k$ defines a vector space isomorphism $\mu_k:V_l \otimes_{F_l} F_k
\to V_r \otimes_{F_r} F_k$. These isomorphisms define a patching problem for
$\mc F$, which then has a solution $V$ over $F$.  We thus have isomorphisms
$\alpha_i:V \otimes_F F_i \to V_i$ that are compatible with the maps
$\mu_k$. Choose a basis $B$ of $V$, and let $A_i \in \GL_n(F_i)$ be the matrix
corresponding to $\alpha_i$ with respect to $B,B_i$.  Then $A_k=A_r^{-1}A_l$
for each $(l,r,k) \in S_I$, proving simultaneous factorization.

Concerning the converse, first note that the functor $\beta$ is always fully faithful, even without assuming simultaneous factorization.  Namely, it suffices to check this in the case of morphisms of one-dimensional vector spaces, i.e.\ for $\Hom(F,F)$, and there it follows from the equality $\displaystyle F = \lim_\leftarrow F_i$.

So to prove the converse, we now assume that simultaneous factorization holds, and prove essential surjectivity of $\beta$.  
Consider a
patching problem $\mathcal{V} := \{V_i\}_{i \in I}$ for $\mc F$, corresponding
to isomorphisms $\mu_k$ as above. Choose bases $B_i$ for each $V_i$, and let
$A_k$ be the transition matrix between $V_r$ and $V_l$ for each triple
$(l,r,k) \in S_I$.  By hypothesis, there exist matrices $A_i \in \GL_n(F_i)$
for all $i \in I_v$ such that $A_k=A_r^{-1}A_l$ for each $(l,r,k) \in
S_I$. Adjusting the bases $B_i$ by the matrices $A_i$, we obtain new bases
$B_i'$ for $V_i$ such that $B_l',B_r'$ have the same images in $V_k$ for each
$(l,r,k) \in S_I$.  This yields a solution to the patching problem, given by
an $n$-dimensional $F$-vector space $V$ with a basis $B$, together with the
isomorphisms $V\otimes_F F_i \to V_i$ that take $B$ to $B_i'$. Thus every patching problem
has a solution, and so $\beta$ is surjective on isomorphism classes of
objects.   \end{proof}

We next consider a different category of patching problems. Let $G$ be a linear
algebraic group over $F$.  
Given a finite inverse system of fields
$\mc F$ with inverse limit $F$, we define a \textit{$G$-torsor patching problem} for
$\mc F$ to consist of a system of $G_{F_i}$-torsors $\T_i$ together with
$F_j$-isomorphisms of $G_{F_j}$-torsors $\nu_{i,j}:\T_i \times_{F_i} F_j \to
\T_j$ for all pairs $i \succ j$ (i.e.\ such that $F_i \subset F_j$).                                                                                      
A $G$-torsor $\T$ over $F$ induces a $G$-torsor patching problem by base
change; and a \textit{solution} to a given $G$-torsor patching problem
consists of a $G$-torsor $\T$ over $F$ that induces the given patching
problem up to isomorphism. Again, the $G$-torsor patching problems form a
category with the obvious definition of a morphism.

If $\mc F$ is a factorization inverse system, we have pairs of inclusions
$F_l\subseteq F_k$ and $F_r \subseteq F_k$ for $(l,r,k) \in S_I$. Giving a
$G$-torsor patching problem for
$\mc F$ is therefore equivalent to giving a
$G_{F_i}$-torsor $\T_i$ for each $i\in I$ together with $G_{F_k}$-torsor isomorphisms $\mu_k:\T_l\times_{F_l}F_k
\to \T_r\times_{F_r}F_k$ for each
$(l,r,k) \in S_I$.  Again, $\mu_k = \nu_{r,k}^{-1}\nu_{l,k}$.

Recall that $G$-torsors over a field are classified by its first Galois cohomology set.
Regard $G$ as a closed subgroup of $\GL_n$, and consider an extension field
$E/F$.  Now $H^1(E,\GL_n)$ is trivial by Hilbert's Theorem~90 (\cite[Theorem~29.2]{BofInv}).  Hence by
Corollaire~1 of~\cite[I.5.4, Proposition~36]{Serre:CG}, we obtain a bijection of pointed sets
\[\GL_n(E)\backslash H^0(E,\GL_n/G) \iso H^1(E,G).\]
Thus $G$-torsors over $E$ are defined by $\Gal(E^{\sep}/E)$-invariant translates $hG_{E^{\sep}}$ of
$G_{E^{\sep}}$, with $h \in \GL_n(E^\sep)$.

Recall also that the coset space $\GL_n/G$ carries the structure of a quasi-projective variety together with
a quotient morphism $\pi:\GL_n \to \GL_n/G$.  For every field extension $E/F$,
and every $h \in \GL_n(E^\sep)$, the fiber of $\pi$ over $\pi(h)$ is the
translate $hG_{E^{\sep}} \subseteq \GL_{n,E^{\sep}}$; and this fiber is $\Gal(E^{\sep}/E)$-invariant if and
only if $\pi(h)$ is.  That is, $hG_{E^{\sep}}$ defines an element of
$H^0(E,\GL_n/G)$, or equivalently a $G$-torsor over $E$, if and only if $\pi(h)$ is defined over $E$.  For details see
\cite{Borel}, Theorem~II.6.8 and its proof.

Using the above, we now prove that patching for vector spaces implies patching
for torsors.

\begin{thm} \label{patching_torsors} 
Let $\mc F:=\{F_i\}_{i\in I}$ be a factorization inverse system over a field $F$, and let $G$ be a linear algebraic group over $F$. If the base
change functor $\beta: \Vect(F) \to \PP(\mc F)$ is an equivalence of categories,
then so is the functor from the category of $G$-torsors over $F$ to the
category of $G$-torsor patching problems for ${\mathcal F}$. In particular,
every $G$-torsor patching problem has a solution that is unique up to isomorphism.  Moreover, on the
level of coordinate rings, this solution is given by taking the inverse limit.
\end{thm}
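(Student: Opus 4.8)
The plan is to reduce the statement about $G$-torsors to one about rational points of the quasi-projective variety $\GL_n/G$, and then to solve the latter using that, by Proposition~\ref{factn_equiv_of_cats}, the hypothesis on $\beta$ is equivalent to simultaneous factorization holding over $\mc F$ for \emph{every} $\GL_n$ (in particular for $\GL_1$ and for the ambient $\GL_n$). Fix a closed embedding $G\subseteq\GL_n$. For any field extension $E/F$ we have $H^1(E,\GL_n)=1$ by Hilbert~90, so every $\GL_n$-torsor over $E$ is trivial and, as recalled before the statement, $H^1(E,G)$ is identified with $\GL_n(E)\backslash(\GL_n/G)(E)$: a $G$-torsor $\T$ over $E$ is recovered from the trivial $\GL_n$-torsor $\T\times^G\GL_n$ together with a reduction of structure group, i.e.\ a point $\bar\theta\in(\GL_n/G)(E)$ well-defined up to the left action of $\GL_n(E)$, while an isomorphism of $G$-torsors over $E$ induces an automorphism of the associated trivial $\GL_n$-torsor, hence is left multiplication by an element of $\GL_n(E)$ (a Galois-descent computation, using that $G$ is smooth, identifies torsor isomorphisms between translates of $G$ with such left multiplications). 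Applying this over each $F_i$ and $F_k$, a $G$-torsor patching problem $\{\T_i\}$ for $\mc F$ with gluing isomorphisms $\mu_k$ becomes the data of points $\bar\theta_i\in(\GL_n/G)(F_i)$ for $i\in I_v$ together with elements $c_k\in\GL_n(F_k)$ for $(l,r,k)\in S_I$ with $c_k\cdot\bar\theta_l=\bar\theta_r$ in $(\GL_n/G)(F_k)$; and a torsor over $F$ corresponding to $\bar\theta\in(\GL_n/G)(F)$ solves this problem precisely when there exist $A_i\in\GL_n(F_i)$ with $\bar\theta|_{F_i}=A_i\cdot\bar\theta_i$ and $c_k=A_r^{-1}A_l$ for all $(l,r,k)$.

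For essential surjectivity, given such data $(\bar\theta_i,c_k)$ I would first use simultaneous factorization for $\GL_n$ to obtain $A_i\in\GL_n(F_i)$ with $c_k=A_r^{-1}A_l$; then the points $\psi_i:=A_i\cdot\bar\theta_i\in(\GL_n/G)(F_i)$ satisfy $\psi_l|_{F_k}=\psi_r|_{F_k}$ for all $(l,r,k)\in S_I$, since $\psi_r|_{F_k}=A_r\cdot\bigl(c_k\cdot\bar\theta_l|_{F_k}\bigr)=(A_rA_r^{-1}A_l)\cdot\bar\theta_l|_{F_k}=\psi_l|_{F_k}$. The crux is then to glue the $\psi_i$ to a point $\psi\in(\GL_n/G)(F)$, i.e.\ to show $(\GL_n/G)(F)=\varprojlim_i(\GL_n/G)(F_i)$. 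I would do this by fixing a locally closed immersion $\GL_n/G\hookrightarrow\mathbb{P}^M$ (using quasi-projectivity), writing $\psi_i=[v_i]$ with $v_i\in F_i^{M+1}\smallsetminus\{0\}$, and observing that over $F_k$ the equality $[v_l]=[v_r]$ gives $v_r=\lambda_k v_l$ with $\lambda_k\in F_k^\times$; applying simultaneous factorization for $\GL_1$ to the $\lambda_k$ produces $\mu_i\in F_i^\times$ with $\mu_l v_l=\mu_r v_r$ over every $F_k$, so the vectors $\mu_i v_i$ glue coordinatewise (as $F=\varprojlim F_i$) to a nonzero $v\in F^{M+1}$, hence to $[v]\in\mathbb{P}^M(F)$ restricting to each $\psi_i$; a standard argument then shows $[v]$ factors through the immersion (its image point, being that of each $\psi_i$, lies in $\GL_n/G$), yielding the desired $\psi$. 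The $G$-torsor $\T$ over $F$ corresponding to $\psi$ satisfies $\T\times_F F_i\cong\T_i$ via left multiplication by $A_i^{-1}$, and the identity $c_k=A_r^{-1}A_l$ is exactly what makes these isomorphisms compatible with the $\mu_k$, so $\T$ solves the patching problem.

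For full faithfulness, torsors $\T,\T'$ over $F$ correspond to points $\bar\theta,\bar\theta'\in(\GL_n/G)(F)$, and a morphism between the induced patching problems is a compatible family of $b_i\in\GL_n(F_i)$ with $b_i\cdot\bar\theta|_{F_i}=\bar\theta'|_{F_i}$ and $b_l|_{F_k}=b_r|_{F_k}$; since $F=\varprojlim F_i$ the $b_i$ glue, entrywise, to a matrix over $F$ whose determinant is a unit, hence to $b\in\GL_n(F)$, and, $\GL_n/G$ being separated, the relation $b\cdot\bar\theta=\bar\theta'$ can be checked after base change, so $b$ defines a morphism $\T\to\T'$ inducing the given one; injectivity is immediate from $\GL_n(F)\hookrightarrow\prod_i\GL_n(F_i)$. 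Together with essential surjectivity this shows the functor is an equivalence, so every patching problem has a solution, unique up to a unique isomorphism. Finally, for the coordinate-ring assertion, write $\T=\Spec R$ and $\T_i=\Spec R_i$ (these are affine, being torsors under the affine group $G$); the solving isomorphisms give $R\otimes_F F_i\cong R_i$ compatibly with the transition maps, so tensoring the exact sequence $0\to F\to\prod_{i\in I_v}F_i\to\prod_{k\in I_e}F_k$ (which expresses $F=\varprojlim F_i$) with the flat $F$-module $R$, and using that tensor product commutes with the finite products involved, gives an exact sequence $0\to R\to\prod_i R_i\to\prod_k R_k$; hence $R=\varprojlim_i R_i$.

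I expect the main obstacle to be precisely this gluing step for points of $\GL_n/G$: since $\GL_n/G$ need not be affine, one cannot glue homogeneous coordinates directly, and one genuinely needs the extra input of simultaneous factorization for $\GL_1$ — available only because the hypothesis concerns all $\GL_n$ rather than a single one — to rescale the coordinate vectors consistently before gluing. Once this is in place, the remaining ingredients, namely the Galois-descent identification of torsor isomorphisms with left multiplications and the flat-base-change computation of the coordinate ring, are routine.
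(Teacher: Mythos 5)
Your proof is correct, and the overall structure (reduce to $H^1(E,G)\cong\GL_n(E)\backslash(\GL_n/G)(E)$ via Hilbert~90, then factor the transition matrices via Proposition~\ref{factn_equiv_of_cats}) matches the paper's. However, you handle the crucial gluing step for points of $\GL_n/G$ quite differently from the authors. You fix a locally closed immersion $\GL_n/G\hookrightarrow\mathbb{P}^M$, observe that the compatible $F_i$-points give homogeneous coordinate vectors that differ over each $F_k$ by a scalar, and then invoke simultaneous factorization for $\GL_1$ to rescale these vectors so that they glue coordinatewise via $F=\varprojlim F_i$; the resulting $\mathbb{P}^M(F)$-point is then shown to land in $\GL_n/G$. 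The paper instead uses a purely scheme-theoretic argument that requires no projective embedding and no $\GL_1$-factorization at all: since the $Q_i\colon\Spec F_i\to\GL_n/G$ agree after base change to each $F_k$ and $\Spec F_k\to\Spec F_i$ is surjective, all the $Q_i$ share a common scheme-theoretic image point $x$ (using connectedness of the graph), and the compatible system of $F$-algebra maps $\kappa(x)\to F_i$ then factors through $F=\varprojlim F_i$, yielding the desired $F$-point directly. So your assessment that ``one genuinely needs the extra input of simultaneous factorization for $\GL_1$'' is a little too strong: the residue-field argument shows the equalizer property of $F$ alone suffices. Your version is arguably more hands-on and elementary, at the cost of invoking more of the hypothesis; the paper's is slightly slicker and cleanly isolates what is actually used. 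You also treat full faithfulness by gluing the $\GL_n(F_i)$-matrices describing the patching-problem morphisms, where the paper works directly with coordinate rings: both are fine. The coordinate-ring-as-inverse-limit argument via tensoring the left-exact sequence $0\to F\to\prod_{i\in I_v}F_i\to\prod_{k\in I_e}F_k$ by the flat module $A$ is essentially identical in both.
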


\begin{proof}
View $G$ as a closed subgroup of $\GL_{n,F}$.  
We first show that the functor is essentially surjective; i.e.\ that
every $G$-torsor patching problem has a solution. 
By the above discussion, we may
assume that the given patching problem consists of $G$-torsors
$\T_i=h_iG_{F_i^{\sep}}$ which are $\Gal(F_i^{\sep}/F_i)$-invariant orbits of
elements $h_i\in \GL_n(F_i^{\sep})$, for $i \in I_v$, together with isomorphisms
$\mu_k:\T_l\times_{F_l}F_k \to \T_r\times_{F_r}F_k$ for each triple $(l,r,k)\in S_I$.

For each $(l,r,k)\in S_I$, the fields $F_l$ and $F_r$ include into $F_k$; thus
the elements $h_l,h_r$ may be viewed as elements of $\GL_n(F_k^{\sep})$.
Consider the collection of elements $g_k:=\mu_k(h_l)h_l^{-1}\in
\GL_n(F_k^{\sep})$. Left multiplication by $g_k$ determines a morphism of
torsors $\lambda_k: h_lG_{F_k^{\sep}}\to g_kh_lG_{F_k^{\sep}} = h_rG_{F_k^{\sep}}$ (a priori defined over $F_k^{\sep}$)
which sends $h_l\in \T_l(F_k^{\sep})$ to $\mu_k(h_l)\in \T_r(F_k^{\sep})$ by definition of $g_k$. Since a morphism of
torsors is determined by the image of one point, $\lambda_k=\mu_k \otimes_{F_k} F_k^{\sep}$. In particular, $\lambda_k$ is also defined over $F_k$
(i.e.\ it commutes with the action of $\Gal(F_k^{\sep}/F_k)$), and hence $g_k \in \GL_n(F_k)$.

By the hypothesis on vector space patching problems for $\mc F$,
Proposition~\ref{factn_equiv_of_cats} implies that there is a collection of
elements $g_i \in \GL_n(F_i)$ for $i\in I_v$ such that $g_k = g_r^{-1}g_l \in
\GL_n(F_k)$ for each triple $(l,r,k) \in S_I$. For $i\in I_v$, let $h_i' = g_i
h_i$ and let $\lambda_i:h_i G_{F_i^{\sep}} \to h_i' G_{F_i^{\sep}}$ be left
multiplication by $g_i$.   Since $g_k =
g_r^{-1}g_l$, the morphisms
$(\lambda_r)_{F_k}^{-1}$ and $ \mu_k \circ (\lambda_l)_{F_k}^{-1}: h_l' G_{F_k^{\sep}}=h_r'G_{F_k^{\sep}}
\to h_r G_{F_k^{\sep}}$ are the same.

For $(l,r,k)\in S_I$, write $h_k'$ for the image of $h_r'$ in $\GL_n(F_k^{\sep})$.  The
translates $h_i' G_{F_i^{\sep}}$ define $G$-torsors over $F_i$; and so as noted before the theorem, the image $Q_i = \pi(h_i')$ in $\GL_n/G$
is defined over $F_i$.
Thus the points $Q_i$, for $i \in I$, form a compatible
system of $F_i$-points of the $F$-variety $\GL_n/G$.  
Since $F$ is the inverse limit of the fields $F_i$, and since an $E$-point on an $F$-variety $Z$ is defined by an $F$-morphism $\kappa(x) \to E$ for some point $x \in Z$, it follows that the points $Q_i$
define a common $F$-point $Q \in (\GL_n/G)(F)$.  The fiber over this $F$-point is a
$\Gal(F^{\sep}/F)$-invariant translate $hG_{F^{\sep}}$ of $G_{F^{\sep}}$ for some $h \in \GL_n(F^\sep)$, such
that $hG_{F_i^\sep} = h_i' G_{F_i^\sep}$ for each $i \in I$.

Consider the $G$-torsor $\T = hG_{F^{\sep}}$ defined over $F$.
Since $(\lambda_r)_{F_k}^{-1}=\mu_k \circ (\lambda_l)_{F_k}^{-1}$ for each
$(l,r,k) \in S_I$, the torsor $\T$, together with the maps $\lambda_i^{-1}:hG_{F_i^\sep} \rightarrow h_i G_{F_i^\sep}$,
defines a solution to the given patching problem.
This completes the proof of essential surjectivity.

Next, we show the last assertion of the theorem.
Let $\T = \Spec(A)$ be a
$G$-torsor over $F$, inducing $\T_i = \Spec(A_i)$ over $F_i$ for $i \in
I$. Consider  the short exact sequence $0\to F \to \prod_{i \in I_v} F_i \to
\prod_{k \in I_e} F_k$, where the $F_k$-entry of the image of $(a_i)$ is
$a_l-a_r$, if $(l,r,k) \in S_I$ is the triple containing $k$. Tensoring with
$A$ shows that $0 \to A \to \prod_{i \in I_v} A_i \to \prod_{k\in I_e} A_k $
is an exact sequence of $F$-vector spaces.
So $A$ is the inverse limit of the rings $A_i$, as desired.

To conclude the proof of the theorem, we show that the functor is fully faithful, i.e.\ bijective on morphisms between corresponding objects.
Given two $G$-torsors $\T,\T'$ over $F$, write $\T = \Spec(A)$ and $\T' =
\Spec(A')$, and similarly for $\T_i, \T'_i$.  A morphism $\T \to \T'$
corresponds to a homomorphism $A' \to A$, and similarly for $\T_i \to
\T_i'$. Since $A \subseteq A_i$, the map on morphisms is injective.  Since $A$
is the inverse limit of the $A_i$, the map is surjective.
\end{proof}

Since $F$ is the inverse limit of $\mc F$, there is an exact sequence (equalizer diagram)
\[0\to F \to \prod \limits_{i \in I_v} F_i \rightrightarrows \prod
\limits_{k\in I_e} F_k,\]
the double arrows corresponding to the inclusions $F_l,F_r \hookrightarrow
F_k$.  This yields an exact sequence
\[1\to G(F) \to \prod \limits_{i \in I_v} G(F_i) \rightrightarrows \prod
\limits_{k\in I_e} G(F_k),\]
which may be rewritten as an exact sequence of pointed sets
\[1\to G(F)\to \prod \limits_{i \in I_v} G(F_i)\to \prod
\limits_{k\in I_e}G(F_k),\]
where the $F_k$-component of an element in the image of $(g_i)\in \prod_{i \in
  I_v} G(F_i)$ is given by $g_r^{-1}g_l$, if $(l,r,k) \in S_I$.
Equivalently,
\[1\to H^0(F,G)\to \prod \limits_{i \in I_v} H^0(F_i,G)\to \prod
\limits_{k\in I_e}H^0(F_k,G).\]

On the other hand, Theorem~\ref{patching_torsors} yields an exact sequence
\[H^1(F,G) \to \prod \limits_{i \in I_v} H^1(F_i,G) \rightrightarrows \prod
\limits_{k\in I_e} H^1(F_k,G).\]
The next theorem shows that these can indeed be combined to a six-term exact
sequence.

\begin{thm} \label{general_6-term_sequence}
Let $\mc F$ be a factorization inverse system of fields over a field $F$, such that
the base change functor $\beta: \Vect(F) \to \PP(\mc F)$ is an equivalence of
categories. Then for any linear algebraic group $G$ over $F$, we have an exact
sequence of pointed sets
\[\xymatrix{
1 \ar[r] & H^0(F,G) \ar[r] &
\prod_{i \in I_v} H^0(F_i,G) \ar[r] & \prod_{k \in I_e} H^0(F_k,G) \ar@<-2pt>
`d[l]`[lld] [lld] \\
& H^1(F, G) \ar[r] & \prod_{i \in I_v} H^1(F_i,G)\ar@<.5ex>[r] \ar@<-.5ex>[r]
& \prod_{k \in I_e} H^1(F_k,G). }\]
Moreover, two elements $(g_k), (\tilde g_k)\in \prod_{k \in I_e} H^0(F_k,G)$
have the same image under the coboundary map if and only if there are
$(g_i)\in \prod\limits_{i\in I_v} G(F_i)$ such that $g_k=g_r^{-1}\tilde g_k g_l$
for all $(l,r,k)\in S_I$.
\end{thm}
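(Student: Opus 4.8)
The plan is to splice the two exact sequences already in hand — the $H^0$-equalizer sequence coming from $F = \varprojlim F_i$, whose exactness at $H^0(F,G)$ and at $\prod_{i\in I_v}H^0(F_i,G)$ is immediate, and the $H^1$-sequence of Theorem~\ref{patching_torsors}, whose exactness at $\prod_{i\in I_v}H^1(F_i,G)$ (i.e.\ that the equalizer of the two restriction maps is the image of $H^1(F,G)$) is exactly the content of that theorem — by constructing the connecting map $\partial$ explicitly and reading the ``moreover'' clause off its construction.

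First I would define $\partial\colon\prod_{k\in I_e}H^0(F_k,G)\to H^1(F,G)$. Given $(g_k)\in\prod_{k\in I_e}G(F_k)$, let $\mc V(g_k)$ be the $G$-torsor patching problem whose torsor over each $F_i$, $i\in I_v$, is the trivial torsor $G_{F_i}$ and whose gluing isomorphism $\mu_k\colon G_{F_l}\otimes_{F_l}F_k\to G_{F_r}\otimes_{F_r}F_k$ is left multiplication by $g_k$, for each $(l,r,k)\in S_I$; this is a legitimate patching problem since each $g_k\in G(F_k)$ makes $\mu_k$ an isomorphism of right $G_{F_k}$-torsors, and the definition of a patching problem imposes no further condition. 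By Theorem~\ref{patching_torsors}, $\mc V(g_k)$ has a solution $\T$, unique up to isomorphism, and I set $\partial((g_k)) = [\T]$. Taking all $g_k=1$ gives the trivial torsor, so $\partial$ is a map of pointed sets.

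Next I would prove exactness at $H^1(F,G)$. If $[\T]=\partial((g_k))$ then by construction $\T_{F_i}\cong G_{F_i}$ for all $i\in I_v$, so $[\T]$ lies in the kernel of $H^1(F,G)\to\prod_{i\in I_v}H^1(F_i,G)$. Conversely, suppose $\T_{F_i}$ is trivial for each $i\in I_v$, and choose trivializations $\psi_i\colon\T_{F_i}\xrightarrow{\ \sim\ }G_{F_i}$. Transporting the canonical gluing data of $\T$ along the $\psi_i$ yields a patching problem with all torsors trivial, whose gluing maps $\mu_k$ are automorphisms of the trivial right $G_{F_k}$-torsor; since such an automorphism is determined by the image of the identity section and must be left multiplication by that image, $\mu_k$ is left multiplication by a unique $g_k\in G(F_k)$. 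The resulting problem is $\mc V(g_k)$ and is isomorphic (via the $\psi_i$) to the one induced by $\T$, so $\T$ solves $\mc V(g_k)$ and $[\T]=\partial((g_k))$.

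Finally, the ``moreover'' clause — which, upon specializing to $\tilde g_k=1$ and recalling that the image of $\prod_{i\in I_v}H^0(F_i,G)$ consists of the tuples $(g_r^{-1}g_l)_k$, also gives exactness at $\prod_{k\in I_e}H^0(F_k,G)$ — comes from unwinding morphisms of patching problems. By the equivalence of categories in Theorem~\ref{patching_torsors}, $\partial((g_k))=\partial((\tilde g_k))$ if and only if $\mc V(g_k)$ and $\mc V(\tilde g_k)$ are isomorphic patching problems (solutions of isomorphic patching problems are isomorphic, and conversely). A morphism $\mc V(g_k)\to\mc V(\tilde g_k)$ is a family of right-$G_{F_i}$-torsor maps $\phi_i\colon G_{F_i}\to G_{F_i}$, necessarily left multiplication by some $g_i\in G(F_i)$ (hence automatically isomorphisms), such that $\tilde\mu_k\circ(\phi_l)_{F_k}=(\phi_r)_{F_k}\circ\mu_k$ for every $(l,r,k)\in S_I$; evaluating on the identity section turns this into $\tilde g_k g_l=g_r g_k$, i.e.\ $g_k=g_r^{-1}\tilde g_k g_l$. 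I expect the one place to be careful is precisely this last computation — keeping the orientation $(l,r,k)$, the left translations, and the base-change directions consistent — together with checking that the data transported along the $\psi_i$ in the previous paragraph is genuinely of the standard form $\mc V(g_k)$; the remaining exactness assertions are formal, being inherited from the two sequences we started with.
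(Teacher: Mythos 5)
Your proposal is correct and takes essentially the same route as the paper: define the connecting map $\delta$ by sending $(g_k)$ to the unique solution of the torsor patching problem with trivial $\T_i$ and gluing by left multiplication by $g_k$, then read both exactness at $H^1(F,G)$ and the "moreover" clause off the equivalence of categories from Theorem~\ref{patching_torsors}, using that an automorphism of a trivial torsor is left multiplication by the image of the identity section. The orientation bookkeeping you flag as the delicate point does come out as you wrote it ($g_rg_k=\tilde g_kg_l$), matching the paper's computation.
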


\begin{proof}
As above, we consider $G$ as a closed subgroup of $\GL_n$ for some $n$.  All the maps in the desired sequence were given in the above discussion, except for the coboundary map $\delta$, which we now define.  

Given an element $(g_k)\in
\prod_{k \in I_e} H^0(F_k,G) = \prod_{k \in I_e} G(F_k)$,  
consider the $G$-torsor patching problem with $\T_i$ the trivial $G_{F_i}$-torsor $G_{F_i}$ for $i \in I_v$, and with 
$\mu_k:\T_l \times_{F_l} F_k \to \T_r
\times_{F_r} F_k$ given by left multiplication by $g_k$ on $G_{F_k}$.  By Theorem~\ref{patching_torsors}, there is, up to isomorphism, a unique $G$-torsor $\T$ over $F$ that is a solution to this patching problem; and we take the image of $(g_k)$ under $\delta$ to be the isomorphism class of this torsor.  

Exactness at $H^1(F,G)$ now follows from the fact that an object in the kernel of $H^1(F, G) \to \prod H^1(F_i,G)$ is a $G$-torsor $\T$ that is the solution to a $G$-torsor patching problem with trivial torsors over each $F_i$, or equivalently a $G$-torsor $\T$ in the image of $\delta$. 

For the assertion on the fibers of $\delta$, which includes
exactness at $\prod_{k\in I_e}H^0(F_k,G)=\prod_{k \in I_e} G(F_k)$,
consider two elements $(g_k),(\tilde g_k) \in \prod_{k\in I_e} G(F_k)$.
For $i \in I_v$ let $\T_i$ be the trivial $G_{F_i}$-torsor $G_{F_i}$ over $F_i$; and let $\mu_i$ (resp.\ $\tilde \mu_i$) be left multiplication by $g_i$ (resp.\ $\tilde g_i$).  The tuples $(g_k),(\tilde g_k)$ map to the same
element of $H^1(F,G)$ under $\delta$
if and only if the respective patching problems $(\{\T_i\},\{\mu_k\})$, $(\{
\T_i\},\{\tilde \mu_k\})$ are isomorphic; i.e.\ if and only if there are 
isomorphisms $\varphi_i:G_{F_i} \to G_{F_i}$ of trivial $G_{F_i}$-torsors for $i
\in I_v$ such that $(\varphi_r)_{F_k}\circ
  \mu_k = \tilde \mu_k\circ  (\varphi_{l})_{F_k}$ for each $(l,r,k) \in S_I$.
Let $g_i = \phi_i(1) \in G(F_i)$, where $1$ is the identity element of $G$.  The above equality on the $\varphi_i$   
is then equivalent to the equality
$g_rg_k=\tilde g_kg_l$ for $(l,r,k)\in S_I$; i.e., $g_k=g_r^{-1}\tilde g_kg_l$.  So the fibers of $\delta$ are as claimed, and the sequence is exact at $\prod_{k\in I_e}H^0(F_k,G)$.
\end{proof}

In this general setup, we let
$$\Sha_{\mc F}(G):=\operatorname{ker}\left(H^1(F,G) \to \prod_{i \in I_v}
H^1(F_i,G)\right)$$
be the kernel of the local-global map, i.e., the set of elements that map to
the trivial element.
Since $G$ need not be
commutative, the sequence in the theorem is exact just as a sequence of
pointed sets, not of groups; and so the kernel of a map does not determine the
fibers of the map.  Nevertheless, using standard techniques from nonabelian
cohomology, we can describe the fibers in terms of twists.

Recall (e.g.\ from Section~I.5.3 of
\cite{Serre:CG}) that if $G$ is a linear algebraic group over a field $F$ and
if $\tau \in Z^1(F,G)$ is a cocycle, then there is a naturally associated
twist $G^\tau$ of $G$ by $\tau$.  Moreover there is a bijection
between $H^1(F, G)$ and $H^1(F, G^\tau)$
such that the neutral element of $H^1(F, G^\tau)$
corresponds to the class $[\tau]$ in $H^1(F, G)$; and this is functorial in $F$.  (See
\cite[Proposition I.5.3.35]{Serre:CG}.)  Applying this discussion to
$F$ and the fields $F_i$, for $i \in I_v$, we obtain:

\begin{cor} \label{fibers_of_six-term_seq}
If $\tau \in Z^1(F,G)$, then the fiber of $H^1(F,G) \to \prod_{i \in I_v}
H^1(F_i,G)$ that contains the class $[\tau] \in H^1(F,G)$ is in natural
bijection with $\Sha_{\mc F}(G^\tau)$ as pointed sets.
\end{cor}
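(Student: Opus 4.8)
The plan is to apply the standard twisting formalism for nonabelian cohomology, using the functoriality (in the field) of the twisting construction together with the six-term sequence of Theorem~\ref{general_6-term_sequence}. First I would recall the key fact (\cite[Proposition~I.5.3.35]{Serre:CG}): given $\tau \in Z^1(F,G)$, there is a bijection $t_\tau : H^1(F,G) \to H^1(F,G^\tau)$ sending the class $[\tau]$ to the neutral element, and this bijection is compatible with base change, i.e.\ for each $i \in I_v$ the restriction $\tau_i$ of $\tau$ to $F_i$ gives $t_{\tau_i} : H^1(F_i,G) \to H^1(F_i,G^\tau)$, and the square relating $t_\tau$, $t_{\tau_i}$, and the two restriction maps $H^1(F,-) \to H^1(F_i,-)$ commutes. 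Stringing these together over all $i \in I_v$ gives a commutative square
\[
\xymatrix{
H^1(F,G) \ar[r] \ar[d]_{t_\tau} & \prod_{i \in I_v} H^1(F_i,G) \ar[d]^{\prod t_{\tau_i}} \\
H^1(F,G^\tau) \ar[r] & \prod_{i \in I_v} H^1(F_i,G^\tau)
}
\]
in which both vertical maps are bijections of pointed sets (though $t_\tau$ does not preserve the neutral element of the source).

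Next I would chase the diagram. The fiber of the top horizontal map containing $[\tau]$ is carried bijectively by $t_\tau$ onto the fiber of the bottom horizontal map containing $t_\tau([\tau])$. But $t_\tau([\tau])$ is the neutral element of $H^1(F,G^\tau)$, and since each $t_{\tau_i}$ sends the neutral element of $H^1(F_i,G)$ to... one must be slightly careful here: what is actually needed is that the fiber over the distinguished point of $\prod_{i \in I_v} H^1(F_i,G^\tau)$ pulls back, under the bottom map, to exactly the kernel $\Sha_{\mc F}(G^\tau)$. This is immediate from the definition of $\Sha_{\mc F}(G^\tau)$ as $\ker\bigl(H^1(F,G^\tau) \to \prod_{i\in I_v} H^1(F_i,G^\tau)\bigr)$, i.e.\ the preimage of the tuple of neutral elements. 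So the fiber of the top map through $[\tau]$ maps bijectively via $t_\tau$ onto $\Sha_{\mc F}(G^\tau)$, which gives the claimed natural bijection of pointed sets, the distinguished point of $\Sha_{\mc F}(G^\tau)$ (the neutral element) corresponding under $t_\tau^{-1}$ to $[\tau]$ itself.

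The one point requiring attention — and the place where the six-term sequence of Theorem~\ref{general_6-term_sequence} genuinely enters, rather than just the twisting formalism — is the compatibility of the vertical bijections $t_{\tau_i}$ with each other across the diagram: strictly the argument above only uses functoriality of $t$ along the inclusions $F \hookrightarrow F_i$, so Theorem~\ref{general_6-term_sequence} is not logically required for the bijection on fibers, but it is what tells us the fiber is nonempty precisely when $[\tau]$ is itself in the image of the coboundary/lies in the kernel, i.e.\ it is the ambient exact sequence that makes the fibers of $H^1(F,G)\to\prod H^1(F_i,G)$ the natural objects to describe. The main (and essentially only) obstacle is therefore bookkeeping: making sure the naturality of the twisting bijection in Serre is invoked with the correct variance and that ``fiber through $[\tau]$'' on the source matches ``kernel'' on the target after transport. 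There is no real analytic or geometric content here — it is a formal consequence of Corollary~1 of \cite[I.5.4]{Serre:CG}-style twisting applied uniformly over the finite index set $I_v$. I would write it as a short diagram chase, roughly: (i) recall $t_\tau$ and its functoriality; (ii) form the commuting square; (iii) observe $t_\tau([\tau]) = *$; (iv) conclude the fiber through $[\tau]$ equals $t_\tau^{-1}\bigl(\Sha_{\mc F}(G^\tau)\bigr)$, hence is in bijection with $\Sha_{\mc F}(G^\tau)$ as pointed sets.
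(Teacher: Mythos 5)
Your proof is correct and follows the same route as the paper: the paper likewise just applies the twisting bijection of \cite[Proposition~I.5.3.35]{Serre:CG} over $F$ and over each $F_i$, using its functoriality, and reads the conclusion off the resulting commutative square. One small correction to your closing aside: the fiber of $H^1(F,G)\to\prod_i H^1(F_i,G)$ through $[\tau]$ is always nonempty (it contains $[\tau]$), so Theorem~\ref{general_6-term_sequence} is not needed here even in the supporting role you describe; the corollary is a purely formal consequence of the twisting formalism, independent of the six-term sequence.
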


Of course if $\tau$ is the trivial cocycle, then this just says that the kernel is $\Sha_{\mc F}(G)$.

The six-term exact sequence also yields the following, which will be
used to study the connection between $\Sha_{\mc F}(G)$ and $\Sha_{\mc F}(G/G^0)$:
\begin{cor} \label{abstract_Sha_quotient}
Under the hypotheses of Theorem~\ref{general_6-term_sequence}, consider a
short exact sequence of linear algebraic groups
 $1 \to N\to G \to \bar G \to 1$ for which the map $G(L) \to \bar G(L)$ is
surjective for every field extension $L/F$.
Then the cohomology sequence associated to $1 \to N \to G \to \bar G \to 1$
induces a short exact sequence of pointed sets
\[1 \to \Sha_{\mc F}(N) \to \Sha_{\mc F}(G) \to \Sha_{\mc F}(\bar G) \to 1.\]
\end{cor}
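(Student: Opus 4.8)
The plan is to combine the ordinary long exact cohomology sequences for $1 \to N \to G \to \bar G \to 1$ over the individual fields of $\mc F$ with the six-term Mayer-Vietoris sequences of Theorem~\ref{general_6-term_sequence} for the three groups $N$, $G$, and $\bar G$. The surjectivity hypothesis will enter in the following form: for every field $E$ occurring in $\mc F$ (that is, $E = F$, or $E = F_i$ with $i \in I_v$, or $E = F_k$ with $k \in I_e$), the map $G(E) \to \bar G(E)$ is onto, so the connecting map $\bar G(E) \to H^1(E,N)$ in the cohomology sequence is trivial, and hence
\[1 \to H^1(E,N) \to H^1(E,G) \to H^1(E,\bar G)\]
is exact as a sequence of pointed sets; in particular $H^1(E,N) \to H^1(E,G)$ has trivial kernel. (It need not be injective, but trivial kernel is all that the relevant notion of exactness demands.) By functoriality of $H^1$ in the group, the maps $H^1(F,N) \to H^1(F,G)$ and $H^1(F,G) \to H^1(F,\bar G)$ restrict to maps $\Sha_{\mc F}(N) \to \Sha_{\mc F}(G)$ and $\Sha_{\mc F}(G) \to \Sha_{\mc F}(\bar G)$, so the asserted sequence is at least well defined.

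Granting this, exactness at the two left-hand terms is a short diagram chase. An element of the kernel of $\Sha_{\mc F}(N) \to \Sha_{\mc F}(G)$ lies in $\ker\bigl(H^1(F,N) \to H^1(F,G)\bigr)$, which is trivial; so the sequence is exact at $\Sha_{\mc F}(N)$. The composite $\Sha_{\mc F}(N) \to \Sha_{\mc F}(\bar G)$ is trivial since $N \to \bar G$ is the trivial homomorphism. Conversely, if $\xi \in \Sha_{\mc F}(G)$ maps to the trivial class in $H^1(F,\bar G)$, then by exactness over $F$ it is the image of some $\eta \in H^1(F,N)$; for each $i \in I_v$ the image of $\eta$ in $H^1(F_i,N)$ maps to the trivial class in $H^1(F_i,G)$, hence is itself trivial by triviality of the kernel over $F_i$, so $\eta \in \Sha_{\mc F}(N)$. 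This gives exactness at $\Sha_{\mc F}(G)$.

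The substance is surjectivity of $\Sha_{\mc F}(G) \to \Sha_{\mc F}(\bar G)$, and here the six-term sequences do the work. Let $\bar\xi \in \Sha_{\mc F}(\bar G)$. By definition $\bar\xi$ lies in the kernel of $H^1(F,\bar G) \to \prod_{i \in I_v} H^1(F_i,\bar G)$, so exactness of the six-term sequence for $\bar G$ at $H^1(F,\bar G)$ writes $\bar\xi = \delta_{\bar G}\bigl((\bar g_k)_{k \in I_e}\bigr)$ with $\bar g_k \in \bar G(F_k)$. Applying the surjectivity hypothesis over each $F_k$, pick lifts $g_k \in G(F_k)$ and set $\xi := \delta_G\bigl((g_k)_{k \in I_e}\bigr) \in H^1(F,G)$. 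Exactness of the six-term sequence for $G$ at $H^1(F,G)$ then shows $\xi \in \ker\bigl(H^1(F,G) \to \prod_{i \in I_v} H^1(F_i,G)\bigr) = \Sha_{\mc F}(G)$; and naturality of the coboundary map with respect to $G \to \bar G$ shows that the image of $\xi$ in $H^1(F,\bar G)$ equals $\delta_{\bar G}\bigl((\bar g_k)\bigr) = \bar\xi$. So $\xi$ is the desired preimage, completing the proof.

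The only step that is not formal, and the one I expect to require the most care, is the naturality of the coboundary $\delta$ used in the last paragraph: $\delta$ is defined not by a cocycle formula but through the patching of torsors in Theorem~\ref{patching_torsors}, with $\delta_G((g_k))$ the class of the unique $G$-torsor over $F$ that solves the patching problem whose pieces are the trivial torsors $G_{F_i}$ ($i \in I_v$) glued by left multiplication by $g_k$ ($k \in I_e$). To check compatibility I would push torsors forward along $\rho \colon G \to \bar G$ via the contracted product $\T \mapsto \T \times^{G} \bar G$, noting that this functor sends the trivial $G_{F_i}$-torsor to the trivial $\bar G_{F_i}$-torsor, sends ``left multiplication by $g_k$'' to ``left multiplication by $\rho(g_k) = \bar g_k$'', and commutes with base change; hence it carries the solution of the $G$-patching problem for $(g_k)$ to a solution of the $\bar G$-patching problem for $(\bar g_k)$, and uniqueness of solutions (Theorem~\ref{patching_torsors}) forces $\rho_*\bigl(\delta_G((g_k))\bigr) = \delta_{\bar G}\bigl((\bar g_k)\bigr)$, where $\rho_* \colon H^1(F,G) \to H^1(F,\bar G)$ is the map induced by $\rho$. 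Once this is in hand, the argument above is complete.
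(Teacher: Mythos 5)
Your argument is correct and follows essentially the same route as the paper: the paper writes out the $3\times 3$ commutative diagram whose rows are the cohomology sequences for $1 \to N \to G \to \bar G \to 1$ over $\prod_k F_k$, over $F$, and over $\prod_i F_i$, and whose columns are slices of the six-term Mayer--Vietoris sequence, and then invokes a diagram chase; what you have written out is precisely that chase, using the surjectivity hypothesis to get exactness of the top row and triviality of the relevant kernels. The one place you spend extra care — verifying that the coboundary $\delta$ (defined via torsor patching rather than a cocycle formula) is natural in $G$, by pushing forward along $G \to \bar G$ via contracted products and appealing to uniqueness in Theorem~\ref{patching_torsors} — is exactly the commutativity of the square $\delta$, $\bar\delta$ that the paper leaves implicit in asserting its diagram commutes, so this is a welcome elaboration rather than a divergence.
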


\begin{proof}
Consider the commutative diagram below, whose
rows are exact by the cohomology sequence referred to in the assertion (\cite{Serre:CG}, I.5.5,
Proposition~38),
together with the surjectivity of $G \to \bar G$ on rational points.
The columns are exact by Theorem~\ref{general_6-term_sequence}.
\[
\xymatrix{
1 \ar[r]
& \prod_{k \in I_e} H^0(F_k,N) \ar[d]^{\delta'} \ar[r]^{\phi_0''}
 & \prod_{k \in I_e} H^0(F_k,G) \ar[d]^{\delta} \ar[r]^{\psi_0''}
 & \prod_{k \in I_e} H^0(F_k,\bar G) \ar[d]^{\bar \delta} \ar[r]
 & 1 \\
1 \ar[r]
& H^1(F,N) \ar[d]^{\iota'} \ar[r]^{\phi_1}
 & H^1(F,G) \ar[d]^{\iota} \ar[r]^{\psi_1}
 & H^1(F,\bar G) \ar[d]^{\bar \iota}
 & \\
1 \ar[r]
& \prod_{i \in I_v} H^1(F_i,N) \ar[r]^{\phi_1'}
 & \prod_{i \in I_v} H^1(F_i,G) \ar[r]^{\psi_1'}
 & \prod_{i \in I_v} H^1(F_i,\bar G)
 &  \\
 }
\]

The lower two rows show that the maps $\phi_1$ and $\psi_1$ restrict to
maps $\Sha_{\mc F}(N) \to \Sha_{\mc F}(G)$ and $\Sha_{\mc F}(G) \to
\Sha_{\mc F}(\bar G)$, with the former having trivial kernel and the
composition of these two restrictions being trivial.  The asserted exactness now follows from a diagram chase, using the definition of $\Sha_{\mc F}$.
\end{proof}

We will be interested in cases when the map $\Sha_{\mc F}(G)\to
\Sha_{\mc F}(\bar G)$ in Corollary~\ref{abstract_Sha_quotient} is a
bijection. The next proposition gives a criterion for this.
\begin{prop}\label{trans_factor}
Under the hypotheses of Corollary~\ref{abstract_Sha_quotient}, the following
are equivalent:
\begin{enumerate}
\renewcommand{\theenumi}{\roman{enumi}}
\renewcommand{\labelenumi}{(\roman{enumi})}
\item The map $\Sha_{\mc F}(G)\to \Sha_{\mc F}(\bar G)$ is a bijection of
  pointed sets.
\item\label{xx} For every pair of elements $(g_k), (\tilde g_k)\in \prod\limits_{k\in I_e}
  G(F_k)$ that have the same image in $\prod\limits_{k \in I_e}\bar G(F_k)$,
there exist elements $(g_i)\in \prod\limits_{i\in I_v}
  G(F_i)$ such that $g_k=g_r^{-1}\tilde g_k g_l$ whenever $(l,r,k)\in S_I$.
\end{enumerate}
\end{prop}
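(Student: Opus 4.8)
The plan is to deduce the equivalence directly from the six-term exact sequence of Theorem~\ref{general_6-term_sequence} applied to both $G$ and $\bar G$, together with the explicit description of the fibers of the coboundary map given in that theorem and the twisting formalism of Corollary~\ref{fibers_of_six-term_seq}. Write $\delta$ and $\bar\delta$ for the coboundary maps into $H^1(F,G)$ and $H^1(F,\bar G)$ respectively. Since the map $G(L)\to\bar G(L)$ is surjective for every extension $L/F$ (in particular for the $F_k$), Corollary~\ref{abstract_Sha_quotient} already gives the short exact sequence $1\to\Sha_{\mc F}(N)\to\Sha_{\mc F}(G)\to\Sha_{\mc F}(\bar G)\to 1$, so the map in (i) is automatically surjective; the content is whether it is injective, and injectivity here means having trivial fibers in the pointed-set sense, which for a map arising from a twistable situation is in fact equivalent to being injective as a set map via Corollary~\ref{fibers_of_six-term_seq}.

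First I would unwind what it means for the map $\Sha_{\mc F}(G)\to\Sha_{\mc F}(\bar G)$ to be injective. An element of $\Sha_{\mc F}(G)$ is, by exactness at $H^1(F,G)$, the class of a torsor in the image of $\delta$, say $\delta\bigl((g_k)\bigr)$ with $(g_k)\in\prod_{k\in I_e}G(F_k)$; its image in $\Sha_{\mc F}(\bar G)$ is $\bar\delta\bigl((\bar g_k)\bigr)$, where $\bar g_k$ is the image of $g_k$. So the map in (i) is injective precisely when: for $(g_k),(\tilde g_k)\in\prod_k G(F_k)$ with $\delta\bigl((g_k)\bigr),\delta\bigl((\tilde g_k)\bigr)\in\Sha_{\mc F}(G)$, the equality $\bar\delta\bigl((\bar g_k)\bigr)=\bar\delta\bigl((\bar{\tilde g}_k)\bigr)$ forces $\delta\bigl((g_k)\bigr)=\delta\bigl((\tilde g_k)\bigr)$. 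Now I would invoke the fiber description in Theorem~\ref{general_6-term_sequence}: $\delta\bigl((g_k)\bigr)=\delta\bigl((\tilde g_k)\bigr)$ iff there exist $(g_i)\in\prod_{i\in I_v}G(F_i)$ with $g_k=g_r^{-1}\tilde g_k g_l$ for all $(l,r,k)\in S_I$, and likewise $\bar\delta\bigl((\bar g_k)\bigr)=\bar\delta\bigl((\bar{\tilde g}_k)\bigr)$ iff there exist $(\bar g_i)\in\prod_{i}\bar G(F_i)$ with $\bar g_k=\bar g_r^{-1}\bar{\tilde g}_k\bar g_l$. The key reduction is that, because $G(F_i)\to\bar G(F_i)$ is surjective, any such $(\bar g_i)$ lifts to some $(g_i)\in\prod_i G(F_i)$; after replacing $\tilde g_k$ by $g_r^{-1}\tilde g_k g_l$ (which changes neither $\delta\bigl((\tilde g_k)\bigr)$ nor the hypothesis) one reduces to the case where $(g_k)$ and $(\tilde g_k)$ already have the \emph{same} image in $\prod_k\bar G(F_k)$. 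Thus injectivity of (i) becomes exactly: whenever $(g_k),(\tilde g_k)$ have the same image in $\prod_k\bar G(F_k)$, there exist $(g_i)\in\prod_i G(F_i)$ with $g_k=g_r^{-1}\tilde g_k g_l$ — which is condition~(\ref{xx}).

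Two technical points need care, and the second is the main obstacle. The first is the reduction-to-same-image step: one must check that conjugating $(\tilde g_k)$ by a lift of $(\bar g_i)$ genuinely preserves the class $\delta\bigl((\tilde g_k)\bigr)$ and preserves membership in $\Sha_{\mc F}(G)$ — this is immediate from the fiber description, since $(g_i)$ witnesses $\delta\bigl((\tilde g_k)\bigr)=\delta\bigl((g_r^{-1}\tilde g_k g_l)\bigr)$. The second, subtler point is that (i) is a statement about a map of \emph{pointed sets}, where "bijection" (and in particular the implicit injectivity) must be handled through twisting rather than naive set-injectivity: to show that triviality of all fibers of $\delta$ (which is what condition~(\ref{xx}) delivers once one also twists) yields a genuine bijection $\Sha_{\mc F}(G)\to\Sha_{\mc F}(\bar G)$, I would apply Corollary~\ref{fibers_of_six-term_seq} with a cocycle $\tau\in Z^1(F,N)$ representing an arbitrary class in $\Sha_{\mc F}(N)$, replacing $G$ by the twist $G^\tau$ and $N$ by $N^\tau$ throughout; condition~(\ref{xx}) is visibly invariant under such twisting because twisting does not change the groups $G(F_k)$ or $\bar G(F_k)$ as abstract groups up to the relevant identifications. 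This twisting argument, establishing that the fibers of $\Sha_{\mc F}(G)\to\Sha_{\mc F}(\bar G)$ over every point (not just the neutral one) are singletons, is the step I expect to require the most care; everything else is a diagram chase combined with the already-established fiber formula.
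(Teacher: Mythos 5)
Your core argument matches the paper's proof exactly: the equivalence is derived from the fiber description of $\delta$ and $\bar\delta$ in Theorem~\ref{general_6-term_sequence}, plus the reduction-to-same-image step (lifting the witnesses $(\bar g_i)$ through $G(F_i)\to\bar G(F_i)$ and replacing $\tilde g_k$ by $g_r^{-1}\tilde g_k g_l$, which preserves the $\delta$-class), and that is all the paper uses.

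The final paragraph about twisting by a cocycle $\tau\in Z^1(F,N)$ is unnecessary, and it reflects a small misconception worth clearing up. You write that injectivity of a map of pointed sets ``must be handled through twisting rather than naive set-injectivity,'' but a bijection of pointed sets is simply a bijection of the underlying sets that respects base points; the genuine pointed-set subtlety is that \emph{trivial kernel} does not imply injectivity, and twisting is the standard device for upgrading a trivial-kernel statement to an injectivity statement. Your argument, however, never goes through the kernel: you take \emph{arbitrary} $\alpha,\tilde\alpha\in\Sha_{\mc F}(G)$ with the same image in $\Sha_{\mc F}(\bar G)$, choose $\delta$-preimages (every element of $\Sha_{\mc F}(G)$ lies in the image of $\delta$ by exactness of the six-term sequence at $H^1(F,G)$), and directly deduce $\delta\bigl((g_k)\bigr)=\delta\bigl((\tilde g_k)\bigr)$, i.e.\ $\alpha=\tilde\alpha$. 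That already shows every fiber is a singleton with no twisting required. Moreover, the claim that condition~(\ref{xx}) is ``visibly invariant'' under twisting by $\tau$ would itself need an argument (the identification $G^\tau(F_k)\cong G(F_k)$ depends on a local trivialization of $\tau$), so omitting the twist both simplifies and strengthens your writeup.
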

\begin{proof}
Assume that $\Sha_{\mc F}(G)\to \Sha_{\mc F}(\bar G)$ is a bijection,
and consider $(g_k), (\tilde g_k)\in \prod\limits_{k\in I_e} G(F_k)$ as in~(\ref{xx}). By
Theorem~\ref{general_6-term_sequence}, these define elements $\alpha,
\tilde\alpha\in \Sha_{\mc F}(G)$, which map to the same element
in $\Sha_{\mc F}(\bar G)$ since $(g_k), (\tilde g_k)$ have the same image in
$\prod\limits_{k \in I_e}\bar G(F_k)$. By assumption, this implies $\alpha=\tilde
\alpha$, which in turn implies that the images of $(g_k)$ and $(\tilde g_k)$
under the coboundary map are the same. Now use the last assertion of
Theorem~\ref{general_6-term_sequence}.

On the other hand, assume~(\ref{xx}). By Corollary~\ref{abstract_Sha_quotient}, it suffices to
show injectivity.  Consider two elements $\alpha, \tilde\alpha \in \Sha_{\mc
  F}(G)$ that have the same image in $\Sha_{\mc F}(\bar G)$.
Let $(g_k),(\tilde g_k) \in \prod_{\mc F}G(F_k)$ be preimages
under the coboundary map $\delta$ in Theorem~\ref{general_6-term_sequence}, and let
$(\bar g_k), (\bar{ \tilde g}_k)$ be their images in $\bar G(F_k)$. By
assumption, $(\bar g_k)$ and $(\bar{\tilde g}_k)$ induce the same
element in $\Sha_{\mc F}(\bar G)$; hence by the last assertion of
Theorem~\ref{general_6-term_sequence}, there exist $(\bar h_i) \in \bar
G(F_i)$ for $i\in I_v$ such that $\bar{ g}_k=\bar h_r^{-1} \bar{\tilde g}_k \bar
h_l$ whenever $(l,r,k)\in S_I$.
By the surjectivity hypothesis on $G\to \bar G$, there exist preimages $h_i\in
G(F_i)$ for the elements $\bar h_i$.  Replacing  $\tilde{g}_k$ by $h_r\tilde
g_kh_l$, we may assume that $(g_k)$ and $(\tilde
g_k)$ have the same image in $\prod_{k \in I_e} \bar G(F_k)$.
By the assumption of~(\ref{xx}) combined with the last assertion of
Theorem~\ref{general_6-term_sequence}, this shows that $(g_k)$ and $(\tilde
g_k)$ map to the same element under $\delta$, i.e., $\alpha=\tilde \alpha$,
thereby proving injectivity.
\end{proof}

Before we apply the results of this section in a more concrete setup, we state one more
corollary to Theorem~\ref{general_6-term_sequence}.

\begin{cor}\label{homogeneous}
Under the hypotheses of Theorem~\ref{general_6-term_sequence}, assume that
$\Sha_{\mc F}(G)=1$ and that $H$ is a $G$-variety over $F$ for which
$G(F_k)$ acts
transitively on $H(F_k)$ for all $k\in I_e$.
If $H(F_i)\neq \varnothing$ for each $i \in I_v$, then
$H(F)\neq \varnothing$.
\end{cor}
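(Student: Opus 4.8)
The plan is to reduce the statement, via the six-term sequence of Theorem~\ref{general_6-term_sequence}, to the triviality of a single class in $\Sha_{\mc F}(G)$, and then to patch points exactly as in the proof of Theorem~\ref{patching_torsors}. First I would choose, for each $i\in I_v$, a point $h_i\in H(F_i)$, using the hypothesis $H(F_i)\neq\varnothing$. For each triple $(l,r,k)\in S_I$, the base changes of $h_l$ and $h_r$ to $F_k$ are two points of $H(F_k)$; since $G(F_k)$ acts transitively on $H(F_k)$, I may pick $g_k\in G(F_k)$ with $g_k\cdot h_l=h_r$ over $F_k$. This produces an element $(g_k)\in\prod_{k\in I_e}H^0(F_k,G)$.

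Next, apply the coboundary map $\delta$ of Theorem~\ref{general_6-term_sequence} to $(g_k)$. By exactness at $H^1(F,G)$ in that sequence, the image of $\delta$ is precisely the kernel of $H^1(F,G)\to\prod_{i\in I_v}H^1(F_i,G)$, i.e.\ $\Sha_{\mc F}(G)$, which is trivial by hypothesis; hence $\delta\bigl((g_k)\bigr)$ is the neutral element, as is $\delta$ of the constant tuple of identity elements. Applying the description of the fibers of $\delta$ in Theorem~\ref{general_6-term_sequence} to $(g_k)$ and to this trivial tuple, I obtain elements $(g_i)\in\prod_{i\in I_v}G(F_i)$ with $g_k=g_r^{-1}g_l$ for every $(l,r,k)\in S_I$.

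Now set $h_i':=g_i\cdot h_i\in H(F_i)$. Over $F_k$ we then have $h_l'=g_l\cdot h_l$ and $h_r'=g_r\cdot h_r=g_rg_k\cdot h_l=g_l\cdot h_l$, so the points $h_i'$ agree on every overlap $F_k$. It remains to glue them into a point of $H(F)$, and here I would invoke the argument already carried out inside the proof of Theorem~\ref{patching_torsors}: base change of an $F_i$-point of $H$ along $\Spec F_k\to\Spec F_i$ does not move its image point in $H$, so the agreement of $h_l'$ and $h_r'$ over $F_k$ forces them to have a common image point; since $\Gamma$ is connected, all the $h_i'$ share a single image point $x\in H$, and the associated $F$-embeddings $\kappa(x)\hookrightarrow F_i$ are compatible with the inclusions $F_i\hookrightarrow F_k$. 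As $F=\varprojlim F_i$, they factor through an $F$-embedding $\kappa(x)\hookrightarrow F$, which defines the desired point of $H(F)$.

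I expect no serious obstacle: the genuinely new input is only the translation through the six-term sequence. The one point requiring care is the left/right bookkeeping — choosing $g_k$ so that the modification $h_i\mapsto g_i\cdot h_i$ exactly absorbs the transition data, matching the relation $g_k=g_r^{-1}g_l$ coming out of the fiber description of $\delta$ — together with the (already established) fact that a compatible system of $F_i$-points of an $F$-variety descends to an $F$-point.
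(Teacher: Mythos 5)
Your proof is correct and follows essentially the same route as the paper's: the paper observes (via the six-term sequence) that triviality of $\Sha_{\mc F}(G)$ is equivalent to simultaneous factorization for $G$, and then cites the proof of \cite[Theorem~3.7]{HHK}, whose content is exactly your steps of choosing local points $h_i$, producing transition elements $g_k$, factoring $g_k = g_r^{-1}g_l$, adjusting $h_i \mapsto g_i\cdot h_i$, and descending the resulting compatible family to an $F$-point by the inverse-limit argument used in Theorem~\ref{patching_torsors}. You've simply written out explicitly what the paper outsources to that reference, and your bookkeeping with the coboundary $\delta$ and its fiber description is a correct alternative way to extract the factorization.
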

\begin{proof}
By the six-term exact sequence, the triviality of $\Sha_{\mc F}(G)$ is
equivalent to 
simultaneous factorization for $G$ over $\mc F$.
The proof is then the same as in \cite{HHK},
Theorem~3.7 (where an additional hypothesis on the group ensured the
simultaneous factorization property).
\end{proof}

In other words, a local-global principle for torsors implies a local-global
principle for homogeneous spaces (in our setup), hence the corollary allows us
to restrict our attention to torsors. 
We will return to other homogeneous spaces in Section~\ref{subsec_homogeneous}.

\section{Patching over Arithmetic Curves} \label{setup}

In the previous section, we saw that patching for vector spaces implies
patching for torsors, and that this yields a six-term exact sequence for Galois cohomology.
We wish to apply these results to linear algebraic groups defined over the
function field $F$ of an arithmetic curve. In order to do so, we use a setup
introduced in \cite{HH:FP}.

\begin{notation} \label{notn_fields}
Let $T$ be a complete discrete valuation ring with uniformizer $t$, fraction
field~$K$, and residue field $k$.  Let $F$ be a one-variable function field
over $K$, and let $\wh X$ be a \textit{normal model} of $F$, i.e.\ a normal
connected projective $T$-curve with function field $F$. For each point $P$ of
the closed fiber $X \subset \wh X$,  let $\wh R_P$ be the completion of the local ring $R_P$ of
$\wh X$ at $P$ (with respect to its maximal ideal), and $F_P$ be the fraction
field of $\wh R_P$.  For each subset $U$ of $X$ that is contained in an
irreducible component of $X$ and does not meet other components, let $\wh R_U$
be the $t$-adic completion of the subring of $F$ consisting of rational
functions that are regular on~$U$, and let $F_U$ be its fraction field.
For each branch of $X$ at a closed point $P$, i.e., for each height one
prime $\wp$ of $\wh R_P$ that contains $t$, let $\wh R_\wp$ be the completion
of the local ring of $\wh R_P$ at $\wp$, and let $F_\wp$ be its fraction
field.
\end{notation}

In the setup of Notation~\ref{notn_fields}, in \cite{HH:FP} and \cite{HHK}, we
considered an inverse system of fields, arising from a choice as follows:

\begin{notation} \label{notn_patches}
In Notation~\ref{notn_fields}, let $\mc P$ be a non-empty finite set of closed points of
$X$ that contains all the closed points at which distinct irreducible
components of $X$ meet.  Let $\mc U$ be the set of connected components of the
complement of $\mc P$ in $X$.  Let $\mc B$ be the set of branches of $X$ at points of $\mc P$.
\end{notation}

This yields a finite inverse system of fields
$F_P, F_U, F_\wp$ of $F$ (for $P \in \mc P$, $U \in \mc U$, $\wp
\in \mc B$), where $F_P,F_U \subset F_\wp$ if $\wp$ is a branch of $X$ at
$P$ lying on the closure of $U$. The system is indeed a factorization inverse system.

In \cite{HH:FP}, Proposition~6.3, it was shown that the inverse limit of this system is $F$, provided
that $\mc P = f^{-1}(\infty)$ for some finite morphism $f:\wh X \to \mbb
P^1_T$, where $\infty$ is the point at infinity on the closed fiber of $\mbb
P^1_T$.  But in fact, this last assumption is satisfied automatically, as the
following strengthening of \cite[Proposition~6.6]{HH:FP} shows:

\begin{prop} \label{map_to_line}
In the situation of Notation~\ref{notn_fields},
let $S$ be a finite set of closed points of $\wh X$ and write $\infty$ for the
point at infinity on $\mbb P^1_k \subset \mbb P^1_T$. Then there is a finite
$T$-morphism $f:\wh X \to \mbb P^1_T$ such that $S = f^{-1}(\infty)$ if and
only if $S$ meets each irreducible component of $X$ non-trivially.
In particular, there is such an $f$ for $S = \mc P$ as in Notation~\ref{notn_patches}.
\end{prop}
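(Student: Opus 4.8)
The plan is to prove the "if" direction, since the "only if" direction is immediate: if $f:\wh X \to \mbb P^1_T$ is finite, then $f$ is surjective (being finite and dominant between integral schemes with $\wh X$ projective), so $f^{-1}(\infty)$ must meet the closed fiber $X$, and in fact must meet each irreducible component of $X$, because the restriction of $f$ to any such component is a finite surjective morphism to $\mbb P^1_k$ and hence hits $\infty$. For the "if" direction, suppose $S$ meets every irreducible component of $X$ nontrivially. Following the strategy of \cite[Proposition~6.6]{HH:FP}, the first step is to produce a rational function, i.e.\ a nonconstant element $f \in F$, that is regular on $\wh X \smallsetminus S$ and has a pole along all of $S$ and nowhere else on $\wh X$; then the associated morphism $f:\wh X \to \mbb P^1_T$ is automatically finite (it is projective, quasi-finite since the fibers are zero-dimensional, hence finite), and $f^{-1}(\infty)$ is exactly $S$ by construction (using that $\wh X$ is normal, so that the scheme-theoretic pole divisor is the full preimage of $\infty$).

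The key step is therefore the construction of such an $f$. I would do this via a Riemann--Roch / ampleness argument on $\wh X$. Fix an ample divisor, or more directly, choose an effective divisor $D$ supported on $S$ that meets each vertical component of $X$; the hypothesis that $S$ meets every irreducible component of $X$ is exactly what is needed so that $D$ can be taken to be "horizontally positive enough" — concretely, one wants $D$ to have positive intersection with every irreducible component of the closed fiber, which forces $D$ (or a suitable positive multiple) to be ample by the Nakai--Moishezon-type criterion for curves over a complete DVR (cf.\ the arithmetic surface intersection theory, e.g.\ as in Lichtenbaum or Liu's book). Once $D$ is ample, for $n \gg 0$ the line bundle $\mc O_{\wh X}(nD)$ is very ample and globally generated, and more importantly $H^0(\wh X, \mc O(nD))$ contains a section whose divisor of poles is exactly $nD$ (one has $H^0(\mc O(nD)) \supsetneq H^0(\mc O((n-1)D))$ for $n$ large, since the dimensions grow, so there is a section with a pole of maximal order along $D$); any such section, viewed as a rational function $f \in F$, has pole locus precisely $S$ (the support of $D$) and is regular elsewhere. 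One must also arrange that $f$ is nonconstant on the generic fiber, which is automatic once $f$ has any pole at all on a point of $\wh X$, and that $f$ restricted to each component of $X$ is nonconstant, which again follows from $D$ meeting that component.

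The main obstacle I expect is the step showing that an effective divisor supported on $S$ can be chosen to be ample — this is where the combinatorial hypothesis ("$S$ meets each irreducible component of $X$") gets converted into positivity. The subtlety is that vertical divisors on an arithmetic surface are never ample by themselves (the closed fiber $X$ itself has self-intersection zero with every vertical component), so one genuinely needs the horizontal contribution coming from the points of $S$, and one must check that assigning enough multiplicity at the points of $S$ (which lie on the closed fiber but are honest closed points, giving horizontal-type positivity when we take the full preimage under $f$) produces a divisor with strictly positive intersection against every component of $X$; pure vertical components that happen to contain no point of $S$ would be fatal, which is exactly the failure case excluded by the hypothesis. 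Once ampleness is in hand, the remaining arguments — finiteness of the resulting morphism, and the equality $f^{-1}(\infty) = S$ as sets — are standard and parallel to \cite[Proposition~6.6]{HH:FP}. The final sentence, that such an $f$ exists for $S = \mc P$, is then immediate from Notation~\ref{notn_patches}, since $\mc P$ is required to contain all the points where distinct components of $X$ meet and hence, together with the connectedness of $X$, meets every irreducible component.
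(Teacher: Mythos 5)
Your "only if" direction is correct, and in fact a touch more elementary than the paper's: you argue directly that $f|_{X_i}:X_i\to\mbb P^1_k$ is finite and surjective for each irreducible component $X_i$ of $X$ and hence hits $\infty$, whereas the paper instead pulls back the ample divisor $\infty$ on $\mbb P^1_k$ under $f|_X$ and invokes the fact that inverse images of ample divisors under finite surjective morphisms are ample. Both are fine. Your outline for the "if" direction (construct an ample effective divisor on $\wh X$ whose support meets $X$ exactly in $S$, then produce a rational function with pole locus along that divisor, then observe the resulting map is proper and quasi-finite, hence finite) is also the right strategy and matches the paper's. However, two essential steps are glossed over, and one of them as written is actually wrong.

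First, the construction of the divisor itself. You say "choose an effective divisor $D$ supported on $S$," but $S$ consists of closed points of $\wh X$, which are codimension two, so no divisor is literally supported on $S$. What is needed is an effective Cartier divisor $\wh D$ on $\wh X$ that is horizontal (contains no component of $X$) and whose support meets $X$ in exactly the set $S$. The existence of such $\wh D$ is where the completeness of $T$ enters in an essential way, and your proposal never addresses it: for each $P\in S$ the paper chooses $r_P$ in the maximal ideal $\mathfrak m_P$ of $R_P$ with $r_P\notin I_P$ (the ideal of the reduced closed fiber), so that the local Cartier divisor $(r_P)$ on $\Spec(R_P)$ meets $X$ only at $P$; and then uses that, because $T$ is complete, connected components of closed subsets of $\wh X$ correspond bijectively to connected components of their intersections with $X$, to conclude that this extends to a global Cartier divisor $\wh D_P$ meeting $X$ only at $P$. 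Without completeness there is no reason a horizontal divisor through $P$ should not also pass through other points of $X$. The ampleness step itself is then fine: one uses that for $\wh X$ proper over the complete local ring $T$, a divisor is ample if and only if its restriction to $X$ is ample (Liu 5.3.24 / EGA3 Th.~4.7.1), which reduces exactly to the hypothesis that $S$ meets each component of $X$.

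Second, the "section with a pole of maximal order" step is not correct. Choosing $s\in H^0(\mc O(n\wh D))\smallsetminus H^0(\mc O((n-1)\wh D))$ does not force the pole divisor of $s$ to be $n\wh D$: when $\wh D$ has several components $E_1,\dots,E_m$, such a section only needs a pole of order $>(n-1)\operatorname{ord}_{E_i}(\wh D)$ along \emph{some} $E_i$, and it may have no pole at all along others. (Already on $\mbb P^1$ with $D=\{0\}+\{\infty\}$, the function $x^n$ lies in $L(nD)\smallsetminus L((n-1)D)$ but has pole divisor $n\{\infty\}$ only.) What is actually needed is a section of $\mc O(n\wh D)$ whose zero locus is disjoint from the support of $\wh D$; this is both what guarantees the pole divisor is exactly $n\wh D$ and, crucially, what makes the rational map $f:\wh X\dra\mbb P^1_T$ into an honest morphism, since normality alone does not suffice when zero and pole loci intersect. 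The paper achieves this by taking $\wh D$ very ample, embedding $\wh X\hookrightarrow\mbb P^n_T$ with $\wh D=i^*(H_1)$, and choosing a hypersurface $H_2$ whose special fiber avoids the finite set $i(S)$; disjointness of $i^*(H_2)$ from the support of $\wh D$ then again uses properness over the complete $T$. Relatedly, be careful with the phrase "pole locus precisely $S$": the pole locus of $f$ on $\wh X$ is the one-dimensional support of $\wh D$, while $f^{-1}(\infty)$ (for $\infty$ the closed point of $\mbb P^1_k$) is the intersection of that support with $X$, which is $S$; conflating the two obscures why the construction gives the right answer.
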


\begin{proof}
By \cite{Liu}, Corollary~5.3.8, a
divisor on a proper scheme is ample if and only if its restriction to each
irreducible component is ample.  Since an effective divisor on an irreducible
projective curve over a field is ample if and only if it is non-zero, it
follows that an effective divisor on $X$ is ample if and only if it meets each
irreducible component of $X$ non-trivially.

Since $\infty$ defines an ample divisor on $\mbb P^1_k$, the forward
implication of the proposition now follows from the fact that the inverse
image of an ample divisor under a finite surjective morphism is ample
(\cite{Liu}, Remark~5.3.9).

We now prove the reverse implication; so assume that $S$ meets each
irreducible component of $X$ non-trivially.  For each point $P \in S$, the
maximal ideal $\mathfrak m_P$ of the local ring $R_P \subset F$ strictly
contains the ideal $I_P$ that defines the reduced closed fiber of $\wh X$ in
$\Spec(R_P)$; so we may choose an element $r_P \in \mathfrak m_P$
that does not lie in $I_P$.  The element $r_P$ defines a non-trivial effective Cartier
divisor $(r_P)$ on $\Spec(R_P)$.  Now since $T$ is complete, the connected components of a closed subset of the projective $T$-scheme $\wh X$ are in bijection with the connected components of its intersection with the closed fiber $X$.  
Thus $(r_P)$ is the restriction to $\Spec(R_P)$ of an 
effective Cartier divisor $\wh D_P$ on $\wh X$ whose support meets $X$ precisely at $P$.
Let $\wh D = \sum_{P\in S} \wh D_P$. The restriction of $\wh D$ to $X$ has
support $S$, and so meets each irreducible component of $X$ non-trivially.
Thus this restriction is an ample divisor on $X$.  Hence by \cite{Liu},
Corollary~5.3.24 (or by \cite{EGA3}, Th\'eor\`eme~4.7.1), $\wh D$ is an ample
divisor on $\wh X$.  Replacing $\wh D$ by a multiple (which corresponds to
replacing each $r_P$ by a power), we may assume that $\wh D$ is very ample (\cite{EGA2}, Proposition~4.5.10(ii)(e$'$)).
Hence for some $n$ there is an embedding $i:\wh X \to \mbb P^n_T$ such that
$\wh D = i^*(H_1)$ for some $T$-hyperplane $H_1 \subset \mbb P^n_T$, given by
a linear form $F_1$ over $T$.  Choose a basic open subset of
$\mbb P^n_k$ that contains the finite set $i(S)$. Its complement in $\mbb P^n_k$ is a $k$-hypersurface, hence is given by a homogeneous $k$-form of some degree $d$.  Lifting the
coefficients of this form from $k$ to $T$, we obtain a $T$-hypersurface $H_2
\subset \mbb P^n_T$, defined by a homogeneous $T$-form $F_2$ of degree $d$,
such that the support of $i^*(H_2)$ does not meet $S$ and hence is disjoint
from the support of $\wh D$.  Consider the rational function $f :=
i^*(F_2/F_1^d)$ on $\wh X$, whose zero divisor is $i^*(H_2)$ and whose pole
divisor is $i^*(dH_1) = d\wh D$.  Since those divisors have disjoint supports,
$f$ defines a $T$-morphism $\wh X \to \mbb P^1_T$, such that the inverse image
of $\infty \in \mbb P^1_k$ is $S$.  It remains to show that $f$ is finite.

The pole locus of $f$ on $\wh X$, viz.\ the support of $\wh D$, meets each
irreducible component of $X$ at a point of $S$.  The zero locus of $f$ on $\wh
X$ is the support of the very ample divisor $i^*(H_2)$, which therefore also
meets each irreducible component of $X$ non-trivially.  Hence $f$ is
non-constant on each irreducible component of $X$, having both a zero and a
pole there.  Similarly, $f$ is non-constant on the general fiber of $\wh X$,
which is irreducible since $\wh X$ itself is.  Thus $f$ does not contract any
irreducible component of a fiber of $\wh X \to T$, and so is a quasi-finite
(i.e.\ finite-to-one) $T$-morphism.  But since the proper morphism $\wh X \to
T$ factors through $f:\wh X \to \mbb P^1_T$, it follows that $f$ is proper
(\cite{Hts}, Corollary~II.4.8(e)).  Being quasi-finite and proper, $f$ is
finite by \cite{EGA4.3}, Th\'eor\`eme~8.11.1.

The last part of the proposition is immediate from the first part if $X$ is
irreducible, since $\mc P$ is non-empty.  On the other hand, if $X$ is
reducible, then each component must
meet some other component by connectivity of $X$, and thus each component contains a
point of $\mc P$.  So again the hypothesis of the first part is satisfied.
\end{proof}

{}From the observation prior to Proposition~\ref{map_to_line} together with
\cite{HH:FP}, Theorem~6.4, we obtain:

\begin{cor}\label{patching_equivalence}
The inverse system given by Notation~\ref{notn_patches} is a factorization
inverse system with inverse
limit $F$. For this system, the base change functor
$\beta: \Vect(F) \to \PP(\mc F)$ is an equivalence of categories.
\end{cor}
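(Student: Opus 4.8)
The plan is to deduce Corollary~\ref{patching_equivalence} by assembling two facts that are already essentially available: that the inverse system of Notation~\ref{notn_patches} is a factorization inverse system, and that the base change functor is an equivalence. For the first assertion, I would recall that this was already asserted in the sentence immediately following Notation~\ref{notn_patches}: the index set is $I_v = \mc P \cup \mc U$ and $I_e = \mc B$, and each branch $\wp \in \mc B$ lies over exactly one point $P \in \mc P$ and on the closure of exactly one component $U \in \mc U$, so each edge has exactly two vertices $P, U$ with $F_P, F_U \subset F_\wp$, and there are no other relations. The only thing that needs to be checked to conclude it is a \emph{factorization inverse system} in the sense of Definition~\ref{factorsystem} is that the inverse limit (in the category of rings) is $F$ itself.

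The key input is therefore Proposition~\ref{map_to_line}: it produces a finite $T$-morphism $f : \wh X \to \mbb P^1_T$ with $f^{-1}(\infty) = \mc P$ (this is the ``in particular'' clause, which applies since $\mc P$ meets each irreducible component of $X$, as noted in the proof of Proposition~\ref{map_to_line}). Given such an $f$, \cite[Proposition~6.3]{HH:FP} says precisely that the inverse limit of the system $\{F_P, F_U, F_\wp\}$ is $F$. So the first sentence of the corollary follows by combining Proposition~\ref{map_to_line} with \cite[Proposition~6.3]{HH:FP}; and since $F$ is an integral domain (indeed a field), the associated graph $\Gamma$ is connected, as remarked after Definition~\ref{factorsystem}.

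For the second sentence --- that $\beta : \Vect(F) \to \PP(\mc F)$ is an equivalence --- I would invoke \cite[Theorem~6.4]{HH:FP}, which (again using the morphism $f$ from Proposition~\ref{map_to_line}) establishes exactly this equivalence of categories for the patching setup of Notation~\ref{notn_patches}. Alternatively, by Proposition~\ref{factn_equiv_of_cats} it would suffice to verify that simultaneous factorization holds over $\mc F$, which is the content of the matrix-factorization results of \cite{HH:FP}; but it is cleaner to cite \cite[Theorem~6.4]{HH:FP} directly, as the excerpt's lead-in sentence (``From the observation prior to Proposition~\ref{map_to_line} together with \cite{HH:FP}, Theorem~6.4'') signals. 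The only real subtlety --- the ``main obstacle'' --- is the bookkeeping point that earlier results of \cite{HH:FP} were stated under the hypothesis $\mc P = f^{-1}(\infty)$ for some finite $f$, and Proposition~\ref{map_to_line} is exactly what removes this hypothesis by showing it holds automatically; once that is in hand, the proof is a two-line citation.

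\begin{proof}
As noted following Notation~\ref{notn_patches}, the system $\{F_P, F_U, F_\wp\}$ with $I_v = \mc P \cup \mc U$ and $I_e = \mc B$ satisfies the combinatorial condition of Definition~\ref{factorsystem}: each branch $\wp \in \mc B$ lies over a unique point $P \in \mc P$ and on the closure of a unique $U \in \mc U$, giving exactly the two relations $F_P, F_U \subset F_\wp$. By Proposition~\ref{map_to_line}, there is a finite $T$-morphism $f : \wh X \to \mbb P^1_T$ with $\mc P = f^{-1}(\infty)$; hence by \cite[Proposition~6.3]{HH:FP} the inverse limit of this system (in the category of rings) is $F$. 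Thus it is a factorization inverse system, and the associated graph is connected since $F$ has no zero divisors. Finally, again using the morphism $f$ provided by Proposition~\ref{map_to_line}, \cite[Theorem~6.4]{HH:FP} shows that $\beta : \Vect(F) \to \PP(\mc F)$ is an equivalence of categories.
\end{proof}
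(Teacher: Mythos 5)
Your proof is correct and takes essentially the same route as the paper: it combines Proposition~\ref{map_to_line} (to produce the finite morphism $f$ removing the extra hypothesis) with \cite[Proposition~6.3]{HH:FP} for the inverse-limit statement and \cite[Theorem~6.4]{HH:FP} for the equivalence of categories, which is precisely what the paper's lead-in sentence signals. The only notable difference is that you spell out the bookkeeping explicitly rather than leaving it as an immediate consequence, which is fine.
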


Moreover, for each $\wp\in \mc B$, exactly one of the two indices
dominating it is from $\mc P$ and one from $\mc U$. In particular, there is a natural
and uniform way to order those indices: We consider triples $(P,U,\wp)$ such
that $\wp$ is a branch at $P$ lying on the closure of $U$.

As a consequence of Corollary~\ref{patching_equivalence}, the results of
Section~\ref{patching} can be applied to the inverse system given in
Notation~\ref{notn_patches}.  In particular, we obtain
the following Mayer-Vietoris type sequence from Theorem~\ref{general_6-term_sequence}:

\begin{thm}\label{6-term_sequence}
Under Notation~\ref{notn_patches}, and for any linear algebraic group $G$ over
$F$, we have an exact sequence of pointed sets
\[\xymatrix{
1 \ar[r] & H^0(F,G) \ar[r] &
\prod_{P \in \mc P} H^0(F_P,G) \times \prod_{U \in \mc U} H^0(F_U,G) \ar[r] &
\prod_{\wp \in \mc B} H^0(F_\wp,G) \ar@<-2pt> `d[l]
`[lld] [lld] \\
& H^1(F, G) \ar[r] & \prod_{P \in \mc P} H^1(F_P,G) \times \prod_{U \in \mc U}
H^1(F_U,G) \ar@<.5ex>[r] \ar@<-.5ex>[r] & \prod_{\wp \in \mc B}
H^1(F_\wp,G). }\]
Moreover, two elements $(g_{\wp}), (\tilde g_{\wp})\in \prod_{\wp \in \mc B} H^0(F_{\wp},G)$
have the same image under the coboundary map if and only if there exist
$g_\xi \in G(F_\xi)$ for each $\xi \in \mc P \cup \mc U$ such that $g_{\wp}=g_U^{-1}\tilde g_{\wp} g_P$ whenever $\wp$ is a branch at $P$ lying on the closure of $U$.
\end{thm}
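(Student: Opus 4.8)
The plan is to obtain this as a direct specialization of Theorem~\ref{general_6-term_sequence} to the concrete factorization inverse system of Notation~\ref{notn_patches}. First I would invoke Corollary~\ref{patching_equivalence}, which says exactly that the system $\mc F$ consisting of the fields $F_P$ ($P \in \mc P$), $F_U$ ($U \in \mc U$), and $F_\wp$ ($\wp \in \mc B$) is a factorization inverse system over $F$ with inverse limit $F$, and that the base change functor $\beta : \Vect(F) \to \PP(\mc F)$ is an equivalence of categories. This is precisely the hypothesis of Theorem~\ref{general_6-term_sequence}, so its conclusion applies to every linear algebraic group $G$ over $F$.

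Next I would identify the pieces of the index set. In the notation of Definition~\ref{factorsystem}, the vertex set is $I_v = \mc P \cup \mc U$ and the edge set is $I_e = \mc B$, so for $j = 0,1$ the product $\prod_{i \in I_v} H^j(F_i,G)$ becomes $\prod_{P \in \mc P} H^j(F_P,G) \times \prod_{U \in \mc U} H^j(F_U,G)$ and $\prod_{k \in I_e} H^j(F_k,G)$ becomes $\prod_{\wp \in \mc B} H^j(F_\wp,G)$; substituting these into the six-term sequence of Theorem~\ref{general_6-term_sequence} yields the displayed sequence. For the orientation I would use the remark following Corollary~\ref{patching_equivalence}: for each $\wp \in \mc B$ exactly one of the two dominating indices lies in $\mc P$ and the other in $\mc U$, so there is a canonical labeling, and I fix $(l,r,k) = (P,U,\wp)$ whenever $\wp$ is a branch at $P$ lying on the closure of $U$. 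Feeding this choice into the last assertion of Theorem~\ref{general_6-term_sequence}, namely that $(g_k),(\tilde g_k)$ have the same image under the coboundary map if and only if there exist $(g_i) \in \prod_{i \in I_v} G(F_i)$ with $g_k = g_r^{-1}\tilde g_k g_l$ for all $(l,r,k) \in S_I$, gives exactly the stated criterion $g_\wp = g_U^{-1}\tilde g_\wp g_P$ with $g_\xi \in G(F_\xi)$ for $\xi \in \mc P \cup \mc U$.

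I do not expect any real obstacle here: all the substance was already established in Section~\ref{patching} together with Proposition~\ref{map_to_line} and Corollary~\ref{patching_equivalence}, and what remains is bookkeeping. The one point needing a moment of care is consistency of the orientation convention, that is, ensuring that ``left $= P$, right $= U$'' matches between the abstract coboundary formula and the displayed one; but since whether simultaneous factorization holds over $\mc F$, and hence the whole sequence, is independent of the chosen left and right labels, as noted in Section~\ref{patching}, nothing is lost by fixing this convention once and for all.
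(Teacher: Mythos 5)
Your proposal is correct and follows the paper's own approach exactly: the paper presents Theorem~\ref{6-term_sequence} as an immediate specialization of Theorem~\ref{general_6-term_sequence}, invoking Corollary~\ref{patching_equivalence} to verify the hypotheses and using the canonical orientation $(l,r,k)=(P,U,\wp)$ noted just before the theorem statement. Your bookkeeping of $I_v = \mc P \cup \mc U$, $I_e = \mc B$, and the orientation convention matches the paper precisely, including the observation that the formula $g_\wp = g_U^{-1}\tilde g_\wp g_P$ falls out of $g_k = g_r^{-1}\tilde g_k g_l$.
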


In this setup, the kernel of the local-global map will be denoted by
\[\Sha_{\wh X,\mc P}(F,G) := \ker\bigl(H^1(F,G) \to \prod_{\xi \in \mc P \cup \mc U}
H^1(F_\xi,G) \bigr),\]
or for short just by $\Sha_{\mc P}(F,G)$ if $\wh X$ is understood.
This kernel equals $\Sha_{\mc F}(G)$, for the inverse system of fields $\mc F$ associated to $\mc P, \mc U, \mc B$.
Note that $\mc P \subset X$ determines $\mc U$, and so the notation need not
make explicit mention of $\mc U$.

\begin{cor} \label{Sha_double-coset}
The coboundary of the above exact sequence induces a bijection
\[\prod_{U \in \mc U} G(F_U) \ \big\backslash \ \prod_{\wp \in \mc B}
G(F_\wp) \ \big/ \ \prod_{P\in \mc P} G(F_P) \to \Sha_{\mc P}(F,G) \]
of pointed sets.
\end{cor}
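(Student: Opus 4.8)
The plan is to deduce the bijection from the six-term exact sequence of Theorem~\ref{6-term_sequence} together with the description of the fibers of the coboundary map $\delta$ given there. First I would recall that by definition $\Sha_{\mc P}(F,G) = \ker\bigl(H^1(F,G) \to \prod_{\xi \in \mc P \cup \mc U} H^1(F_\xi,G)\bigr)$, and that exactness of the sequence at $H^1(F,G)$ says precisely that this kernel is the image of the coboundary map $\delta : \prod_{\wp \in \mc B} H^0(F_\wp,G) \to H^1(F,G)$, i.e.\ that $\delta$ induces a surjection $\prod_{\wp \in \mc B} G(F_\wp) \twoheadrightarrow \Sha_{\mc P}(F,G)$. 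So the content is entirely in identifying the fibers of $\delta$.

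Next I would invoke the last assertion of Theorem~\ref{general_6-term_sequence} (equivalently, its concrete form stated in Theorem~\ref{6-term_sequence}): two tuples $(g_\wp), (\tilde g_\wp) \in \prod_{\wp \in \mc B} G(F_\wp)$ have the same image under $\delta$ if and only if there exist elements $g_\xi \in G(F_\xi)$ for $\xi \in \mc P \cup \mc U$ such that $g_\wp = g_U^{-1} \tilde g_\wp g_P$ whenever $\wp$ is a branch at $P$ lying on the closure of $U$. I would then observe that, using the chosen uniform ordering of triples $(P,U,\wp)$ (with $U \in \mc U$ playing the role of the left vertex and $P \in \mc P$ the role of the right vertex), this equivalence relation is exactly the relation defining the double coset space
\[
\prod_{U \in \mc U} G(F_U) \ \big\backslash \ \prod_{\wp \in \mc B} G(F_\wp) \ \big/ \ \prod_{P \in \mc P} G(F_P),
\]
where $\prod_U G(F_U)$ acts on the left via the diagonal inclusions $G(F_U) \hookrightarrow G(F_\wp)$ for the branches $\wp$ on the closure of $U$, and $\prod_P G(F_P)$ acts on the right via $G(F_P) \hookrightarrow G(F_\wp)$. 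Thus $\delta$ factors through a well-defined injection from this double coset space into $\Sha_{\mc P}(F,G)$, which combined with the surjectivity above gives the desired bijection; the base point of the double coset space (the class of the identity tuple) maps to the trivial class, so this is a bijection of pointed sets.

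The only point requiring a small amount of care—and the step I would flag as the main (though mild) obstacle—is the bookkeeping of left-right conventions: one must check that the action of $\prod_U G(F_U)$ by $g_\wp \mapsto g_U^{-1} g_\wp$ on the left and of $\prod_P G(F_P)$ by $g_\wp \mapsto g_\wp g_P$ on the right is genuinely a two-sided group action (so that the double coset space makes sense) and that it matches the relation $g_\wp = g_U^{-1}\tilde g_\wp g_P$ coming from Theorem~\ref{6-term_sequence} on the nose. This is immediate once one notes that each branch $\wp$ lies on the closure of a \emph{unique} $U \in \mc U$ and is a branch at a \emph{unique} $P \in \mc P$ (so the left and right factors acting on the $\wp$-coordinate are unambiguous), and that, as remarked after Corollary~\ref{patching_equivalence}, exactly one of the two indices dominating $\wp$ comes from $\mc P$ and one from $\mc U$, giving the uniform orientation used above. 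With this in place the argument is a direct translation of the fiber description of $\delta$ into the language of double cosets, and no further input is needed.
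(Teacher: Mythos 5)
Your proof is correct and is precisely the argument the paper has in mind — indeed the paper states the corollary without proof, treating it as an immediate consequence of Theorem~\ref{6-term_sequence} (surjectivity from exactness at $H^1(F,G)$, injectivity on double cosets from the fiber description of $\delta$), which is exactly how you argue. One minor phrasing nit: the map $g_\wp \mapsto g_U^{-1} g_\wp$ is technically a right action rather than a left action of $\prod_U G(F_U)$, but since inversion is a bijection of the group, the orbits it produces coincide with the usual double cosets $\prod_U G(F_U) \cdot (g_\wp) \cdot \prod_P G(F_P)$, so the conclusion is unaffected.
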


\begin{remark}
The above double coset space is reminiscent of the classical adelic double coset space
$$\Sigma_{G,K}:=G(K)\ \big \backslash \ G({\mathbb A}_K)\ \big/ \ \prod \limits_{v\in\Omega_K} G(\mc O_v),$$
where $K$ is a function field over a finite field with set of discrete valuations $\Omega_K$, complete local rings $\mc O_v$, and adeles ${\mathbb A}_K$. This space is indeed also used in the study of $G$-torsors over $K$ and of $\Sha(K,G)$. See, for example, \cite{Conrad}, Sections~1.2 and~1.3.
\end{remark}

\section{The case of rational groups}\label{sec_rational}

In \cite{HHK}, certain local-global principles were shown to hold for
connected rational linear algebraic groups. In this manuscript, we permit
consideration of groups that are not necessarily connected. For an arbitrary
linear algebraic group $G$ over an infinite field $k$, we say that $G$ is
\textit{rational} over $k$ (or {\em $k$-rational}) if every connected component of $G$ is a rational variety
over $k$.  It can be shown that over an algebraically closed field, every linear
algebraic group is rational. Rationality is equivalent to the condition that the identity component
$G^0$ is $k$-rational and every component has a $k$-point. 
It is also equivalent to the condition that $G^0$ is $k$-rational, that 
$G/G^0$ is a constant finite $F$-group scheme, and that 
the map $G(F) \to (G/G^0)(F)$ is surjective.
In particular, for the inverse system of fields considered in Section~\ref{setup}, the hypothesis of
Corollary~\ref{abstract_Sha_quotient} is satisfied for any $F$-rational linear algebraic
group $G$, with $N=G^0$ (also using Corollary~\ref{patching_equivalence}). 
Hence for a rational linear algebraic group $G$ over $F$, and choice of $\wh X$ and $\mc P$, we obtain a
short exact sequence of pointed sets
\[1 \to \Sha_{\wh X,\mc P}(F,G^0) \to \Sha_{\wh X,\mc P}(F,G) \to \Sha_{\wh X,\mc P}(F,G/G^0) \to 1\]
induced from the corresponding cohomology sequence.

Moreover, Proposition~\ref{map_to_line} shows that $\mc P$ as in
Notation~\ref{notn_patches} satisfies the hypotheses made
in Section~3 of \cite{HHK}. Theorem~3.7 of \cite{HHK} then implies that
$\Sha_{\mc P}(F,G^0)=1$. Summing up the discussion, we find that
the natural surjection $\Sha_{\mc P}(F,G) \to \Sha_{\mc P}(F,G/G^0)$
has trivial kernel for rational linear algebraic groups $G$. 

Note that whereas $\Sha_{\mc P}(F,G) \to \Sha_{\mc P}(F,G/G^0)$
is in general just a map of pointed sets, it is a group homomorphism in the
case that $G$ is commutative, being the restriction to
$\Sha_{\mc P}(F,G)$ of the group homomorphism
$H^1(F,G) \to H^1(F,G/G^0)$.  Hence if the linear algebraic group $G$ is
rational and commutative, the above says that this map is an isomorphism of groups.
But in general, a map of pointed sets can have trivial kernel without being
injective.

Nevertheless, we show below that (if $G$ is rational) the map $\Sha_{\mc
  P}(F,G) \to \Sha_{\mc P}(F,G/G^0)$ is indeed a bijection of pointed
sets. This reduces the study of $\Sha_{\mc P}(F,G)$ for rational linear algebraic groups
to the case of finite constant groups (i.e.\ to the study
of finite Galois extensions of the function field $F$).

We first prove a variant on Theorem~2.5 of~\cite{HHK}. We recall the situation considered there:

Let $\wh R_0$ be a complete discrete valuation ring  with uniformizer $t$ and
field of fractions $F_0$, and suppose $\wh R_0$ contains a subring $T
\subseteq \wh R_0$ that is also a complete discrete valuation ring with
uniformizer $t$.  We let $|\cdot|$ be a norm defined on $F_0$ defined by the $t$-adic
valuation. This induces a maximum norm on $\mbb A^n_{F_0}$ in the usual way.
Let $F_1, F_2 $ be subfields of $F_0$ containing~$T$.

\begin{prop} \label{small_factorization}
In the above situation, assume there are
$t$-adically complete $T$-submodules $V \subset F_1 \cap \wh R_0$, $W \subset F_2\cap \wh R_0$
satisfying $V+ W  =\wh R_0$. Assume moreover that $F_1$ is $t$-adically dense
in $F_0$.
Let $f = (f_1, \ldots, f_n): \mbb A^n_{F_0} \times \mbb A^n_{F_0} \dra \mbb
A^n_{F_0}$ be an $F_0$-rational map that is defined on a Zariski open set $U \subseteq \mbb A^n_{F_0}
\times \mbb A^n_{F_0}$ containing the origin $(0,0)$.  Suppose that for each $i$ we may write (in multi-index notation)
\[f_i = T_{1,i}(x) + T_{2,i}(y) + \sum_{|(\nu, \rho)| \geq 2} c_{\nu, \rho, i} x^\nu
y^\rho, \eqno{(*)}\]
for some elements $c_{\nu, \rho, i}$ in $F_0$, where $x=(x_1,\dots,x_n)$ and
$y=(y_1,\dots,y_n)$, and where $T_j = (T_{j,1},\dots,T_{j,n})$ is an
automorphism of the vector space $F_0^n$ for $j=1,2$.
Then there is a real number $\varepsilon > 0$
such that for all $a \in \mbb A^n(F_0)$ with $|a| \leq \varepsilon$, there
exist $v \in \m F_1^n$ and $w \in \m F_2^n$ such that $(v,w) \in U(F_0)$ and
$f(v, w) = a$.
\end{prop}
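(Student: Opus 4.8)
The plan is to set this up as a fixed-point / iteration argument, in the spirit of Newton's method or Hensel-type lifting, solving $f(v,w) = a$ by successive approximation while at each stage correcting the error using the decomposition $V + W = \wh R_0$. First I would rewrite the equation $f(v,w) = a$ in a form suited to iteration: since $T_1, T_2$ are linear automorphisms of $F_0^n$, I can split off the linear part and treat the higher-order terms $\sum_{|(\nu,\rho)|\ge 2} c_{\nu,\rho,i} x^\nu y^\rho$ as a small perturbation when the arguments are $t$-adically small. Writing $L(v,w) := T_1(v) + T_2(w)$ and $g(v,w)$ for the quadratic-and-higher tail, the equation becomes $L(v,w) = a - g(v,w)$. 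The point of the hypothesis $V + W = \wh R_0$ (scaled appropriately by powers of $t$) is precisely that $L$ restricted to $V^n \times W^n$ is $t$-adically surjective onto a lattice: given any target $b \in \wh R_0^n$ one can write $T_1^{-1}(b)$'s components using $V$-part plus $W$-part, hence solve the \emph{linear} equation with $v \in F_1^n$, $w \in F_2^n$ of controlled size.

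The key steps, in order, are as follows. (1) Fix $\varepsilon$ small enough (depending on the $t$-adic sizes of the $c_{\nu,\rho,i}$, on $\|T_j^{-1}\|$, and on the radius on which $f$ is defined on $U$) so that the map $(v,w) \mapsto$ (the linear solution of $L(v,w) = a - g(v_{\mathrm{old}},w_{\mathrm{old}})$) is a well-defined self-map of a small ball $B_\varepsilon \subseteq V^n \times W^n$ and the tail $g$ is genuinely contracting there, using that $g$ vanishes to order $2$ at the origin so $|g(v,w) - g(v',w')| \le C\varepsilon\,|(v,w)-(v',w')|$. (2) Run the iteration: set $(v_0,w_0) = (0,0)$ (which lies in $U$ by hypothesis), and given $(v_m,w_m)$ let $(v_{m+1},w_{m+1})$ be a solution of the linear equation $L(v,w) = a - g(v_m,w_m)$ with $v \in F_1^n \cap (t\text{-adically small})$ and $w \in F_2^n$, obtained from $V+W = \wh R_0$. (3) Show the sequence is $t$-adically Cauchy, using the contraction estimate from (1); here the density of $F_1$ in $F_0$ and the $t$-adic completeness of $V$, $W$ (hence of $V^n$, $W^n$) guarantee the limits $v_\infty \in F_1^n$, $w_\infty \in F_2^n$ exist and lie in the right fields, and that $(v_\infty, w_\infty)$ stays in $B_\varepsilon \subseteq U(F_0)$. (4) Pass to the limit in $L(v_{m+1},w_{m+1}) = a - g(v_m,w_m)$ to get $L(v_\infty,w_\infty) = a - g(v_\infty,w_\infty)$, i.e.\ $f(v_\infty,w_\infty) = a$, and check the size bound $|(v_\infty,w_\infty)| \le \varepsilon$ so that $(v_\infty,w_\infty) \in U(F_0)$ as required.

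I expect the main obstacle to be bookkeeping the interaction between two different normalizations: the $t$-adic balls on which $f$ is \emph{defined} (a Zariski-open, so an analytic condition on the denominators) versus the lattices $V, W$ with $V + W = \wh R_0$ which a priori only control integrality, not smallness. The delicate point is to choose $\varepsilon$ and rescale — e.g.\ replace $V, W$ by $t^N V, t^N W$ for suitable $N$, so that $t^N V + t^N W = t^N \wh R_0$ is contained in a prescribed small ball — in such a way that (a) the linear solver still produces $v \in F_1^n$, $w \in F_2^n$ from the decomposition, (b) all iterates remain inside the open set $U$ where $f$ and its expansion $(*)$ are valid, and (c) the higher-order terms, with their possibly large coefficients $c_{\nu,\rho,i} \in F_0$, are nonetheless dominated once the arguments are of size $\le \varepsilon$. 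Once the scaling constants are pinned down this should be a routine (if tedious) estimate; the conceptual content is entirely in the observation that $V + W = \wh R_0$ upgrades "solvable linearly over $F_0$" to "solvable linearly with $v \in F_1$, $w \in F_2$ of controlled size," and that the quadratic vanishing at the origin makes the nonlinear correction a contraction. This mirrors the argument of \cite[Theorem~2.5]{HHK}, the difference being that there one factors a matrix (the group-theoretic setting), whereas here one solves a general $F_0$-rational equation with a nondegenerate bilinear-free linear part; the same analytic machinery applies.
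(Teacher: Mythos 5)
Your overall blueprint — set up a contraction/iteration à la Newton or Hensel, use $V+W=\wh R_0$ as the "linear solver," and let the quadratic vanishing of the tail make the correction step contracting — is exactly the engine that ultimately powers the paper's proof (via~\cite[Theorem~2.5]{HHK}). But there is a genuine gap in the way you run the linear step when $T_1\neq T_2$, and it is precisely the point this proposition was written to handle.

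Here is the problem. You want to solve $T_1(v)+T_2(w)=b$ with $v\in F_1^n$, $w\in F_2^n$, and you propose to do so by writing $T_1^{-1}(b)=v'+w'$ with $v'\in V^n\subset F_1^n$, $w'\in W^n\subset F_2^n$. Applying $T_1^{-1}$ to the equation gives $v + T_1^{-1}T_2(w)=v'+w'$, so you would need $v=v'$ and $T_1^{-1}T_2(w)=w'$, i.e.\ $w=T_2^{-1}T_1(w')$. But $T_1,T_2$ are only $F_0$-linear automorphisms, so $T_2^{-1}T_1(w')$ is a general element of $F_0^n$ — there is no reason for it to lie in $F_2^n$. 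Equivalently, $V+W=\wh R_0$ gives you a splitting adapted to the identity operator, not to the skew operator $T_1^{-1}T_2$; what you would actually need is something like $V + (T_1^{-1}T_2)(W)=\wh R_0$, and that is not among your hypotheses, nor need $(T_1^{-1}T_2)(W)$ even be a $T$-submodule. So the iteration never gets off the ground: every step of the loop produces a $w$-coordinate outside $F_2^n$.

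The paper's proof fixes this by front-loading a reduction before any iteration happens. It first replaces $f$ by $T_1^{-1}\circ f$ to normalize $T_1=\id$ (harmless, just a change of target variable). Then — and this is the essential extra input — it invokes the factorization theorem for $\GL_n$ (Theorem~3.2 of~\cite{HHK}) to write the remaining twist $T_2$ as $T_2=S_1S_2^{-1}$ with $S_1\in\GL_n(F_1)$ and $S_2\in\GL_n(F_2)$. Substituting $\tilde f:=S_1^{-1}\circ f\circ(S_1\times S_2)$ makes the linear part equal to $x+y$: the computation is $S_1^{-1}(S_1(x)+T_2 S_2(y))=x+y$. Because $S_1\in\GL_n(F_1)$ and $S_2\in\GL_n(F_2)$, this change of variables respects the decomposition of fields: a solution $(\tilde v,\tilde w)\in F_1^n\times F_2^n$ for $\tilde f$ transports to $(S_1\tilde v, S_2\tilde w)\in F_1^n\times F_2^n$ for $f$. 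Only after this reduction does the contraction argument you sketch apply, and then it is literally~\cite[Theorem~2.5]{HHK}. The conceptual content here is not that the $V+W$ machinery also works with general linear parts (it doesn't, directly); it is that patching for $\GL_n$ lets you untwist the general linear part back to the identity, where $V+W=\wh R_0$ does work.
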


\begin{proof}
\textit{Case 1:} $T_1 = T_2$ is the identity transformation.

In this situation, the assertion was shown at~\cite[Theorem~2.5]{HHK}.  The
statement of that result had assumed that $f(u, 0) = u = f(0, u)$ whenever
$(u, 0)$ (resp.\ $(0,u)$) is in $U$, rather than assuming $(*)$.  But that
hypothesis was used only in order to obtain $(*)$ and the equality $f(0,0)=0$,
which itself follows from $(*)$.  This was done in the first paragraph of the
proof of~\cite[Theorem~2.5]{HHK}.  The remainder of the proof
of~\cite[Theorem~2.5]{HHK} used only $(*)$, and so in our situation that
argument carries over and the desired conclusion follows.
\smallskip

\textit{Case 2:} $T_1$ is the identity and $T_2$ is arbitrary.

The group $G := \GL_{n,F}$ is rational and connected over $F := F_1 \cap
F_2$ and $F_1$ is dense in $F_0$. 
Hence Theorem~3.2 of~\cite{HHK} applies, and asserts that for any $g \in
\GL_n(F_0)$ there exist $g_1 \in \GL_n(F_1)$ and $g_2 \in \GL_n(F_2)$ such
that $g=g_1g_2$.  Taking $g$ to be the matrix associated to $T_2$, we obtain
invertible linear transformations $S_i$ of $F_i^n$ for $i=1,2$ such that
$T_2 =S_1S_2^{-1}$.

Consider the $F_0$-rational map $\tilde f = S_1^{-1} \circ f \circ (S_1\times S_2) : \mbb
A^n_{F_0} \times \mbb A^n_{F_0} \dra \mbb A^n_{F_0}$.  This is defined on the
Zariski open set $\tilde U := (S_1\times S_2)^{-1}(U)$, which contains the origin $(0,0)$.
Write $\tilde f = (\tilde f_1,\dots,\tilde f_n)$. Since $\tilde f(0,0) = 0$,
we may write $\tilde f_i \in F_0[[x_1,\dots,x_n,y_1,\dots,y_n]]$ as
$\tilde f_i = L_{1,i}(x) + L_{2,i}(y) + \sum_{\nu,\rho} \tilde c_{\nu,
  \rho, i} x^\nu y^\rho$ for some $\tilde c_{\nu, \rho, i}$ in
$F_0$, where $L_{1,i}$ and $L_{2,i}$ are linear forms in $x_1,\dots,x_n$ and
$y_1,\dots,y_n$ respectively, where the sum ranges over $|(\nu, \rho)|
\geq 2$ and where $1 \le i \le n$.

Let $L_j:=(L_{j,1},\ldots,L_{j,n})$ for $j=1,2$.
Equating linear terms in the definition of $\tilde f$ then yields the equality
$S_1^{-1} \circ (\id + T_2) \circ (S_1\times S_2) = L_1 + L_2$, where $\id$ is the
identity map on $x_1,\dots,x_n$ and $T_2$ is viewed as a map on
$y_1,\dots,y_n$. That is, $L_1(x) + L_2(y) = S_1^{-1}(S_1(x) + T_2S_2(y)) =
x+y$.
We then have
\[
\tilde f_i = x_i + y_i + \sum_{|(\nu, \rho)| \geq 2} \tilde c_{\nu, \rho, i}
x^\nu y^\rho
\]
for each $i$.  Hence Case~1 applies, and there is a real number $\tilde
\varepsilon> 0$ as in the assertion of the theorem. Since the linear map $S_1$ is continuous,
there is an $\varepsilon > 0$ such that if $|a| \le \varepsilon$ then
$|S_1^{-1}(a)| \leq \tilde \varepsilon$. Now suppose that $a \in \mbb
A^n(F_0)$ with $|a| \leq \varepsilon$, and let $\tilde a := S_1^{-1}(a)$.
By the assertion of the theorem in Case~1, there exist $\tilde v \in F_1^n$ and $\tilde w
\in F_2^n$ such that $(\tilde v,\tilde w) \in \tilde U(F_0)$ and $\tilde
f(\tilde v, \tilde w) = \tilde a$.

Let $v = S_1(\tilde v) \in F_1^n$ and $w = S_2(\tilde w) \in F_2^n$. Thus
$(v,w) = (S_1\times S_2)(\tilde v,\tilde w) \in U$.  The equality $f =
S_1\circ \tilde f \circ (S_1\times S_2)^{-1}$ then yields $f(v,w) = S_1(\tilde
f(\tilde v,\tilde w)) = S_1(\tilde a) = a$.  Thus $\varepsilon$ has the
required properties.  This concludes the proof in Case~2.

\smallskip

\textit{Case 3:} General case.

Let $\tilde f = T_1^{-1} \circ f : \mbb A^n_{F_0} \times \mbb A^n_{F_0} \dra
\mbb A^n_{F_0}$.  Thus $\tilde f$ is defined on $U$.  Write $\tilde f =
(\tilde f_1,\dots,\tilde f_n)$ and  $S = (S_1,\dots,S_n) := T_1^{-1}T_2$.
Then for each $i$ we may write $\tilde f_i = x_i + S_i(y) + \sum_{|(\nu,
  \rho)| \geq 2} \tilde c_{\nu, \rho, i} x^\nu y^\rho$ for some $\tilde
c_{\nu, \rho, i}$ in $F_0$.  So by Case~2, there is a real number $\tilde
\varepsilon > 0$ as in the statement of the theorem.
Since $T_1^{-1}$ is continuous, there is an $\varepsilon > 0$ such
that if $|a| \le \varepsilon$ then $|T_1^{-1}(a)| \leq \tilde \varepsilon$.
Thus if $a \in \mbb A^n(F_0)$ satisfies $|a| \leq \varepsilon$ then by the
assertion of the theorem (in Case~2), there
exist $v \in F_1^n$ and $w \in F_2^n$ such that $(v,w) \in U(F_0)$ and $\tilde
f(v, w) = T_1^{-1}(a)$, and hence $f(v,w)=a$.
\end{proof}

We now return to the situation of Notation~\ref{notn_patches}.
Let $G$ be a rational linear algebraic group over $F$, and let $G^0$ be its
identity component.

\begin{thm} \label{sha_inj}
Under Notation~\ref{notn_patches}, if $G$ is a rational linear algebraic group
over $F$, then the natural map $\Sha_{\mc P}(F,G) \to \Sha_{\mc P}(F,G/G^0)$ is a bijection.
\end{thm}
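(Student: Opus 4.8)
The plan is to apply Proposition~\ref{trans_factor} with $N = G^0$ and $\bar G = G/G^0$, so that it suffices to verify condition~(\ref{xx}) of that proposition: given two tuples $(g_\wp), (\tilde g_\wp) \in \prod_{\wp \in \mc B} G(F_\wp)$ having the same image in $\prod_{\wp} (G/G^0)(F_\wp)$, I need to produce elements $g_\xi \in G(F_\xi)$ for $\xi \in \mc P \cup \mc U$ with $g_\wp = g_U^{-1}\tilde g_\wp g_P$ whenever $\wp$ is a branch at $P$ on the closure of $U$. Since $(g_\wp)$ and $(\tilde g_\wp)$ have the same image in $G/G^0$, the elements $h_\wp := \tilde g_\wp^{-1} g_\wp$ lie in $G^0(F_\wp)$; writing the desired factorization as $h_\wp = (\tilde g_\wp^{-1} g_U^{-1} \tilde g_\wp)(\tilde g_\wp^{-1} g_P \tilde g_\wp)$ suggests that — after a twist — the problem becomes a simultaneous factorization problem for the \emph{connected} rational group $G^0$, which is handled by Theorem~3.2/3.7 of \cite{HHK}. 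However, the conjugation by $\tilde g_\wp$ does not obviously respect the fields $F_\xi$, so this reduction is not literally clean and I expect the real work to go through the analytic factorization result just proved.

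More concretely, I would follow the template of the proof of Theorem~2.5 / Theorem~3.2 in \cite{HHK}, using Proposition~\ref{small_factorization} as the local engine. Reducing to the case $\mc P = f^{-1}(\infty)$ via Proposition~\ref{map_to_line}, one obtains the standard two-patch decomposition $F_U, F_P \subseteq F_\wp$ together with the complementary-module hypothesis $V + W = \wh R_\wp$ coming from the patching setup (as in \cite{HHK}, Section~2). One chooses local coordinates so that $G^0 \subseteq \GL_n$ is, near the identity, the graph of a rational map $f$ satisfying the hypothesis $(*)$ of Proposition~\ref{small_factorization} with $T_1, T_2$ the appropriate linear parts of the multiplication map. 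The point of the generalization from \cite[Theorem~2.5]{HHK} to Proposition~\ref{small_factorization} — allowing arbitrary invertible linear parts $T_1, T_2$ — is exactly to accommodate the conjugated group multiplication $(a,b) \mapsto \tilde g_\wp^{-1} a \tilde g_\wp \cdot b$ that arises here, whose linear part need not be the identity. So the analytic input gives, for each branch $\wp$, a local factorization of any sufficiently small element; the global factorization is then assembled by the usual iteration/patching argument, passing from small elements to arbitrary elements of $G^0(F_\wp)$ using connectedness and rationality of $G^0$ (i.e.\ that $G^0(F_\xi)$ surjects onto a neighborhood and the group is generated by such neighborhoods, together with Theorem~3.7 of \cite{HHK} for the ``large'' part).

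The key steps, in order, are: (1) invoke Proposition~\ref{trans_factor} to reduce the bijectivity of $\Sha_{\mc P}(F,G) \to \Sha_{\mc P}(F,G/G^0)$ to the simultaneous conjugated-factorization statement~(\ref{xx}); (2) reduce via Proposition~\ref{map_to_line} to the setting of Notation~\ref{notn_patches} with the complementary $T$-module hypothesis available on each branch; (3) express the conjugated factorization problem for $G^0$ locally, near the identity, in the form required by Proposition~\ref{small_factorization}, identifying $T_1, T_2$ with the linear parts of the (conjugated) multiplication; (4) apply Proposition~\ref{small_factorization} to factor small elements on each branch, then bootstrap to all of $G^0(F_\wp)$ using rationality and connectedness of $G^0$ together with \cite[Theorem~3.7]{HHK}; (5) conclude via the last assertion of Theorem~\ref{6-term_sequence} that $(g_\wp)$ and $(\tilde g_\wp)$ have the same image under the coboundary, hence that condition~(\ref{xx}) holds. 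The main obstacle I expect is step~(3)–(4): carefully setting up the conjugated multiplication in coordinates so that the hypotheses of Proposition~\ref{small_factorization} are genuinely met (in particular that $T_1$ and $T_2$ are honest linear automorphisms of $F_\wp^n$ and that the $F_U$- and $F_P$-rationality of the factors is preserved through the construction), and then globalizing the local factorizations consistently across all branches $\wp$ simultaneously — this is where the ``simultaneous'' aspect, and hence the full strength of the patching framework rather than a single two-patch statement, is essential.
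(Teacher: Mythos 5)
Your strategy is essentially the paper's: reduce via Proposition~\ref{trans_factor} to condition~(ii), observe that after writing $h_\wp$ for the relevant $G^0$-valued element one needs a ``twisted'' simultaneous factorization, and note that Proposition~\ref{small_factorization} (with arbitrary invertible linear parts $T_1, T_2$) exists precisely to handle the non-identity linear part that conjugation by $g_\wp$ introduces. The paper indeed writes the condition in the form $h_\wp = g_\wp^{-1} g_U g_\wp g_P^{-1}$ with $h_\wp := g_\wp^{-1}\tilde g_\wp \in G^0(F_\wp)$, sets up the rational map $\gamma(g_1,g_2)= g^{-1}g_2 g g_1^{-1}$ in coordinates near the identity of $G^0$, applies Proposition~\ref{small_factorization}, and passes from small to arbitrary elements by left-multiplying by an element of $G^0(F_1)$ (this is \cite[Lemma~3.1]{HHK} applied to the dense field $F_1$, not \cite[Theorem~3.7]{HHK}). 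So steps (1), (3), (4), (5) of your outline are on target.

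Two things to fix. First, your explicit rewriting $h_\wp = (\tilde g_\wp^{-1} g_U^{-1}\tilde g_\wp)(\tilde g_\wp^{-1} g_P\tilde g_\wp)$ is incorrect (it multiplies out to $\tilde g_\wp^{-1}g_U^{-1}g_P\tilde g_\wp$, not to $h_\wp$); you flag it as ``not literally clean,'' but the correct form $h_\wp = g_\wp^{-1} g_U g_\wp g_P^{-1}$ is the one you then want to treat with $\gamma$, and it is already clean. Second, and more importantly, your step (4) leaves the simultaneous (multi-branch) aspect as a vague ``usual iteration/patching argument.'' The paper's actual mechanism is Weil restriction: using Proposition~\ref{map_to_line} it picks a finite $f:\wh X\to\mbb P^1_T$ with $\mc P=f^{-1}(\infty)$ and replaces $G$ by $R_{F/F'}(G)$ over $F'=K(x)$; then $\prod_\wp G(F_\wp)$, $\prod_P G(F_P)$, $\prod_U G(F_U)$ become the $F_0$-, $F_1$-, $F_2$-points of the restricted group, and the entire multi-branch problem collapses to the single-branch Case~1 on $\mbb P^1_T$ with the bigger group (whose identity component $R_{F/F'}(G^0)$ is still connected and rational). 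Without naming that device, the ``simultaneity'' step in your outline has a real gap, since Proposition~\ref{small_factorization} as stated is a two-field statement and you have not explained how to get coherent choices of $g_\xi$ across all branches at once.
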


\begin{proof}
Write $\bar G:=G/G^0$; this is a finite constant group by the rationality assumption.
By Corollary~\ref{patching_equivalence} and the rationality of $G$, the hypotheses of Proposition~\ref{trans_factor} are satisfied.  Hence it suffices to show that for any two elements
$(g_{\wp}),(\tilde g_{\wp}) \in \prod_{\mc B}G(F_{\wp})$  that map to the same
element in $\prod_{\mc B}\bar G$, there exist elements
$g_\xi\in G(F_\xi)$ for all $\xi\in {\mc P} \cup {\mc U}$ satisfying the condition that $g_{\wp}=g_U^{-1}\tilde g_{\wp} g_P$ whenever $\wp$ is a branch at $P$ lying on the
closure of $U$.  This is equivalent to the condition that 
$h_\wp = g_\wp^{-1} g_U g_\wp g_P^{-1}$ for all such triples $P,U,\wp$, where 
$h_{\wp}:=g_{\wp}^{-1}\tilde g_{\wp} \in G^0 (F_{\wp})$.  We now proceed to show that there exist such elements $g_\xi$.

\smallskip

\textit{Case 1:} $\wh X = \mbb P^1_T$ and $\mc P =
\{\infty\}$.

In this case, $\mc U$ contains a single element $U = \mbb A^1_k$, and $\mc B$ consists just of the unique branch $\wp$ at $\infty$.  For short write  $g=g_\wp$ and $h=h_\wp$.  Also write $F_1=F_P$, $F_2=F_U$, and $F_0=F_\wp$.
Define the map $\gamma:G^0(F_0)\times G^0(F_0)\to
G^0(F_0)$ by $(g_1,g_2)\mapsto g^{-1}g_2gg_1^{-1}$.
We wish to show that $h$ is the
image under $\gamma$ of a pair $(g_1,g_2)\in G^0(F_1)\times
G^0(F_2)$. 

Since $G^0$ is rational, there is an
$F$-isomorphism $\phi:U' \to U$ from a connected open neighborhood of the
identity in $G$ to an open neighborhood of the origin in $\mbb A^n_{F'}$ (for a
suitable $n$).
The map $\phi$ carries $\gamma$ to an
$F_0$-rational map
$f : \mbb A^n_{F_0} \times \mbb A^n_{F_0} \dra \mbb A^n_{F_0}$
that is defined on a Zariski open set $U \subseteq \mbb A^n_{F_0}
\times \mbb A^n_{F_0}$ containing the origin $(0,0)$.  That is, $f \circ (\phi
\times \phi)$ agrees with $\phi \circ \gamma$ as a rational map.
Moreover $f(0,0)=0$.  So as an element of
$F_0[[x_1,\dots,x_n,y_1,\dots,y_n]]$, $f=(f_1,\dots,f_n)$ has the form $(*)$
of Proposition~\ref{small_factorization}, with $T_i$ being a linear
transformation of the vector space $F_0^n$, corresponding to a matrix $A_i \in
\Mat_n(F_0)$.  Here $(A_1,A_2)$ is the Jacobian matrix of $f$ at the origin,
and $A_1$ (resp.\ $A_2$) is the Jacobian matrix of the restriction of $f$ to
$y_1=\dots=y_n=0$ (resp.\ to $x_1=\dots=x_n=0$).  The matrices $A_1, A_2$ are
invertible, since the restrictions $\gamma(\cdot,1)$ and $\gamma(1,\cdot)$ are
invertible; and hence $T_i$ is an automorphism of $F_0^n$ for $i=1,2$.
Let $V = \wh R_1$ and $W = \wh R_2$.
Since every element of $\wh R_0/(t) = k((x^{-1}))$ is the sum of elements of
$\wh R_1/(t) = k[[x^{-1}]]$ and $\wh R_2/(t) = k[x]$, a standard induction
argument shows that $V+W=\wh R_0$.
So the hypotheses of Proposition~\ref{small_factorization} are satisfied.
By the conclusion of that proposition, there is an $\varepsilon > 0$ such that
if $a \in \mbb A^n_{F_0}$ satisfies $|a| \le \varepsilon$ then $a = f(v,w)$
for some $v \in F_1^n$ and $w \in F_2^n$.

Now $F_1$ is dense in $F_0$ (see the proof of \cite{HHK}, Theorem~3.4). Hence
we may apply Lemma~3.1 of~\cite{HHK} to the rational group $G^0$ and the field
$F_0$ to obtain an element $h_1 \in G^0(F_1)$ such that $|\phi(hh_1)| \le
\varepsilon$ (for details see the last paragraph of the proof of \cite{HHK}, Theorem~3.2).
Thus $\phi(hh_1) = f(v,w)$ for some $v \in F_1^n$ and some $w \in
F_2^n$.
Applying $\phi^{-1}$, we obtain the conclusion that $hh_1$ is the
image under $\gamma$ of a pair $(g_1',g_2) \in G^0(F_1) \times
G^0(F_2)$, i.e., $hh_1=g^{-1}g_2g(g_1')^{-1}$. But then
$h=g^{-1}g_2g(g_1')^{-1}h_1^{-1}$, or equivalently, $h=\gamma(g_1,g_2)$ for
$g_1:=h_1g_1'$. This finishes the proof in Case~1.

\smallskip

\textit{Case 2:} General case.

By Proposition~\ref{map_to_line},
there is a finite morphism $\wh X \to \mbb P^1_T$ such that $\mc P =
f^{-1}(\infty)$.
Write $F'$ for the function field $K(x)$ of $\mbb P^1_T$, and let
$d=[F:F']$. As in Notation~\ref{notn_fields} for $\mbb P^1_T$, we may consider
rings and fields associated to $\infty$, $\mbb A^1_k$, and to the unique branch on $\mbb
P^1_k$ at infinity; we call these $\wh R_1, F_1,\wh R_2, F_2, \wh R_0, F_0$, respectively.
Let $G'$ be the Weil restriction $R_{F/F'}(G)$ of $G$ (which exists even if $F/F'$ is inseparable; see \cite{BLR:Neron}, Section~7.6). By the defining
property of the Weil restriction, there is a natural isomorphism
$G'(F_0)=G(F_0\otimes_{F'} F)$, and the latter equals $\prod_{\mc B}
G(F_{\wp})$ by \cite{HH:FP}, Lemma~6.2(a). Similarly, $G'(F_1)=\prod_{\mc P}
G(F_P)$ and $G'(F_2)=\prod_{\mc U} G(F_U)$. Under these identifications, write
$(g_{\wp})=:g\in G'(F_0)$, $(\tilde g_{\wp})=:\tilde g\in G'(F_0)$, and
$(h_{\wp})=:h\in G'^0(F_0)$. 
Thus $h=g^{-1}\tilde g \in G'^0(F_0)$.
Since $G^0$ is (geometrically) connected and smooth, so is 
$R_{F/F'}(G^0)$ (\cite{Oesterle}, Proposition~A.3.7; 
\cite{CoGaPr}, Proposition~A.5.9).  Hence $R_{F/F'}(G^0)$ is
the identity component of $G'$, and moreover it is rational.

By Case~1, there exist $g_1 \in G'^0(F_1)$ and $g_2 \in G'^0(F_2)$ such that 
$h= g^{-1}g_2gg_1^{-1}$.  Viewing this as an equation in $\prod_{\mc B}G(F_{\wp})$
then yields the desired elements $g_\xi$.
\end{proof}

\begin{remark}
Theorem~\ref{sha_inj} and Corollary~\ref{homogeneous} above can be regarded as analogous to  Cor.~3 and Cor.~1 of \cite[Th\'eor\`eme~III.2.4.3]{Serre:CG}.  The proofs of our results here, however, are quite different from the proofs there.
\end{remark}

\section{Split covers}\label{sec_splitcovers}

In this section we consider a different and more canonical local-global map, with respect to {\em all} points of the closed fiber (including the generic points of
components) of a curve
$\wh X$ over a complete discrete valuation ring $T$, as in
Notation~\ref{notn_fields}.

We define
\[\Sha_{\wh X,X}(F,G) := \ker\bigl(H^1(F,G)\rightarrow \prod\limits_{P\in X} H^1(F_P, G)\bigr)\]
for a linear algebraic group $G$ over our field $F$, 
and for short we write $\Sha_X(F,G)$ if the model $\wh X$ is understood.
For finite subsets
$\mc P \subseteq \mc P'$ of the closed fiber $X$ as in Notation~\ref{notn_patches}, we have the containments
\[\Sha_{\mc P}(F,G) \subseteq \Sha_{\mc P'}(F,G) \subseteq \Sha_X(F,G) \subseteq H^1(F,G).\]
Namely, if $P \in U$ then $F_U \subset F_P$, hence any torsor with an
$F_U$-point has an $F_P$-point; and if $U' \subseteq U$ then similarly a
torsor with an $F_U$-point has an $F_{U'}$-point.

Below we show that for $G$ rational, $\Sha_X(F,G)$
equals $\Sha_{\mc P}(F,G)$ (for any choice of ${\mc P}$), and we give its
structure in terms of a certain quotient $\pi_1^\s(\wh X)$ of the \'etale
fundamental group of $\wh X$ (Theorem~\ref{sha_fsplit}).

The study of $\Sha_X(F,G)$ relies on the study of split covers.
We will say that a degree $n$ morphism $h:\wh Y \to
\wh X$ of normal projective $T$-curves is \textit{split} over a
point $P$ of $\wh X$ if $\wh Y \times_{\wh X} P$ consists
of $n$ copies of $P$.
In this case $h$ is \'etale over $P$ since the fiber $\wh Y \times_{\wh X} P$
is reduced and separable (in fact trivial) over $P$.
More generally, given a morphism $S \to \wh X$, we will similarly say that $h$
is \textit{split} over $S$  (or over $A$, in case $S = \Spec(A)$) if $\wh Y
\times_{\wh X} S$ is a disjoint union of copies of $S$. We say that $h$ is a
\textit{split cover} if it is split over every point $P$ of $\wh X$ except
possibly the generic point.  Thus every split cover of $\wh X$ is \'etale.

The inverse limit of the Galois groups of pointed Galois (connected) split covers of $\wh X$ will be denoted by
$\pi_1^{\s}(\wh X)$; this is a quotient of  the \'etale fundamental group
$\pi_1(\wh X)$.  (Here the base points on the covers lie over a chosen base point on $\wh X$; and up to isomorphism, $\pi_1^{\s}(\wh X)$ is independent of this choice.)
For any
finite constant group $G$, 
$\Hom(\pi_1^\s(\wh X),G)$ classifies
the pointed $G$-Galois split covers of $\wh X$, with $\Epi(\pi_1^\s(\wh X),G)$ classifying those that are connected.  
Similarly, the (unpointed)  $G$-Galois split
covers of $\wh X$ are classified by $\Hom(\pi_1^\s(\wh X),G){/\!\sim}$, where
the equivalence relation is given by conjugation by $G$. Here $\Hom(\pi_1^\s(\wh X),G){/\!\sim}$ is viewed as a subset of $\Hom(\Gal(F^\sep/F),G){/\!\sim} = H^1(F,G)$, which classifies the $G$-Galois \'etale algebras over $F$. If $G$ is abelian, then conjugation is trivial. Moreover, $H^1(F,G)$ is a group in that case, and $\Hom(\pi_1^\s(\wh X),G)$ is a subgroup.

Observe that if $h:\wh Y \to \wh X$ is a finite morphism of $T$-curves and $P$
is a point of the closed fiber $X \subset \wh X$, then $h$ is split over $P$ if and only if $h$ is split
over the ring $\wh R_P$, by Hensel's Lemma (which applies since being split
implies being \'etale).
If moreover $\wh Y$ is normal then these conditions are also equivalent to $h$
being split over the field $F_P$, since in that case the fiber over $\wh R_P$
is the normalization of $\wh R_P$ in the fiber over $F_P$. For $U \in \mc U$
as in Notation~\ref{notn_patches},
the corresponding equivalences hold for $U$, $\wh R_U$, $F_U$.  Instead of Hensel's Lemma,
this uses \cite[Lemma~4.5]{HHK}, which applies since the morphism $h$ is \'etale and hence smooth.

\begin{prop} \label{split_points}
Suppose that $h:\wh Y \to \wh X$ is a finite morphism of normal
projective $T$-curves, with $\wh X$ connected. 
Then $h$ is a split cover if and only if it is split
over $F_P$ for every (not necessarily closed) point $P$ of the closed fiber $X$.
\end{prop}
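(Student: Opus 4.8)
The plan is to prove the equivalence by unwinding the definitions of ``split cover'' and of ``split over $F_P$'' separately at closed points and at generic points of components of $X$, using the remarks just preceding the statement to pass between points, completions of local rings, and fraction fields. The nontrivial direction is the ``if'' direction: assuming $h$ is split over $F_P$ for every point $P$ of $X$, one must show $h$ is split over \emph{every} point of $\wh X$ other than the generic point of $\wh X$ --- in particular, over the closed points of the generic fiber $\wh X_K$, which are codimension one points of $\wh X$ not lying in $X$.

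First I would dispose of the ``only if'' direction: if $h$ is a split cover, then by definition it is split over every point of $\wh X$ except possibly the generic point, so in particular over every point of $X$; this immediately gives splitting over each $F_P$ by the equivalences recalled before the proposition (Hensel's Lemma for closed $P$, and \cite[Lemma~4.5]{HHK} for the generic points of components, which appear as the fields $F_U$). Conversely, suppose $h$ is split over $F_P$ for every point $P$ of $X$. For a closed point $P \in X$ the cited equivalence gives that $h$ is split over $\wh R_P$, hence over the point $P$ itself; and for a generic point $\eta$ of an irreducible component of $X$, splitting over $F_\eta$ similarly yields splitting over $\wh R_\eta$ and over $\eta$. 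It remains to handle the codimension one points $Q$ of $\wh X$ lying on the generic fiber $\wh X_K$. The key observation is that $\wh Y$ is normal and $h$ is finite, so $h$ is \'etale away from a closed subset $Z \subsetneq \wh X$, and it suffices to show $Z$ contains no codimension one point of $\wh X$ and no closed point of $X$. We have just shown $h$ is \'etale (indeed split) over every point of $X$, so $Z \cap X = \varnothing$; since $\wh X \to T$ is proper, the only way for $Z$ to be nonempty is for it to meet $X$ --- more precisely, any nonempty closed subset of $\wh X$ meets the closed fiber $X$, because $T$ is complete (this is the same fact used in the proof of Proposition~\ref{map_to_line}). Hence $Z = \varnothing$, so $h$ is \'etale everywhere, and being \'etale and split over every point of $X$ forces it to be split over every point except possibly the generic point of $\wh X$: the locus where the fiber fails to be totally split is closed, contained in the non-\'etale locus union the ``ramification-free but nonsplit'' locus, and by a specialization argument splitness at $P \in X$ propagates to all points specializing to $P$.

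I expect the main obstacle to be the last point: carefully arguing that \emph{split} (not merely \'etale) over all points of $X$ implies split over all of $\wh X$ minus its generic point. The cleanest way is to note that for a finite \'etale morphism $h : \wh Y \to \wh X$, the function $P \mapsto (\text{number of geometric points of } \wh Y \text{ over } P)$ is locally constant, while the condition ``$h$ is split over $P$'' is equivalent to ``$h$ is split over the generic point of $\overline{\{P\}}$ together with $P$ lying in the closure of the totally-split locus'' --- more simply, the totally split locus of an \'etale cover is open and closed, being a union of connected components of $\wh Y \times_{\wh X} \wh X$ regarded appropriately; since it contains all of $X$ and $\wh X$ is connected with $X$ meeting every component, it must be all of $\wh X$. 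Thus every point except the generic point of $\wh X$ is in fact split, which is slightly stronger than ``split cover'' requires but certainly suffices. One should double-check that $h$ being split over the generic points $\eta$ of components of $X$ (i.e.\ over the fields $F_\eta = F_U$) is genuinely being used: it is exactly what rules out a cover that is, say, split at all closed points of $X$ but ramified along an entire component, which cannot happen once we know $h$ is \'etale but is worth keeping in the hypothesis for the statement to read uniformly over all points of $X$.
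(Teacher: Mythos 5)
Your proposal has a genuine gap in the ``if'' direction: the claim that \emph{the totally split locus of a finite \'etale cover is open and closed} is false, and is in fact false in precisely the situation at hand. Indeed, if $h:\wh Y\to\wh X$ is a nontrivial split cover (these exist whenever the reduction graph has a cycle, which is the whole point of this paper), then $h$ is split over every point of $\wh X$ \emph{except} the generic point $\eta$, so the totally split locus equals $\wh X\smallsetminus\{\eta\}$, which is neither open nor closed. Your argument, if it worked, would force $h$ to be split over $\eta$ as well and hence trivial; you even notice this (``slightly stronger than `split cover' requires''), but that stronger conclusion is simply wrong for nontrivial split covers. The underlying issue is that for \'etale covers, splitting does not propagate along generization in general (think of $\Spec\mbb Z[i][1/2]\to\Spec\mbb Z[1/2]$: the set of split primes is not open or closed), so neither the ``union of connected components'' reformulation nor the ``specialization argument'' as stated is available.

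What actually makes the propagation work here is a \emph{local} use of completeness, and this is where the paper's proof differs from yours. Given a codimension one point $Q$ of $\wh X$ not on the closed fiber, its closure $Z$ is $\Spec(T')$ for a finite extension $T'$ of $T$, which is a $t$-adically complete one-dimensional local domain whose unique closed point $P$ lies on $X$. Completeness of $T'$ guarantees that $Z\hookrightarrow\wh X$ factors through $\Spec(\wh R_P)$. Then splitting over $F_P$ gives splitting over $\wh R_P$ (the observation before the proposition), hence over $T'$, hence at the generic point $Q$ of $Z$. This is a pointwise argument that does not require $h$ to be \'etale globally, does not need regularity of $\wh X$, and correctly yields splitting at $Q$ without saying anything about the generic point of $\wh X$. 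You only invoke completeness to show the non-\'etale locus is empty (which is correct, but insufficient); the second and essential use of completeness --- to carry splitting from $P$ up to $Q$ along the complete local ring $T'$ --- is missing from your argument. (Incidentally, the idea you gesture at is related to the equivalence between \'etale covers of $\wh X$ and of $X$, which the paper records in Remark~\ref{split_remark}(\ref{bijection}) as a \emph{consequence} of this proposition, citing SGA~1 X.2.1; but that remark is not being used to prove the proposition.)
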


\begin{proof}
The forward direction follows from the above observation, which also shows
that if $h$ is split over every $F_P$ then it is it is split over every point
$P$ of $X$.

To complete the proof of the converse, it remains to show that $h$ is split at
each codimension one point of $\wh X$ that is not supported on $X$.  Such a
point is a closed point of the general fiber of $\wh X$, with residue field a
finite extension $K'$ of $K$.  Its closure $Z$ in $\wh X$ is of the form
$\Spec(T')$ for some finite extension $T'$ of $T$,
since $\wh X$ is proper over $T$.  So $T'$ is
a $t$-adically complete one-dimensional local domain (but
not necessarily normal).  The maximal ideal of $T'$ corresponds to the unique closed
point $P$ of $Z$, which lies on $X$; and since $T'$ is complete, it follows that the inclusion $Z \hookrightarrow \wh X$ factors through $\Spec(\wh R_P)$.  
Since $h$ is split over $F_P$, it is also split over $\wh R_P$, hence also over $Z$, and thus over the generic point of $Z$, which is the given 
codimension one point of $\wh X$.
\end{proof}

\begin{remark}\label{split_remark}
\renewcommand{\theenumi}{\alph{enumi}}
\begin{enumerate}
\item \label{bijection}
Proposition~\ref{split_points} shows that there is a natural bijection between the set of split
covers of $\wh X$ and the set of finite morphisms to the closed fiber $X$ that
are split over every point.  This follows from  Hensel's Lemma and the fact
that every \'etale cover of $X$  lifts uniquely to an \'etale cover of $\wh X$
(\cite{SGA}, Theorem~X.2.1).

\item In Proposition~\ref{split_points}, if
the residue field $k$ of $T$ is finite, then
Chebotarev's Density Theorem applies.  Hence
$h:\wh Y \to \wh X$ is a split
cover if and only if it is split over every \textit{closed} point of $\wh X$
(i.e.\ of $X$).  A cover satisfying this latter splitness condition is referred
to in \cite{Saito}, Definition~II.2.1, as a \textit{c.s.\ cover}.  If the
residue field $k$ is not assumed to be finite (e.g.\ if it is algebraically
closed), then that condition is in general weaker than being a split cover.
Compare Corollary~\ref{graph_pi1} below to Proposition~2.2 and Theorems~2.4 and~7.1 of
\cite{Saito}, Section~II (where $k$ was assumed to be finite).
\end{enumerate}
\end{remark}

\begin{cor} \label{purity}
Suppose that $h:\wh Y \to \wh X$ is a finite morphism of projective
$T$-curves, with $\wh X$ regular and connected, and with $\wh Y$ normal.  Then $h$ is a split
cover if and only if it is split over every  codimension one point of $\wh X$.
\end{cor}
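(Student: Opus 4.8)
The plan is to deduce this from Proposition~\ref{split_points} by showing that, under the regularity hypothesis on $\wh X$, being split over every codimension one point of $\wh X$ forces being split over every (not necessarily closed) point of the closed fiber $X$. The forward implication is immediate: a split cover is split over every point of $\wh X$ except possibly the generic point, and in particular over every codimension one point. For the converse, suppose $h$ is split over each codimension one point of $\wh X$. The codimension one points of $\wh X$ that lie on $X$ are precisely the generic points of the irreducible components of $X$, so $h$ is already split over those. It remains to handle the closed points $P$ of $X$ (these are the codimension two points of $\wh X$), and by Proposition~\ref{split_points} it suffices to show $h$ is split over $F_P$, equivalently over $\wh R_P$ by Hensel's Lemma.

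The key step is a purity argument. Since $\wh X$ is regular, the local ring $\wh R_P$ is a regular local ring of dimension two, hence in particular a regular (so normal) Noetherian domain. Over $\Spec(\wh R_P)$ the morphism $h$ restricts to a finite morphism $\wh Y \times_{\wh X}\Spec(\wh R_P) \to \Spec(\wh R_P)$ whose source is normal (being an open in the normal scheme $\wh Y$). By hypothesis $h$ is split — in particular \'etale — over the two height one primes of $\wh R_P$ (namely the generic point of $\wh X_K$'s closure through $P$, and the generic point of the component of $X$ through $P$; more precisely, over every codimension one point of $\wh X$ specializing to $P$, which includes the height one primes of $\wh R_P$). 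Zariski--Nagata purity of the branch locus (\cite{SGA}, Exposé~X) then implies that $h$ is \'etale over all of $\Spec(\wh R_P)$, since the branch locus, if nonempty, would be pure of codimension one. Thus $\wh Y \times_{\wh X}\Spec(\wh R_P)$ is a finite \'etale cover of the complete regular local ring $\wh R_P$.

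Finally, I would upgrade "\'etale over $\wh R_P$" to "split over $\wh R_P$." Since $\wh R_P$ is Henselian with residue field $\kappa(P)$, a finite \'etale cover is split over $\wh R_P$ if and only if it is split over $\kappa(P)$, i.e.\ over the point $P$ itself. But $h$ being split over every codimension one point of $\wh X$ includes, after passing to a codimension one point $\wp$ specializing to $P$ on $\wh X$ and using that the cover is unramified there with trivial residue extension, enough to conclude the fiber over $P$ is totally split; alternatively, one observes that the closure in $\wh X$ of such a $\wp$ meets $X$ at $P$ and carries the splitting down to $P$ exactly as in the last paragraph of the proof of Proposition~\ref{split_points}. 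Hence $h$ is split over $\wh R_P$, so over $F_P$, and Proposition~\ref{split_points} gives that $h$ is a split cover. The main obstacle is the purity input: one must verify that the regularity of $\wh X$ (not merely normality) is what licenses the application of purity of the branch locus at the two-dimensional local rings $\wh R_P$, and that "split over every codimension one point" genuinely supplies \'etaleness in codimension one at each such $P$.
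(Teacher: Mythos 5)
Your overall strategy---purity of branch locus to get \'etaleness, then reduce splitting to the closed points via a one-dimensional slice through $P$---matches the paper's, but there are problems in both halves. In the purity step you write that $h$ is split over ``the two height one primes of $\wh R_P$.'' But $\wh R_P$ is a two-dimensional complete regular local ring with infinitely many height one primes, and only those contracting to height one primes of $R_P$ correspond to codimension one points of $\wh X$; the rest contract to $(0)$ and are not controlled by the hypothesis at all. (The parenthetical claim that $\wh Y\times_{\wh X}\Spec(\wh R_P)$ is ``an open in the normal scheme $\wh Y$'' is also wrong, since completion is not a localization.) This half is salvageable---the ramification locus of $h$ on $\wh X$ is closed and avoids every codimension one point, so it is a finite set of closed points whose preimage in $\Spec(\wh R_P)$ is at most the closed point---but the clean route is just to apply purity of branch locus directly to $\wh X$, which is what the paper does.

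The more serious gap is the last step. Once $h$ is \'etale, you want the fiber over $P$ to be totally split and propose to ``pass to a codimension one point $\wp$ specializing to $P$.'' For an \emph{arbitrary} such $\wp$ this does not follow: the closure $Z=\overline{\{\wp\}}$ can be singular at $P$ in such a way that its normalization has a nontrivial residue field extension (for example a horizontal curve $Z=\Spec(T')$ with $T'$ a non-normal finite extension of $T$ whose normalization has strictly larger residue field), and then a connected finite \'etale cover of $Z$ that is split over the generic point of $Z$ can nevertheless have a nontrivial residue extension over $P$. The paper avoids this by choosing $\wp$ to be the generic point of the zero locus of a \emph{regular parameter} at $P$, which forces $\mc O_{Z,P}$ to be a discrete valuation ring; a finite \'etale extension of a DVR that splits over the fraction field is split, and this gives the trivial residue extension at $P$. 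Your fallback citation of the last paragraph of Proposition~\ref{split_points} does not fill this gap: that argument deduces splitting over the generic point of $Z$ \emph{from} splitting over $F_P$, which is the reverse of what you need here.
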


\begin{proof}
The forward direction is trivial.  For the converse, we want to show that $h$
is split over every closed point.  By Purity of Branch Locus, $h$ is \'etale;
and hence every fiber is reduced and has separable residue field extension.
Any closed point $Q$ of $\wh X$ has codimension two, and by regularity there is a system of
local uniformizing parameters at $Q$.  Consider the zero locus of
one of these parameters, and let $Z$ be the irreducible component of this locus that passes
through $Q$.  Then $h$ splits over the generic point of the regular curve $Z$
by hypothesis, since this point has codimension one.  Hence there is no
residue field extension over the closed point $Q$ and thus $h$ is split
there.  
\end{proof}

We now return to the situation of Notation~\ref{notn_patches}. However, when
choosing a finite subset $\mc P$ of the closed fiber $X$, we now strengthen
our hypotheses slightly:

\begin{hyp} \label{notn_branched}
Under Notation~\ref{notn_patches}, assume in addition that the finite set $\mc
P \subset X$ contains all the closed points at which $X$ is not unibranched.
In particular, it contains all points where an irreducible component intersects
itself.
\end{hyp}

We then have the following analog of Proposition~\ref{split_points}:
\begin{cor} \label{split_patch}
In the situation of Hypothesis~\ref{notn_branched}, suppose that $h:\wh Y \to
\wh X$ is a finite morphism of normal projective $T$-curves, with $\wh X$
connected. 
Then $h$ is a split cover if and only if it is split over every $F_U$ (for $U \in
\mc U$) and every $F_P$ (for $P \in \mc P$).
\end{cor}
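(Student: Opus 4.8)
The plan is to deduce Corollary~\ref{split_patch} from Proposition~\ref{split_points}, which already gives the criterion in terms of splitting over $F_P$ for \emph{all} points $P$ of the closed fiber $X$ (including the generic points of components). So the real content is to bridge the gap between the full point set $X$ and the finite data $\{F_U : U \in \mc U\} \cup \{F_P : P \in \mc P\}$. The forward direction is immediate: if $h$ is a split cover, then by Proposition~\ref{split_points} it is split over $F_P$ for every point $P$, and in particular over every $P \in \mc P$; and it is split over each generic point of a component, hence (since $h$ is \'etale, so the fiber over $\wh R_U$ is the normalization of $\wh R_U$ in the fiber over $F_U$) it is split over $F_U$ for each $U \in \mc U$, using the remark before Proposition~\ref{split_points} together with \cite[Lemma~4.5]{HHK}.

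For the converse, suppose $h$ is split over every $F_U$ ($U \in \mc U$) and every $F_P$ ($P \in \mc P$). By Proposition~\ref{split_points}, it suffices to check that $h$ is split over $F_Q$ for every point $Q$ of $X$ not already in $\mc P$, i.e.\ every closed point of $X \smallsetminus \mc P$ and every generic point of an irreducible component of $X$. A generic point $\eta$ of a component $X_i$ lies in (the closure of) exactly one $U \in \mc U$, namely the $U$ whose closure is $X_i$; since $h$ is split over $F_U$, and $F_U \subseteq F_\eta$, it is split over $F_\eta$. For a closed point $Q \in X \smallsetminus \mc P$, I would argue as follows. By Hypothesis~\ref{notn_branched}, $Q$ is a unibranched point of $X$ lying on a unique irreducible component, and it belongs to a unique connected component $U \in \mc U$; moreover $Q$ lies in the closure of $U$ (indeed $Q \in U$). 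Since $h$ is split over $F_U$, it is split over $\wh R_U$ (using that $\wh Y$ is normal, as in the discussion preceding Proposition~\ref{split_points}), hence split over the local ring of $\wh X$ at $Q$ — because the inclusion $\Spec(\wh R_Q) \hookrightarrow \wh X$ factors, at least on the level of completions along $X$, through $\Spec(\wh R_U)$ in the relevant sense: $\wh R_U$ maps to $\wh R_Q$ since regular functions on $U$ are regular at $Q \in U$, and $t$-adic completion is compatible. Therefore $h$ is split over $F_Q$, completing the verification.

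The step I expect to require the most care is making precise the claim that splitness over $F_U$ forces splitness over $F_Q$ for $Q \in U$ a closed point. The cleanest way is to observe that $\wh R_U \to \wh R_Q$ is a local homomorphism of complete (or at least Henselian) local rings, so an \'etale algebra that becomes a product of copies of $\wh R_U$ stays a product of copies of $\wh R_Q$ after base change; one must be slightly attentive to the fact that $\wh R_U$ is a $t$-adic completion of a ring of functions regular on $U$ rather than a local ring, but the ring map $\wh R_U \to \wh R_Q$ still exists and is all that is needed. Passing between ``split over $F_U$'' and ``split over $\wh R_U$'' relies, as noted, on the normality of $\wh Y$ together with \cite[Lemma~4.5]{HHK}, exactly as in the paragraph preceding Proposition~\ref{split_points}; I would simply cite that discussion rather than reprove it. With those two bookkeeping points handled, the corollary follows formally from Proposition~\ref{split_points}.
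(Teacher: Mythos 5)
Your converse direction is essentially the paper's argument and is fine: split over $F_U$ implies split over $\wh R_U$ (by normality of $\wh Y$), hence split over $\wh R_Q$ and $F_Q$ for every $Q \in U$; together with the hypothesis over $\mc P$ this covers every point of $X$, and Proposition~\ref{split_points} finishes.

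The forward direction, however, has a genuine gap, and it is exactly at the step you flag as ``immediate.'' You pass from ``$h$ is split over each generic point $\eta$ of a component'' to ``$h$ is split over $F_U$,'' citing only the remark before Proposition~\ref{split_points}. But that remark relates the three conditions ``split over $U$'' / ``split over $\wh R_U$'' / ``split over $F_U$'' to one another; it does \emph{not} say that splitting at the generic point of $U$ forces splitting over $U$. Indeed $F_U \subsetneq F_\eta$, so ``split over $F_\eta$'' is a \emph{weaker} condition than ``split over $F_U$,'' and there is no free implication in the direction you need. The real content of the forward direction is precisely to show that $h^{-1}(U)$ is a disjoint union of copies of $U$, i.e.\ that the connected components of $h^{-1}(U)$ are irreducible. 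Your argument never touches this, and in particular it never uses Hypothesis~\ref{notn_branched} in the forward direction at all — which cannot be right, since the paper explicitly observes after this corollary that the statement fails without that hypothesis (a nontrivial split cover of a rational nodal curve, with the node omitted from $\mc P$, gives a counterexample).

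The paper's proof closes this gap as follows. Because $h$ is \'etale, if a connected component $V$ of $h^{-1}(U)$ were reducible, two of its irreducible components would meet at a non-unibranched closed point $Q$ of the closed fiber of $\wh Y$; since $h$ is \'etale, $h(Q)$ would then be a non-unibranched point of $U$, contradicting Hypothesis~\ref{notn_branched}, which requires $\mc P$ to contain all non-unibranched points of $X$. So $V$ is irreducible; its unique generic point has trivial residue extension over the generic point of $U$ (this is where splitness at the generic point enters), so $V \to U$ is a connected \'etale cover of degree one, i.e.\ an isomorphism. Hence $h^{-1}(U)$ is a disjoint union of copies of $U$, and only now do the equivalences from the remark before Proposition~\ref{split_points} give splitting over $F_U$. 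This connected-component argument is the crux of the corollary, and it needs to be supplied.
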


\begin{proof}
In the forward direction, $h$ is split over the generic point of each $U \in
\mc U$ (this being a codimension one point of $\wh X$), and $h$ is \'etale.
Let $Y$ be the closed fiber of $\wh Y$; and for any choice of $U \in \mc U$
let $V \subseteq Y$ be a connected component of $h^{-1}(U)$.  If $V$ is
reducible, then each irreducible component of $V$ must meet some other
component at some closed point $Q$, which is thus not unibranched.  But since
$h$ is \'etale, $h(Q) \in U$ is also not unibranched.  But $\mc P$ contains
all non-unibranched points by Hypothesis~\ref{notn_branched}, and so such a point cannot lie in
$U$.  This is a contradiction.  So actually $V$ is irreducible, and has a
unique generic point.
By splitness, the residue field at that generic point is a trivial field
extension of the residue field at the generic point of $U$.  Thus $V \to U$ is
a connected finite \'etale cover of degree one, and hence an isomorphism.
This shows that $h^{-1}(U)$ is a disjoint union of copies of $U$; i.e.\ $h$
splits over $U$.  By the observation before Proposition~\ref{split_points},
$h$ is split over $F_U$.  Also, $h$ is split over each $F_P$ by  Proposition~\ref{split_points}.

In the converse direction, since $h$ is split over each $F_U$ it is split over
each $\wh R_U$ (since $\wh Y$ is normal), and hence over $\wh R_Q$ for every
(not necessarily closed) point $Q \in U$.  But every point of $X$ lies either
in $\mc P$ or in some $U \in \mc U$.  So $h$ is split by
Proposition~\ref{split_points}.
\end{proof}

Notice that Proposition~\ref{split_patch} fails without the extra assumption in
Hypothesis~\ref{notn_branched}.  For example, take a non-trivial split cover of
a rational nodal curve, if the nodal point is not in the set $\mc P$.

Let $G$ be a finite constant group.  Given a $G$-Galois \'etale cover $\wh Y
\to \wh X$ with $\wh X$ connected, its generic fiber is a $G$-torsor $\T = \Spec(A)$ over the
function field $F$ of $\wh X$, and
$\wh Y$ is the normalization of $\wh X$ in $A$.  Moreover $\wh Y \to \wh X$ is
split over a field extension $E/F$ if and only if the $G$-torsor $\T$ becomes trivial over $E$.
Conversely, given a $G$-torsor $\T = \Spec(A)$ over~$F$, the normalization
$\wh Y \to \wh X$ of $\wh X$ in $A$ is a finite morphism of normal projective
$T$-curves, with a $G$-action that extends that of its generic fiber $\T$ and
satisfies $\wh Y/G = \wh X$.  Thus Proposition~\ref{split_points} and
Corollary~\ref{split_patch} yield: 

\begin{cor} \label{Sha_P-finite_gp}
Let $\wh X$ be a connected normal projective $T$-curve with function field $F$.
Then for any finite constant group $G$, and any $\mc P\subset X$ satisfying Hypothesis~\ref{notn_branched}, 
the subsets $\Hom(\pi_1^\s(\wh X),G){/\!\sim}$, $\Sha_X(F,G)$, and $\Sha_{\mc P}(F,G)$
of $H^1(F,G)$ each
classify the $G$-Galois finite split covers of $\wh X$, and are thus equal.
\end{cor}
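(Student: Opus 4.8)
The plan is to show that the three subsets of $H^1(F,G)$ in question all coincide with the set of (unpointed) $G$-Galois finite split covers of $\wh X$, viewing the latter as a subset of $H^1(F,G) = \Hom(\Gal(F^\sep/F),G){/\!\sim}$ via the generic fiber construction. The key inputs are the correspondence, recalled just before the statement, between $G$-torsors over $F$ and finite normal $T$-curves $\wh Y \to \wh X$ with $G$-action (namely, $\wh Y$ is the normalization of $\wh X$ in the coordinate ring $A$ of the torsor), together with the fact that $\wh Y \to \wh X$ is split over a field extension $E/F$ if and only if the torsor becomes trivial over $E$. Combined with Proposition~\ref{split_points} (splitness as a cover is detected on all $F_P$ for $P\in X$) and Corollary~\ref{split_patch} (under Hypothesis~\ref{notn_branched}, splitness is detected on the $F_U$ and $F_P$ for $U\in\mc U$, $P\in\mc P$), this translates "being a split cover" into precisely the defining conditions of $\Sha_X(F,G)$ and $\Sha_{\mc P}(F,G)$ respectively.

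Concretely, first I would fix a class $\xi \in H^1(F,G)$, let $\T = \Spec(A)$ be a corresponding $G$-torsor over $F$, and let $\wh Y \to \wh X$ be the normalization of $\wh X$ in $A$, a finite morphism of normal projective $T$-curves with $\wh Y/G = \wh X$. Then $\xi \in \Sha_X(F,G)$ means exactly that $\T$ is trivial over $F_P$ for every $P\in X$, which by the translation above means $\wh Y \to \wh X$ is split over every $F_P$, which by Proposition~\ref{split_points} is equivalent to $\wh Y \to \wh X$ being a split cover. Likewise $\xi\in\Sha_{\mc P}(F,G)$ means $\T$ is trivial over $F_U$ for all $U\in\mc U$ and over $F_P$ for all $P\in\mc P$, which by the same translation and Corollary~\ref{split_patch} is again equivalent to $\wh Y\to\wh X$ being a split cover (here Hypothesis~\ref{notn_branched} is exactly what licenses the use of Corollary~\ref{split_patch}). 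Finally, $\xi \in \Hom(\pi_1^\s(\wh X),G){/\!\sim}$ means by definition that $\xi$ comes from a $G$-Galois split cover of $\wh X$, i.e.\ that the associated $\wh Y\to\wh X$ is a split cover --- so all three subsets of $H^1(F,G)$ are described by the same condition on $\wh Y$.

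For the assertion that each set "classifies the $G$-Galois finite split covers of $\wh X$", I would note that the generic-fiber functor gives a bijection between isomorphism classes of $G$-Galois finite normal covers of $\wh X$ and $H^1(F,G) = \Hom(\Gal(F),G){/\!\sim}$ (with inverse given by normalization), under which "split cover" on the left corresponds to membership in the common subset identified above on the right; restricting this bijection to split covers yields the claim. One should take care that the correspondence is between \emph{connected} covers and \emph{surjective} homomorphisms while general covers correspond to arbitrary homomorphisms, but since disjoint unions of split covers are split and the bijection is compatible with this, the unpointed classification goes through unchanged.

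The main obstacle, such as it is, is bookkeeping rather than mathematical depth: one must be careful that "split over $F_P$" (a condition on the torsor/field) matches "split over the point $P$" (a condition on the cover) for \emph{all} points of $X$, including the generic points of irreducible components. This is handled by the observations immediately preceding Proposition~\ref{split_points} (using Hensel's Lemma for closed points and \cite[Lemma~4.5]{HHK} for the $U\in\mc U$), so no new argument is needed --- one just has to invoke these correctly and check that the normalization $\wh Y$ is the same object whether one computes it fiber-by-fiber over the $\wh R_P$/$\wh R_U$ or globally. Given those facts, the proof is a short chain of equivalences.
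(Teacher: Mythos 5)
Your proposal is correct and follows essentially the same approach as the paper: the paper's ``proof'' of the corollary is precisely the paragraph preceding it, which sets up the same normalization correspondence between $G$-torsors over $F$ and finite normal $G$-covers $\wh Y \to \wh X$, and then deduces the equalities from Proposition~\ref{split_points} and Corollary~\ref{split_patch} exactly as you do. Your additional care about pointed vs.\ unpointed classification and about generic points of components is worth spelling out, but is handled in the paper by the remarks before Proposition~\ref{split_points} and the discussion at the start of Section~\ref{sec_splitcovers}, just as you note.
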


Our aim is to extend the statements of Corollary~\ref{Sha_P-finite_gp} to
arbitrary rational linear algebraic groups. Using the results of
Section~\ref{sec_rational}, we immediately obtain:

\begin{cor} \label{Sha_P-pi}
Under Hypothesis~\ref{notn_branched}, if $G$ is a rational linear algebraic group then
$\Sha_{\mc P}(F,G)$ is in natural bijection with the
pointed set $\Hom(\pi_1^\s(\wh X),G/G^0)/\!\sim$.  Hence $\Sha_{\mc P}(F,G)$ is
independent of the choice of $\mc P$.
\end{cor}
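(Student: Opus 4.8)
The plan is to obtain this as a direct consequence of two results already in place, namely Theorem~\ref{sha_inj} and Corollary~\ref{Sha_P-finite_gp}. First I would set $\bar G := G/G^0$ and recall, as noted at the start of Section~\ref{sec_rational}, that the rationality of $G$ over $F$ forces $\bar G$ to be a finite constant $F$-group scheme, and also guarantees the surjectivity of $G(L)\to\bar G(L)$ for every field extension $L/F$. In particular all the hypotheses invoked below are automatically satisfied.

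Next I would apply Theorem~\ref{sha_inj}: since we are working under Notation~\ref{notn_patches} (and Hypothesis~\ref{notn_branched} only strengthens the choice of $\mc P$), the natural map $\Sha_{\mc P}(F,G)\to\Sha_{\mc P}(F,\bar G)$ induced by $G\to\bar G$ is a bijection of pointed sets. Then, because $\bar G$ is a finite constant group and Hypothesis~\ref{notn_branched} holds, Corollary~\ref{Sha_P-finite_gp} identifies $\Sha_{\mc P}(F,\bar G)$ as a subset of $H^1(F,\bar G)$ with $\Hom(\pi_1^\s(\wh X),\bar G)/\!\sim$, the set classifying $\bar G$-Galois finite split covers of $\wh X$. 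Composing, one gets the asserted natural bijection
\[\Sha_{\mc P}(F,G)\ \iso\ \Hom\bigl(\pi_1^\s(\wh X),G/G^0\bigr)/\!\sim,\]
which is canonical and pointed because each of the two constituent bijections is.

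For the final clause, the independence of the choice of $\mc P$ is then immediate: the target $\Hom(\pi_1^\s(\wh X),G/G^0)/\!\sim$ depends only on $\wh X$ and $G$, not on the auxiliary set $\mc P$ (as long as $\mc P$ satisfies Hypothesis~\ref{notn_branched}). Alternatively, one can argue directly from the chain $\Sha_{\mc P}(F,G)\subseteq\Sha_{\mc P'}(F,G)$ for $\mc P\subseteq\mc P'$ displayed in Section~\ref{sec_splitcovers}, together with the fact that all these sets are in bijection with the same object. I do not expect any genuine obstacle here: the substantive work is already carried out in Theorem~\ref{sha_inj} (the analytic simultaneous-factorization argument of Proposition~\ref{small_factorization}) and in Corollary~\ref{Sha_P-finite_gp} (the translation between torsors under a finite constant group and split covers of $\wh X$); the only thing that remains is to check that these identifications compose to a well-defined, canonical bijection of pointed sets, which is routine.
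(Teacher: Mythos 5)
Your proposal is correct and takes essentially the same route as the paper: compose the bijection from Theorem~\ref{sha_inj} (reducing to $\bar G = G/G^0$, a finite constant group by rationality) with the one from Corollary~\ref{Sha_P-finite_gp}. One small caveat: your first argument for independence of $\mc P$ (that the target depends only on $\wh X$ and $G$) yields only abstract bijectivity of the various $\Sha_{\mc P}(F,G)$, not equality as subsets of $H^1(F,G)$; the alternative you offer --- compatibility of the bijections with the inclusions $\Sha_{\mc P}(F,G)\subseteq\Sha_{\mc P'}(F,G)$ --- is the precise argument the paper gives and is what is actually needed.
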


\begin{proof}
By Theorem~\ref{sha_inj}, the natural map $\Sha_{\mc P}(F,G) \to \Sha_{\mc
  P}(\wh X,G/G^0)$ is a bijection.  Since $G$ is rational, its quotient
$G/G^0$ is a constant finite group.
Corollary~\ref{Sha_P-finite_gp} then implies that $\Sha_{\mc P}(F,G)$ is
in natural bijection with $\Hom(\pi_1^\s(\wh X),G/G^0){/\!\sim}$.

The second assertion follows from the fact that if $\mc P \subseteq \mc P'$
then the bijections with $\Hom(\pi_1^\s(\wh X),G/G^0)/\!\sim$ are compatible
with the inclusion $\Sha_{\mc P}(F,G) \subseteq \Sha_{\mc P'}(F,G)$.
\end{proof}

To extend this result to $\Sha_X(F,G)$, thus generalizing
Corollary~\ref{Sha_P-finite_gp}, we first prove two preliminary results.

\begin{prop}\label{approx}
Let $\wh X$ be a normal projective curve over a complete discrete valuation
ring $T$, and let $\eta$ be the generic point of an irreducible component
$X_0$ of the closed fiber $X$.  Let $H$ be a variety over $F$.  If $H(F_\eta)$
is non-empty, then so is $H(F_U)$ for some non-empty affine open subset $U
\subset X_0$ that does not meet any other irreducible component of $X$.
\end{prop}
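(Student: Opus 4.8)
The plan is to spread out a point of $H(F_\eta)$ over an affine open neighborhood of $\eta$ in $X_0$, using that $F_\eta$ is a kind of ``limit'' of the fields $F_U$ as $U$ shrinks toward $\eta$. More precisely, I would first recall the structure of the ring $\wh R_\eta$: it is the completion of the local ring $R_\eta$ of $\wh X$ at the codimension-one point $\eta$, which is a discrete valuation ring with uniformizer $t$ and residue field the function field $\kappa(X_0)$ of $X_0$; and $F_\eta = \operatorname{Frac}(\wh R_\eta)$. On the other hand, for an affine open $U \subset X_0$ not meeting the other components, $\wh R_U$ is the $t$-adic completion of the ring of rational functions on $\wh X$ regular on $U$, and $F_U$ is its fraction field. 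The key point is that $\wh R_\eta$ is the direct limit (or filtered colimit, followed by $t$-adic completion) of the rings $\wh R_U$ as $U$ ranges over affine open neighborhoods of $\eta$ in $X_0$ avoiding the other components; equivalently, $\bigcup_U \wh R_U$ is $t$-adically dense in (and has the same fraction field issues controlled by) $\wh R_\eta$.

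The next step is to translate the hypothesis $H(F_\eta) \neq \varnothing$ into a concrete statement. A point of $H(F_\eta)$ is given by choosing an affine open $\operatorname{Spec}(B) \subseteq H$ (where $B$ is a finitely generated $F$-algebra) together with an $F$-algebra homomorphism $B \to F_\eta$. Writing $B = F[z_1,\dots,z_m]/(p_1,\dots,p_s)$, the $F_\eta$-point is a tuple $(b_1,\dots,b_m) \in F_\eta^m$ satisfying the equations $p_j(b) = 0$. Now I would clear denominators: since $H$ is a variety over $F$ and the data involves only finitely many elements of $F_\eta$, these finitely many elements, after multiplying by a suitable power of $t$, lie in $\wh R_\eta$; and since $\bigcup_U \wh R_U$ is $t$-adically dense in $\wh R_\eta$, I can use a Hensel/Newton-type approximation — or more simply, since we want an exact point, I would instead argue that each $b_i$, being an element of $F_\eta$, actually already lies in the fraction field of $\wh R_U$ for some sufficiently small $U$. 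This last assertion is the heart of the matter: an element of $F_\eta$ need not a priori come from any single $F_U$, because $\wh R_\eta$ is a completion, not a colimit. So I cannot simply ``descend'' the point directly.

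Thus the main obstacle is exactly this: passing from a point over the completed local ring $\wh R_\eta$ to a point over some $\wh R_U$ (or its fraction field). I expect the right tool here is the Artin Approximation Theorem, or Néron–Popescu desingularization, applied to the ring $A := \bigcup_U \wh R_U$ (the ring of functions regular on some open neighborhood of $\eta$ in $X_0$, $t$-adically "pre-completed") whose $t$-adic completion — or whose Henselization at the relevant ideal — surjects onto (or is closely tied to) $\wh R_\eta$. Concretely: the local ring of $\wh X$ at $\eta$ is a $t$-adically henselian-ish excellent DVR-like ring, $\wh R_\eta$ is its completion, and Artin Approximation says that a solution to a finite system of polynomial equations over the completion can be approximated arbitrarily well (in the $t$-adic topology) by a solution over the Henselization, hence over some étale neighborhood, hence — after possibly shrinking — over $\wh R_U$ for some affine open $U \ni \eta$ in $X_0$ disjoint from the other components. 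Since the paper explicitly states (in the outline of Section~\ref{sec_splitcovers}) that Artin Approximation is the tool used for exactly this comparison between $\Sha_X$ and $\Sha_{\mc P}$, I would invoke it here. Once one has an $F_U$-point of $H$ for such a $U$ — possibly after shrinking $U$ further to an affine open avoiding a proper closed subset where the chosen affine chart of $H$ is not reached — the proof is complete: $H(F_U) \neq \varnothing$ as desired.

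One technical point to be careful about: I need $U$ to be affine, non-empty, contained in $X_0$, and disjoint from the other irreducible components of $X$; all of these are open (or can be arranged by intersecting with the complement of the closed set $\overline{X \setminus X_0} \cup (\text{non-affine locus})$), and $\eta$ is the generic point of $X_0$, so it lies in every non-empty open subset of $X_0$. Hence shrinking $U$ while keeping $\eta \in U$ is always possible, and the approximation argument only ever forces us to shrink, never to enlarge. So the final statement follows.
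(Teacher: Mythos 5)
You identify the right obstacle (an element of $F_\eta$ need not lie in any single $F_U$, since $\wh R_\eta$ is a completion, not a colimit) and the right main tool --- Artin Approximation, justified by excellence --- and this is precisely what the paper does. However, your passage from ``a solution over some \'etale neighborhood of $\eta$'' to ``hence, after possibly shrinking, over $\wh R_U$'' is where the remaining real work lies, and mere shrinking does not accomplish it. The \'etale $R_\eta$-algebra $S$ produced by Artin Approximation must first be spread out to a finite-type \'etale $A$-algebra $S_0$ over an affine open $\Spec(A)\subset\wh X$ containing $\eta$; the inclusion $S\hookrightarrow\tilde R_\eta$ then gives a section of $\Spec(S_0)\to\Spec(A)$ at $\eta$, which extends to a section over some affine open $U\subset X_0$ of the closed fiber; and finally one needs a Hensel-type lifting lemma for smooth morphisms over the $t$-adically complete ring $\wh R_U$ (namely \cite[Lemma~4.5]{HHK}) to lift that section from $U$ to $\Spec(\wh R_U)$, producing an $F$-embedding $S_0\hookrightarrow\wh R_U\subset F_U$ that carries the approximate solution to an $F_U$-point of $H$. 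Nothing forces the \'etale cover $\Spec(S_0)\to\Spec(A)$ to admit a map to $\Spec(\wh R_U)$ just because $U$ is small --- this lifting lemma is what actually closes the argument. A secondary point: Artin Approximation should be applied directly to the excellent local ring $R_\eta$ (using $[\mathrm{EGA\,IV}]$-style excellence of $T$ and finite-typeness of $\wh X/T$), rather than to the ring $\bigcup_U\wh R_U$ you propose; the henselization $\tilde R_\eta$ of $R_\eta$ is the correct intermediate object sitting between $R_\eta$ and $\wh R_\eta$, whereas the relationship of $\bigcup_U\wh R_U$ to $\wh R_\eta$ is not what Artin's theorem directly addresses.
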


\begin{proof}
Since $\wh X$ is normal and $\eta$ is a point of codimension one, its local ring $R_\eta$ is a discrete valuation ring, say with uniformizer $s$.
The given $F_\eta$-point on $H$ lies
on an affine open subset $H'$ of $H$.  Here $H'$ is given in affine $n$-space
by polynomials $f_j(x_1,\dots,x_n) \in F[x_1,\dots,x_n]$, for $j=1,\dots,m$.
Let the coordinates of the $F_\eta$-point be $(\hat a_1,\dots,\hat a_n)$, with
$\hat a_i \in F_\eta$.  For some integer $c$, the elements $s^c \hat a_i$ all
lie in the complete local ring $\wh R_\eta$.  Setting $g_j(x_1,\dots,x_n) =
f_j(s^{-c}x_1,\dots,s^{-c}x_n)$ and replacing the polynomials $f_j$ by the
polynomials $g_j$, we may assume that each $\hat a_i$ lies in $\wh R_\eta$.
Also, multiplying the polynomials $g_j$ by appropriate powers of $s$, we may
assume that each $g_j$ lies in $R_\eta[x_1,\dots,x_n]$.

Since $T$ is a complete discrete valuation ring, it is excellent by
\cite[Scholie~7.8.3(iii)]{EGA4.2}; and since $\wh X$ is of finite type over
$T$, it follows from \cite[Scholie~7.8.3(ii)]{EGA4.2} that the local ring
$R_\eta$ is also excellent.  Hence the Artin Approximation Theorem
(\cite{artin}, Theorem~1.10) applies.  Thus there exist
elements $a_1,\dots,a_n$ of the henselization $\tilde R_\eta$ of $R_\eta$
that satisfy the polynomials $g_j$ and are congruent to the elements $\hat
a_i$ modulo $s$.  Since the henselization is a limit of \'etale neighborhoods,
the elements $a_i$ all lie in some \'etale neighborhood of $\eta$, i.e.\ in
some \'etale $R_\eta$-algebra $S$ that is contained in $\tilde R_\eta$.  This
containment defines a section of $\Spec(S) \to \wh X$ over $\eta$.  Since
$R_\eta$ is the local ring of $\wh X$ at $\eta$, and since $\Spec(S) \to \wh
X$ is of finite type, there exists an affine neighborhood $\Spec(A)$ of $\eta$
in $\wh X$ together with an \'etale $A$-algebra $S_0 \subset S$ such that $S_0
\otimes_A {R_\eta}$ is isomorphic to $S$ as an $R_\eta$-algebra, and which
contains $a_1,\dots,a_n$.  The section over $\eta$
defines a rational section over $X_0$ and hence a section over some
affine open neighborhood $U \subset X$ of $\eta$ in $X$.
After shrinking $U$ we may assume that it is contained in the subset of $X_0$ consisting of points that do not meet any other component of $X$.
After shrinking
$\Spec(A)$ we may assume that $U$ is its closed fiber.

Since $\Spec(S_0) \to \Spec(A)$ is \'etale, \cite[Lemma~4.5]{HHK} implies that the section over $U$ extends to a section over $\Spec(\wh
R_U)$.  That is, the inclusion of $S_0$ into $\tilde R_\eta \subset \wh
R_\eta$ defines an inclusion of $S_0$ into $\wh R_U$, and hence into
$F_U$.  The $S_0$-valued point $(a_1,\dots,a_n)$ of $H' \subseteq H$ thus
determines an $F_U$-point of $H$, as desired.
\end{proof}

\begin{cor} \label{Sha_union}
Under Notation~\ref{notn_fields}, let $G$ be any linear algebraic group over
the function field $F$.  Then $\Sha_X(F,G) = \bigcup_{\mc P} \Sha_{\mc
  P}(F,G)$, where the union ranges over all finite sets $\mc P$ of closed
points of $\wh X$ that satisfy Hypothesis~\ref{notn_branched}.
\end{cor}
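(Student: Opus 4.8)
The plan is to prove the two inclusions separately. The inclusion $\bigcup_{\mc P}\Sha_{\mc P}(F,G)\subseteq\Sha_X(F,G)$ is immediate from the chain of containments $\Sha_{\mc P}(F,G)\subseteq\Sha_X(F,G)$ recorded just after the definition of $\Sha_X(F,G)$, which holds for every admissible $\mc P$. So the real content is the reverse inclusion: given $\alpha\in\Sha_X(F,G)$, I will exhibit a single finite set $\mc P$ of closed points of $\wh X$ satisfying Hypothesis~\ref{notn_branched} with $\alpha\in\Sha_{\mc P}(F,G)$.

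First I would represent $\alpha$ by a $G$-torsor $\T$ over $F$. Membership in $\Sha_X(F,G)$ says precisely that $\T(F_P)\neq\varnothing$ for every (not necessarily closed) point $P$ of the closed fiber $X$; in particular $\T(F_{\eta_i})\neq\varnothing$ for the generic points $\eta_1,\dots,\eta_r$ of the irreducible components $X_1,\dots,X_r$ of $X$. The key tool is then Proposition~\ref{approx}, applied to the finite-type $F$-scheme $\T$ and to each $\eta_i$: it yields a non-empty affine open $U_i\subset X_i$, disjoint from the other components of $X$, with $\T(F_{U_i})\neq\varnothing$. Since $X_i$ is projective over the residue field while $U_i$ is affine and non-empty, $X_i\smallsetminus U_i$ is a non-empty finite set of closed points. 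I would then take $\mc P$ to be the union of the sets $X_i\smallsetminus U_i$ for $i=1,\dots,r$ together with the finite set of closed points at which $X$ is not unibranched. This $\mc P$ is non-empty, finite, and consists of closed points of $\wh X$; and it satisfies Hypothesis~\ref{notn_branched} (and in particular the conditions of Notation~\ref{notn_patches}, since any point at which distinct components of $X$ meet is non-unibranched).

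It then remains to verify $\alpha\in\Sha_{\mc P}(F,G)$, i.e.\ that $\T(F_\xi)\neq\varnothing$ for every $\xi\in\mc P\cup\mc U$. For $\xi=P\in\mc P$ this is clear, since $\mc P\subseteq X$ and $\alpha\in\Sha_X(F,G)$. For $\xi=U\in\mc U$ the argument is a short point-set computation: each $U_i$ is open in $X$ (indeed $U_i$ is open in $X_i$ and contained in the open subset $X\smallsetminus\bigcup_{j\neq i}X_j$ of $X$, which is itself contained in $X_i$), the $U_i$ are pairwise disjoint, and every point of $X\smallsetminus\mc P$ lies in some $U_i$ (otherwise it would lie in $X_i\smallsetminus U_i\subseteq\mc P$ for its component $X_i$). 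Hence $X\smallsetminus\mc P$ is the disjoint union of the open sets $U_i\cap(X\smallsetminus\mc P)$, so any connected $U\in\mc U$ is contained in a single $U_i$. Since a torsor with an $F_{U_i}$-point has an $F_U$-point whenever $U\subseteq U_i$ (as noted just after the chain of containments above), we conclude $\T(F_U)\neq\varnothing$, and hence $\alpha\in\Sha_{\mc P}(F,G)$.

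I expect the main obstacle — really the only non-formal point — to be the bookkeeping in the last paragraph: arranging $\mc P$ so that it simultaneously remains finite, obeys Hypothesis~\ref{notn_branched}, and forces each connected component of its complement into one of the good open sets $U_i$ produced by Proposition~\ref{approx}. Everything else reduces directly to that proposition and to the definitions of $\Sha_X$ and $\Sha_{\mc P}$; no new analytic or cohomological input is required.
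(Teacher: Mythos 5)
Your proof is correct and follows essentially the same approach as the paper's: one inclusion is immediate from the recorded containments, and for the other you apply Proposition~\ref{approx} at each generic point $\eta_i$ to produce affine opens $U_i$ over which the torsor has points, then take $\mc P$ to be (roughly) the complement of $\bigcup_i U_i$. The paper's own proof is terser: it simply sets $\mc P = X \smallsetminus \bigcup_i U_i$ and $\mc U = \{U_i\}$ and declares victory, without explicitly verifying that this $\mc P$ contains all points where $X$ fails to be unibranched, as Hypothesis~\ref{notn_branched} requires. (A self-node lying on a single component $X_i$ could in principle fall inside $U_i$; the paper leaves it to the reader to shrink $U_i$ or enlarge $\mc P$.) Your write-up handles this explicitly by throwing the non-unibranched points into $\mc P$ and then checking, by the point-set argument in your last paragraph, that each resulting connected component $U\in\mc U$ is contained in some $U_i$ and hence inherits an $F_U$-point from $F_{U_i}$. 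So the only genuine difference is that you supply the bookkeeping that the paper compresses, and in doing so you make visible a small step the paper glosses over. No new ideas are needed beyond Proposition~\ref{approx} and the elementary observations already in the text, and your use of each is correct.
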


\begin{proof}
Since each $\Sha_{\mc P}(F,G)$ is contained in $\Sha_X(F,G)$, it
suffices  to show that every element of $\Sha_X(F,G)$ lies in some
$\Sha_{\mc P}(F,G)$.  So consider a torsor representing an element of
$\Sha_X(F,G)$.
Let $\eta$ be the generic point of an irreducible component $X_0$ of $X$.
Then the given torsor is trivial over $F_\eta$ and so has an $F_\eta$-point.
By Proposition~\ref{approx}, the torsor has an $F_U$-point for some
affine Zariski open subset $U \subset X_0$ that does not meet any other
irreducible component of $X$.  Hence the given torsor is trivial over $F_U$.
In this way we obtain such $U$ on each
irreducible component $X_0$ of $X$.
Take $\mc U$ to be the set of these open subsets $U$, indexed by the
irreducible components of $X$; and take $\mc P$ to be the complement in $X$ of
the union of the sets $U$.  Then the given torsor represents an element in
$\Sha_{\mc P}(F,G)$.
\end{proof}

\begin{thm}\label{sha_fsplit}
Let $F$ be a one variable function field over the field of fractions of a
complete discrete valuation ring $T$, and let $\wh X$ be a normal model for
$F$ over $T$, with closed fiber $X$.  Let
$G$ be a rational linear algebraic group defined over $F$. Then
\[\Sha_{\wh X,X}(F,G) = \Sha_{\wh X,\mc P}(F,G)\]
for any set $\mc P$ that satisfies Hypothesis~\ref{notn_branched}. 
Both are in natural bijection, as pointed sets, with
$\Hom(\pi_1^\s(\wh X),G/G^0)/\!\sim$.
In particular, the natural map of pointed sets $\Sha_X(F, G)\to \Sha_X(F,G/G^0)$ is a bijection.
\end{thm}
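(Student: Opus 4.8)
The plan is to assemble the statement from results already in place, with no new geometric input. The ingredients are Corollary~\ref{Sha_P-pi} (which identifies $\Sha_{\mc P}(F,G)$ with the pointed set $\Hom(\pi_1^\s(\wh X),G/G^0)/\!\sim$ and asserts its independence of $\mc P$), Corollary~\ref{Sha_union} (which writes $\Sha_X(F,G)=\bigcup_{\mc P}\Sha_{\mc P}(F,G)$, the union running over finite $\mc P\subset X$ satisfying Hypothesis~\ref{notn_branched}), and Theorem~\ref{sha_inj} (the bijection $\Sha_{\mc P}(F,G)\to\Sha_{\mc P}(F,G/G^0)$). None of these is to be reproved.

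First I would check that, as a subset of $H^1(F,G)$, the set $\Sha_{\mc P}(F,G)$ is the same for all finite $\mc P\subset X$ satisfying Hypothesis~\ref{notn_branched}. For a nested pair $\mc P\subseteq\mc P'$ this is the content of the last paragraph of the proof of Corollary~\ref{Sha_P-pi}: the inclusion $\Sha_{\mc P}(F,G)\subseteq\Sha_{\mc P'}(F,G)$ is compatible with the two bijections onto the common pointed set $\Hom(\pi_1^\s(\wh X),G/G^0)/\!\sim$, hence is an equality. For arbitrary $\mc P_1,\mc P_2$ satisfying Hypothesis~\ref{notn_branched}, the set $\mc P_1\cup\mc P_2$ is finite and still contains every closed point at which $X$ is not unibranched, so it too satisfies Hypothesis~\ref{notn_branched}; applying the nested case twice gives $\Sha_{\mc P_1}(F,G)=\Sha_{\mc P_1\cup\mc P_2}(F,G)=\Sha_{\mc P_2}(F,G)$. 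Inserting this into Corollary~\ref{Sha_union}, every term of the union equals this one common subset, so $\Sha_{\wh X,X}(F,G)=\Sha_{\wh X,\mc P}(F,G)$ for each admissible $\mc P$; and Corollary~\ref{Sha_P-pi} then furnishes the natural bijection with $\Hom(\pi_1^\s(\wh X),G/G^0)/\!\sim$.

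For the last sentence of the theorem I would run the same argument for $\bar G:=G/G^0$. Since $G$ is rational, $\bar G$ is a finite constant group; its identity component is trivial, so $\bar G/\bar G^0=\bar G$. Hence $\Sha_X(F,\bar G)=\Sha_{\mc P}(F,\bar G)$, in natural bijection with $\Hom(\pi_1^\s(\wh X),\bar G)/\!\sim$. The map $\Sha_X(F,G)\to\Sha_X(F,\bar G)$ of the statement is the restriction of the quotient map $H^1(F,G)\to H^1(F,\bar G)$; it is well defined because for every $P\in X$ the square relating $H^1(F,-)$ and $H^1(F_P,-)$ commutes, so a class trivial in all $H^1(F_P,G)$ goes to a class trivial in all $H^1(F_P,\bar G)$. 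Because $\Sha_X$ and $\Sha_{\mc P}$ coincide on both sides, this map is literally the map $\Sha_{\mc P}(F,G)\to\Sha_{\mc P}(F,\bar G)$, which is a bijection by Theorem~\ref{sha_inj}.

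I do not expect a real obstacle here: all the analytic work (Proposition~\ref{small_factorization}, Theorem~\ref{sha_inj}) and the approximation argument (Proposition~\ref{approx}, Corollary~\ref{Sha_union}) are already done, and what remains is bookkeeping. The one point demanding care is the role of Hypothesis~\ref{notn_branched}: it is needed both for Corollary~\ref{Sha_P-pi} (which rests on Corollary~\ref{split_patch}, and that can fail for a node not in $\mc P$) and to match the index set of the union in Corollary~\ref{Sha_union}; the argument would not go through with the weaker Notation~\ref{notn_patches} alone. I would also remark that $\Sha_X(F,G)$ a priori depends on the model $\wh X$ --- the theorem fixes $\wh X$, and independence of the model is only taken up later, in Section~\ref{sec_splitexten}.
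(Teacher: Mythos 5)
Your proposal is correct and follows essentially the same route as the paper, which also derives the result by combining Corollary~\ref{Sha_P-pi}, Corollary~\ref{Sha_union}, and Theorem~\ref{sha_inj}. Your extra step verifying independence of $\Sha_{\mc P}(F,G)$ across non-nested choices of $\mc P$ via $\mc P_1\cup\mc P_2$ is a reasonable elaboration of a point the paper passes over silently.
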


\begin{proof}
Since $\Sha_{\wh X,\mc P}(F,G)$ is independent of the choice of $\mc P$ by
Corollary~\ref{Sha_P-pi}, the first assertion is immediate from
Corollary~\ref{Sha_union}. The second and third statements then follow by Corollary~\ref{Sha_P-pi} and Theorem~\ref{sha_inj}.
\end{proof}

\section{The Reduction Graph}\label{sec_reduction}
Split covers of a $T$-curve $\wh X$ can be understood in terms of a
combinatorial object, viz.\ the reduction graph $\Gamma$ associated to the
closed fiber of $\wh X$.  Using this, we obtain a more explicit description of
$\Sha_X(F, G)=\Sha_{\mc P}(F,G) $ in the case of rational linear algebraic groups.  This description
(Corollary~\ref{Sha_P-graph}) shows that $\Sha_X(F,G)$ is finite, and it provides a
necessary and sufficient condition for it to vanish and thus for the
corresponding local-global principle to hold (still under the assumption that
$G$ is rational). This condition is given in
terms of the rational group $G$ and the reduction graph $\Gamma$.

We saw in Section~\ref{setup} how a finite subset of points ${\mc P}$
as in Notation~\ref{notn_patches} yields a factorization inverse system of
fields, and in Section~\ref{patching} how such systems relate
to (multi-)graphs.
In the situation of Notation~\ref{notn_patches}, the associated graph $\Gamma =
\Gamma(\wh X,\mc P)$, 
which we call the {\em reduction graph} of $(\wh X,\mc P)$, 
has vertices consisting of the elements of $\mc P
\cup \mc U$, and edges consisting of the elements of $\mc B$.   The vertices
of an edge $\wp$ are $P$ and $U$, where $\wp$ is a branch of $X$ at $P \in \mc
P$ lying on the closure of $U \in \mc U$.  Thus each edge connects a vertex in
$\mc P$ to a vertex in $\mc U$.  Graphs with this property (namely that the vertices are
partitioned into two disjoint sets such that each edge has a vertex in each
set) are called {\em bipartite}.
We may regard the graph either as a
combinatorial object or (as we will do more often) as a topological object ---
viz.\ a one-dimensional simplicial complex.  Note that the graph is connected
as a topological space because $X$ is connected.

\begin{remark} \label{graph_remark}
\renewcommand{\theenumi}{\alph{enumi}}
\begin{enumerate}
\item \label{barycentric}
Classically, there is another (multi-)graph that is associated to the above situation
in the special case that $X$ has only ordinary double points, and $\mc P$ is
the set of double points.  Namely, the edges are given by the points of $\mc
P$; the vertices are given by the irreducible components of the closed fiber;
and the vertices of a given edge $P \in \mc P$ are the components of $X$ that
pass through the point $P$.  (See~\cite[p.~86]{DM},
\cite[Definition~3.17]{Liu}, and \cite[Definition~II.2.3]{Saito}.)
In this special case, it is easy to see that this graph is homotopic, as a
topological space, to our graph $\Gamma$.
(In fact, our graph is the barycentric subdivision of the classical reduction
graph.)  Hence they have the same fundamental groups and cohomology.

\item \label{blow-up_tree}
If $(\wh X,\mc P)$ is as in Hypothesis~\ref{notn_branched} and if $\mc Q$ is a
  finite set of closed points that contains $\mc P$, then the graphs
  $\Gamma(\wh X,\mc P)$ and $\Gamma(\wh X,\mc Q)$ are homotopic, with the
  latter differing from the former by having additional terminal vertices 
  and edges corresponding to additional points and their branches.  
(This requires the additional assumption of
  Hypothesis~\ref{notn_branched}, since self-intersecting components can
  introduce loops into the graph.)
  Moreover if $\wh X' \to \wh X$ is a blow-up of $\wh X$ at a
  closed point at which $\wh X$ is regular,
  and if $\mc P' \subset \wh X'$ is a finite set of closed points over $\mc P$ that
contains the non-unibranched points of the closed fiber, then the graphs
$\Gamma(\wh X,\mc P)$ and $\Gamma(\wh X',\mc P')$ are homotopic.
\end{enumerate}
\end{remark}

Under Hypothesis~\ref{notn_branched}, if $h:\wh Y \to \wh X$ is a split cover
with closed fiber $Y$, then we may consider the finite set $
h^{-1}(\mc P) \subset Y$.  In this situation, $(\wh Y, h^{-1}(\mc P))$ also
satisfies Hypothesis~\ref{notn_branched}, and we obtain a graph $
\Gamma':=\Gamma(\wh Y,h^{-1}(\mc P))$.  There is a natural map of graphs
$\Gamma' \to  \Gamma $, and this is a finite covering space.  Conversely,
every finite covering space of $\Gamma$ has the structure of a graph, by
taking as the set of vertices the inverse image of the vertices of~$\Gamma$.

\begin{prop} \label{graph_corresp}
With $(\wh X,\mc P)$ as in Hypothesis~\ref{notn_branched}, the above
association defines a lattice isomorphism between  (connected) split covers
of $\wh X$ and (connected) finite covering spaces of $\Gamma = \Gamma(\wh
X,\mc P)$, which preserves the degree of the covers.
\end{prop}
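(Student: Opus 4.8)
The plan is to establish a bijective correspondence in both directions and then check that it respects connectedness, degree, and the lattice (containment) structure. First I would go from a split cover to a covering space of the graph. Given a split cover $h\colon \wh Y \to \wh X$, Hypothesis~\ref{notn_branched} is inherited by $(\wh Y, h^{-1}(\mc P))$: indeed $h$ is \'etale, so the non-unibranched closed points of the closed fiber $Y$ of $\wh Y$ all lie over non-unibranched points of $X$, hence over $\mc P$, and so $h^{-1}(\mc P)$ contains all of them. Thus $\Gamma' := \Gamma(\wh Y, h^{-1}(\mc P))$ is defined, and the natural map $\Gamma' \to \Gamma$ is the claimed covering space. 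I would verify it is a covering by a local check at each vertex and edge: over a vertex $P \in \mc P$, since $h$ is split over $P$ the fiber consists of $\deg h$ copies of $P$; over a vertex $U \in \mc U$, Corollary~\ref{split_patch} (more precisely the argument in its forward direction) shows $h^{-1}(U)$ is a disjoint union of $\deg h$ copies of $U$; and over a branch $\wp$ the fiber consists of $\deg h$ branches, with the incidence relations preserved. This shows $\Gamma' \to \Gamma$ is a degree-$\deg h$ covering map of graphs.

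Next I would construct the inverse association. Given a finite covering space $\gamma\colon \Delta \to \Gamma$, I give $\Delta$ the structure of a bipartite graph by declaring its vertices over $\mc P$ to be ``$\mc P$-type'' and those over $\mc U$ to be ``$\mc U$-type''; each edge of $\Delta$ then joins a vertex of each type, since this holds in $\Gamma$. To build a split cover, I would first use Remark~\ref{split_remark}\ref{bijection}: split covers of $\wh X$ correspond to finite morphisms to $X$ that are split over every point, so it suffices to build such a morphism $Y \to X$. I would assemble $Y$ by gluing: over each $U \in \mc U$ take the trivial cover $Y_U := \gamma^{-1}(U) \times U \to U$ (a disjoint union of copies of $U$, one for each vertex of $\Delta$ over $U$), and over each $P \in \mc P$ take a trivial cover with one copy of $\Spec\kappa(P)$ for each vertex of $\Delta$ over $P$; then glue these along the branches, using the edges of $\Delta$ over each branch $\wp$ to specify which branch of which component of $Y_U$ is identified with which point lying over $P$. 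Because $\Delta \to \Gamma$ is a covering, at each vertex the edges upstairs biject with the edges downstairs, so the gluing data is consistent and produces a finite morphism $Y \to X$ that is split over every closed point of $\mc P$, over every point of each $U$, and hence over every point of $X$ by Proposition~\ref{split_points}. Then Remark~\ref{split_remark}\ref{bijection} lifts this uniquely to a split cover $\wh Y \to \wh X$.

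Then I would check the two constructions are mutually inverse. Starting from a split cover $h$ and forming $\Gamma'$ and then the reconstructed cover: the components of $h^{-1}(U)$ are exactly the copies of $U$ indexed by the vertices of $\Gamma'$ over $U$ (again by the Corollary~\ref{split_patch} argument), the points over $P$ are indexed by the vertices over $P$, and the branch incidences recover the edges of $\Gamma'$, so we recover $h$ up to isomorphism. Starting from a covering $\gamma\colon \Delta \to \Gamma$ and building $\wh Y \to \wh X$, the reduction graph $\Gamma(\wh Y, h^{-1}(\mc P))$ has vertices the components of $h^{-1}(U)$ and the points over $\mc P$, and edges the branches — by construction these are in bijection with the vertices and edges of $\Delta$, compatibly, so $\Gamma(\wh Y, h^{-1}(\mc P)) \cong \Delta$ over $\Gamma$. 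Finally, the degree is preserved by construction on both sides; connectedness of $\wh Y$ is equivalent to connectedness of $Y$ (since $T$ is complete, as used in the proof of Proposition~\ref{map_to_line}), which in turn is equivalent to connectedness of $\Delta$ because $Y$ is built by gluing pieces indexed by the vertices and edges of $\Delta$ with the same incidence pattern; and the lattice structure (i.e.\ intermediate covers, or equivalently the partially ordered set of subcovers) is respected because subcovers on either side correspond to quotient coverings compatible with the construction. The main obstacle I expect is making the gluing construction of $Y$ from $\Delta$ fully precise — in particular checking that the glued scheme is again a normal projective $T$-curve and that ``split over every point'' is genuinely satisfied (not just over closed points), which is where Proposition~\ref{split_points} and the normality of $\wh Y$ do the real work; the homotopy remarks in Remark~\ref{graph_remark} are not needed here, only the combinatorial covering-space bookkeeping.
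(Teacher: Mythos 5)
Your proof takes a genuinely different route from the paper's. The paper argues via \emph{field patching}: by Proposition~\ref{map_to_line} there is a finite morphism to $\mbb P^1_T$ with $\mc P = f^{-1}(\infty)$, and then \cite[Theorem~7.1(iv)]{HH:FP} identifies finite separable commutative $F$-algebras (equivalently, normal curves finite and generically separable over $\wh X$) with systems of such algebras over the $F_P$, $F_U$, glued over the $F_\wp$. Combined with Corollary~\ref{split_patch}, a split cover of degree $n$ is encoded exactly by a family of permutations $\sigma_{\xi,\wp}\in S_n$ indexed by incidences $(\xi,\wp)$ of $\Gamma$, up to relabeling; and a degree-$n$ covering space of $\Gamma$ is encoded by the same data. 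The lattice structure and connectedness are then read off from the permutation data, with no gluing of schemes required. By contrast, you go from a covering space $\Delta \to \Gamma$ to a split cover by a direct geometric gluing of trivial covers over the $U$'s and the points of $\mc P$, and then invoke Remark~\ref{split_remark}(\ref{bijection}). This is very much the strategy sketched in the Remark that immediately follows the proposition in the paper.

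However, that sketch is stated there only for $\wh X$ \emph{regular}, and it begins with a preliminary blow-up at points of $\mc P$ to arrange that no two branches at a point lie on the same irreducible component of the closed fiber. That reduction is not cosmetic, and your proof has a real gap at exactly this spot. The gluing you describe is along \emph{branches}, i.e.\ along height-one primes of $\wh R_P$ containing $t$; these are not Zariski overlaps inside $X$. If some $U$ meets a point $P$ along two distinct branches (the standard example being a nodal component with $U=X_0\smallsetminus\{P\}$ and $P$ the node), then any Zariski neighborhood $U\cap V_P$ is irreducible and cannot see the two branches separately, so there is no consistent way to specify different matchings along the two branches by Zariski gluing. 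What you need at this point is a formal/field patching theorem (precisely what the paper's main proof cites) or the preliminary blow-up from the Remark; and the blow-up step in turn relies on $\wh X$ being regular, a hypothesis the proposition does not grant (it only assumes normal). So either you must invoke the patching theorem explicitly — at which point your argument converges with the paper's — or you must first prove the result for a regular model after blowing up and then transfer back, which requires additional justification not present in your write-up. You did flag the gluing as "the main obstacle," but the issue is not merely showing the glued object is a normal projective curve or that splitness holds over non-closed points (Proposition~\ref{split_points} handles that once you have the cover of $X$); the issue is that the gluing itself is not well-defined as a Zariski-local construction in the generality you work in.
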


\begin{proof}
We show that these objects are each classified by the same combinatorial data.

By Proposition~\ref{map_to_line}, there is a finite morphism $\wh X \to \mbb P^1_T$ such that $\mc P = f^{-1}(\infty)$.  
So by
\cite[Theorem~7.1(iv)]{HH:FP} in the context of \cite[Theorem~6.4]{HH:FP},
giving a finite separable commutative $F$-algebra $A$ is equivalent to giving its base
change to each of the fields $F_P$ (for $P \in \mc P$) and $F_U$ (for $U \in
\mc U$) together with isomorphisms between the algebras that they induce over
the fields $F_\wp$ (for $\wp \in \mc B$ a branch at $P$ lying on the closure
of~$U$).
But giving such an $F$-algebra $A$ is also equivalent to giving
a normal projective $T$-curve $\wh Y$ together with a finite generically separable morphism to $\wh X$ (where $A$ is the generic fiber of $\wh Y$, and $\wh Y$ is the normalization of $\wh X$ in $A$).  
Thus by Proposition~\ref{split_patch},
giving a split cover of $\wh X$ of degree
$n$ is equivalent to giving an $F_{\wp}$-isomorphism of $(\prod_{i=1}^n F_\xi)
\otimes_{F_\xi} F_\wp$ with $\prod_{i=1}^n F_\wp$ for each pair $\xi
\in \mc P \cup \mc U$ and $\wp \in \mc B$
such that $\wp$ is a branch at $\xi$
(if $\xi \in \mc P$) or on the closure of $\xi$ (if $\xi \in \mc U$).  That
is, we are given such an isomorphism for each pair $(\xi,\wp)$ such that
$\xi$ is a vertex of the edge $\wp$ in the reduction graph $\Gamma$.  This is the same as giving
a family of permutations $\sigma_{\xi,\wp} \in S_n$ indexed by such pairs, up
to equivalence coming from renumbering the $n$ copies of each $F_\xi$ and of
each $F_\wp$. That is, a family of permutations
$\{\sigma_{\xi,\wp}\}_{\xi,\wp}$ is equivalent to the family $\{\tau_\xi^{-1}
\sigma_{\xi,\wp} \tau_\wp\}_{\xi,\wp}$ for each choice of permutations
$\tau_\xi, \tau_\wp \in S_n$ indexed by the elements $\xi \in \mc P \cup \mc
U$ and $\wp \in \mc B$.

Meanwhile, to give a covering space of $\Gamma$ having degree $n$ is to give a
graph $\Gamma'$ whose vertex set is $(\mc P \cup \mc U) \times \{1,\dots,n\}$
and whose edge set is $\mc B \times \{1,\dots,n\}$.  Here $\Gamma'$ is
determined by specifying, for each edge $(\wp,i)$, the corresponding vertices
$(P,j)$ and $(U,\ell)$, where $P,U$ correspond to the vertices of the edge $\wp$
in $\Gamma$, and where $1 \le i,j,\ell \le n$.  For each such pair $(\wp,P)$,
the map $i \mapsto j$ is an element of $S_n$; and similarly for $(\wp,U)$ and
$i \mapsto \ell$.  Again we have equivalence corresponding to renumbering the
indices.
 Associating to each split cover of $\wh X$ the covering space of $\Gamma$ with
the same set of permutations thus gives a bijection between the split covers
of $\wh X$ and the covering spaces of $\Gamma$.  This is in fact the same map
$\wh Y \mapsto \Gamma'$ given before the proposition, by the definition of the
graph associated to the closed fiber.

A split cover $\wh Y \to \wh X$ of degree $n$ factors through a split cover
$\wh Y' \to \wh X$ of degree $m$ if and only if for some representative
families of permutations  $\{\sigma_{\xi,\wp}\}_{\xi,\wp}$ and
$\{\sigma'_{\xi,\wp}\}_{\xi,\wp}$ for these covers there is an $n/m$-to-one
map $\phi:\{1,\dots,n\}\to\{1,\dots,m\}$ such that
$\phi\sigma_{\xi,\wp}=\sigma'_{\xi,\wp}\phi$ for all $(\xi,\wp)$.  The same is
true for the covering spaces of $\Gamma$.  So the above bijection of lattices
preserves the property of one cover dominating another.  As a result, it
preserves meet and join in the lattices, and so it is a lattice isomorphism.

Finally, a split cover of $\wh X$, or a covering space of $\Gamma$, is
connected if and only if the set of associated permutations is transitive.  So
the above bijection carries connected split covers to connected covering
spaces.
\end{proof}

\begin{remark}
\renewcommand{\theenumi}{\alph{enumi}}
\begin{enumerate}
\item 
Since Proposition~\ref{graph_corresp} concerns finite covers, it could also
have been proven using other forms of patching (e.g.\ formal patching), rather
than using patching over fields.  See the introductory comments in Section~7.2
of~\cite{HH:FP}.
\item
In the special case that $\wh X$ is regular, Proposition~\ref{graph_corresp} can also be proven using Remark~\ref{split_remark}(\ref{bijection}).
Namely, after blowing up $\wh X$ at points of $\mc P$, we obtain a model with the property that no two branches at any point can lie on the same irreducible component of the closed fiber.  
Since $\wh X$ is regular, we may replace $\wh X$ by this model, using the last part of Remark~\ref{graph_remark}(\ref{blow-up_tree}).  
Given a finite covering space $\Gamma' \to \Gamma$, it is now easy to construct a cover of $X$ that is split over every point and which induces $\Gamma' \to \Gamma$, by constructing this cover Zariski locally over $X$.  
As a result we obtain the inverse of the association considered in Proposition~\ref{graph_corresp}, via Remark~\ref{split_remark}(\ref{bijection}).
\end{enumerate}
\end{remark}

Recall that $\pi_1^{\s}(\wh X)$ is the inverse limit of the Galois groups of Galois split covers of $\wh X$ as introduced in Section~\ref{sec_splitcovers}. Directly from Proposition~\ref{graph_corresp}, we obtain:
\begin{cor} \label{graph_pi1}
Under Hypothesis~\ref{notn_branched}, $\pi_1^{\s}(\wh X)$ is naturally
isomorphic to the profinite completion of $\pi_1(\Gamma(\wh X,\mc P))$.
Thus it is a free profinite group on finitely many generators.
\end{cor}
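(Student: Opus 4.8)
The plan is to derive the corollary formally from Proposition~\ref{graph_corresp}, together with the classical description of the fundamental group of a finite graph via its finite covering spaces.

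First I would recall that, by its definition, $\pi_1^\s(\wh X)$ is the inverse limit of the Galois groups $\Gal(\wh Y/\wh X)$ over the pointed connected Galois split covers $h\colon\wh Y\to\wh X$, with transition maps induced by domination of covers. So to compute $\pi_1^\s(\wh X)$ it suffices to know the connected Galois split covers of $\wh X$, their degrees, their deck groups, and the domination relation among them. Proposition~\ref{graph_corresp} already supplies a degree-preserving lattice isomorphism between the connected split covers of $\wh X$ and the connected finite covering spaces of $\Gamma := \Gamma(\wh X,\mc P)$, compatible with domination. The only additional observation I need is that this correspondence matches \emph{Galois} split covers of $\wh X$ with \emph{Galois} finite covering spaces of $\Gamma$, with canonically isomorphic deck groups. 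This comes not from the statement but from the \emph{proof} of Proposition~\ref{graph_corresp}: there, a degree $n$ split cover of $\wh X$ and a degree $n$ covering space of $\Gamma$ are each shown to be classified by the very same datum --- a family of permutations in $S_n$ indexed by the incident vertex--edge pairs of $\Gamma$, modulo the same renumbering equivalence --- which (after fixing a spanning tree, whose choice is absorbed into the renumbering equivalence) is the same as a conjugacy class of transitive homomorphisms $\pi_1(\Gamma)\to S_n$. For such data, a connected degree $n$ cover is Galois exactly when the image of the homomorphism is a regular subgroup of $S_n$, and that image is then its deck group; since the datum is literally the same on the two sides, Galois objects, deck groups, and domination all correspond.

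Granting this, I would finish as follows. The deck groups of the connected Galois finite covering spaces of the connected finite graph $\Gamma$ form an inverse system whose limit is, by definition, the profinite completion $\wh{\pi_1(\Gamma)}$: these covering spaces correspond to the finite-index normal subgroups $N\trianglelefteq\pi_1(\Gamma)$, with deck group $\pi_1(\Gamma)/N$, and $\wh{\pi_1(\Gamma)}=\varprojlim_N\pi_1(\Gamma)/N$. Combined with the previous paragraph, this yields a natural isomorphism $\pi_1^\s(\wh X)\cong\wh{\pi_1(\Gamma(\wh X,\mc P))}$ (canonical up to inner automorphism, the ambiguity being the usual one from the choice of base point). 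For the last assertion, a finite connected graph is homotopy equivalent to a wedge of $r$ circles with $r=b_1(\Gamma)=\#\mc B-\#(\mc P\cup\mc U)+1$, so $\pi_1(\Gamma)$ is free of finite rank $r$ and $\wh{\pi_1(\Gamma)}$ is free profinite on $r$ generators. Since $\pi_1^\s(\wh X)$ obviously does not depend on $\mc P$, this incidentally reconfirms that the rank is independent of $\mc P$, in agreement with Remark~\ref{graph_remark}(\ref{blow-up_tree}).

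The main (and essentially only) obstacle is the bridging claim in the second paragraph, namely that the lattice isomorphism of Proposition~\ref{graph_corresp} respects the Galois/deck-group structure. I expect to handle this by appealing to the common permutation-theoretic classification exhibited inside the proof of Proposition~\ref{graph_corresp}, rather than by any abstract lattice argument; after that, the remainder is the routine computation of $\pi_1$ of a finite graph through its finite covers.
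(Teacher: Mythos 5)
Your proof is correct and takes essentially the same route as the paper, which simply asserts that the corollary follows ``directly'' from Proposition~\ref{graph_corresp} and that the last clause follows from the freeness of graph fundamental groups. The one place you go beyond the paper's terse treatment is in isolating and justifying the bridging fact that the lattice isomorphism of Proposition~\ref{graph_corresp} matches Galois covers to Galois covers with canonically isomorphic deck groups (since the stated proposition only promises a degree-preserving lattice isomorphism); your appeal to the common classification by permutation data inside the proof of that proposition is exactly the right way to close this, and it makes explicit what the paper leaves implicit.
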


Here the last assertion follows from the fact that the fundamental group of
any graph is a free group on finitely many generators.

As a consequence of Corollary~\ref{graph_pi1}, $\pi_1^{\s}(\wh X)$ is trivial if and only if the graph
$\Gamma(\wh X,\mc P)$ is a tree.
As another consequence, the fundamental group of this graph is independent of
the choice of the set $\mc P$, and may be denoted by $\pi_1(\Gamma(\wh X))$.  

\begin{cor} \label{Sha_P-graph}
Let $G$ be a rational linear algebraic group. Then
$\Sha_X(F,G)$ is finite, and is in natural
bijection with the pointed set $\Hom(\pi_1(\Gamma(\wh X)),G/G^0)/\!\sim$.  
It is trivial if and only if $G$ is connected or
$\pi_1(\Gamma(\wh X))=1$.
Moreover, for any $\mc P$ as in Hypothesis~\ref{notn_branched}, the same
holds for $\Sha_{\mc P}(F,G)$.
\end{cor}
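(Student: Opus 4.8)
The plan is to assemble this from Theorem~\ref{sha_fsplit} and Corollary~\ref{graph_pi1}, together with elementary facts about homomorphisms into finite groups; no serious new input is needed. Fix a set $\mc P$ satisfying Hypothesis~\ref{notn_branched}. By Theorem~\ref{sha_fsplit}, since $G$ is rational we have $\Sha_{\wh X,X}(F,G) = \Sha_{\wh X,\mc P}(F,G)$, and both are in natural bijection, as pointed sets, with $\Hom(\pi_1^{\s}(\wh X),G/G^0){/\!\sim}$, where the equivalence relation is conjugation by $G/G^0$. This equality already reduces the ``Moreover'' clause to the statement for $\Sha_X(F,G)$, so it suffices to treat the latter.

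Next I would pass from $\pi_1^{\s}(\wh X)$ to the graph. By Corollary~\ref{graph_pi1}, $\pi_1^{\s}(\wh X)$ is the profinite completion of the ordinary fundamental group of the reduction graph, which is free on finitely many generators and, as observed after that corollary, independent of the choice of $\mc P$, so may be written $\pi_1(\Gamma(\wh X))$. Since $G$ is rational, $G/G^0$ is a finite constant group, and a continuous homomorphism from a profinite completion to a finite group is the same thing as an abstract homomorphism from the original group; hence $\Hom(\pi_1^{\s}(\wh X),G/G^0) = \Hom(\pi_1(\Gamma(\wh X)),G/G^0)$, compatibly with the $G/G^0$-conjugation action. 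Combining with the previous paragraph yields the asserted natural bijection $\Sha_X(F,G) \cong \Hom(\pi_1(\Gamma(\wh X)),G/G^0){/\!\sim}$. Finiteness is then immediate: if $\pi_1(\Gamma(\wh X))$ is free on $r$ generators, then $\Hom(\pi_1(\Gamma(\wh X)),G/G^0)$ is in bijection with $(G/G^0)^r$, which is finite, so a fortiori its quotient by conjugation is finite.

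Finally I would settle the triviality criterion. The pointed set $\Hom(\pi_1(\Gamma(\wh X)),G/G^0){/\!\sim}$ is a single point exactly when the only homomorphism $\pi_1(\Gamma(\wh X)) \to G/G^0$ is the trivial one: conjugation cannot merge the trivial homomorphism with a nontrivial one, since conjugates of the trivial homomorphism are trivial while conjugates of a nontrivial homomorphism are nontrivial. If $G$ is connected then $G/G^0 = 1$, and if $\pi_1(\Gamma(\wh X)) = 1$ (i.e.\ $\Gamma$ is a tree) then likewise only the trivial homomorphism exists; conversely, if $G/G^0 \neq 1$ and $\pi_1(\Gamma(\wh X))$ is free on $r \ge 1$ generators, then sending one free generator to a nontrivial element of $G/G^0$ and the rest to the identity produces a nontrivial homomorphism, so the pointed set has more than one element. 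Hence $\Sha_X(F,G)$ is trivial if and only if $G$ is connected or $\pi_1(\Gamma(\wh X)) = 1$, and the same holds for $\Sha_{\mc P}(F,G)$ by the equality in the first paragraph. There is essentially no obstacle here beyond bookkeeping; the one point that deserves care is this last step, distinguishing ``trivial as a pointed set'' from ``$\Hom$ itself is a singleton,'' which the conjugation-invariance remark handles.
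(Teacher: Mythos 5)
Your proof is correct and takes essentially the same approach as the paper's: both derive the natural bijection from Theorem~\ref{sha_fsplit} and Corollary~\ref{graph_pi1} via the universal property of profinite completion, and both deduce finiteness and the triviality criterion from elementary facts about homomorphisms out of a finitely generated free group, with the observation that passing to the conjugation quotient does not change whether the pointed set is a singleton.
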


\begin{proof}
Theorem~\ref{sha_fsplit} and Corollary~\ref{graph_pi1} together provide the
asserted natural bijection,
using that $\Hom(\Pi,H)$ is in natural bijection with $\Hom(\wh \Pi,H)$ for any finite group $H$ and any discrete group $\Pi$ with profinite completion $\wh \Pi$. 
The set $\Hom(\pi_1(\Gamma(\wh X)),G/G^0)/\!\sim$ is finite because $G/G^0$ is finite and $\pi_1(\Gamma(\wh X))$ is finitely generated. This gives the first assertion.

The set $\Hom(\pi_1(\Gamma(\wh X)),G/G^0)/\!\sim$ is
trivial exactly when the set $\Hom(\pi_1(\Gamma(\wh X)),G/G^0)$ is
trivial, since the only element conjugate to the identity is itself. As
observed above, $\pi_1(\Gamma(\wh X))$ is a free group of finite rank.
But the set of homomorphisms from such a free group to a finite (constant)
group is trivial if and only if the free group or the constant group is
trivial. So the second assertion now follows from the first assertion.
The last assertion follows again from Theorem~\ref{sha_fsplit}.
\end{proof}

\begin{remark}
\renewcommand{\theenumi}{\alph{enumi}}
\begin{enumerate}
\item 
In the special case that $\wh X$ is smooth over $T$, the reduction graph is
trivial and so Corollary~\ref{Sha_P-graph} says that $\Sha_{\wh X,\mc P}(F,G)$
is trivial (i.e.\ the associated local-global principle is satisfied) even if
$G$ is a disconnected rational group. The corresponding simultaneous
factorization result also appeared in \cite[Theorem~3.4]{HHK}, where
connectivity was not in fact needed or used.
\item
Although the group $\pi_1^{\s}(\wh X)$ is finitely generated, the group $\pi_1(\wh X)$ (of which it is a quotient) in general is not.  In fact, that group is isomorphic to $\pi_1(X)$, which has $\Gal(k)$ as a quotient.
\end{enumerate}
\end{remark}
\section{Split Extensions} \label{sec_splitexten}

In this section we obtain a Galois-theoretic description of 
$\pi_1^\s(\wh X)$, given just in terms of $F$ and the discrete valuations on $F$.  
For this, we need to require that the model $\wh X$ is regular; such models always exist (see \cite{Abh} or \cite{Lip}).  As a consequence of this description, 
$\pi_1^\s(\wh X)$ is independent of the choice of regular model $\wh X$ of $F$; and hence so is $\Sha_{\wh X,X}(F,G)$, in the case that $G$ is rational.  Indeed, for such models 
$\wh X$ and groups $G$, we obtain a description of $\Sha_{\wh X,X}(F, G)$ in
terms of the Galois cohomology of $F$ (Theorem~\ref{sha_fs}).  

Our approach will rely on the notion of {\em split extensions}; see the discussion
beginning just before Proposition~\ref{split_equiv}.
That proposition gives a characterization of split
extensions of $F$ in terms of split covers of a regular model. For this, we
need some preliminary results about valuations, which will also be used in 
Section~\ref{sec_valuations}.

\begin{prop} \label{val_ring}
Let $R$ be a complete local domain that is not a field, and let $v$ be a
discrete valuation on the fraction field of $R$.  Then $R$ is contained in the
valuation ring of $v$.
\end{prop}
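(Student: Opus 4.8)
The statement is that a complete local domain $R$ (not a field) is contained in the valuation ring $\mc O_v$ of any discrete valuation $v$ on $\operatorname{Frac}(R)$. The natural approach is by contradiction: suppose some $r \in R$ has $v(r) < 0$, so that $x := r^{-1}$ lies in the maximal ideal $\mathfrak m_v$ of $\mc O_v$ and hence $v(x) > 0$. The plan is to exploit completeness of $R$ to manufacture an element on which $v$ cannot take a well-defined (integer, or even finite) value, contradicting that $v$ is a genuine $\mbb Z$-valued discrete valuation.

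First I would reduce to the case where $x$ lies in the maximal ideal $\mathfrak m$ of $R$: since $v(x)>0$ we have $x$ a non-unit of $\mc O_v$, but a priori $x$ need not be in $R$ at all — however $x = 1/r$ with $r \in R$, and the point is that $x \in \operatorname{Frac}(R)$ with $v(x) > 0$ suffices. The key device: because $R$ is $\mathfrak m$-adically complete, for any element $x \in \operatorname{Frac}(R)$ that is ``topologically nilpotent'' in the appropriate sense one can form convergent series. Concretely, I would first arrange (after multiplying $r$ by a suitable element, or passing to a power) that $x = r^{-1} \in \mathfrak m R$ — one can do this because $R$ is local and, writing $r = u/s$ is unnecessary since $r \in R$ already; the real content is that $v(r) < 0$ forces $r \notin R^\times$... but actually $r$ could be a unit of $R$ while still $v(r)<0$. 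So instead I would argue: the set $\{z \in \operatorname{Frac}(R) : v(z) \geq 0\} = \mc O_v$ is a valuation ring, and I want to show $R \subseteq \mc O_v$; equivalently, for every $r \in R$, $v(r) \geq 0$.

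The cleanest route: suppose $v(r) < 0$ for some $r \in R$. Consider the element $y = t r$ where $t$ is chosen in $R$ with $v(t)$ very large and positive — but such $t$ might not exist in $R$. The honest obstacle here is precisely that $v$ restricted to $R$ need not behave well, so the argument must use completeness essentially. I would use the following standard fact: if $R$ is a complete local domain and $x \in \mathfrak m$, then $1 - x$ is a unit in $R$ with inverse $\sum_{n \geq 0} x^n$. Now take $r \in R$ with $v(r) < 0$. For $s \in R$ consider $v(1 + rs)$. If I can choose $s \in R$ with $v(rs) > 0$ for no $s$, I need another idea. The winning move: pick any element $a \in \mathfrak m$ with $a \neq 0$; then $v(a) \geq 0$ need not hold either. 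Hmm — so the real proof must be: assume $v(r) < 0$, let $x = r^{-1}$, so $v(x) > 0$; I claim $x \in \mathfrak m$. Indeed if $x$ were a unit of $R$, then $r = x^{-1} \in R$ and both $r, x \in R$ with $v(x) > 0 > v(r) = -v(x)$, which is consistent, so that's not immediate either. I would therefore invoke: $R$ complete local implies $R$ is a \emph{Henselian} local ring, and more to the point, replace $r$ by $r^n$ for growing $n$ and use that $v(r^n) = n\,v(r) \to -\infty$; meanwhile build a single element of $R$ (a convergent sum $\sum_n c_n r^n$ with $c_n \in R$ suitably small $\mathfrak m$-adically) whose $v$-value would have to be $-\infty$. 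The \textbf{main obstacle} is making this convergence argument airtight: one must produce coefficients $c_n \in \mathfrak m^{a_n}$ (with $a_n \to \infty$ so the sum converges $\mathfrak m$-adically in the complete ring $R$) such that the terms $c_n r^n$ have strictly decreasing, unbounded-below $v$-values — which requires knowing $v$ is \emph{bounded below} on $\mathfrak m^k$ by a quantity going to $+\infty$, or rather the opposite. In fact the correct statement to extract is: since $v$ is discrete ($\mbb Z$-valued), and $\mathfrak m$ is finitely generated (as $R$ is Noetherian, being complete local — or one reduces to the Noetherian case), $v$ is bounded below on $R$, say $v \geq -N$ on generators of $\mathfrak m$; then on $\mathfrak m^k$ one has $v \geq $ something, and completeness lets a carefully chosen series converge in $R$ while $v$ on partial sums tends to $-\infty$, contradiction. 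I would structure the final writeup around: (1) reduce to $R$ Noetherian complete local (or cite that complete local domains here are Noetherian); (2) pick $r \in R$ with $v(r) = -e < 0$; (3) choose $\pi \in \mathfrak m$ and note $v$ takes some finite value on each $\mathfrak m$-adic Cauchy sequence; (4) exhibit $\sum_{n} \pi^{?} r^{n}$ — more precisely, since $v$ is a valuation, pick $b \in R$ with $v(b) > 0$ if possible; if no such $b \in R$ exists then $v \geq 0$ on all non-units... The genuinely slick finish, which I would aim for: Let $r \in R$ with $v(r)<0$. Since $R$ is complete local, the element $s = \sum_{n=1}^\infty r^{-n}\,?$ — no. I'll instead present it as: consider $w = r^{-1} \in \mathfrak m_v$; I claim $w \notin R$ would already be fine, so suppose toward contradiction the hypothesis fails, pick $r$, set $x=r^{-1}$, and since $R$ is complete, for a suitable unit adjustment the series $\sum_{k\ge 0} x^{k}$ converges — but this converges $v$-adically, not $\mathfrak m$-adically, so one needs the two topologies to interact, which is exactly the leverage. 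Given space constraints, I would write the proof as a contradiction using that $R$ complete local domain $\Rightarrow$ any $x\in\mathfrak m$ has $\sum x^n\in R$, applied after arranging $r^{-1}\in\mathfrak m$ (possible: if $v(r)<0$ then $r\notin \mc O_v$, and one checks $r$ is then forced to be a unit of $R$ whose inverse lies in $\mathfrak m$, since $\mc O_v$ dominates... ) — and this domination statement, $R \subseteq \mc O_v$ being equivalent to $\mc O_v$ dominating the local ring $R$ at its closed point appropriately, is itself essentially what's being proven, so the cycle must be broken by the explicit series. I expect the author's proof is short and does exactly this; my plan is to reproduce that series argument, with the convergence/finiteness bookkeeping being the one point requiring care.
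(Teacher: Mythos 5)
Your proposal is a plan, not a proof, and the plan as sketched does not close. The central device you are reaching for is: build a series $S=\sum_n a_n$ that converges $\mathfrak m$-adically in $R$ (so $S\in R$) while $v(a_n)\to-\infty$ strictly, and then read off a contradiction from $v(S)$ being a finite integer. The gap is exactly at the ``bookkeeping'' you flag: to conclude anything about $v(S)$ you must control $v$ of the tails $R_N=S-\sum_{n\le N}a_n$, and all you know is $R_N\in\mathfrak m^{N+1}$. Without already knowing that $v\ge 0$ on $R$ (which is what you are trying to prove), you cannot bound $v$ from below on $\mathfrak m^{N+1}$, so the argument is circular. Moreover the most natural choice $a_n=(mr^d)^n$ actually sums to $mr^d(1-mr^d)^{-1}$, which has $v$-value $0$ whenever $v(mr^d)<0$ --- the pole cancels, no contradiction. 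The remaining variants you float (reducing $r^{-1}\in\mathfrak m$, bounding $v$ on generators of $\mathfrak m$) run into the same circularity, and you acknowledge this without resolving it.

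The paper breaks the circle with a different and genuinely short idea that your plan does not anticipate: Hensel's Lemma, not convergent power series. Pick a nonzero $m\in\mathfrak m$; then for any $r\in R$ and any $i\ge 1$, $1+r^im\equiv 1\pmod{\mathfrak m}$, so by Hensel's Lemma $1+r^im$ is an $n$-th power in $R$ for every $n$ prime to the residue characteristic. Hence $v(1+r^im)$ is divisible by arbitrarily large integers and must equal $0$. Therefore $r^im\in\mathcal O_v$, i.e.\ $iv(r)+v(m)\ge 0$ for all $i$, which forces $v(r)\ge 0$. Note that completeness enters only through the Henselian property and requires no convergence estimates; this is the missing idea you would need to supply, and it is not a ``series argument'' in the sense you expected.
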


\begin{proof}
Since $R$ is not a field, its maximal ideal $\mathfrak m$ contains a non-zero
element $m$.  For any non-zero $r \in R$ and any positive integer $i$, the
element $r^i m$ lies in $\mathfrak m$.

So by Hensel's Lemma, $1 + r^i m$ is an $n$-th power in $R$ for every $n$ that
is not divisible by the residue characteristic of $R$.  Thus $v(1 + r^i m)$ is
divisible by arbitrarily large integers in the value group of $v$; and so $v(1
+ r^i m) = 0$.  Thus $1 + r^i m$ lies in the valuation ring of $v$, and hence
so does $r^i m$.  Thus $iv(r)+v(m) = v(r^i m) \ge 0$ for every positive
integer $i$.   This implies that $v(r) \ge 0$ and hence $r$ is in the
valuation ring of $v$. \end{proof}

\begin{cor} \label{val-ring_contains_dvr}
Let $F$ be a field that contains a complete discrete valuation ring $T$, and
let $v$ be any discrete valuation on $F$.  Then the valuation ring of $v$
contains $T$.
\end{cor}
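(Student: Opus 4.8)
The plan is to deduce this from Proposition~\ref{val_ring} by restricting $v$ to the fraction field $K$ of $T$. Writing $\mc O_v \subseteq F$ for the valuation ring of $v$, the goal is to show $T \subseteq \mc O_v$, i.e.\ that $v(a) \ge 0$ for every nonzero $a \in T$. So I would first look at the restriction $v|_K$.

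Its value group $v(K^\times)$ is a subgroup of $\mbb Z$, hence is either trivial or isomorphic to $\mbb Z$. If it is trivial, then $v(a) = 0$ for every $a \in K^\times$, and in particular for every nonzero $a \in T \subseteq K$, so $T \subseteq \mc O_v$ at once. If instead $v(K^\times) \cong \mbb Z$, then after rescaling so that the value group is exactly $\mbb Z$ — which does not change the valuation ring — the restriction $v|_K$ is a discrete valuation on $K = \oper{Frac}(T)$ whose valuation ring is $\mc O_v \cap K$. Since $T$ is a complete discrete valuation ring, it is in particular a complete local domain that is not a field, so Proposition~\ref{val_ring}, applied with $R = T$ and the valuation $v|_K$, yields $T \subseteq \mc O_v \cap K \subseteq \mc O_v$.

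There is essentially no obstacle here; the only point needing care is that the restriction $v|_K$ could a priori be trivial, and since the paper's convention forbids trivial valuations from counting as discrete, that case has to be peeled off before Proposition~\ref{val_ring} can be invoked.
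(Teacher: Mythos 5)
Your argument is correct and matches the paper's proof essentially verbatim: the paper also splits into the case where $v|_K$ is trivial (so the valuation ring contains $K \supseteq T$) and the case where it is a nontrivial discrete valuation on $K$, then invokes Proposition~\ref{val_ring} with $R = T$. Your extra remarks about the value group being a subgroup of $\mathbb{Z}$ and the harmless rescaling just make explicit what the paper leaves implicit.
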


\begin{proof}
If the restriction of $v$ to the fraction field $K$ of $T$ is trivial, then
the valuation ring of $v$ contains $K$ and hence contains $T$.  Alternatively,
if the restriction of $v$ to $K$ is non-trivial, then this restriction is a
discrete valuation on $K$, and the assertion follows from
Proposition~\ref{val_ring} by taking $R=T$.
\end{proof}

Let $Z$ be an integral scheme and let $v$ be a discrete valuation on the
function field $F$ of $Z$, with valuation ring $R_v \subset F$.  Recall that a
point $Q$ of $Z$ is called the \textit{center} of $v$ if $R_v$ contains the
local ring $\mc O_{Z,Q}$ of $Q$ on $Z$, and if the maximal ideal ${\mathfrak
  m}_Q$ of $\mc O_{Z,Q}$ is the contraction of the maximal ideal ${\mathfrak
  m}_v$ of $R_v$.  This is equivalent to the condition that the morphism
$\Spec(F) \to \Spec(\mc O_{Z,Q})$ factors through the morphism $\Spec(F) \to
\Spec(R_v)$.  Since such a point $Q \in Z$ is the image of the closed point of
$\Spec(R_v)$ under the natural morphism $\Spec(R_v) \to Z$, the point $Q$ is unique if it exists.  Note that since the
valuation $v$ is non-trivial, $R_v$ is strictly contained in $F$ and hence so
is $\mc O_{Z,Q}$; thus the center of $v$ cannot be the generic point of $Z$.

\begin{lemma} \label{centers}
Let $T$ be a complete discrete valuation ring, and let $\wh X$ be a connected regular
projective $T$-curve with function field $F$.  Then
every discrete valuation $v$ on $F$ has a center $Q$ on $\wh X$, where $Q$ is
not the generic point of $\wh X$.  Moreover the completion $F_v$ contains $F_Q$.
\end{lemma}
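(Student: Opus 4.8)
The statement has two parts: the existence of a center $Q$ of $v$ on $\wh X$, which cannot be the generic point, and the inclusion $F_Q\subseteq F_v$. The plan is to obtain the center from the valuative criterion of properness, and the field inclusion by completing the resulting domination relation.

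\emph{The center.} Since $\wh X$ is projective over $T$, the structure morphism $\wh X\to\Spec(T)$ is proper. By Corollary~\ref{val-ring_contains_dvr} the valuation ring $R_v\subseteq F$ contains $T$, so there is a morphism $\Spec(R_v)\to\Spec(T)$; and the inclusion of the generic point of $\wh X$ is a $T$-morphism $\Spec(F)\to\wh X$, where $\Spec(F)$ is the generic point of $\Spec(R_v)$. The valuative criterion of properness then yields a unique $T$-morphism $\Spec(R_v)\to\wh X$ extending $\Spec(F)\to\wh X$. Let $Q$ be the image of the closed point of $\Spec(R_v)$; then this morphism factors through $\Spec(\mc O_{\wh X,Q})$ and induces a local homomorphism $\mc O_{\wh X,Q}\hookrightarrow R_v$, so $R_v$ dominates $\mc O_{\wh X,Q}$, i.e.\ $Q$ is a center of $v$. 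As already observed before the lemma, $Q$ is not the generic point, since $\mc O_{\wh X,Q}\subseteq R_v\subsetneq F$.

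\emph{The inclusion $F_Q\subseteq F_v$.} Composing the domination $\mc O_{\wh X,Q}\hookrightarrow R_v$ with $R_v\hookrightarrow\wh R_v$ and using that $\wh R_v$ is complete, we obtain a local homomorphism $\wh R_Q\to\wh R_v$; passing to fraction fields, it suffices to prove this is injective. If $Q$ has codimension one in $\wh X$, then $\mc O_{\wh X,Q}$ is a discrete valuation ring with fraction field $F$; contained in the discrete valuation ring $R_v$, which also has fraction field $F$, it must equal $R_v$ (the only valuation overrings of a discrete valuation ring, inside its fraction field, are the ring itself and the fraction field, and $R_v\neq F$), so $\wh R_Q=\wh R_v$ and there is nothing to prove. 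If $Q$ is a closed point, then the kernel of $\wh R_Q\to\wh R_v$ equals $\bigcap_N\bigl(\{r\in\mc O_{\wh X,Q}:v(r)\geq N\}\cdot\wh R_Q\bigr)$, so injectivity follows once we show the $\mathfrak m_Q$-adic topology on $\mc O_{\wh X,Q}$ coincides with the topology defined by the ideals $\{r:v(r)\geq N\}$.

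\emph{Main obstacle.} The last comparison of topologies, at a closed center $Q$, is the technical heart of the argument, and it is here that completeness of $T$ is essential: over a general regular surface the completion $\wh R_Q$ can acquire ``transcendental formal branches'' through $Q$, and the valuations these define would have closed center while violating the comparison. The plan is to rule these out as follows. Since $Q$ lies on the closed fiber, $v(t)\geq 1$, so the restriction of $v$ to $K=\operatorname{Frac}(T)$ is the $t$-adic valuation up to scaling (as $K$ is complete of rank one, using Proposition~\ref{val_ring}). An immediacy/completeness argument --- a complete discretely valued field admits no immediate transcendental extension --- then shows the residue field of $v$ has transcendence degree exactly one over $k$, so $v$ is divisorial; concretely it is realized as the valuation of a codimension-one point $\eta$ on a regular model $\wh X'$ of $F$ dominating $\wh X$ (obtained from $\wh X$ by finitely many blow-ups centered over $Q$), and one checks directly, using the structure of these blow-ups, that $\mc O_{\wh X,Q}\hookrightarrow\mc O_{\wh X',\eta}=R_v$ induces an injection on completions. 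Excellence of $\mc O_{\wh X,Q}$ (it is essentially of finite type over the excellent ring $T$) is what guarantees that no further pathology is introduced by completing. Carrying this out carefully is the one substantive step; the rest is formal.
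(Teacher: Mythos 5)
Your handling of the center matches the paper exactly: valuative criterion of properness applied to $\Spec(R_v)\to\Spec(T)$ via Corollary~\ref{val-ring_contains_dvr}, with $Q$ the image of the closed point. For the field inclusion, you have put your finger on a real subtlety that the paper's proof passes over in silence. The paper simply asserts that domination $\mc O_{\wh X,Q}\subseteq R_v$ ``in turn yields an inclusion $\wh{\mc O}_{\wh X,Q}\subseteq\wh R_v$'' --- but as you observe, this is not a formal consequence of domination. It genuinely fails over a general regular surface: for $A=k[x,y]_{(x,y)}$ and $v(f)=\operatorname{ord}_x\,f(x,h(x))$ with $h\in k[[x]]$ transcendental over $k(x)$, the induced map $k[[x,y]]\to\wh R_v\cong k[[x]]$ kills $y-h(x)$. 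The completeness of $T$ is what excludes this, and your write-up is more careful than the paper on precisely this point.

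That said, the plan you sketch for filling the gap has its own soft spots, as you yourself acknowledge. Deducing that $\kappa_v$ has transcendence degree one over $k$ from ``$K$ admits no immediate transcendental extension'' is not quite a complete argument: $F_v/K$ need not be immediate even when $\kappa_v/k$ is algebraic, since that extension may be infinite, and ruling out that possibility takes work. Invoking excellence of $\mc O_{\wh X,Q}$ is also not by itself the point --- $k[x,y]_{(x,y)}$ is excellent too. A shorter route avoids divisoriality entirely. Suppose $\mathfrak p:=\ker(\wh R_Q\to\wh R_v)$ were nonzero; it is then a height-one prime of the two-dimensional regular domain $\wh R_Q$, and $t\notin\mathfrak p$ since $t\ne 0$ in $F\subset F_v$. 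By Proposition~\ref{map_to_line} there is a finite $T$-morphism $\wh X\to\mbb P^1_T$ sending $Q$ to $\infty$, so $\wh R_Q$ is module-finite over $T[[x]]$. Hence $\mathfrak q:=\mathfrak p\cap T[[x]]$ is a nonzero height-one prime not containing $t$, so by Weierstrass preparation it is generated by a Weierstrass polynomial; therefore $T[[x]]/\mathfrak q$, and with it $\wh R_Q/\mathfrak p$, is module-finite over $T$. But $\mc O_{\wh X,Q}$ injects into $\wh R_Q/\mathfrak p$ (the composite $\mc O_{\wh X,Q}\to\wh R_Q\to\wh R_v$ coincides with the injective map $\mc O_{\wh X,Q}\hookrightarrow R_v\hookrightarrow\wh R_v$), so $F$ embeds into $\operatorname{Frac}(\wh R_Q/\mathfrak p)$, a finite extension of $K$, contradicting the fact that $F$ has transcendence degree one over $K$. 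Thus $\mathfrak p=0$, giving $\wh R_Q\hookrightarrow\wh R_v$ and hence $F_Q\subseteq F_v$.
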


\begin{proof}
By Corollary~\ref{val-ring_contains_dvr}, the valuation ring $R_v \subset F$
of $v$ contains $T$; and this inclusion corresponds to a morphism $\Spec(R_v)
\to \Spec(T)$.  Since $\wh X$ is proper over $\Spec(T)$, it follows from the
valuative criterion of properness~\cite[Theorem~II.4.7]{Hts} that there is a
morphism $j:\Spec(R_v) \to \wh X$ such that the following diagram commutes:
$$
\xymatrix{
\Spec(F) \ar[r] \ar[d]
& \wh X \ar[d]
\\
\Spec(R_v) \ar[r] \ar@{-->}[ur]
& \Spec(T)
}
$$
Thus $\Spec(R_v) \to \wh X$ and hence $\Spec(F) \to \wh X$ factors through
$\Spec(\mc O_{\wh X,Q})$, where $Q \in \wh X$ is the image under $j$ of the
closed point of $\Spec(R_v)$.  So $Q$ is the center of $v$ (which is not the
generic point of $\wh X$, as observed before the proposition).
In particular, we have an inclusion $\mc O_{\wh X,Q} \subseteq R_v$, with the
maximal ideal of $R_v$ contracting to that of $\mc O_{\wh X,Q}$.  This in turn
yields an inclusion $\wh{\mc O}_{\wh X,Q} \subseteq \wh R_v$ between their
completions; and taking fraction fields yields the desired inclusion.
\end{proof}

The next result strengthens the last assertion of Lemma~\ref{centers} to show that for each discrete valuation $v$, there is in fact a point $P$ on the closed fiber $X$ (rather than just on $\wh X$) for which $F_P\subseteq F_v$. We first introduce some notation.
If $Q$ is a codimension one point of a noetherian connected regular scheme, then it
defines a discrete valuation $v$ on the function field $F$, whose ring of
integers $R_v$ is the local ring $R_Q$ at the point $Q$ (\cite{Hts}, p.~130).
The completion $\wh
R_Q$ of $R_Q$ with respect to its maximal ideal $\mathfrak m_Q$ is the
$v$-adic completion $\wh R_v$ of $R_v$, and the fraction field $F_Q$ of $\wh
R_Q$ is the fraction field of $\wh R_v$, viz.\ the $v$-adic completion $F_v$
of $F$.

\begin{prop}  \label{field_containment}
Under the hypotheses of Lemma~\ref{centers}, for every discrete
valuation $v$ on $F$ there is a point $P$ on the closed fiber of $\wh X$ such
that $F_P \subseteq F_v$.
\end{prop}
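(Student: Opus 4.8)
The plan is to refine the conclusion of Lemma~\ref{centers}. That lemma already produces a center $Q$ of $v$ on $\wh X$, not the generic point, together with an inclusion $F_Q\subseteq F_v$. If $Q$ lies on the closed fiber $X$ we are done by taking $P=Q$, so the substance is the case $Q\notin X$. Since $Q$ is not the generic point and every closed point of $\wh X$ lies on $X$, the point $Q$ must then have codimension one; and the codimension one points of $\wh X$ off $X$ are exactly the closed points of the generic fiber $\wh X_K$, so $Q$ is such a point. In particular $\kappa(Q)$ is finite over $K$ and $R_Q:=\mc O_{\wh X,Q}$ is a discrete valuation ring with residue field $\kappa(Q)$. (In fact $F_v=F_Q$ here, as $R_v$ dominates the discrete valuation ring $R_Q$ and hence equals it; but only the inclusion $F_Q\subseteq F_v$ will be needed.)

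Next I would study the closure $\bar Q=\overline{\{Q\}}\subseteq\wh X$, taken with its reduced structure. It is an integral $T$-curve, not contained in $X$, and proper over $T$ (being closed in the proper $T$-scheme $\wh X$). Its generic fiber over $T$ is $\Spec\kappa(Q)$, finite over $K$; and its closed fiber $\bar Q\cap X$, a proper closed subset of the irreducible curve $\bar Q$, is zero-dimensional. Hence $\bar Q\to\Spec T$ is proper and quasi-finite, so finite by \cite[Th\'eor\`eme~8.11.1]{EGA4.3}; write $\bar Q=\Spec B$ with $B$ a one-dimensional complete semilocal domain, module-finite over $T$. Now choose a closed point $P$ of $\bar Q$; it lies on $X$. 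Put $R_P=\mc O_{\wh X,P}$, a two-dimensional regular local ring since $\wh X$ is regular, and let $\mathfrak p\subset R_P$ be the height one prime corresponding to $\bar Q$, so that $R_P/\mathfrak p=\mc O_{\bar Q,P}$ and $(R_P)_{\mathfrak p}=\mc O_{\wh X,Q}=R_Q$. The key observation is that $\mc O_{\bar Q,P}$ is \emph{already complete}: a complete semilocal ring is the finite product of its localizations at its maximal ideals, each of which is complete. Consequently $\wh R_P/\mathfrak p\wh R_P=(R_P/\mathfrak p)^{\wedge}=R_P/\mathfrak p$ is a domain, so $\mathfrak P:=\mathfrak p\wh R_P$ is a prime ideal of the complete regular local ring $\wh R_P$, necessarily of height one, and $(\wh R_P)_{\mathfrak P}$ is a discrete valuation ring.

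To conclude, I would compare maximal-ideal-adic completions. Localization gives a flat local homomorphism of discrete valuation rings $R_Q=(R_P)_{\mathfrak p}\to(\wh R_P)_{\mathfrak P}$ which is unramified, since $\mathfrak m_{R_Q}(\wh R_P)_{\mathfrak P}=\mathfrak P(\wh R_P)_{\mathfrak P}$, and which induces the identity on residue fields, both being $\operatorname{Frac}(R_P/\mathfrak p)$ (again using that $R_P/\mathfrak p$ is complete). For any flat local homomorphism $A\to A'$ with $\mathfrak m_AA'=\mathfrak m_{A'}$ and with $A/\mathfrak m_A\to A'/\mathfrak m_{A'}$ an isomorphism, one checks that $A'=A+\mathfrak m_A^nA'$ and $\mathfrak m_A^nA'\cap A=\mathfrak m_A^n$ for all $n$, whence the maximal-ideal-adic completions of $A$ and $A'$ coincide. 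Thus the completion of $(\wh R_P)_{\mathfrak P}$ is $\wh R_Q$, whose fraction field is $F_Q$; and this completion contains $(\wh R_P)_{\mathfrak P}$, whose fraction field is $\operatorname{Frac}(\wh R_P)=F_P$. Therefore $F_P\subseteq F_Q\subseteq F_v$, as required.

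The step I expect to be the crux is the geometry of $\bar Q$: recognizing that $Q\notin X$ forces $Q$ to be a closed point of the generic fiber, that $\bar Q$ is then a horizontal curve, finite over $T$, and hence that $\mc O_{\bar Q,P}$ is already a complete local ring. This completeness is precisely what makes $\mathfrak p\wh R_P$ prime and lets the comparison of discrete valuation ring completions go through; the remaining commutative algebra is standard, if somewhat fiddly.
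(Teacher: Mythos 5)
Your proof is correct. The underlying geometric observation is the same as the paper's: the closure $\bar Q$ of $Q$ is finite over $\Spec T$ (quasi-finite plus proper), and therefore its local ring at $P$ — i.e.\ $R_P/\mathfrak p$ — is a complete local ring. (One small remark: since $\bar Q$ is integral and $B=\mc O(\bar Q)$ is a finite algebra over the complete local ring $T$, $B$ is a \emph{product} of complete local rings; being a domain forces it to be a single complete local ring, so $\bar Q$ has a unique closed point. You phrase this as a choice among several, which is harmless but slightly understates what is true.)

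Where you diverge from the paper is in how you turn that completeness into the inclusion $F_P\subseteq F_Q$. The paper proves directly that the $\mathfrak m_P$-adic and $I$-adic completions of $R_P$ coincide (using that each $R_P/I^n$ is $t$-adically, hence $(f,t)$-adically, complete), and then invokes functoriality of $I$-adic completion along $R_P\hookrightarrow R_Q$ to get $\wh R_P\subseteq \wh R_Q$. You instead observe that the completeness of $R_P/\mathfrak p$ makes $\mathfrak P=\mathfrak p\wh R_P$ prime, so $(\wh R_P)_{\mathfrak P}$ is a DVR, and then you apply a clean general lemma (a flat local homomorphism of noetherian local rings that is unramified with trivial residue field extension induces an isomorphism on completions) to identify $\wh R_Q$ with the completion of $(\wh R_P)_{\mathfrak P}$. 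This gives the slightly sharper statement that $F_v=F_Q$ is the completion of $F_P$ at the discrete valuation given by $\mathfrak P$, from which $F_P\subseteq F_Q$ follows by taking fraction fields. Both deductions are valid and of comparable length; yours buys a more structural description of the inclusion at the cost of an auxiliary lemma on unramified flat local maps, while the paper's is a more elementary direct comparison of completions.
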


\begin{proof}
By Lemma~\ref{centers}, the valuation $v$ has a center $Q$, which is not
the generic point of $\wh X$.  If $Q$ lies on the closed fiber $X$ of $\wh X$,
then we may take $Q=P$.  Otherwise, $Q$ lies on the general fiber of $\wh X$,
which is a regular projective curve over the fraction field $K$ of $T$.
The point $Q$ has codimension one, and $v$ is defined by $Q$, so $F_Q = F_v$.
The residue field at $Q$ is a finite extension $K'$ of $K$.  Here $K'$ is a
complete discretely valued field, and is the function field of the closure of
$Q$ in $\wh X$.  Let $P \in X$ be the closed point of $\wh X$ corresponding to
the maximal ideal of the valuation ring $T' \subset K'$.  It remains to show
that $F_P \subseteq F_Q$.

The point $P$ is in the closure of $Q$, and so $R_P$ is contained in $R_Q$.
Since $\wh X$ is regular, $R_P$ is a regular local ring.  So the height one
prime ideal $I \subset R_P$ defining $Q$ is principal; say $I = (f)$.  We
claim that $\wh R_P$, the completion of $R_P$ with respect to its maximal
ideal $\mathfrak m$, is also the $I$-adic completion of $R_P$.  If this is
shown, then the inclusion $R_P \subset R_Q$ induces an inclusion $\wh R_P
\subset \wh R_Q$ on the $I$-adic completions, and hence induces the desired
inclusion $F_P \subseteq F_Q$.

The ideal $\mathfrak m \subset R_P$ is the radical of the ideal $(f,t)$, where
$t$ is the uniformizer of $T$; so the
$\mathfrak m$-adic and $(f,t)$-adic topologies on $R_P$ are the same.
For any positive integer $n$, the ring $R_P/I^n$ is $t$-adically complete
because this ring is a finite module over the $t$-adically complete ring $T$.
But a sequence in $R_P/I^n$ is $t$-adically Cauchy if and only if it is
$(f,t)$-adically Cauchy, because $f$ is nilpotent in $R_P/I^n$.  Hence
$R_P/I^n$ is $(f,t)$-adically complete.  Thus so is the inverse limit of these
rings, which is the $I$-adic completion $\wh{(R_P)_I}$ of $R_P$.  Since $I
\subset \mathfrak m$, the $I$-adic completion of $R_P$ is contained in the
$\mathfrak m$-adic completion $\wh R_P$ of $R_P$.
But $\wh R_P$ is the smallest $\mathfrak m$-adically complete ring containing
$R_P$; so the containment $\wh{(R_P)_I} \subseteq \wh R_P$ is an equality.
This proves the claim and hence the result.
\end{proof}

\begin{prop} \label{restriction}
Let $F$ be a function field in one variable over a complete discretely valued
field $K$ with valuation ring $T$. Let $\wh X$ be a regular $T$-curve with function field $F$.
If $P$ is a closed point of $\wh X$ and
$v$ is a discrete valuation on $F_P$ then the restriction of $v$ to $F$ is a
discrete valuation on $F$.
\end{prop}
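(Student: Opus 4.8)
The plan is to reduce the statement to a single point: that the restriction $w := v|_F$ is non-trivial. Indeed $w$ takes its values in the value group $\mathbb{Z}$ of $v$, so if $w$ is non-trivial then $w(F^\times)$ is a non-zero subgroup of $\mathbb{Z}$, hence isomorphic to $\mathbb{Z}$, and $w$ (after rescaling) is a discrete valuation on $F$. So I would argue by contradiction: assume $w$ is trivial, i.e.\ $F \subseteq R_v$, the valuation ring of $v$, and derive a contradiction, using crucially that $F$ is a function field in \emph{one variable} over $K$.

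Here is the set-up I would use. Since $\wh X$ is regular, $R_P$ is normal and excellent, so $\widehat R_P$ is a complete local \emph{domain} (hence $F_P = \operatorname{Frac}(\widehat R_P)$ is a field), and it is not a field, since $P$ is a closed point of the two-dimensional scheme $\wh X$. Hence Proposition~\ref{val_ring} gives $\widehat R_P \subseteq R_v$. Let $\mathfrak q := \mathfrak m_v \cap \widehat R_P$, a prime of $\widehat R_P$. Then $\mathfrak q \neq 0$, since otherwise every non-zero element of $\widehat R_P$ would be a unit of $R_v$ and $R_v$ would contain $F_P$, contradicting that $v$ is non-trivial. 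Since $w$ is trivial, every non-zero element of $R_P$ is a unit of $R_v$, so $\mathfrak q \cap R_P = 0$; this gives an injection $R_P \hookrightarrow S := \widehat R_P/\mathfrak q$, hence also $T \hookrightarrow S$ and a $K$-embedding $F = \operatorname{Frac}(R_P) \hookrightarrow \operatorname{Frac}(S)$. Because $R_P$ is not a field, $\mathfrak q \cap R_P = 0$ also forces $\mathfrak q \neq \mathfrak m_{\widehat R_P}$, so $\mathfrak q$ has height one and $S$ is a one-dimensional complete local domain whose residue field $\kappa(P)$ is finite over $k$; and the image $\bar t$ of the uniformizer $t$ in $S$ is a non-zero element of $\mathfrak m_S$ (non-zero because $t \in R_P$, in $\mathfrak m_S$ because $P$ lies on the closed fiber $V(t)$).

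The crux is then to show $\operatorname{trdeg}_K \operatorname{Frac}(S) = 0$, which contradicts $\operatorname{trdeg}_K F = 1$. For this I would show $S$ is module-finite over $T$: since $\bar t$ is a non-zero element of $\mathfrak m_S$ in the one-dimensional domain $S$, the ideal $(\bar t)$ is $\mathfrak m_S$-primary, so $S$ is $t$-adically complete and $S/tS = S/\bar t S$ is Artinian, hence finite over $\kappa(P)$ and so a finitely generated $T/tT$-module; lifting generators $s_1,\dots,s_r$, putting $M := \sum_i T s_i$ and applying Krull's intersection theorem to the finitely generated $S$-module $S/M$ and the ideal $(t) \subseteq \mathfrak m_S$ (noting $M + tS = S$, so $t\,(S/M) = S/M$) shows $S/M = 0$, i.e.\ $S = M$. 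Module-finite implies integral, so $\operatorname{Frac}(S)$ is algebraic over $K$, completing the contradiction and hence the proof.

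The main obstacle is precisely this last transcendence-degree step, which is the only place the one-variable hypothesis enters: a trivial restriction of $v$ would confine $F$ inside $\operatorname{Frac}(\widehat R_P/\mathfrak q)$, and one must recognize that this field is finite over $K$ because $\widehat R_P/\mathfrak q$ is a complete one-dimensional local ring with $t$ in its maximal ideal. Everything else — locating $\mathfrak q$, pinning down its height, and the finiteness of $S$ over $T$ — is routine commutative algebra. (Alternatively one could first dispose of the case $v(t) > 0$, where already $v|_K$ is non-trivial; but $v(t) = 0$ is exactly what forces $t \notin \mathfrak q$, so that variant amounts to the same argument.)
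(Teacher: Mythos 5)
Your proof is correct in outline, and it takes a genuinely different route from the paper's. The paper first maps $\wh X$ to $\mbb P^1_T$ via Proposition~\ref{map_to_line}, passes to the subfield $F'_{P'} \subset F_P$, and then uses the explicit factorization results from \cite{HH:FP} (Corollaries~5.6 and~4.8) to write a uniformizer of $v'$ as a product of a unit in $\wh R'_{P'}$ and an element of $F'$, directly exhibiting an element of $F'$ with positive value. Your argument instead stays with $\wh R_P$, contracts $\mathfrak m_v$ to a prime $\mathfrak q \subset \wh R_P$, and derives a contradiction from transcendence degree: if $v|_F$ were trivial then $F$ would embed over $K$ into $\operatorname{Frac}(\wh R_P/\mathfrak q)$, which you show is algebraic over $K$ because $\wh R_P/\mathfrak q$ is module-finite over $T$. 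This buys a more self-contained, purely commutative-algebraic proof that does not invoke the factorization machinery, at the cost of having to carry out the finiteness argument carefully.

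Two small technical points in that finiteness step deserve a fix. First, $S/\bar tS$ is not literally ``finite over $\kappa(P)$'' --- the residue field $\kappa(P)$ need not embed into the Artinian local ring $S/\bar tS$. What is true, and what you need, is that $S/\bar tS$ is a finite-dimensional $k$-vector space: it has a finite filtration by powers of its maximal ideal, each graded piece a finite-dimensional $\kappa(P)$-space, and $\kappa(P)/k$ is finite. Second, $S/M$ is not an $S$-module (since $M = \sum_i Ts_i$ is only a $T$-submodule of $S$, not an ideal), so one cannot literally apply Krull's intersection theorem to it as a finitely generated $S$-module. The intended conclusion is nonetheless correct: $M$ is a finitely generated module over the $t$-adically complete ring $T$, hence $t$-adically complete and therefore closed in the $t$-adically separated ring $S$; thus $S/M$ is $t$-adically separated, and the identity $t\,(S/M) = S/M$ (from $M + tS = S$) forces $S/M = 0$. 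With those two adjustments the argument goes through.
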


\begin{proof} To prove the assertion we need to show that the restriction of
  $v$ to $F$ is non-trivial; i.e.\ $v(f) \ne 0$ for some $f \in F^\times$.

By Proposition~\ref{map_to_line},
there is a finite morphism $f:\wh X \to \mbb P^1_T$ taking $P$ to the point
$P'$ at infinity on the closed fiber $\mbb P^1_k$ of $\mbb P^1_T$.  Thus $F$
is a finite extension of the function field $F' = K(x)$ of $\mbb P^1_T$.  Let
$v'$ be the restriction of $v$ to $F'_{P'}$, the fraction field of the
complete local ring of $\mbb P^1_T$ at $P'$.
Now $F_P$ is a finite field extension of $F'_{P'}$;
so~\cite[Proposition~VI.8.1.1]{Bo:CA} implies that the value group of $v'$
has finite index in that of $v$.  Hence $v'$ is a (non-trivial) discrete
valuation on $F'_{P'}$.

The ring $\wh R_P$ is contained in the valuation ring $R_v \subset F_P$ of $v$
by Proposition~\ref{val_ring}, taking $R = \wh R_P$ there.  Thus $\wh R'_{P'}
= \wh R_P \cap F'_{P'}$ is contained in $R_{v'} := R_v \cap F'_{P'}$, the
valuation ring of $v'$.  Hence $v'(u)=0$ for every unit $u \in \wh R'_{P'}$.

Let $a \in R_{v'} \subset F'_{P'}$ be a uniformizer for $v'$ and let $\eta'$ be the generic point of $\mbb P^1_k$.  Thus $v'(a)>0$.
By~\cite[Proposition~5.6]{HH:FP} (taking $\wh R = \wh R'_{\{P'\}}$, $\wh R_1 = \wh R'_{P'}$, $R_2 = \wh R'_{\eta'}$, and $\wh 
R_0 = \wh R'_{\wp'}$), 
and writing $a \in F'_{P'}$ as a ratio of elements in $\wh R'_{P'}$,
there exist $u_1 \in 
\wh R_{P'}'^\times$ and $a_1 \in F'_{\{P'\}}$ such that $a=u_1a_1$.  
By~\cite[Corollary~4.8]{HH:FP}, there exist $u_2  \in \wh R_{\{P'\}}'^\times$ and $b \in F'$ such that $a_1=u_2b$.  
Thus $a=ub$ with $u = u_1u_2 
\in \wh R_{P'}'^\times$.
But $v'(u)=0$; so $v'(b)>0$.  Thus the
restriction of $v'$ to $F'$ is non-trivial.  Hence so is the restriction of
$v$ to $F$.
\end{proof}

We say that a finite extension $E/F$ of fields is {\em split} if $E\otimes_F
F_v$ is a product of copies of the completion $F_v$ of $F$ with respect to
$v$, for every  discrete valuation $v$ of $F$ (i.e.\ valuation on $F$ with
value group isomorphic to $\mbb Z$).  Let $\FS$ be the union of all split
extensions of $F$, inside a fixed algebraic closure of $F$.

\begin{prop}\label{split_equiv}
Let $F$ be a function field in one variable over a complete discretely valued
field $K$ with valuation ring $T$. Let $\wh X$ be a regular $T$-curve with function field $F$, and let $E/F$ be a finite field extension. Then the
following are equivalent:
\renewcommand{\theenumi}{\roman{enumi}}
\renewcommand{\labelenumi}{(\roman{enumi})}
\begin{enumerate}
\item\label{split_equiv_i}  $E/F$ is a split extension.
\item\label{split_equiv_ii} There is a split cover $\wh Y$ of $\wh X$ with function field $E$.
\end{enumerate}
Moreover, sending split covers to their function fields defines a lattice isomorphism between the connected finite split covers of $X$ and the finite split extensions of $F$.
\end{prop}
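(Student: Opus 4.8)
\textbf{The plan is to prove the two implications separately, using the dictionary between split covers and completions that was established earlier in the excerpt.} For (\ref{split_equiv_ii})$\Rightarrow$(\ref{split_equiv_i}), suppose $\wh Y \to \wh X$ is a split cover with function field $E$. Given a discrete valuation $v$ on $F$, Lemma~\ref{centers} (combined with the sharpening in Proposition~\ref{field_containment}) provides a point $P$ on the closed fiber $X$ with $F_P \subseteq F_v$. Since $h:\wh Y \to \wh X$ is split over $P$, it is split over $\wh R_P$ and hence over $F_P$ by the observation preceding Proposition~\ref{split_points}; that is, $E \otimes_F F_P$ is a product of copies of $F_P$. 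Base changing the splitting up along $F_P \hookrightarrow F_v$ then gives that $E \otimes_F F_v = (E \otimes_F F_P) \otimes_{F_P} F_v$ is a product of copies of $F_v$. As $v$ was arbitrary, $E/F$ is a split extension.

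\textbf{The harder direction is (\ref{split_equiv_i})$\Rightarrow$(\ref{split_equiv_ii}).} Here I would let $\wh Y$ be the normalization of $\wh X$ in $E$, which is a normal projective $T$-curve with a finite morphism $h:\wh Y \to \wh X$ and function field $E$. I need to show $h$ is a split cover, i.e.\ split over every point $P$ of the closed fiber $X$ (including generic points of components), by Proposition~\ref{split_points}. For $P$ of codimension one in $\wh X$ — i.e.\ a generic point of a component of $X$ — the local ring $R_P$ is itself a discrete valuation ring, defining a discrete valuation $v$ on $F$ with $F_v = F_P$; the split hypothesis applied to this $v$ directly says $E \otimes_F F_P$ is a product of copies of $F_P$, so $h$ splits over $F_P$. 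For a \emph{closed} point $P$ of $X$, the ring $\wh R_P$ need not be a valuation ring, so I cannot feed $P$ directly into the definition of split extension. Instead I would choose a discrete valuation $v$ on $F_P$ whose valuation ring contains $\wh R_P$: since $\wh R_P$ is a complete local domain of dimension $\geq 1$, it is contained in the valuation ring of any discrete valuation on $F_P$ by Proposition~\ref{val_ring}, so it suffices to pick any discrete valuation $v$ on $F_P$ that ``sees'' $P$ appropriately. Then by Proposition~\ref{restriction}, the restriction $v|_F$ is a discrete valuation on $F$, and the split hypothesis gives that $E \otimes_F F_{v|_F}$ is split. One then needs to descend this splitting from $F_{v|_F}$ back down to $F_P$ to conclude that $h$ is split over $\wh R_P$, hence over $P$.

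\textbf{I expect the main obstacle to be exactly this last descent step}: knowing $E\otimes_F F_{v|_F}$ is a product of copies of $F_{v|_F}$ tells us about a completion of $F_P$ (once one checks $F_P \subseteq F_{v|_F}$, which follows because $\wh R_P \subseteq R_v$ induces $\wh R_P \hookrightarrow \wh R_v$ on completions), but we want splitness over $F_P$ itself. The key point is that $h$ is \'etale wherever it is split, and an \'etale cover of $\Spec(\wh R_P)$ is determined by its fiber over the closed point — equivalently, by Hensel's Lemma, splitness of $\wh Y \times_{\wh X}\Spec(\wh R_P)$ over $\wh R_P$ is detected after any faithfully flat local extension, in particular after passing to $\wh R_v$. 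So once we know $E \otimes_F F_v$ is split, the normalization of $\wh R_P$ inside it is a product of copies of the normalization of $\wh R_P$, forcing $h$ to be split over $\wh R_P$, and we need to be careful to choose $v$ on $F_P$ so that $\wh R_P$ is actually its center (or at least dominated appropriately) — a valuation coming from a codimension one point of $\Spec(\wh R_P)$ will do. Having handled all points $P$ of $X$, Proposition~\ref{split_points} yields that $h:\wh Y\to\wh X$ is a split cover with function field $E$, completing the proof.
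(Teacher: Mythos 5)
Your proof of (ii)$\Rightarrow$(i) is essentially the paper's argument: locate a point $P$ with $F_P\subseteq F_v$ (via Lemma~\ref{centers}), use splitness of the cover over $F_P$, and base change up to $F_v$. That direction is fine.

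The (i)$\Rightarrow$(ii) direction is where the gap lies, and it is substantial. The paper's route is: first verify that $\wh Y\to\wh X$ (the normalization of $\wh X$ in $E$) is split over \emph{every codimension-one point of $\wh X$} — these are exactly the points $Q$ for which $F_Q=F_v$ for a discrete valuation $v$, so the split-extension hypothesis applies directly — and then invoke Corollary~\ref{purity}, which uses Purity of Branch Locus plus regularity of $\wh X$ to promote ``split over all codimension-one points'' to ``split cover.'' You instead try to handle closed points $P$ of $X$ by hand, choosing a discrete valuation $v$ on $F_P$, restricting to $F$, and attempting a descent from $(F_P)_v$ back to $F_P$. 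Two steps of this descent do not hold up. First, you silently assume the cover is \'etale over $\wh R_P$; establishing that is precisely what Purity of Branch Locus is for, and you never invoke it. Second, and more seriously, the claim that ``splitness of $\wh Y\times_{\wh X}\Spec(\wh R_P)$ over $\wh R_P$ is detected after any faithfully flat local extension, in particular after passing to $\wh R_v$'' is false as stated: $\wh R_P\to\wh R_v$ is not a \emph{local} homomorphism (the maximal ideal of $\wh R_v$ corresponds to a height-one prime $\wp$ of $\wh R_P$, not to $\mathfrak m_P$), so splitness over $\wh R_v$ tells you nothing directly about the fiber over the closed point of $\Spec(\wh R_P)$, which is what Hensel's Lemma needs. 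In general, splitness of a finite \'etale cover is \emph{not} preserved under faithfully flat descent — a connected double cover becomes split after base change to any field that trivializes it.

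What actually makes the argument work (and this is exactly what Corollary~\ref{purity} does) is: after Purity gives \'etaleness, pick $\wp$ to be generated by a \emph{regular parameter} at $P$, so that $Z:=V(\wp)$ is a \emph{regular} curve through $P$. Splitness over the generic point of $Z$ (a codimension-one point of $\wh X$) together with \'etaleness of $h^{-1}(Z)\to Z$ and normality of the DVR $\mathcal O_{Z,P}$ then forces splitness over the closed point. Your choice of $v$ is arbitrary rather than coming from a regular parameter, and you never reduce to a normal one-dimensional base where the integrality argument applies. In short, you re-derived the need for something like Corollary~\ref{purity} but did not supply it; citing that corollary (after checking splitness at codimension-one points, which your argument already does) closes the gap and gives the paper's proof.
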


\begin{proof} If $E/F$ is a split extension, let $\wh Y$ be the normalization
  of $\wh X$ in $E$. For every codimension one point $Q$ on $\wh X$, the
  fraction field $F_Q$ of the complete local ring $\wh R_Q$ at $Q$ is the
  completion of $F$ at the discrete valuation $v$ defined by $Q$. By
  definition of split extensions, $E \otimes_F F_v$ is a product of copies
  of $F_v$. That is, $E/F$ is split over $F_Q$. Since $\wh X$ is regular (and connected),
  Corollary~\ref{purity} asserts that $\wh Y \to \wh X$ is a split cover.

Conversely, if $\wh Y \to \wh X$ is a split cover with function field $E$, let
$v$ be a discrete valuation on $F$.  By Lemma~\ref{centers} above, $v$
has a center $Q$ on $\wh X$ that is not the generic point, and $F_Q \subseteq
F_v$.  By definition of a split cover, $\wh Y \to \wh X$ splits over the point
$Q$ (since it is not the generic point of $\wh X$); and so by Hensel's Lemma
it splits over $\wh R_Q$ and thus over $F_Q$.  The containment $F_Q \subseteq
F_v$ implies that $\wh Y \to \wh X$ splits over $F_v$, and hence so does
$E/F$.

For the last assertion, it remains to show injectivity (on isomorphism classes).  This 
follows using that finite split covers are \'etale, together 
with the fact that a connected \'etale cover $\wh Y$ of $\wh X$ is the normalization of $\wh X$ in the function field of $\wh Y$.
\end{proof}

\begin{remark}\label{split_equiv_rk}
The proof of Proposition~\ref{split_equiv} shows that condition~(\ref{split_equiv_ii})
follows from an a priori weaker form of condition~(\ref{split_equiv_i}),
viz.\ it suffices to assume that $E/F$ is split over each discrete valuation of $F$ that is given by a codimension one point on the chosen regular model $\wh X$.  Since~(\ref{split_equiv_ii})
implies~(\ref{split_equiv_i}),
it follows that an extension $E/F$ is split if and only if it is split over $F_v$ for each $v$ in that smaller set of valuations.
\end{remark}

By taking Galois groups of the objects in Proposition~\ref{split_equiv} and passing to the limit, we obtain the following:

\begin{cor} \label{fs_free}
Let $F$ be a one-variable function field over the fraction field of a
complete discrete valuation ring.  Then $\FS/F$ is a Galois extension; and
for any regular  model $\wh X$ of $F$, there is a natural identification
\[\Gal(\FS/F) = \pi_1^\s(\wh X) \]
of free profinite groups on finitely many
generators.  Hence $\pi_1^\s(\wh X)$  and the fundamental group of the reduction graph are independent of the choice of the regular model $\wh X$ for $F$.
\end{cor}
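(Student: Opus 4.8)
The plan is to identify $\Gal(\FS/F)$ with $\pi_1^\s(\wh X)$ by recognizing both as the same quotient of $\Gal(F^\sep/F)$, namely the one classifying split covers of $\wh X$; freeness, finite generation, and independence of the model will then follow from Corollary~\ref{graph_pi1} and from the intrinsic description of $\FS$.

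First I would check that $\FS/F$ is Galois. Every finite split extension of $F$ is separable, since by Proposition~\ref{split_equiv} it is the function field of a split cover of a regular model of $F$ (such models exist), and split covers are \'etale; hence $\FS/F$ is separable, so $\FS\subseteq F^\sep$. Moreover $\FS$ is stable under $\Gal(F^\sep/F)$: the condition of being a split extension depends only on the $F$-isomorphism class of $E$, as it refers only to the completions $F_v$; so for $\sigma\in\Gal(F^\sep/F)$ and $E\subseteq\FS$ a finite split extension, $\sigma(E)$ is isomorphic to $E$ over $F$, hence split, hence contained in $\FS$. Thus $\FS/F$ is Galois, with $\Gal(\FS/F)$ the quotient of $\Gal(F^\sep/F)$ by $\Gal(F^\sep/\FS)$; equivalently, $\Gal(\FS/F)=\varprojlim_E\Gal(E/F)$, the limit taken over the finite Galois split extensions $E/F$ inside $F^\sep$, with restriction maps as transitions.

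Next, fixing a regular model $\wh X$ of $F$, I would identify the kernel of the canonical surjection $\Gal(F^\sep/F)\to\pi_1^\s(\wh X)$ realizing $\pi_1^\s(\wh X)$ as the quotient of $\Gal(F^\sep/F)$ that classifies split covers of $\wh X$ (this is the map implicit in Section~\ref{sec_splitcovers}). A finite Galois quotient $\Gal(F^\sep/F)\twoheadrightarrow\Gal(E/F)$, corresponding to a finite Galois extension $E/F$ inside $F^\sep$, factors through $\pi_1^\s(\wh X)$ if and only if the normalization $\wh Y$ of $\wh X$ in $E$ --- which carries a canonical $\Gal(E/F)$-action extending that on $E$, by uniqueness of normalization, so that $\wh Y/\wh X$ is Galois with group $\Gal(E/F)$ --- is a split cover of $\wh X$. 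By Proposition~\ref{split_equiv} this is precisely the condition that $E/F$ be a split extension, i.e.\ that $E\subseteq\FS$, i.e.\ that the quotient factor through $\Gal(\FS/F)$. Hence $\pi_1^\s(\wh X)$ and $\Gal(\FS/F)$ are the same quotient of $\Gal(F^\sep/F)$, which gives the asserted natural identification.

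It remains to read off the corollaries. Corollary~\ref{graph_pi1} identifies $\pi_1^\s(\wh X)$ with the profinite completion of $\pi_1(\Gamma(\wh X,\mc P))$, which is free on finitely many generators because the fundamental group of a finite graph is; this gives the freeness statement. For independence of the model, note that $\Gal(\FS/F)$ is defined purely in terms of $F$ and its discrete valuations, so the identification above shows $\pi_1^\s(\wh X)$ does not depend on the regular model $\wh X$. Since a finitely generated free group is determined up to isomorphism by its profinite completion (if $\wh{\mbb Z}^m\cong\wh{\mbb Z}^n$ then $m=n$, e.g.\ after tensoring with $\mbb Z/p$, so the rank is detected by the profinite abelianization), the free group $\pi_1(\Gamma(\wh X))$ --- equivalently, the first Betti number of the reduction graph --- is likewise independent of the choice of $\wh X$; alternatively this follows from Remark~\ref{graph_remark}(\ref{blow-up_tree}) together with the fact that any two regular models of $F$ are dominated by a common one obtained by blow-ups at closed points. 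I expect the only real obstacle to lie in the third paragraph: verifying cleanly that, under the dictionary between $G$-Galois extensions of $F$ and $G$-Galois covers of $\wh X$, the cover attached to $E/F$ is split exactly when $E/F$ is a split extension --- but this is exactly the content of Proposition~\ref{split_equiv}, which is already available.
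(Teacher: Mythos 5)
Your proposal is correct and follows essentially the same route as the paper: separability of finite split extensions via \'etaleness of split covers, Galois-stability of $\FS$ from the intrinsic (valuation-theoretic) characterization, the identification of $\Gal(\FS/F)$ with $\pi_1^\s(\wh X)$ via Proposition~\ref{split_equiv}, and then Corollary~\ref{graph_pi1} for freeness and finite generation. The paper states this more tersely (as befits a corollary), but the logical structure is the same; your extra observation that the rank of $\pi_1(\Gamma)$ is detected by the profinite abelianization modulo $p$ is a clean way to justify the model-independence of the reduction graph's fundamental group, alongside the alternative via Remark~\ref{graph_remark}(\ref{blow-up_tree}) that the paper also mentions.
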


\begin{proof}
Since split covers are \'etale, Proposition~\ref{split_equiv} implies that every finite split extension of $F$ is separable; and hence so is $\FS/F$.  
Also, $\FS$ is invariant under the absolute Galois group of $F$.  Thus the extension $\FS/F$ is Galois.  Moreover, under the lattice isomorphism given 
in the last part of Proposition~\ref{split_equiv}, each
Galois finite split cover of $\wh X$ is sent to a Galois finite split extension of $F$ with the same Galois group.
Hence the asserted identification of profinite groups follows.  The fact that these profinite groups are finitely generated and free then follows from Corollary~\ref{graph_pi1}, as does the last assertion of the present corollary. 
\end{proof}

Combining this with Corollary~\ref{Sha_P-graph} if $G$ is rational, we obtain
that $\Sha_{\wh X,X}(F,G)$ does not depend on the regular model $\wh X$ of $F$ (as $\FS$ does not).
To indicate this independence of the model, we write
$$\Sha_0(F,G):=\Sha_{\wh X,X}(F,G),$$
where $\wh X$ is any
regular model of $F$, and $X$ is its closed fiber.

\begin{lemma} \label{split_inclusions}
Let $\wh X$ be a regular projective curve over a complete discrete valuation ring $T$, with closed fiber $X$.
\renewcommand{\theenumi}{\alph{enumi}}
\renewcommand{\labelenumi}{(\alph{enumi})}
\begin{enumerate}
\item \label{point_inclusions}
For every point $P$ of $X$, there is an inclusion $\FS\hookrightarrow
F_P$ of $F$-algebras.
\item \label{patch_inclusions}
For any irreducible component $X_0 \subseteq X$, and any
non-empty affine open subset $U \subset X_0$ that does not meet any other
irreducible component of $X$, there is an inclusion $\FS \hookrightarrow F_U$
of $F$-algebras.
\end{enumerate}
\end{lemma}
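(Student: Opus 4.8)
Both parts follow the same two-move pattern: first reduce from the (infinite) field $\FS$ to a single finite split subextension, and then invoke the dictionary between split extensions of $F$ and split covers of $\wh X$ (Proposition~\ref{split_equiv}), together with the principle that a split cover, being \'etale, is already split over the relevant complete rings and their fraction fields. For the reduction: $\FS/F$ is a directed union of its finite subextensions $E$, each a split extension of $F$, so an $F$-algebra homomorphism $\FS\to F_P$ (resp.\ $\to F_U$) is exactly a compatible system of $F$-algebra homomorphisms $E\to F_P$ (resp.\ $E\to F_U$). Thus $\Hom_{F\text{-alg}}(\FS,F_P)=\varprojlim_E\Hom_{F\text{-alg}}(E,F_P)$, an inverse limit over a directed index set of finite sets; since an inverse limit of nonempty finite sets over a directed set is nonempty, and since any $F$-algebra homomorphism out of the field $\FS$ is automatically injective, it suffices to check that for each finite split $E/F$ the algebra $E\otimes_F F_P$ (resp.\ $E\otimes_F F_U$) is a nonzero finite product of copies of $F_P$ (resp.\ $F_U$).

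\textbf{Part (a).} Given a finite split $E/F$, Proposition~\ref{split_equiv} produces a split cover $h\colon\wh Y\to\wh X$ with function field $E$, where $\wh Y$ is a normal projective $T$-curve. By definition $h$ is split over every point of $\wh X$ except the generic point, and every point $P$ of the closed fiber $X$ is a proper specialization of that generic point; hence $h$ is split over $P$. By the observations recorded just before Proposition~\ref{split_points} — Hensel's Lemma when $P$ is closed, and \cite[Lemma~4.5]{HHK} with the normality of $\wh Y$ in general — this forces $h$ to be split over $\wh R_P$ and therefore over $F_P$, i.e.\ $E\otimes_F F_P\cong F_P^{[E:F]}$, as needed. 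For \textbf{part (b)} the plan is to run the identical argument with $P$ replaced by $U$: given the split cover $h\colon\wh Y\to\wh X$, pass to the open $\wh U\subseteq\wh X$ whose closed fiber is $U$ (this makes sense because $U$ lies in a single component and meets no other). Since $\wh X$ is regular, $\wh U$ is regular and $h^{-1}(\wh U)\to\wh U$ is \'etale with $h^{-1}(\wh U)$ normal; moreover this restricted cover is split over every codimension-one point of $\wh U$ (those lying in the closed fiber are the generic points of components, over which $h$ splits). Hence Corollary~\ref{purity} applies and shows $h^{-1}(\wh U)\to\wh U$ is a split cover of $\wh U$; in particular $h$ is split over $\wh R_U$, and then — by the $U$-analog of the equivalences before Proposition~\ref{split_points}, again via \cite[Lemma~4.5]{HHK} and the normality of $\wh Y$ — split over $F_U$. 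Thus $E\otimes_F F_U$ is a product of copies of $F_U$, which completes the argument. (Alternatively, one can enlarge $\mc P$ to a set satisfying Hypothesis~\ref{notn_branched} for which the relevant piece of $U$ appears among the $\mc U$, and apply Corollary~\ref{split_patch}.)

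\textbf{Main obstacle.} The delicate point is part (b): one must control $\wh R_U$ and $F_U$ for a \emph{general} affine open $U$, which may contain non-unibranched points of $X$ and hence be singular as a scheme. This is exactly where the regularity hypothesis on $\wh X$ (as opposed to mere normality) is used — it is what lets Corollary~\ref{purity} promote ``split over every codimension-one point'' to ``split over every point,'' so that the restricted cover $h^{-1}(\wh U)\to\wh U$ is genuinely a split cover and the passage from $\wh R_U$ to $F_U$ goes through. Part (a) and the inverse-limit bookkeeping are then routine.
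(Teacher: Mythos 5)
Part (a) follows the paper's proof closely: reduce to finite split subextensions of $\FS$ via the inverse-limit argument, use Proposition~\ref{split_equiv} to produce a split cover $\wh Y \to \wh X$, and apply Proposition~\ref{split_points} to get splitness over $F_P$.

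Part (b)'s primary argument contains a genuine gap. From Corollary~\ref{purity} you conclude that $h^{-1}(\wh U)\to\wh U$ is a split cover of $\wh U$ (that is, split over every non-generic point of $\wh U$) and then write ``in particular $h$ is split over $\wh R_U$.'' That last step is not an ``in particular'': splitness over $\wh R_U$ means $\wh Y\times_{\wh X}\Spec(\wh R_U)$ is \emph{globally} a disjoint union of copies of $\Spec(\wh R_U)$, which is strictly stronger than being split over each point of $U$. The two diverge exactly when $U$ contains a non-unibranched point of $X_0$ (e.g.\ a self-node): a connected split cover of $\wh X$ with monodromy around the corresponding loop in the reduction graph is split at every point, yet remains connected over $\Spec(\wh R_U)$, so $\FS$ cannot embed in $F_U$ along your route. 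The passage from pointwise splitness to global triviality over $U$ --- and thence to $\wh R_U$ and $F_U$ via \cite[Lemma~4.5]{HHK} --- is precisely what the forward direction of Corollary~\ref{split_patch} provides, and its proof relies on Hypothesis~\ref{notn_branched} (which forces $U$ to contain no non-unibranched points). Corollary~\ref{purity} does not do this work: it only upgrades codimension-one splitness to pointwise splitness, so your ``Main obstacle'' paragraph misidentifies what the regularity hypothesis buys. Your parenthetical alternative (choose $\mc P$ so that $U\in\mc U$ and invoke Corollary~\ref{split_patch}) is in fact the paper's own proof of (b), and it --- not the Corollary~\ref{purity} route --- is the argument you need.
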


\begin{proof}
(\ref{point_inclusions})
Let $\bar F$ be an algebraic closure of $F$.  The field $\FS \subset \bar F$ is a union of finite split
field extensions $F_i/F$, each of which is the function field of a
finite split cover $\wh X_i \to \wh X$ (by Proposition~\ref{split_equiv}).  Since this cover is split, it is also split over $F_P$ by
Proposition~\ref{split_points}.  
Hence $\Hom_F(F_i,F_P)$ is a non-empty finite set.  Thus
$\displaystyle\Hom_F(\FS,F_P) = \lim_\leftarrow \Hom_F(F_i,F_P)$ is also non-empty, being an inverse limit of non-empty finite sets.

(\ref{patch_inclusions})
Let $\mc P$ be the finite subset of $X$ that consists of the complement of $U$
in $X_0$ together with all the points of $X$ at which distinct irreducible
components of $X$ meet.  Let $\mc U$ be the set of connected components of the
complement of $\mc P$ in $X$.  Thus $U \in \mc U$.  The proof is now the same
as that of part~(\ref{point_inclusions}), except that
Corollary~\ref{split_patch} replaces Proposition~\ref{split_points}.
\end{proof}

\begin{thm}\label{sha_fs}
Let $G$ be a rational linear algebraic group defined over $F$. Then
\[\Sha_0(F,G)=H^1(\FS/F,G).\]
\end{thm}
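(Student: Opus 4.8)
The plan is to prove the two inclusions $H^1(\FS/F,G)\subseteq\Sha_0(F,G)$ and $\Sha_0(F,G)\subseteq H^1(\FS/F,G)$ separately, computing $\Sha_0(F,G)$ as $\Sha_{\wh X,X}(F,G)$ for a fixed regular model $\wh X$ of $F$ with closed fiber $X$. The first inclusion is easy and needs no rationality hypothesis: by Lemma~\ref{split_inclusions}(\ref{point_inclusions}) there is, for each point $P$ of $X$, an $F$-embedding $\FS\hookrightarrow F_P$, so every restriction $H^1(F,G)\to H^1(F_P,G)$ factors through $H^1(\FS,G)$; hence any class killed over $\FS$ is killed over every $F_P$, i.e.\ lies in $\Sha_{\wh X,X}(F,G)$.

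For the reverse inclusion, write $\bar G:=G/G^0$, a finite constant group by rationality, and fix $\alpha\in\Sha_0(F,G)$. By Theorem~\ref{sha_fsplit} the natural map $\Sha_0(F,G)\to\Sha_0(F,\bar G)$ is a bijection, and by Corollaries~\ref{Sha_P-finite_gp} and~\ref{fs_free} we may identify $\Sha_0(F,\bar G)$ with $\Hom(\Gal(\FS/F),\bar G){/\!\sim}$; thus the image $\bar\alpha$ of $\alpha$ in $H^1(F,\bar G)$ is inflated from a homomorphism $\rho\colon\Gal(\FS/F)\to\bar G$. Since $\rho$ has open kernel and $\Gal(\FS/F)=\pi_1^\s(\wh X)$ (Corollary~\ref{fs_free}), I would then choose a connected finite Galois split cover $\wh Y\to\wh X$, with function field $E$, large enough that $\rho$ factors through $\Gal(\wh Y/\wh X)=\Gal(E/F)$. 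Then $\rho$ is trivial on $\Gal(\FS/E)$, so $\bar\alpha$ becomes the trivial class over $E$.

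The key step is to deduce that $\alpha$ too becomes trivial over $E$; since $E\subseteq\FS$, this gives $\alpha\in H^1(\FS/F,G)$ and finishes the proof. First I would check that $\alpha|_E\in\Sha_0(E,G)$, computed with the regular model $\wh Y$: indeed $\wh Y\to\wh X$ is a split cover, hence \'etale, so $\wh Y$ is regular and projective over $T$, a regular model of $E$; and each point $Q$ of its closed fiber lies over a point $P$ of $X$, with the flat local inclusion $R_P\hookrightarrow R_Q$ inducing $\wh R_P\hookrightarrow\wh R_Q$ and hence an $F$-algebra inclusion $F_P\hookrightarrow E_Q$. As $\alpha|_{F_P}$ is trivial for all $P\in X$, it follows that $\alpha|_{E_Q}$ is trivial for all such $Q$, so $\alpha|_E\in\Sha_{\wh Y,Y}(E,G)=\Sha_0(E,G)$. (One could equally well take any regular model of $E$ and invoke Proposition~\ref{field_containment} together with the fact that every discrete valuation of $E$ restricts to one on $F$.) Now apply Theorem~\ref{sha_fsplit} over the function field $E$: the natural map $\Sha_0(E,G)\to\Sha_0(E,\bar G)$ is a bijection of pointed sets and sends $\alpha|_E$ to $\bar\alpha|_E$, which we arranged to be trivial; since a bijection of pointed sets sends only the neutral element to the neutral element, $\alpha|_E$ is trivial.

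I expect the main obstacle to be this middle step --- ensuring that restricting $\alpha$ to the auxiliary split extension $E$ keeps it inside $\Sha_0(E,G)$, which is a compatibility between the local triviality conditions on $\wh X$ and on the cover $\wh Y$. The point that makes the whole argument go through is that the finiteness of $\bar G=G/G^0$ (a consequence of rationality) permits the reduction to a \emph{finite} split extension $E$, over which the bijectivity assertion of Theorem~\ref{sha_fsplit} then forces the triviality of $\bar\alpha|_E$ to propagate to $\alpha|_E$.
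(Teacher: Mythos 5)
Your proof is correct and follows essentially the same strategy as the paper's: establish the easy containment via Lemma~\ref{split_inclusions}(\ref{point_inclusions}), then for the reverse pass to $\bar G = G/G^0$, find a finite connected Galois split cover $\wh Y\to\wh X$ with function field $E\subseteq\FS$ over which $\bar\alpha$ dies, restrict $\alpha$ to $\Sha_0(E,G)$, and invoke the bijectivity of $\Sha_0(E,G)\to\Sha_0(E,\bar G)$ from Theorem~\ref{sha_fsplit}. The only cosmetic differences are that you phrase the triviality of $\bar\alpha|_E$ via a homomorphism $\rho\colon\Gal(\FS/F)\to\bar G$ factoring through $\Gal(E/F)$, while the paper chooses $E$ directly as the function field of the connected component of the cover corresponding to $\bar\alpha$ and observes $E\otimes_F E\cong E^{[E:F]}$; and you spell out explicitly (correctly) why $\alpha|_E$ lands in $\Sha_0(E,G)$, using that $\wh Y$ is a regular model of $E$ and that local completions embed $F_P\hookrightarrow E_Q$ for $Q$ over $P$ --- a point the paper leaves implicit.
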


\begin{proof}
Let $\iota: H^1(\FS/F,G)\rightarrow H^1(F,G)$ be the natural inclusion. By
Lemma~\ref{split_inclusions}(\ref{point_inclusions}), $\FS$ includes into
$F_P$ for each point $P$ of $X$.
Hence the image of $\iota$ is in $\Sha_0(F,G)$. It suffices to show that
$\iota$ surjects onto $\Sha_0(F,G)$.

Write $\bar G = G/G^0$ and let $\wh X$ be a regular model of $F$ over $T$.  To
show surjectivity, consider an element $\alpha\in \Sha_0(F,G)$, with image $\bar \alpha\in
\Sha_0(F,\bar G)$.
By Corollary~\ref{Sha_P-finite_gp}, $\bar\alpha$ corresponds to a $\bar G$-Galois split cover
$\wh Y \to \wh X$, 
which is a disjoint union of copies of a connected Galois split cover with function field $E/F$.
Write $\alpha_E$ for the image of $\alpha$ under
$\Sha_0(F,G) \to \Sha_0(E,G)$.

Since $E/F$ is Galois, $E \otimes_F E$ is a direct product of copies of $E$,
and hence the induced element $(\bar\alpha)_E \in \Sha_0(E,\bar G) \subseteq
H^1(E,\bar G)$ is trivial.  But this is the image of $\alpha_E$ under $\Sha_0(E,G)
\to \Sha_0(E,\bar G)$, and this map is bijective by Theorem~\ref{sha_fsplit}.
So $\alpha_E$ is trivial. Since $E \subseteq \FS$, it follows that $\alpha$ is in
fact in $H^1(\FS/F,G)$, as desired.
\end{proof}

\section{Local-global principles with respect to valuations} \label{sec_valuations}

In this section, we consider a local-global map with respect to the discrete valuations on a field $E$; this was previously considered in \cite{COP} and \cite{CGP}. Let $\Omega_E$ denote the set of equivalence classes of discrete valuations $v$ on $E$ (i.e., valuations with value group isomorphic to ${\mathbb Z}$), and let $E_v$ be the $v$-adic completion of $E$.
Given an algebraic group $G$ over $E$, we define
\[\Sha(E,G) := \ker\bigl(H^1(E,G) \to \prod_{v \in \Omega_E} H^1(E_v,G)\bigr).\]
Thus $\Sha(E,G)$ classifies $G$-torsors over $E$ that become trivial over each $E_v$.  Again, this is a pointed set, and is a group if $G$ is commutative.

We will focus on the case that $E$ is a one-variable function field $F$ 
over a complete discretely valued field $K$, and $G$ is a linear algebraic group over $F$. As before, the valuation ring of $K$ will be denoted by $T$.  And as in 
Section~\ref{sec_splitexten} we consider only 
regular models of $F$, i.e., regular projective $T$-curves $\wh X$ with function field $F$.
We will also consider the case that $E=F_P$, for $P$ a point on the closed fiber $X$ of such a model $\wh X$ of $F$.

For $F$, $\wh X$, $X$ as above, we will
study the relationship between $\Sha(F,G)$ and $\Sha_X(F,G)$, using this to obtain information about local-global principles for torsors in terms of discrete valuations.  (For now, we do \textit{not} assume that $G$ is rational, and so we
cannot conclude from Section~\ref{sec_splitexten}
that $\Sha_X(F,G)$ depends only on the function field $F$.)
Theorem~\ref{local-global_Sha_disconnected} gives a criterion for the equality of $\Sha(F,G)$ and $\Sha_X(F,G)$, and this is then used to obtain specific conditions guaranteeing this equality (Theorem~\ref{sha_conditions}).  In particular, Corollary~\ref{Sha_descrip} gives a criterion under which $\Sha(F,G)$ is finite and there is a necessary and sufficient condition for it to vanish.

\begin{remark} \label{valuations_on_models}
The above definition of $\Sha(F,G)$ was used in \cite{CPS}, and it was also
considered (without this notation) in \cite{COP}.  In \cite{CGP} and
\cite{BKG}, a slightly different version of $\Sha(F,G)$ was considered.
There, one takes only those discrete valuations that arise from codimension
one points on blow-ups of a given regular model.  In our situation, these two
sets of discrete valuations are the same if the residue field $k$ of $T$ is
algebraically closed; but more generally the sets are unequal.  In that case,
our $\Sha(F,G)$ is contained in the variant version, and it would be
interesting to know if this latter containment is always an equality.  As the
proofs below show, the results stated here about $\Sha(F,G)$ each hold with
either version, and some of the results hold even if one just considers the
discrete valuations associated to codimension one points on a fixed regular
model (as was the case with
Proposition~\ref{split_equiv}/Remark~\ref{split_equiv_rk}).
\end{remark}

\begin{prop} \label{sha_inclusion}
Let $F$ be a one-variable function field over a complete discretely valued field $K$, and let $G$ be a linear algebraic group over $F$.
Let $\wh X$ be a regular model of $F$, with closed fiber $X$.
Then
$\Sha_X(F,G) \subseteq \Sha(F,G)$ as subsets of $H^1(F,G)$.
\end{prop}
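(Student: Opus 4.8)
The plan is to show that every class in $\Sha_{\wh X,X}(F,G)$ dies over $F_v$ for an arbitrary discrete valuation $v$ on $F$, and then conclude from the definitions. So fix a class $\alpha \in \Sha_{\wh X,X}(F,G)$; by definition this means $\alpha$ becomes trivial over $F_P$ for every point $P$ of the closed fiber $X$ of $\wh X$ (including non-closed points). Now take any $v \in \Omega_F$. The key input is Lemma~\ref{centers}: since $\wh X$ is a connected regular projective $T$-curve with function field $F$, the valuation $v$ has a center $Q$ on $\wh X$ which is not the generic point, and moreover $F_Q \subseteq F_v$. (Alternatively one could invoke Proposition~\ref{field_containment} to arrange $Q$ to lie on the closed fiber, but Lemma~\ref{centers} already suffices, as I explain next.)

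First I would reduce to the case where the center $Q$ lies on $X$. If $Q \in X$, then $\alpha_{F_Q}$ is trivial by hypothesis, and since $F_Q \subseteq F_v$ the restriction $\alpha_{F_v}$ is trivial too, so we are done for this $v$. If instead $Q$ lies on the general fiber of $\wh X$, then $Q$ is a codimension one point of $\wh X$, hence $F_Q = F_v$ is the $v$-adic completion; but here the argument of Proposition~\ref{field_containment} produces a closed point $P \in X$ with $F_P \subseteq F_Q = F_v$. Again $\alpha_{F_P}$ is trivial by the hypothesis that $\alpha \in \Sha_{\wh X,X}(F,G)$, so $\alpha_{F_v}$ is trivial. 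Either way, $\alpha$ restricts to the neutral element of $H^1(F_v,G)$.

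Since $v \in \Omega_F$ was arbitrary, $\alpha$ lies in the kernel of $H^1(F,G) \to \prod_{v \in \Omega_F} H^1(F_v,G)$, i.e.\ $\alpha \in \Sha(F,G)$. This proves $\Sha_{\wh X,X}(F,G) \subseteq \Sha(F,G)$ as subsets of $H^1(F,G)$.

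I do not expect any serious obstacle here: the entire content is the valuative criterion of properness packaged in Lemma~\ref{centers} (that every discrete valuation on $F$ acquires a center on the regular projective model, with a field inclusion of the corresponding completions) together with the refinement in Proposition~\ref{field_containment} handling valuations centered on the general fiber. The only point requiring a word of care is making sure that the completions $F_P$ appearing in the definition of $\Sha_{\wh X,X}$ range over \emph{all} points of $X$, not just the closed ones — but that is exactly how $\Sha_{\wh X,X}(F,G)$ was defined in Section~\ref{sec_splitcovers}, so the non-closed codimension-one points are covered as well, and the argument goes through verbatim.
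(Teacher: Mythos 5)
Your argument is correct and is essentially the paper's proof: the paper simply invokes Proposition~\ref{field_containment} directly to produce a point $P \in X$ with $F_P \subseteq F_v$, whereas you first cite Lemma~\ref{centers} and then do a (harmless but redundant) case split on whether the center lies on the closed fiber, falling back to Proposition~\ref{field_containment} in the other case — which is exactly what that proposition already handles internally.
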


\begin{proof}
Let $\alpha \in H^1(F,G)$ be an element of $\Sha_X(F,G)$.  Then $\alpha$ has trivial image in $H^1(F_P,G)$ for every $P \in X$.  Let $v \in \Omega_F$.  By Proposition~\ref{field_containment}, there is some point $P$ on the closed fiber $X$ of $\wh X$ such that $F_P$ is contained in $F_v$.  Hence $\alpha$ has trivial image in $H^1(F_v,G)$.  Thus $\alpha$ is in $\Sha(F,G)$.
\end{proof}

\begin{prop} \label{loc-to-glob_sha_map}
Let $\wh X$ be a regular projective curve over a complete discrete valuation ring $T$, with function field F and closed fiber $X$.  Let $X_{(0)}$ be the set of closed points of $X$.  For any linear algebraic group $G$ over $F$,
the image of $\Sha(F,G)$ under the natural map
\[\phi:H^1(F,G) \to \prod_{P \in X_{(0)}} H^1(F_P,G)\]
is $\displaystyle {\prod_{P \in X_{(0)}}}{\!\!\!'} \ \Sha(F_P,G)$, the subset of the product in which all but finitely many entries are trivial.
\end{prop}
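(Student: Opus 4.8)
The plan is to prove the two inclusions ``$\subseteq$'' and ``$\supseteq$'' separately. The first will follow from Propositions~\ref{restriction} and~\ref{approx}; the second will be a patching construction built on Theorem~\ref{patching_torsors}, with the verification that the patched class genuinely lies in $\Sha(F,G)$ carried out via Lemma~\ref{centers} and Proposition~\ref{field_containment}. Throughout, $\wh X$ is regular (as assumed in this section), so each complete local ring $\wh R_P$ is regular; in particular every branch $\wp$ of $X$ at a closed point $P$ defines a discrete valuation on $F_P$ whose completion is exactly $F_\wp$, and for every component $X_0$ of $X$ with generic point $\eta$, the local ring $\mc O_{\wh X,\eta}$ is a discrete valuation ring.

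First I would show $\phi(\Sha(F,G)) \subseteq \prod'_{P \in X_{(0)}} \Sha(F_P,G)$. Fix $\alpha \in \Sha(F,G)$ and a closed point $P$. Given any discrete valuation $w$ on $F_P$, Proposition~\ref{restriction} says $v := w|_F$ is a discrete valuation on $F$; since $v = w|_F$, the subspace topology on $F$ coming from $w$ is the $v$-adic topology, so the completion $F_v$ sits inside $(F_P)_w$ compatibly with the inclusions of $F$. As $\alpha|_{F_v}$ is trivial, so is $\alpha|_{(F_P)_w}$, and hence $\alpha_P := \alpha|_{F_P} \in \Sha(F_P,G)$. For the finiteness of the support, let $\eta$ be the generic point of a component $X_0$ of $X$; since $\mc O_{\wh X,\eta}$ is a discrete valuation ring, $\alpha$ is trivial over the corresponding completion $F_\eta$, so Proposition~\ref{approx} (applied to a torsor representing $\alpha$) yields a nonempty affine open $U \subseteq X_0$, meeting no other component, over which $\alpha$ is trivial. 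Then $\alpha$ is trivial over $F_P$ for every closed $P \in U$, because $F_U \subseteq F_P$. Ranging over the finitely many components of $X$ shows $\alpha_P$ is trivial for all $P$ outside a finite set.

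For the reverse inclusion, given $(\beta_P) \in \prod'_P \Sha(F_P,G)$, I would pick a finite set $\mc P \subseteq X_{(0)}$ containing every $P$ with $\beta_P \neq 0$ and satisfying Hypothesis~\ref{notn_branched}, with associated patches $\mc U$ and branches $\mc B$. Form the $G$-torsor patching problem with $\T_P$ a torsor representing $\beta_P$ for $P \in \mc P$, with $\T_U$ trivial for $U \in \mc U$, and with, for each $\wp \in \mc B$ at $P \in \mc P$ on the closure of $U$, an isomorphism $\mu_\wp \colon \T_P \times_{F_P} F_\wp \to \T_U \times_{F_U} F_\wp$; such a $\mu_\wp$ exists because $F_\wp$ is the completion of $F_P$ at the discrete valuation defined by $\wp$ and $\beta_P \in \Sha(F_P,G)$, so $\T_P$ becomes trivial over $F_\wp$. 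A factorization inverse system imposes no relations beyond its edges, so no compatibility among the $\mu_\wp$ is required and this is a legitimate patching problem; by Corollary~\ref{patching_equivalence} and Theorem~\ref{patching_torsors} it has a solution $\T$ over $F$. Put $\alpha = [\T]$. Then $\alpha_P = \beta_P$ for $P \in \mc P$ and $\alpha_U = 0$ for $U \in \mc U$; and for a closed $P \notin \mc P$, lying in some $U \in \mc U$, one has $\alpha_P = 0$ from $F_U \subseteq F_P$ while also $\beta_P = 0$. Hence $\phi(\alpha) = (\beta_P)$.

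It remains to check $\alpha \in \Sha(F,G)$, which I expect to be the main obstacle. Let $v \in \Omega_F$; by Lemma~\ref{centers} it has a center $Q$ on $\wh X$, not the generic point. If $Q$ is a closed point of $X$, or a closed point of the generic fiber, then (by the proof of) Proposition~\ref{field_containment} there is a closed point $P$ of $X$ with $F_P \subseteq F_v$; if $P \in \mc P$ then $\alpha_P = \beta_P \in \Sha(F_P,G)$ is trivial over the completion of $F_P$ at the discrete valuation $v|_{F_P}$ (nontrivial since $v$ is nontrivial on $F \subseteq F_P$), and that completion embeds into $F_v$, so $\alpha|_{F_v} = 0$; if $P \notin \mc P$ then $\alpha_P = 0$ already, so again $\alpha|_{F_v}=0$. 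In the remaining case $Q = \eta$ is the generic point of a component $X_0$, whence $v = \oper{ord}_\eta$ and $F_v = F_\eta$; here I would use that $\alpha$ is trivial over $F_U$ for the patch $U = X_0 \cap (X \smallsetminus \mc P) \in \mc U$, together with the natural inclusion of $F$-algebras $F_U \hookrightarrow F_\eta$ — which is immediate from the definitions, since a rational function regular on $U$ is regular at the generic point $\eta \in U$, giving an inclusion of the ring of functions regular on $U$ into $\mc O_{\wh X,\eta}$ that passes to $t$-adic completions ($(t)$ being primary to the maximal ideal of the discrete valuation ring $\mc O_{\wh X,\eta}$). Thus $\alpha|_{F_v}=0$ in every case, so $\alpha \in \Sha(F,G)$, and the two inclusions together give the proposition.
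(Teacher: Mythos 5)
Your proof is correct and follows the same strategy as the paper's: the forward inclusion via Propositions~\ref{restriction} and~\ref{approx}, surjectivity by patching trivial torsors over patches $U_i$ against representatives of $\beta_P$ (the paper invokes the Mayer--Vietoris sequence of Theorem~\ref{6-term_sequence} where you directly call Theorem~\ref{patching_torsors}, but these amount to the same construction), and the membership check $\alpha \in \Sha(F,G)$ via Lemma~\ref{centers} and Proposition~\ref{field_containment}. The case analysis on the center $Q$ and the verification that the restricted valuation on $F_P$ is nontrivial and has completion inside $F_v$ are the same points the paper addresses.
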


\begin{proof}
For any $P \in X_{(0)}$ and any $v \in \Omega_{F_P}$, the restriction $v_0$ of $v$ to $F$ is a discrete valuation in $\Omega_F$, by Proposition~\ref{restriction}.
If $\alpha$ is in $\Sha(F,G)$ then the image of $\alpha$ in $H^1(F_{v_0},G)$ is trivial; and hence so is its image in $H^1((F_P)_v,G)$.  But this is the same as the image  in $H^1((F_P)_v,G)$ of the $P$-component
of $\phi(\alpha)$.  Thus $\phi(\alpha)$ and hence $\phi(\Sha(F,G))$ are
contained in $\prod_{P \in X_{(0)}}\Sha(F_P,G)$.  

Let $X_1,\dots,X_n$ be the irreducible components of $X$, and let $\eta_i$ be the generic point of $X_i$.  Then $\eta_i$ is a codimension one point of $\wh X$, and so corresponds to a discrete valuation in $\Omega_F$.  Thus if $\alpha$ is an element of $\Sha(F,G)$, then the image of $\alpha$ is trivial in $H^1(F_{\eta_i},G)$.
That is, $\alpha$ corresponds to a $G$-torsor over $F$ that has an $F_{\eta_i}$-point.
By Proposition~\ref{approx},
it also has an $F_{U_i}$-point, for some affine open subset $U_i \subset X_i$
that does not meet any other component.  Thus
the image of $\alpha$ is trivial in $H^1(F_{U_i},G)$,
and hence also trivial in $H^1(F_P,G)$ for all $P \in U_i$.  Since
there are only finitely many points of $X$ that do not lie in any of the sets
$U_i$, it follows that $\phi(\Sha(F,G))$ is indeed contained in $\prod'_{P \in X_{(0)}} \Sha(F_P,G)$.

To prove the result we will show that $\Sha(F,G)$ surjects onto $\prod'_{P \in
  X_{(0)}} \Sha(F_P,G)$ under $\phi$.  So consider a finite set $S$ of closed points of $X$, and elements $\alpha_P \in \Sha(F_P,G)$ for $P \in S$.  For each $P \in X_{(0)}$ that is in the complement of $S$, let $\alpha_P$ be the trivial element of $\Sha(F_P,G)$.  We wish to find an element $\alpha \in \Sha(F,G)$ whose image in $\Sha(F_P,G)$ is $\alpha_P$ for all $P \in X_{(0)}$.

After enlarging $S$, we may assume that $S$ contains a point on each irreducible component $X_i$ of $X$, and includes all the points at which $X$ is not unibranched.
Since $\alpha_P$ is in $\Sha(F_P,G) \subseteq H^1(F_P,G)$, the restriction of $\alpha_P$ to $H^1(F_\wp,G)$ is trivial for any height one prime $\wp$ of $\wh R_P$.  In particular, this holds if $\wp$ is a branch of $X$ at $P$.

Let $U_i$ now be the complement of $S \cap X_i$ in $X_i$, and let
$\alpha_{U_i}$ be the trivial element of $H^1(F_{U_i},G)$.  Thus
$\alpha_{U_i}$ is an element of $\Sha(F_{U_i},G)$, and its restriction to
$H^1(F_\wp,G)$ is trivial for any branch $\wp$ at a point $P \in S \cap X_i$.
Thus $\alpha_{U_i}, \alpha_P$ induce isomorphic (trivial) elements of
$H^1(F_\wp,G)$, if $P \in S \cap X_i$ and $\wp$ is a branch at $P$ on the
closure of $U_i$.  Hence Theorem~\ref{6-term_sequence} applies; 
and so there is some $\alpha \in H^1(F,G)$ that induces $\alpha_P \in
H^1(F_P,G)$ for each $P \in S$ and induces $\alpha_{U_i} \in H^1(F_{U_i},G)$
for each $i$.  Since $\alpha_{U_i}$ is trivial, it follows that $\alpha$
induces the trivial element of $H^1(F_P,G)$ for each $P$ in the complement of
$S$. 

It remains to show that $\alpha$ lies in $\Sha(F,G)$.  So consider any $v \in \Omega_F$.  By Proposition~\ref{field_containment}, there is a point $P \in X$ such that $F_P$ is contained in $F_v$.  If $P$ is a codimension one point of $\wh X$, then $P = \eta_i$ for some $i$; and then the image of $\alpha$ in $H^1(F_{\eta_i},G)$ is trivial since its image in $H^1(F_{U_i},G)$ is trivial and since $F_{\eta_i}$ contains $F_{U_i}$.  The other possibility is that $P$ is a closed point of $X$.  Then the discrete valuation on $F_v$ restricts to a discrete valuation $v_P$ on $F_P$ that in turn restricts to $v$ on $F$; and the completion of $F_P$ at $v_P$ is just $F_v$.  Since $\alpha_P$ lies in $\Sha(F_P,G)$, the image of $\alpha_P$ in $H^1(F_v,G)$ is trivial.  But $\alpha_P$ is the image of $\alpha$ in $H^1(F_P,G)$.  Hence the
the image of $\alpha$ in $H^1(F_v,G)$ is also trivial.  Thus $\alpha$ lies in $\Sha(F,G)$.
\end{proof}

\begin{prop} \label{sha_exact_sequence}
Let $\wh X$ be a regular projective curve over a complete discrete valuation ring $T$, with function field F and closed fiber $X$.  Let $X_{(0)}$ be the set of closed points of $X$.  For any linear algebraic group $G$ over $F$, there is an exact sequence
\[1 \to \Sha_X(F,G) \xrightarrow{\iota} \Sha(F,G)  \xrightarrow{\phi}  \prod_{P \in X_{(0)}}{\!\!\!\!'} \  \Sha(F_P,G) \to 1 \eqno{(*)}\]
of pointed sets, in which the map $\iota$ is an inclusion.
\end{prop}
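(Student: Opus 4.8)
The plan is to assemble the two preceding propositions and then verify exactness at the middle term, which is the only part not already in hand.

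First I would note that the map $\iota$ is simply the inclusion $\Sha_{\wh X,X}(F,G)\subseteq\Sha(F,G)$ furnished by Proposition~\ref{sha_inclusion}; being an inclusion it is injective, so exactness at $\Sha_{\wh X,X}(F,G)$ is immediate. Next, Proposition~\ref{loc-to-glob_sha_map} says precisely that the image of $\Sha(F,G)$ under $\phi$ is the restricted product $\prod'_{P\in X_{(0)}}\Sha(F_P,G)$, so $\phi$ is onto and we have exactness at the right-hand term. The composite $\phi\circ\iota$ is trivial, since by definition every element of $\Sha_{\wh X,X}(F,G)$ becomes trivial over $F_P$ for all points $P$ of $X$, in particular over the closed points; this gives one of the two inclusions needed for exactness at $\Sha(F,G)$.

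The one point that still needs an argument is the reverse inclusion $\ker\phi\subseteq\Sha_{\wh X,X}(F,G)$. Here I would argue as follows. Take $\alpha\in\Sha(F,G)$ whose image in $H^1(F_P,G)$ is trivial for every closed point $P\in X_{(0)}$. To place $\alpha$ in $\Sha_{\wh X,X}(F,G)$ I must also check that $\alpha$ is trivial over $F_\eta$ for each generic point $\eta$ of an irreducible component of $X$. But such an $\eta$ is a codimension one point of the regular $T$-curve $\wh X$, so it defines a discrete valuation $v\in\Omega_F$ with $F_\eta=F_v$; and because $\alpha\in\Sha(F,G)$, its image in $H^1(F_v,G)=H^1(F_\eta,G)$ is trivial. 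Hence $\alpha$ is trivial over $F_P$ for \emph{every} point $P$ of $X$, so $\alpha\in\Sha_{\wh X,X}(F,G)$, as needed.

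I do not anticipate a real obstacle here: the substantive work (the patching six-term sequence, Artin approximation via Proposition~\ref{approx}, and the compatibility of discrete valuations with the inclusions $F\hookrightarrow F_P$ via Propositions~\ref{field_containment} and~\ref{restriction}) has all been folded into Propositions~\ref{sha_inclusion} and~\ref{loc-to-glob_sha_map}. The only subtlety worth flagging is that the generic points of the components of $X$ genuinely have codimension one in $\wh X$ --- which is where the regularity and two-dimensionality of $\wh X$ enter --- so that their completions $F_\eta$ are among the $F_v$ with $v\in\Omega_F$ and are therefore already controlled by the hypothesis $\alpha\in\Sha(F,G)$.
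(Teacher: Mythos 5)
Your proposal is correct and follows essentially the same route as the paper: $\iota$ is the inclusion from Proposition~\ref{sha_inclusion}, surjectivity of $\phi$ is exactly Proposition~\ref{loc-to-glob_sha_map}, $\phi\circ\iota$ is trivial by definition of $\Sha_{\wh X,X}$, and the reverse inclusion $\ker\phi\subseteq\Sha_{\wh X,X}(F,G)$ is handled by observing that the generic points $\eta$ of components of $X$ have codimension one, so $F_\eta=F_v$ for the associated discrete valuation $v$, whence triviality at $\eta$ is already guaranteed by $\alpha\in\Sha(F,G)$. The subtlety you flag (that $F_\eta$ is the $v$-adic completion for a codimension-one point on a regular model) is precisely the one the paper establishes in the discussion preceding Proposition~\ref{field_containment}.
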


\begin{proof}
By Proposition~\ref{sha_inclusion}, there is an inclusion $\iota:\Sha_X(F,G) \to \Sha(F,G)$, given by viewing each as a subset of $H^1(F,G)$.  By Proposition~\ref{loc-to-glob_sha_map}, there is a surjection $\phi:\Sha(F,G) \to \prod'_{P \in X_{(0)}} \Sha(F_P,G)$, whose entries are the natural maps to $\Sha(F_P,G) \subseteq H^1(F_P,G)$.  The composition $\phi\iota$ is trivial, since
$\Sha_X(F,G)$ is the kernel of the map $H^1(F,G) \to \prod_{P \in X} H^1(F_P,G)$.  Also, any element of $\Sha(F,G)$ has trivial image in $H^1(F_P,G)$ for any codimension one point $P \in \wh X$, and in particular for the generic point of any irreducible component of $X$.  Hence any element in the kernel of $\phi$ is also in the kernel of $H^1(F,G) \to \prod_{P \in X} H^1(F_P,G)$; i.e.\ is in $\Sha_X(F,G)$.  So the sequence is exact.
\end{proof}

Proposition~\ref{sha_exact_sequence} shows that $\Sha_X(F,G)$ is the kernel of $\phi$.
But since $(*)$ is in general just an exact sequence of pointed sets, describing the kernel does not describe the fibers of $\phi$.  But as in Corollary~\ref{fibers_of_six-term_seq}, we can describe those fibers by using the bijection between $H^1(F, G)$ and $H^1(F, G^\tau)$ that takes the class $[\tau] \in H^1(F, G)$ of $\tau \in Z^1(F,G)$ to the neutral element.  Namely, the fiber of $\phi$ containing $[\tau] \in \Sha(F,G)$ is carried to the kernel of the corresponding map for the twisted group $G^\tau$; and this is just $\Sha_X(F,G^\tau)$, by Proposition~\ref{sha_exact_sequence} applied to $G^\tau$.  We thus obtain the following corollary:

\begin{cor} \label{sha_sequence_fibers}
In the situation of Proposition~\ref{sha_exact_sequence},
let $\tau \in Z^1(F,G)$ be a cocycle whose class $[\tau]$ lies in $\Sha(F,G)$.  Then the fiber of $\phi: \Sha(F,G) \to \prod'_{P \in X_{(0)}} \  \Sha(F_P,G)$ that contains $[\tau]$ is in natural bijection with $\Sha_X(F,G^\tau)$ as pointed sets.
\end{cor}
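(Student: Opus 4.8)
The plan is to follow the pattern of Corollary~\ref{fibers_of_six-term_seq}, using the twisting dictionary of nonabelian cohomology. Fix the cocycle $\tau\in Z^1(F,G)$ with $[\tau]\in\Sha(F,G)$, and for each field extension $L/F$ write $\tau_L$ for the image of $\tau$ in $Z^1(L,G)$. As recalled in the discussion preceding Corollary~\ref{fibers_of_six-term_seq}, there is a twisted group $G^\tau$ over $F$ and, functorially in $L$, a bijection of pointed sets $t_L\colon H^1(L,G^\tau)\to H^1(L,G)$ carrying the neutral element to $[\tau_L]$; here the twist $(G_L)^{\tau_L}$ is the base change $(G^\tau)_L$. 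I would apply this with $L=F$, with $L=F_v$ for $v\in\Omega_F$, and with $L=F_P$ for $P$ a point of the closed fiber $X$. Since $G^\tau$ is again a linear algebraic group over $F$, Propositions~\ref{sha_inclusion} and~\ref{sha_exact_sequence} apply verbatim to $G^\tau$ in place of $G$.

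The first observation is that, because $[\tau]\in\Sha(F,G)$, the class $[\tau_v]$ is trivial in $H^1(F_v,G)$ for every $v\in\Omega_F$; hence $t_{F_v}$ is a basepoint-preserving bijection, so it identifies the trivial class on each side. It follows that for $\beta\in H^1(F,G^\tau)$ with $\alpha:=t_F(\beta)$, one has $\alpha\in\Sha(F,G)$ if and only if $\beta\in\Sha(F,G^\tau)$. The second observation is that, for any closed point $P\in X_{(0)}$, the restriction $[\tau_P]$ equals the image under $t_{F_P}$ of the neutral element, so by injectivity of $t_{F_P}$ the image of $\alpha$ in $H^1(F_P,G)$ agrees with the image of $[\tau]$ there if and only if the image of $\beta$ in $H^1(F_P,G^\tau)$ is trivial. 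Combining these, $t_F$ restricts to a bijection, carrying the neutral element to $[\tau]$, from the set of $\beta\in\Sha(F,G^\tau)$ that have trivial image in $H^1(F_P,G^\tau)$ for every $P\in X_{(0)}$ onto the fiber of $\phi$ over $[\tau]$.

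It then remains to identify that set with $\Sha_{\wh X,X}(F,G^\tau)$. One inclusion is immediate, since $X_{(0)}\subseteq X$ and, by Proposition~\ref{sha_inclusion} applied to $G^\tau$, $\Sha_{\wh X,X}(F,G^\tau)\subseteq\Sha(F,G^\tau)$. For the reverse inclusion, the only points of $X$ outside $X_{(0)}$ are the generic points $\eta_i$ of the irreducible components of $X$; each $\eta_i$ is a codimension one point of the regular curve $\wh X$ and hence defines a discrete valuation in $\Omega_F$ whose completion is $F_{\eta_i}$, so triviality of $\beta$ over all the $F_v$ in particular forces triviality over each $F_{\eta_i}$. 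I expect no serious obstacle here: the argument is entirely the standard twisting bijection together with the bookkeeping remark that the generic points of the components of $X$ are among the discrete valuations of $F$. The one point that must be stated with care is that $[\tau_v]$ is trivial for every $v\in\Omega_F$ --- which is exactly the hypothesis $[\tau]\in\Sha(F,G)$, and is what makes the local twisting maps $t_{F_v}$ basepoint-preserving.
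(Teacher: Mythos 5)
Your proof is correct and follows essentially the same route as the paper: you invoke the functorial twisting bijections $t_L\colon H^1(L,G^\tau)\to H^1(L,G)$ (basepoint-preserving over each $F_v$ and each $F_P$ precisely because $[\tau]\in\Sha(F,G)$), identify the fiber of $\phi$ over $\phi([\tau])$ with the kernel of the corresponding map for $G^\tau$, and conclude. The only cosmetic difference is that where the paper simply cites Proposition~\ref{sha_exact_sequence} applied to $G^\tau$ to identify that kernel as $\Sha_{\wh X,X}(F,G^\tau)$, you re-derive the equality directly via Proposition~\ref{sha_inclusion} together with the observation about the generic points $\eta_i$; both amount to the same thing.
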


By Proposition~\ref{sha_exact_sequence},
if $\Sha(F_P,G)$ is trivial for every closed point $P$ on the closed fiber $X$ of some regular model $\wh X$ of $F$, then $\Sha(F,G) = \Sha_X(F,G)$.  A related result appears at Theorem~\ref{local-global_Sha_disconnected} below.  First we prove a lemma.

\begin{lemma} \label{local_sha_finite_group}
Let $E$ be the fraction field of a two-dimensional complete regular local ring $A$, and let $G$ be a finite constant group over~$E$.  Then $\Sha(E,G)$ is trivial.
\end{lemma}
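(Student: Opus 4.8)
The plan is to reduce the statement, which concerns arbitrary $G$-torsors, to a claim about split field extensions, and then to use the regularity of $A$ via purity of the branch locus. Since $G$ is a finite constant group, $H^1(E,G)$ is the set of continuous homomorphisms $\rho\colon\Gal(E)\to G$ modulo $G$-conjugacy, and such a class lies in $\Sha(E,G)$ exactly when, for each $v\in\Omega_E$, the image under $\rho$ of the decomposition group at $v$ is trivial. Letting $L_\rho$ be the fixed field of $\ker\rho$, this says precisely that the finite Galois extension $L_\rho/E$ is \emph{split} over every completion $E_v$ (i.e.\ $L_\rho\otimes_E E_v$ is a product of copies of $E_v$), while the class is trivial exactly when $L_\rho=E$. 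So it suffices to prove: a finite Galois extension $L/E$ that is split over $E_v$ for every discrete valuation $v$ on $E$ must satisfy $L=E$.

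The next step is to show that the normalization $B$ of $A$ in $L$ is finite \'etale over $A$. For a height-one prime $\mathfrak p\subset A$, the local ring $A_{\mathfrak p}$ is a discrete valuation ring (as $A$ is regular, hence normal), giving a valuation $v_{\mathfrak p}\in\Omega_E$; and since $L$ is split over $E_{v_{\mathfrak p}}$, the localization $B_{\mathfrak p}$ is finite \'etale over $A_{\mathfrak p}$. Thus $\Spec(B)\to\Spec(A)$ is \'etale over every point of codimension at most one of the regular scheme $\Spec(A)$, so by Zariski--Nagata purity of the branch locus (as already invoked in Corollary~\ref{purity}) the whole morphism $\Spec(B)\to\Spec(A)$ is finite \'etale. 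Since $A$ is complete local, hence Henselian, and $B$ is a domain, $B$ is local; let $\kappa'/\kappa$ be the (finite, separable) residue field extension. As $B$ is finite \'etale of rank $n:=[L:E]$ over the connected scheme $\Spec(A)$, we get $[\kappa':\kappa]=n$.

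Finally I would cut down by a regular parameter. Choose $t\in\mathfrak m_A\smallsetminus\mathfrak m_A^2$, so that $A_1:=A/(t)$ is a complete discrete valuation ring with residue field $\kappa$ and fraction field $K_1$, and so that $(t)$ is a height-one prime giving a valuation $v_0\in\Omega_E$ whose completed valuation ring has maximal ideal generated by $t$ and residue field $K_1$. On one hand, $B/tB=B\otimes_A A_1$ is finite \'etale over $A_1$ and is local (being a quotient of the local ring $B$), hence is a discrete valuation ring with residue field $\kappa'$; consequently $B\otimes_A K_1=(B/tB)\otimes_{A_1}K_1$ is the \emph{field} $\mathrm{Frac}(B/tB)$, of degree $[\kappa':\kappa]=n$ over $K_1$. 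On the other hand, since $L$ is split over $E_{v_0}$ and $B/A$ is \'etale, the base change of $B$ to the (Henselian) completed valuation ring of $v_0$ is a product of $n$ copies of that ring; reducing modulo $t$ yields $B\otimes_A K_1\cong K_1^{\,n}$. Comparing the two computations, a field is isomorphic to $K_1^{\,n}$, which forces $n=1$ and hence $L=E$.

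The crux — and the only point requiring a nontrivial input — is the passage in the second step from splitting over all completions (which a priori controls only codimension-one behaviour) to genuine \'etaleness of $B$ over all of $\Spec(A)$; this is exactly where the regularity of $A$ enters, through purity of the branch locus. Once that is in hand, the last step is just a short local computation on a well-chosen divisor.
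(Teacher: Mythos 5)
Your proof is correct and follows essentially the same route as the paper's: pass to the normalization $B$ of $A$ in the relevant \'etale $E$-algebra, invoke purity of the branch locus to conclude that $\Spec(B)\to\Spec(A)$ is finite \'etale, and then cut by a regular parameter and use splitting at the corresponding height-one valuation together with the completeness of $A$. The only difference is the packaging of the endgame: the paper descends the section along the parameter curve to the closed point and applies Hensel's lemma to lift it to $\Spec(A)$, while you exploit the locality of $B$ over the Henselian ring $A$ to force a degree contradiction on $B\otimes_A K_1$ -- two formulations of the same final step.
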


\begin{proof}
Let $\alpha \in \Sha(E,G) \subseteq H^1(E,G)$.  Then $\alpha$ defines an \'etale $E$-algebra; let $B$ be the integral closure of $A$ in this algebra.
Thus the morphism $\Spec(B) \to \Spec(A)$ is finite and generically separable.
Moreover it is unramified over the codimension one points of $\Spec(A)$, since $\alpha \in \Sha(E,G)$.
Since $A$ is regular and $B$ is normal, it follows by Purity of Branch Locus that $\Spec(B) \to \Spec(A)$ is unramified, and hence is an \'etale cover.

Let $\{x,y\}$ be a system of parameters for $\Spec(A)$ at its closed point
$P$.  By hypothesis, the torsor defined by $\alpha$ becomes trivial
over the $x$-adic completion $E_x$ of $E$.  Thus there is a section of the \'etale cover $\Spec(B) \to \Spec(A)$ over
$\Spec(A_x)$, where $A_x$ is the completion of the local ring of~$A$ at the height one prime~$(x)$.  Let $V \subset \Spec(A)$ be the closed subset defined by $(x)$.
The \'etale cover $\Spec(B) \to \Spec(A)$ thus has a section over the
generic point $(x)$ of $V$, and hence has a section over all of $V$.  Hence
it has trivial residue field extension at a point lying over the closed point $P$ of $V$.
Since $\Spec(B) \to \Spec(A)$ is \'etale,
Hensel's Lemma implies that $\Spec(B) \to \Spec(A)$ has a section.
Thus the torsor defined by $\alpha$ has a section over $\Spec(E)$; i.e.\ $\alpha$ corresponds to the trivial $G$-torsor over $E$.
This shows that $\Sha(E,G)$ is trivial.
\end{proof}

Note that the above proof shows that Lemma~\ref{local_sha_finite_group} holds more generally if $G$ is instead assumed to be the generic fiber of a smooth finite group scheme over $A$.

If $G$ is a rational linear algebraic group over $F$, then $G(E) \to (G/G^0)(E)$ is surjective for every $E/F$.  Hence $H^1(E,G^0) \to H^1(E,G)$ is injective by the cohomology exact sequence
\cite[I, 5.5, Proposition~38]{Serre:CG}, and thus $\Sha(F_P,G^0) \subseteq \Sha(F_P,G)$ for every closed point $P \in X$.  For such groups, the vanishing of the a priori smaller set $\Sha(F_P,G^0)$ suffices to obtain the equality stated after Corollary~\ref{sha_sequence_fibers}, as the following result shows:

\begin{thm} \label{local-global_Sha_disconnected}
Let $\wh X$ be a regular projective curve over a complete discrete valuation ring $T$, with function field $F$ and closed fiber $X$.  Let $G$ be a linear algebraic group over $F$ with $G/G^0$ a finite constant group.  Assume
that for each $P\in X$ and each valuation $v$ on $F_P$, the homomorphism $G((F_P)_v)\rightarrow (G/G^0)((F_P)_v)$ is surjective.
Suppose moreover that $\Sha(F_P,G^0)$ is trivial for every closed point $P$ of $\wh X$.
Then $\Sha(F,G) = \Sha_X(F,G)$ as subsets of $H^1(F,G)$.
\end{thm}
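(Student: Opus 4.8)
The plan is to reduce to the finite-group case and then apply Lemma~\ref{local_sha_finite_group} together with the already-established exact sequence in Proposition~\ref{sha_exact_sequence}. By Proposition~\ref{sha_inclusion} we have the inclusion $\Sha_{\wh X,X}(F,G) \subseteq \Sha(F,G)$, so it remains to prove the reverse inclusion; equivalently, by the exact sequence $(*)$ of Proposition~\ref{sha_exact_sequence}, it suffices to show that the map $\phi\colon \Sha(F,G) \to \prod'_{P\in X_{(0)}} \Sha(F_P,G)$ is the trivial map, i.e.\ that $\Sha(F_P,G)$ is trivial for every closed point $P$ of $X$. So the heart of the argument is a purely local statement: if $A = \wh R_P$ is a two-dimensional complete regular local ring (we may assume $\wh X$ is regular, and after possibly further blowing up at $P$ this is harmless) with fraction field $F_P$, and $G/G^0$ is a finite constant group with $G((F_P)_v) \to (G/G^0)((F_P)_v)$ surjective for all $v \in \Omega_{F_P}$, and $\Sha(F_P,G^0)$ is trivial, then $\Sha(F_P,G)$ is trivial.

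To prove this local statement, I would run the nonabelian cohomology exact sequence for $1 \to G^0 \to G \to G/G^0 \to 1$ over $F_P$ and over each completion $(F_P)_v$. Given $\alpha \in \Sha(F_P,G)$, let $\bar\alpha$ be its image in $H^1(F_P,G/G^0)$; since $\alpha$ dies in each $H^1((F_P)_v,G)$, also $\bar\alpha$ dies in each $H^1((F_P)_v,G/G^0)$, so $\bar\alpha \in \Sha(F_P,G/G^0)$. Now $G/G^0$ is a finite constant group and $F_P$ is the fraction field of the two-dimensional complete regular local ring $A$, so Lemma~\ref{local_sha_finite_group} gives $\bar\alpha = 1$. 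Therefore $\alpha$ lifts to an element of $H^1(F_P,G^0)$, using the surjectivity of $G(F_P) \to (G/G^0)(F_P)$ (which holds: it is part of the surjectivity hypothesis, at the trivial valuation or directly since $F_P$ itself is among the completions — more precisely one uses that $\bar\alpha$ being trivial means the fiber of $H^1(F_P,G^0) \to H^1(F_P,G)$ over $\alpha$ is nonempty). Say $\alpha = \phi_1(\beta)$ with $\beta \in H^1(F_P,G^0)$. The subtlety is that $\beta$ need not itself lie in $\Sha(F_P,G^0)$: over a completion $(F_P)_v$, the image of $\beta$ lies in the kernel of $H^1((F_P)_v,G^0) \to H^1((F_P)_v,G)$, which by the twisting description of fibers (Serre, I.5.5) is a torsor under $(G/G^0)((F_P)_v)/\operatorname{im}(G((F_P)_v))$, and this is trivial precisely by the surjectivity hypothesis at $v$. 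Hence the image of $\beta$ in $H^1((F_P)_v,G^0)$ is trivial for every $v$, so $\beta \in \Sha(F_P,G^0)$, which is trivial by assumption. Therefore $\beta = 1$ and $\alpha = \phi_1(1) = 1$, proving $\Sha(F_P,G)=1$.

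With the local triviality in hand, $\phi$ has trivial target, so by exactness of $(*)$ the inclusion $\iota\colon \Sha_{\wh X,X}(F,G) \to \Sha(F,G)$ is a surjection, hence a bijection, giving $\Sha(F,G) = \Sha_{\wh X,X}(F,G)$ as subsets of $H^1(F,G)$. The main obstacle I anticipate is the careful bookkeeping in the local step: one has to track precisely which surjectivity and which twisting argument is needed to pass from ``$\alpha$ is locally trivial'' to ``the chosen lift $\beta$ is locally trivial,'' since nonabelian $H^1$ fibers are torsors rather than kernels, and it is this point where the hypothesis on $G((F_P)_v) \to (G/G^0)((F_P)_v)$ is genuinely used (as opposed to merely over $F_P$). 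Everything else — the reduction to closed points, the invocation of Lemma~\ref{local_sha_finite_group}, and the final diagram chase — is routine given the results already proved.
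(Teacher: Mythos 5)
Your proposal is correct and follows essentially the same route as the paper's proof: reduce via Proposition~\ref{sha_exact_sequence} to showing $\Sha(F_P,G)$ is trivial at each closed point, use the nonabelian cohomology exact sequence together with Lemma~\ref{local_sha_finite_group} to trivialize $\bar\alpha$ and lift $\alpha$ to some $\beta\in H^1(F_P,G^0)$, and then invoke the surjectivity hypothesis over each $(F_P)_v$ to force $\beta\in\Sha(F_P,G^0)=1$. The only superfluous remark is the parenthetical about blowing up: $\wh X$ is already assumed regular, so $\wh R_P$ is a two-dimensional complete regular local ring with no further modification.
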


\begin{proof}
Let $\wh X$ be a regular model of $F$ over $T$.  Let $X_{(0)}$ denote the set of closed points of its closed fiber $X$.
By Proposition~\ref{sha_exact_sequence}, it suffices
to show that $\Sha(F_P,G)$ is trivial for each closed point $P \in X_{(0)}$.

Let $P \in X_{(0)}$.  By \cite[I, 5.5, Proposition~38]{Serre:CG}, the sequence of pointed sets
\[H^1(F_P,G^0)  \to H^1(F_P,G) \to H^1(F_P,\bar G)\]
is exact, where $\bar G=G/G^0$.

Now let $\alpha \in \Sha(F_P,G) \subseteq H^1(F_P,G)$.  Then the image of $\alpha$ in
$H^1(F_P,\bar G)$ lies in $\Sha(F_P,\bar G)$ and hence is trivial by
Lemma~\ref{local_sha_finite_group}.
Thus $\alpha$ is the image of an element of $H^1(F_P,G^0)$. By the
surjectivity assumption this element necessarily lies in $\Sha(F_P,G^0)$ (see
the arguments in the first and last paragraphs of the proof of
Corollary~\ref{abstract_Sha_quotient}). 
But $\Sha(F_P,G^0)$ is trivial, and so
$\alpha$ is trivial.
This shows that $\Sha(F_P,G)$ is trivial and completes the proof.
\end{proof}

Results about the vanishing of $\Sha(E,G)$ for algebraic groups~$G$ over
fraction fields $E$ of regular complete local rings give applications of the
above theorem.  In particular, there is the next result, following a
suggestion of J.-L.~Colliot-Th\'el\`ene, and using a strategy that is similar
to that used in the proof of Lemma~\ref{local_sha_finite_group}.   Here, as in \cite{SGA3.3}, Exp.~XIX, D\'efinition~2.7, we say that a group scheme $G$ over a base scheme $S$ is \textit{reductive} if it is affine and smooth over~$S$, and its geometric fibers are connected and reductive (meaning they have trivial unipotent radical).

\begin{lemma} \label{local_sha_reductive_group}
Let $E$ be the fraction field of a two-dimensional complete regular local ring $A$, and let $G$ be a reductive group scheme over $A$. Then $\Sha(E,G)$ is trivial.
\end{lemma}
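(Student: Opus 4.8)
The plan is to adapt the strategy of Lemma~\ref{local_sha_finite_group}: there, a class in $\Sha$ was realized by a finite \'etale cover of $\Spec A$, which was shown to be \'etale by Purity of Branch Locus and then trivialized using a section over one completion that propagates along a divisor and lifts by Hensel's Lemma. Here the finite cover must be replaced by a spread-out model of the $G$-torsor, Purity of Branch Locus by a purity statement for torsors under reductive group schemes over regular local rings, and the propagation of sections by the Grothendieck--Serre property for (complete) discrete valuation rings.

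So let $\alpha\in\Sha(E,G)$ be represented by a $G$-torsor $\mathcal T$ over $E$. Since $G$ is affine of finite type over $A$, the scheme $\mathcal T$ is affine of finite type over $E$, so $\mathcal T$ -- together with its $G_E$-action and torsor structure -- spreads out to a $G$-torsor over $W:=\Spec A\setminus V(f_0)$ for some nonzero $f_0\in A$; if $f_0$ is a unit there is nothing more to do, so assume not, and note that $V(f_0)$ then consists of the closed point $\mathfrak m$ together with the finitely many height one primes $\mathfrak p_1,\dots,\mathfrak p_r$ minimal over $f_0$. First I would extend this torsor across each $\mathfrak p_i$: the local ring $A_{\mathfrak p_i}$ is a discrete valuation ring, its completion $\wh{A_{\mathfrak p_i}}$ has fraction field the completion $E_{v_i}$ of $E$ at the valuation $v_i\in\Omega_E$ defined by $\mathfrak p_i$, and since $\alpha\in\Sha(E,G)$ the torsor $\mathcal T$ is trivial over $E_{v_i}$; gluing $\mathcal T$ over $E$ to the trivial $G$-torsor over $\wh{A_{\mathfrak p_i}}$ along the faithfully flat morphism $\Spec E\sqcup\Spec\wh{A_{\mathfrak p_i}}\to\Spec A_{\mathfrak p_i}$ (a formal-gluing argument, in which the relevant triple fibre products simplify because $A_{\mathfrak p_i}\to E$ is a ring epimorphism) produces a $G$-torsor over $A_{\mathfrak p_i}$, and this is the unique such extension by the Grothendieck--Serre property for the discrete valuation ring $A_{\mathfrak p_i}$. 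These extensions combine with the torsor over $W$ -- the required compatibilities on overlaps again following from the Grothendieck--Serre property, now for the local rings at codimension one points -- to give a $G$-torsor $\mathcal T'$ over the punctured spectrum $U:=\Spec A\setminus\{\mathfrak m\}$.

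Next I would extend $\mathcal T'$ over the closed point. Because $A$ is a two-dimensional regular local ring, every finitely generated reflexive $A$-module is free (by the Auslander--Buchsbaum formula, such a module having depth $\geq 2=\dim A$), so every vector bundle on $U$ is trivial and $H^1_{\mathrm{et}}(\Spec A,\GL_n)\to H^1_{\mathrm{et}}(U,\GL_n)$ is bijective; a d\'evissage via an embedding $G\hookrightarrow\GL_{n,A}$ then extends this to arbitrary reductive $G$ -- this is the purity property for torsors under reductive group schemes over two-dimensional regular local rings. Hence $\mathcal T'$ extends to a $G$-torsor $\wh{\mathcal T}$ over $\Spec A$, whose restriction to $E$ is $\mathcal T$.

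To finish, I would pick $x\in\mathfrak m$ whose image in $\mathfrak m/\mathfrak m^2$ is part of a basis and which does not generate any $\mathfrak p_i$, so that $A/(x)$ is a complete discrete valuation ring; letting $v_x\in\Omega_E$ be the discrete valuation on $E$ defined by the height one prime $(x)$, triviality of $\alpha$ over $E_{v_x}$ gives, by the Grothendieck--Serre property for the complete discrete valuation ring $\wh{A_{(x)}}$ (whose fraction field is $E_{v_x}$), triviality of $\wh{\mathcal T}$ over $\wh{A_{(x)}}$, hence over its residue field $\oper{Frac}(A/(x))$; applying the Grothendieck--Serre property once more, for the complete discrete valuation ring $A/(x)$, shows $\wh{\mathcal T}$ is trivial over $\Spec(A/(x))$ and in particular over the residue field $\kappa$ of $A$ at $\mathfrak m$. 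Since $\wh{\mathcal T}\to\Spec A$ is smooth ($G$ being smooth over $A$) and $A$ is complete, hence Henselian, local, this section over $\kappa$ lifts to a section over $\Spec A$; so $\wh{\mathcal T}$, and therefore its restriction $\mathcal T$ to $E$, is trivial, i.e.\ $\alpha$ is trivial. This proves $\Sha(E,G)=1$. I expect the main obstacle to be the extension of the torsor across the codimension two closed point -- that is, invoking the purity property for torsors under a reductive group scheme over a two-dimensional regular local ring; the remaining ingredients (spreading out, formal gluing along a discrete valuation ring, and the Grothendieck--Serre property for complete discrete valuation rings) are standard, though one should double-check that the formal-gluing step is valid for torsors under a smooth affine group scheme and that a parameter $x$ as above always exists.
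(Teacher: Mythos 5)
Your proof is correct and follows essentially the same strategy as the paper: extend $\alpha$ to a class over $\Spec(A)$ using purity for reductive torsors over a two-dimensional regular local ring, restrict along a regular parameter divisor and invoke the Grothendieck--Serre property (Nisnevich's theorem) twice to trivialize over the residue field, then lift a $\kappa$-point to an $A$-point by smoothness and completeness. The only difference is that where you sketch the extension step from scratch (Beauville--Laszlo gluing at codimension one, then reflexive-sheaf/Hartogs d\'evissage at the closed point), the paper simply cites \cite[Theorem~4.2(i)]{CPS}, noting that its proof applies to any two-dimensional regular scheme with function field $E$.
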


\begin{proof}
If $\alpha \in \Sha(E,G) \subseteq H^1(E,G)$, then the corresponding $G$-torsor over $\Spec(E)$ is unramified at the codimension one points of $\Spec(A)$.
It therefore follows that $\alpha$ is induced by an element $\alpha_A$ of $H^1(\Spec(A), G)$, as shown in \cite[Theorem~4.2(i)]{CPS}.
(That result considered not our space $\Spec(A)$ but rather a two-dimensional regular scheme that is projective over a complete discrete valuation ring; but the only hypothesis that was used in the proof was that the scheme is two-dimensional and regular with function field $E$.)

Let $\{x,y\}$ be a system of parameters for $\Spec(A)$ at its closed point $P$.
By hypothesis, the torsor defined by $\alpha_A$ becomes trivial over the $x$-adic completion $E_x$ of $E$.  According to \cite{Nis84}, since $G$ is reductive, the natural map $H^1(A_x,G) \to H^1(E_x,G)$ has trivial kernel, where the discrete valuation ring $A_x$ is the completion of the local ring of $A$ at the height one prime $(x)$.  Thus the image of $\alpha_A$ in $H^1(A_x,G)$ is trivial.  Hence so is its image in $H^1(A_x/xA_x,G)$.  Since $A/xA$ is a discrete valuation ring with fraction field $A_x/xA_x$, it follows from \cite{Nis84} that the image of $\alpha_A$ in $H^1(A/xA,G)$ is trivial.  Hence so is its image in $H^1(A/(x,y),G)$.  Thus the $G$-torsor over $A$ given by $\alpha_A$ has a rational point over the residue field $A/(x,y)$ at $P$.  But $A$ is complete.  So this point lifts to an $A$-point on the torsor.  Thus this torsor is trivial, and hence so is the $G$-torsor over $E$ defined by $\alpha$.  This shows that $\Sha(E,G)$ is trivial.
\end{proof}

\begin{remark}
The assertion cited from~\cite{Nis84} was stated for reductive groups, but the proof was given there only for semi-simple groups.  The proof in the general reductive case was given in \cite[Th\'eor\`eme~I.1.2.2]{Gille}.
\end{remark}

\begin{thm} \label{sha_conditions}
Let $F$ be a one-variable function field over the fraction field of a complete discrete valuation ring $T$, and let $\wh X$ be a regular model for $F$, with closed fiber $X$.  Let $G$ be a linear algebraic group over $F$ with $G/G^0$ constant, such that for each $P\in X $ and each valuation $v$ on $F_P$, the homomorphism $G((F_P)_v)\rightarrow (G/G^0)((F_P)_v)$ is surjective.
Then $\Sha_X(F,G)$ equals $\Sha(F,G)$ in each of the following situations:
\renewcommand{\theenumi}{\roman{enumi}}
\renewcommand{\labelenumi}{(\roman{enumi})}
\begin{enumerate}
\item \label{rational}
$G^0$ is a rational variety and the residue field of $T$ is algebraically closed of characteristic zero; or
\item \label{reductive}
$G^0$ is the generic fiber of a reductive group scheme over $\hat X$; or
\item \label{semi-simple}
$G^0$ is semi-simple and simply connected, and the residue field of $T$ is algebraically closed of characteristic zero.
\end{enumerate}
Hence $\Sha_X(F,G)$ is independent of the above choice of $\hat X$, provided the above hypotheses are satisfied.
\end{thm}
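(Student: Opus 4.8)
The plan is to deduce all three cases from Theorem~\ref{local-global_Sha_disconnected}. Our group $G$ satisfies exactly the hypotheses required there: $G/G^0$ is a finite constant group, and $G((F_P)_v)\to (G/G^0)((F_P)_v)$ is surjective for every $P\in X$ and every valuation $v$ on $F_P$. Hence that theorem reduces the equality $\Sha_{\wh X,X}(F,G)=\Sha(F,G)$ to the single local assertion that $\Sha(F_P,G^0)$ is trivial for every closed point $P$ of $\wh X$. Since $\wh X$ is regular of dimension two and $P$ is a closed point, $\wh R_P$ is a two-dimensional complete regular local ring with fraction field $F_P$; so in particular Lemma~\ref{local_sha_reductive_group} is available for $E=F_P$ whenever $G^0_{F_P}$ is the generic fibre of a reductive group scheme over $\wh R_P$. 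The remaining work, case by case, is to put oneself in a position to invoke that lemma (or an analogous local vanishing statement).

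Case~(\ref{reductive}) is then immediate: base-changing the given reductive group scheme over $\wh X$ to $\wh R_P$ yields a reductive group scheme with generic fibre $G^0_{F_P}$, so $\Sha(F_P,G^0)=1$ by Lemma~\ref{local_sha_reductive_group}.

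For Case~(\ref{rational}), I would first pass to the reductive case via the Levi decomposition $G^0=\mc{R}_u(G^0)\rtimes L$, which exists because $F$ has characteristic zero; here $\mc{R}_u(G^0)$ is split unipotent and $L$ is connected reductive. Since $H^1(E,\mc{R}_u(G^0))$ is trivial for every extension $E/F$, the map $H^1(E,G^0)\to H^1(E,L)$ is a bijection functorial in $E$, hence restricts to a bijection $\Sha(F_P,G^0)\cong\Sha(F_P,L)$, so it suffices to treat $L$. Case~(\ref{semi-simple}) requires no such reduction, a semisimple simply connected group being already connected reductive. In both cases one is reduced to the purely local statement that $\Sha(F_P,H)=1$ for a connected reductive (resp.\ semisimple simply connected) group $H$ over the fraction field $F_P$ of a two-dimensional complete regular local ring $\wh R_P$ whose residue field is algebraically closed of characteristic zero --- and it is precisely here that the hypothesis on the residue field of $T$ is used. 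This local statement would be obtained by spreading $H$ out over $\wh X$, observing that any class in $\Sha(F_P,H)$ is unramified at every divisorial valuation of $F_P$, and using a purity argument for reductive group schemes over the regular two-dimensional local ring $\wh R_P$ to see that $H$ extends to a reductive group scheme over $\wh R_P$ and that the class comes from a torsor over it, which must then be trivial by Lemma~\ref{local_sha_reductive_group}; alternatively one may invoke the known triviality of $\Sha$ for rational connected (resp.\ simply connected) groups over such two-dimensional local fields.

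Finally, in each of the three situations we have shown $\Sha_{\wh X,X}(F,G)=\Sha(F,G)$, and since $\Sha(F,G)$ refers only to $F$ and $G$, this gives the asserted independence of the model $\wh X$. I expect the main obstacle to be exactly the local step in Cases~(\ref{rational}) and~(\ref{semi-simple}): establishing the triviality of $\Sha(F_P,H)$ --- that is, the purity/extension statement allowing one to extend the reductive $F_P$-group $H$ to a reductive group scheme over $\wh R_P$ so that Lemma~\ref{local_sha_reductive_group} applies, and identifying precisely where the algebraically-closed, characteristic-zero hypothesis on the residue field of $T$ enters. Everything else is formal, given Theorem~\ref{local-global_Sha_disconnected} and Lemma~\ref{local_sha_reductive_group}.
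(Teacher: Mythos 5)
Your reduction of all three cases to the triviality of $\Sha(F_P,G^0)$ for every closed point $P\in X$, via Theorem~\ref{local-global_Sha_disconnected}, is exactly what the paper does, and your treatment of case~(ii) is identical to the paper's (base change of the given reductive $\wh X$-group scheme to $\wh R_P$, then Lemma~\ref{local_sha_reductive_group}).

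The gap is in cases~(i) and~(iii), which you flag yourself but do not close, and the route you sketch for closing them does not work. A connected reductive group over $F_P$ need \emph{not} extend to a reductive group scheme over the two-dimensional regular local ring $\wh R_P$; there is no blanket ``purity for reductive group schemes'' that produces such an extension, so the envisaged reduction to Lemma~\ref{local_sha_reductive_group} is not available for case~(i) (nor for~(iii)). The paper instead cites two specific local results for these cases: for~(i), Colliot-Th\'el\`ene--Ojanguren--Parimala-type vanishing as given in \cite[Corollary~7.7]{BKG}, which establishes $\Sha(F_P,G^0)=1$ directly for rational connected $G^0$ when the residue field is algebraically closed of characteristic zero; and for~(iii), \cite[Theorem~5.1]{COP}, which gives the stronger statement that $H^1(F_P,G^0)$ itself vanishes when $G^0$ is semisimple simply connected over $F_P$ with such a residue field. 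Your Levi-decomposition step is correct in characteristic zero but unnecessary once one has these citations, since the first is already stated for rational (not merely reductive) connected groups. In short: the framework and case~(ii) are right, but cases~(i) and~(iii) rest on specific known local vanishing theorems rather than on the spreading-out/purity argument you propose.
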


\begin{proof}
By Theorem~\ref{local-global_Sha_disconnected}, it suffices to show that
$\Sha(F_P,G^0)$ is trivial for every closed point $P$ of $\wh X$.
In case~(\ref{rational}) this follows from~\cite[Corollary~7.7]{BKG}; and in case~(\ref{reductive}) it follows from Lemma~\ref{local_sha_reductive_group} above, via base change from $\wh X$ to $\Spec(\wh R_P)$.  In case~(\ref{semi-simple}) it follows from the fact that $H^1(F_P,G^0)$, which contains $\Sha(F_P,G^0)$, is trivial by~\cite[Theorem~5.1]{COP}.
\end{proof}

In particular, if $G$ is rational
over $F$ and the residue field of $T$ is algebraically closed of characteristic zero, then
the above conclusions about $\Sha_X(F,G)$ hold. 
In this case, the last assertion of the theorem was previously observed (see the comment after Corollary~\ref{fs_free}).

Combining Theorem~\ref{sha_conditions} with Corollary~\ref{Sha_P-graph} then gives:

\begin{cor} \label{Sha_descrip}
Let $G$ be a rational linear algebraic group over $F$.  If the residue field
of $T$ is algebraically closed of characteristic zero, or if
condition~(\ref{reductive}) of Theorem~\ref{sha_conditions} holds, 
then $\Sha(F,G)$ is finite.  Under either of these hypotheses, $\Sha(F,G)$ is
trivial if and only if $G$ is connected or $F^\s=F$ (or, equivalently,
if the reduction graph of some regular model of $F$ is a tree). 
\end{cor}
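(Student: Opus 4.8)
The plan is to obtain this as a direct consequence of Theorem~\ref{sha_conditions} together with Corollary~\ref{Sha_P-graph}, the only real task being to check that the blanket hypothesis of Theorem~\ref{sha_conditions} is satisfied. First I would fix a regular model $\wh X$ of $F$ over $T$ and observe that, since $G$ is $F$-rational, the component group $\bar G := G/G^0$ is a constant finite group scheme and every connected component of $G$ has an $F$-point; consequently, for every field extension $L/F$ --- in particular for each field $(F_P)_v$ with $P\in X$ and $v$ a discrete valuation on $F_P$ --- the map $G(L)\to \bar G(L)$ is surjective (as $\bar G(L)=\bar G$ is constant). Moreover $G^0$ is a rational variety. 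Hence Theorem~\ref{sha_conditions}(\ref{rational}) applies when the residue field of $T$ is algebraically closed of characteristic zero, and Theorem~\ref{sha_conditions}(\ref{reductive}) applies when $G^0$ is the generic fiber of a reductive group scheme over $\wh X$. In either case we obtain the equality $\Sha(F,G)=\Sha_{\wh X,X}(F,G)$ inside $H^1(F,G)$.

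Next I would feed this into Corollary~\ref{Sha_P-graph}, which (for $G$ rational) identifies $\Sha_{\wh X,X}(F,G)$ with the finite pointed set $\Hom(\pi_1(\Gamma(\wh X)),\bar G)/\!\sim$ and asserts that it is trivial precisely when $G$ is connected or $\pi_1(\Gamma(\wh X))=1$. Finiteness of $\Sha(F,G)$ is then immediate. It remains to translate the condition $\pi_1(\Gamma(\wh X))=1$: by Corollary~\ref{graph_pi1} this holds exactly when the reduction graph $\Gamma(\wh X)$ is a tree, and by Corollary~\ref{fs_free} the profinite completion of $\pi_1(\Gamma(\wh X))$ is $\Gal(\FS/F)$, which is trivial exactly when $\FS=F$, i.e.\ $F^\s=F$; these conditions are genuinely equivalent since a nontrivial finitely generated free group has nontrivial profinite completion. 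Since $\FS$ (and hence the homotopy type of the reduction graph) depends only on $F$ by Corollary~\ref{fs_free}, the phrase ``some regular model'' may be replaced by ``any''. Assembling these equivalences yields the stated triviality criterion.

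The argument has no serious obstacle: the substantive input is already contained in Theorems~\ref{sha_fsplit} and~\ref{sha_conditions} and in Corollaries~\ref{Sha_P-graph}, \ref{graph_pi1}, and~\ref{fs_free}. The one spot deserving a moment of care is the surjectivity hypothesis of Theorem~\ref{sha_conditions} over the iterated completions $(F_P)_v$; but this is forced by $F$-rationality of $G$, since rationality makes each component of $G$ carry an $F$-point and therefore a point over every overfield, and $\bar G$ is constant.
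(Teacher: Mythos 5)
Your proof is correct and follows essentially the same route as the paper, which proves the corollary simply by "combining Theorem~\ref{sha_conditions} with Corollary~\ref{Sha_P-graph}." You have usefully spelled out the verification of the surjectivity hypothesis of Theorem~\ref{sha_conditions} (rationality gives an $F$-point in each component, hence an $L$-point for every overfield, while $G/G^0$ being constant makes $\bar G(L)$ independent of $L$) and the translation from $\pi_1(\Gamma)=1$ to $F^\s=F$ via Corollaries~\ref{graph_pi1} and~\ref{fs_free}, both of which the paper leaves implicit.
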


It would be interesting to know if the conclusion of
Theorem~\ref{sha_conditions} always holds even without assuming any of the
three additional hypotheses.  If so, then the conclusion of
Corollary~\ref{Sha_descrip} would hold for rational linear algebraic groups
even without an assumption on the residue field of $T$.  
In particular, the obstruction $\Sha(F,G)$ would then vanish for $G$ rational and connected, just as $\Sha_X(F,G)$ does.  
Whether this is actually the case is a question that is currently being studied by a number of researchers.

\section{Applications}\label{sec_applications}

In this section we apply the previous results in order to obtain local-global
principles for quadratic forms and central simple algebras. We begin by
considering homogeneous spaces which are not necessarily torsors. 
\subsection{Applications to homogeneous spaces}\label{subsec_homogeneous}

Until now our focus has been on local-global principles for torsors, i.e.\ for
principal homogeneous spaces.  Some results also carry over to other
homogeneous spaces.  As in \cite{HHK}, if $H$ is an $F$-variety on which a
linear algebraic group acts, then we say that $G$ \textit{acts transitively on
  the points of} $H$  if $G(E)$ acts transitively on $H(E)$ for every field
extension $E$ of $F$.  As observed in Corollary~\ref{homogeneous}, if a
local-global principle holds for $G$-torsors then it holds for all homogeneous
$G$-spaces that satisfy the above transitivity assumption.  There, the context
was for factorization inverse systems.  But it holds in particular for the
situation of patches, i.e.\ that of Notation~\ref{notn_patches}, since by
Corollary~\ref{patching_equivalence} they form such a system satisfying the
hypotheses of Corollary~\ref{homogeneous}. 
Using Proposition~\ref{approx}, the corresponding property holds in the
context of points on the closed fiber, in parallel to \cite{HHK},
Theorem~3.7. 

\begin{thm} \label{homog_pts}
Let $F$ be a one variable function field over a complete discretely valued
field with valuation ring~$T$, and let $\wh X$ be a normal model for $F$ over $T$, with closed fiber $X$.
If $G$ is a linear algebraic group over $F$ such that $\Sha_X(F,G)$ is trivial, then there is a local-global principle with respect to points on the closed fiber $X$ for all $F$-varieties $H$ for which $G$ acts transitively on the points. That is, if $H(F_P)$ is non-empty for every $P \in X$, then $H(F)$ is non-empty. In particular, this local-global principle holds if $G$ is connected and rational.
\end{thm}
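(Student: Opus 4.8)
The plan is to reduce to the patching framework of Notation~\ref{notn_patches} and then invoke Corollary~\ref{homogeneous}. Assume $\Sha_{\wh X,X}(F,G)$ is trivial, let $H$ be an $F$-variety on which $G$ acts transitively on points, and suppose $H(F_P)\neq\varnothing$ for every $P\in X$. First I would deal with the generic points: for each irreducible component $X_i$ of $X$, with generic point $\eta_i\in X$, the hypothesis gives $H(F_{\eta_i})\neq\varnothing$, so Proposition~\ref{approx} produces a non-empty affine open $U_i\subset X_i$, disjoint from the other components, with $H(F_{U_i})\neq\varnothing$. The $U_i$ are pairwise disjoint connected opens of $X$; I would then let $\mc P$ be the finite set of closed points in the complement of $\bigcup_i U_i$, enlarged if necessary so that $\mc P$ is non-empty and contains every point where distinct components of $X$ meet, i.e.\ so that $\mc P$ satisfies Notation~\ref{notn_patches}. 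Enlarging $\mc P$ only replaces members of $\mc U$ by smaller opens, over whose fraction fields $H$ still has points since $F_U\hookrightarrow F_{U'}$ when $U'\subseteq U$, and adjoins closed points of $X$, over whose fields $H$ has points by hypothesis. So with $\mc U$ the set of connected components of $X\smallsetminus\mc P$ and $\mc B$ the set of branches, we have $H(F_\xi)\neq\varnothing$ for every $\xi\in\mc P\cup\mc U$.

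Next I would verify the hypotheses of Corollary~\ref{homogeneous}. By Corollary~\ref{patching_equivalence} the inverse system $\mc F$ attached to $\mc P,\mc U,\mc B$ is a factorization inverse system with inverse limit $F$ for which $\beta:\Vect(F)\to\PP(\mc F)$ is an equivalence; this is exactly the hypothesis of Theorem~\ref{general_6-term_sequence} and hence of Corollary~\ref{homogeneous}. The kernel $\Sha_{\mc F}(G)=\Sha_{\wh X,\mc P}(F,G)$ lies inside $\Sha_{\wh X,X}(F,G)$ by the chain of containments recorded in Section~\ref{sec_splitcovers}, hence is trivial. Finally, for each branch $\wp\in\mc B$, $F_\wp$ is a field extension of $F$, so $G(F_\wp)$ acts transitively on $H(F_\wp)$ by the transitivity hypothesis on $H$. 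Corollary~\ref{homogeneous} then yields $H(F)\neq\varnothing$, which is the asserted local-global principle.

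For the last assertion, if $G$ is connected and rational then $G/G^0$ is trivial, so by Theorem~\ref{sha_fsplit} (equivalently Corollary~\ref{Sha_P-graph}) the pointed set $\Sha_{\wh X,X}(F,G)$ is in bijection with $\Hom(\pi_1^\s(\wh X),G/G^0)/\!\sim$, which has one element; hence $\Sha_{\wh X,X}(F,G)$ is trivial and the first part applies.

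I do not expect a genuine obstacle here: the substantive work is already done in Proposition~\ref{approx}, which promotes a point over $F_{\eta_i}$ to a point over some $F_{U_i}$, and in Corollary~\ref{homogeneous}, which patches homogeneous spaces once $\Sha$ vanishes. The only thing needing care is the bookkeeping around $\mc P$ --- checking that the set one builds really does satisfy Notation~\ref{notn_patches} (non-empty, finite, containing the component-intersection points), that the connected components of its complement are precisely the opens $U_i$ so that $H$ has a point over each $F_U$, and that nonemptiness of the local points survives both the shrinking of the $U$'s and the enlargement of $\mc P$.
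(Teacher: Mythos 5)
Your proof is correct and follows essentially the same route as the paper: apply Proposition~\ref{approx} at each generic point $\eta_i$ to get an affine open $U_i$ with $H(F_{U_i})\neq\varnothing$, let $\mc P$ be the complement of $\bigcup_i U_i$, note $\Sha_{\mc P}(F,G)\subseteq\Sha_{\wh X,X}(F,G)$ is trivial, and conclude via Corollary~\ref{homogeneous}. The only divergence is that you add an enlargement step for $\mc P$; in fact the complement of $\bigcup_i U_i$ already is non-empty (each $X_i$ is projective, so $U_i\subsetneq X_i$) and already contains every intersection point of distinct components (since no $U_i$ meets two components), so the enlargement, while harmless, is not needed.
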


\begin{proof}
Let $X_1,\dots,X_r$ be the irreducible components of $X$, and let $\eta_i$ be
the generic point of $X_i$.  
Since $H(F_{\eta_i})$ is non-empty, so is $H(F_{U_i})$ for some affine dense
open subset $U_i$ of $X_i$ that does not meet any other $X_j$, 
by Proposition~\ref{approx}. 
Let $\mc U$ be the
collection of the sets $U_i$, and let $\mc P$ be the complement of their
union. As was pointed out at the beginning of Section~\ref{sec_splitcovers},
$\Sha_{\mc P}(F, G)$ is a subset of $\Sha_X(F, G)$ and thus trivial by assumption. Hence by Corollary~\ref{homogeneous} applied to the
factorization inverse system given by the patches, if $H(F_\xi) \ne
\varnothing$
for each $\xi \in \mc P \cup \mc U$, then $H(F) \ne \varnothing$. The last assertion now follows from Corollary~\ref{Sha_P-graph}.
\end{proof}

It would be desirable to prove an analogous assertion in the context of discrete valuations, viz.\ that if $\Sha(F,G)$ is trivial then for any $G$-space $H$ with transitive action, there is an $F$-point on $H$ provided that there is an $F_v$-point for each discrete valuation $v$.  In this general direction we have the following result:

\begin{thm} \label{homog_val}
Let $T$ be a complete discrete valuation ring with fraction field $K$, such
that its residue field $k$ is algebraically closed of characteristic zero.
Let $F$ be a one-variable function field over $K$ with a regular model $\wh X$ having closed fiber $X$; and let $G$ be a linear algebraic group over $F$ such that $\Sha(F,G)$ or $\Sha_{\hat X,X}(F,G)$ is trivial.  If $H$ is a smooth projective
$F$-variety with a $G$-action that is transitive on points, and if $H(F_v)$
is non-empty for every discrete valuation $v$ on $F$, then $H(F)$ is
non-empty.
\end{thm}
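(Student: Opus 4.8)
The plan is to reduce the assertion to the local-global principle with respect to points on the closed fiber, namely Theorem~\ref{homog_pts}. That theorem applies because $\Sha_{\wh X,X}(F,G)$ is trivial: this holds under either hypothesis, since $\Sha_{\wh X,X}(F,G)\subseteq\Sha(F,G)$ by Proposition~\ref{sha_inclusion}. So it suffices to prove that $H(F_P)\neq\varnothing$ for every (not necessarily closed) point $P$ of $X$, given that $H(F_v)\neq\varnothing$ for every discrete valuation $v$ of $F$. The generic points are immediate: if $P=\eta$ is the generic point of an irreducible component $X_0$ of $X$, then $\eta$ is a codimension one point of the regular scheme $\wh X$, so it defines a discrete valuation $v$ on $F$ with $F_\eta=F_v$ (as recalled just before Proposition~\ref{field_containment}), whence $H(F_\eta)=H(F_v)\neq\varnothing$.

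It remains to treat a closed point $P$ of $X$. Since $\wh X$ is regular, $\wh R_P$ is a two-dimensional regular complete local ring; and since $k$ is algebraically closed of characteristic zero, $\wh R_P$ contains a copy of $k$ and has residue field $k$, so $\wh R_P\cong k[[u,v]]$. First I would transfer the hypothesis down to $F_P$: if $w$ is a discrete valuation on $F_P$, then by Proposition~\ref{restriction} its restriction $w_0$ to $F$ is a discrete valuation of $F$, and the closure of $F$ in the completion $(F_P)_w$ is a completion of $F$ at $w_0$, giving an $F$-algebra embedding $F_{w_0}\hookrightarrow(F_P)_w$; since $H(F_{w_0})\neq\varnothing$ by hypothesis, it follows that $H((F_P)_w)\neq\varnothing$ for every discrete valuation $w$ of $F_P$. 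Thus the closed-point case comes down to the following local statement: if $L$ is the fraction field of a two-dimensional regular complete local ring with algebraically closed residue field of characteristic zero, and $H$ is a smooth projective $L$-variety on whose points $G_L$ acts transitively, then $H(L)\neq\varnothing$ provided $H(L_w)\neq\varnothing$ for every discrete valuation $w$ of $L$.

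For this local-global principle over $L$ I would reduce, in the standard way (compare the Remark after Theorem~\ref{sha_inj}), from homogeneous spaces to torsors: a rational point of $H$ over one completion identifies $H$ over that field with a quotient of $G$ by the stabilizer and produces a cohomology class whose triviality over $L$ is equivalent to the existence of an $L$-point, and one controls this class using that $\Sha(L,-)$ vanishes for the relevant groups over $L$ — finite constant groups by Lemma~\ref{local_sha_finite_group}, reductive group schemes by Lemma~\ref{local_sha_reductive_group}, together with the structure sequence $1\to G^0\to G\to G/G^0\to 1$ (and, in characteristic zero, the Levi decomposition of $G^0$) and the $\Sha$-vanishing results for such two-dimensional local fields (cf.\ \cite{CGP} and the proof of Theorem~\ref{sha_conditions}). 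The hard part will be exactly this last step: passing from torsors to general homogeneous spaces requires care because the stabilizer need not be defined over $L$, and one must handle an arbitrary — possibly disconnected, possibly non-reductive — group $G$, assembling the input from the connected reductive quotient, the unipotent radical, and the finite group $G/G^0$. Once this two-dimensional local-global principle is in place, the theorem follows by combining it with the generic-point case above and Theorem~\ref{homog_pts}.
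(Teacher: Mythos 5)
Your reduction from $F$ to $F_P$ is exactly the paper's: you invoke Proposition~\ref{sha_inclusion} (equivalently Proposition~\ref{sha_exact_sequence}) to assume $\Sha_{\wh X,X}(F,G)=1$, handle generic points of components of $X$ directly via $F_\eta=F_v$, use Proposition~\ref{restriction} to transfer the hypothesis on discrete valuations of $F$ to discrete valuations of $F_P$ for each closed point $P$, and conclude via Theorem~\ref{homog_pts} once $H(F_P)\neq\varnothing$ is known for all $P\in X$. All of that matches the paper and is correct.

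The gap is in the local step, and you essentially flag it yourself. For each closed point $P$, you need: if $H$ is a smooth projective homogeneous $G$-space over $L=F_P$ (fraction field of a two-dimensional complete regular local ring with algebraically closed residue field of characteristic zero) with $H(L_w)\neq\varnothing$ for every discrete valuation $w$ on $L$, then $H(L)\neq\varnothing$. The paper dispatches this by citing Corollary~5.7 of \cite{CGP}, which is precisely this statement, and that single citation is the entire content of the closed-point case. Your proposal instead tries to reprove this from the paper's torsor-level $\Sha$-vanishing lemmas (Lemmas~\ref{local_sha_finite_group} and~\ref{local_sha_reductive_group}) plus structure theory. But as you note, the stabilizer of a geometric point of $H$ need not be defined over $L$, so $H$ is not immediately a form of $G/G_0$ classified by $H^1(L,G_0)$ for a group $G_0$ that your lemmas control; handling this requires the machinery of homogeneous spaces without rational points (gerbes or a careful twisting argument), and one must also pass through the unipotent radical and a Levi decomposition, none of which your $\Sha$ lemmas address directly. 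In effect you would be re-deriving a nontrivial theorem of Colliot-Th\'el\`ene--Gille--Parimala rather than citing it, and the sketch as written does not do so. To fix the proposal, simply replace your final paragraph by the citation to \cite[Corollary~5.7]{CGP}, noting that its hypotheses ($H$ smooth projective homogeneous, residue field algebraically closed of characteristic zero) are exactly those you have arranged at $F_P$.
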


\begin{proof}
Since $\Sha_{\hat X,X}(F,G)$ is contained in $\Sha(F,G)$ by Proposition~\ref{sha_exact_sequence}, it suffices to prove the theorem in the case that 
$\Sha_{\hat X,X}(F,G)$ is trivial.

By Proposition~\ref{restriction}, for every closed point $P \in X$ and
discrete valuation $v$ on $F_P$, the restriction of $v$ to $F$ is a discrete
valuation $v_0$.  Hence for every such $P$, $H((F_P)_v)$ is non-empty, as it
contains $H(F_{v_0})$.  Since $H$ is smooth, Corollary~5.7 of \cite{CGP} applies, and thus
$H(F_P)$ is non-empty.

Next, consider a generic point $\eta$ of an
irreducible component $X_0$ of $X$.  Then $H(F_\eta)$ is non-empty since
$F_\eta = F_v$, where $v$ is the discrete valuation on $F$ corresponding to
the codimension one point $\eta \in \wh X$.

Thus $H(F_P)$ is non-empty for every point $P \in X$.  Since $\Sha_X(F,G)$ is trivial, it follows from Theorem~\ref{homog_pts} that $H(F)$ is non-empty.
\end{proof}

Note that the containment $\Sha_{\hat X,X}(F,G) \subseteq \Sha(F,G)$ also shows that 
Theorem~\ref{homog_pts} remains true if instead $\Sha(F,G)$ is assumed trivial.  Also, 
as in Theorem~\ref{homog_pts}, the triviality hypothesis in Theorem~\ref{homog_val} in particular holds 
for $G$ rational and connected, by Corollary~\ref{Sha_P-graph}.

\subsection{Applications to quadratic forms}\label{subsec_quadforms}

Below we prove local-global principles for quadratic forms in terms of points on the closed fiber and in terms of valuations.  These results concern whether a quadratic form is isotropic or hyperbolic; the value of the Witt index of a form; and the Witt group of a field.

As before, let $T$ be a complete discrete valuation ring with uniformizer $t$, fraction field $K$, and residue field $k$, and let $F$ be a one-variable function field over $K$.  In this section, we assume that $K$ does not have characteristic $2$. Let $\wh X $ be a normal model of $F$ over $T$.

In the situation of Notation~\ref{notn_patches}, a local-global principle for isotropy was shown in \cite[Theorem~4.2]{HHK} for quadratic forms over $F$ of dimension unequal to two, in terms of the fields $F_P$ and $F_U$.
An analogous local-global principle in terms of completions $F_v$ at
valuations was shown in~\cite[Theorem~3.1]{CPS}, using the result
in~\cite{HHK} and a theorem of Springer (\cite[Proposition~VI.1.9(2)]{Lam});
this is an analog of the classical Hasse-Minkowski theorem.  (The statement of ~\cite[Theorem~3.1]{CPS} assumed that some model has a smooth generic fiber, but this was not essential.)
The next result proves the analog in terms of points on the closed fiber $X$ of $\wh X$.  (We have learned that 
in the case that $\cha(k) \ne 2$ and $\wh X$ is regular, 
this result also appears in the Ph.D. thesis of David Grimm.  This case also follows from \cite[Theorem~3.1]{CPS} together with Proposition~\ref{field_containment}.)

\begin{thm} \label{isotropy_pts_dim_not_2}
Let $F$ and $\wh X$ be as above.
If $q$ is a quadratic form over $F$ of dimension unequal to two, and if $q$ is isotropic over $F_P$ for every $P \in X$, then $q$ is isotropic over $F$.
\end{thm}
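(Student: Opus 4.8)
The plan is to deduce this from the local--global principle for homogeneous spaces over the points of the closed fiber, Theorem~\ref{homog_pts}, applied to the connected rational group $\SOrth(q)$ acting on the variety of isotropic vectors of $q$. First I would reduce to the essential case. If $\dim q \le 1$ the statement is vacuous or trivial: a nonzero one-dimensional form is anisotropic over every field, so (since $X \ne \varnothing$) the hypothesis cannot be satisfied. If $q$ is degenerate, then in characteristic $\ne 2$ any nonzero vector in its radical is isotropic, so $q$ is already isotropic over $F$. Since $\dim q = 2$ is excluded by hypothesis, one may therefore assume that $q$ is a nondegenerate form with $n := \dim q \ge 3$.

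Now let $G := \SOrth(q)$, which is a connected linear algebraic group over $F$ (as $\cha K \ne 2$) and is a rational $F$-variety. By Corollary~\ref{Sha_P-graph}, $\Sha_{\wh X,X}(F,G)$ is trivial because $G$ is connected. Let $H$ be the $F$-variety of nonzero vectors $v$ with $q(v)=0$ (equivalently, one may use the projective quadric $Q_q \subset \mbb P^{\,n-1}_F$ parametrizing isotropic lines); for every field extension $E/F$ one has $H(E) \ne \varnothing$ if and only if $q_E$ is isotropic. The group $G$ acts on $H$, and I claim this action is transitive on points. Indeed, over any $E/F$ for which $H(E) \ne \varnothing$, Witt's extension theorem shows that $\Orth(q)(E)$ acts transitively on the nonzero isotropic vectors of $q_E$; and for any such vector $v$ the stabilizer of $v$ in $\Orth(q)(E)$ contains an element of determinant $-1$, namely a reflection fixing $v$, which exists because the orthogonal complement of a hyperbolic plane through $v$ is a nondegenerate form of dimension $n-2 \ge 1$ and hence contains an anisotropic vector. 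Thus the $G(E)$-orbits and the $\Orth(q)(E)$-orbits on $H(E)$ coincide, so $G(E)$ acts transitively on $H(E)$ (vacuously so when $H(E)=\varnothing$). By hypothesis $q$ is isotropic over $F_P$ for every $P \in X$, so $H(F_P) \ne \varnothing$ for all such $P$; since $\Sha_{\wh X,X}(F,G)$ is trivial, Theorem~\ref{homog_pts} yields $H(F) \ne \varnothing$, i.e.\ $q$ is isotropic over $F$.

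The crux of the argument — and the point where the hypothesis $\dim q \ne 2$ is genuinely used — is the transitivity of the \emph{connected} group $\SOrth(q)$, rather than of the disconnected $\Orth(q)$, on the isotropic vectors: only for the connected group is $\Sha_{\wh X,X}(F,G)$ forced to be trivial, so that Theorem~\ref{homog_pts} applies. For $n \ge 3$ there is enough ``room'' for a determinant-$(-1)$ isometry fixing a given isotropic vector, which makes the $\SOrth(q)$- and $\Orth(q)$-orbits agree; but for a binary form an isotropic $q$ is hyperbolic and its two isotropic lines lie in distinct $\SOrth(q)$-orbits, so $\Orth(q)$ is disconnected-essential there. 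This is exactly why the local--global principle, and hence this proof, breaks down in dimension two, consistent with Colliot-Th\'el\`ene's counterexample.
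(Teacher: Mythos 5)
Your proposal is correct and takes essentially the same approach as the paper: reduce to $q$ regular of dimension $n \ge 3$, observe that the connected rational group $\SOrth(q)$ acts transitively on the points of the quadric (the paper cites the proof of Theorem 4.2 and Remark 4.1 of \cite{HHK} for the transitivity and rationality, which you instead spell out via Witt's extension theorem and a reflection fixing the given isotropic vector), and then apply Theorem~\ref{homog_pts}. The only cosmetic difference is that you work with the affine cone of nonzero isotropic vectors while the paper uses the projective quadric; both serve equally well as the homogeneous space.
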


\begin{proof}
Without loss of generality, we may assume that $q$ is regular of dimension $n\geq 3$.
Then $\Orth(q)$ and its identity component $\operatorname{SO}(q)$ act transitively on the points of the projective quadric hypersurface $H$ defined by $q$ (for details, see the proof of Theorem~4.2 of~\cite{HHK}).
By Remark~4.1 of that article, the group $\operatorname{SO}(q)$ is rational. Thus by Theorem~\ref{homog_pts}, $H(F)$ is non-empty provided that each $H(F_P)$ is; this is equivalent to the desired assertion.
\end{proof}

\begin{example} \label{octonion example}
\renewcommand{\theenumi}{\alph{enumi}}
\renewcommand{\labelenumi}{(\alph{enumi})}
\begin{enumerate}
\item \label{octonions_points}
In the situation of Theorem~\ref{isotropy_pts_dim_not_2}, there are local-global principles for octonion algebras and for torsors under linear algebraic groups of type $G_2$.  
Namely, these algebras and these torsors are each
in bijection with the isomorphism classes of three-fold Pfister forms 
$\<1,a\>\otimes\<1,b\>\otimes\<1,c\>$ (\cite{Se:Bo}, Section~8.1, Th\'eor\`eme~9).  Since such a quadratic form has dimension eight, 
Theorem~\ref{isotropy_pts_dim_not_2} applies.  
But a Pfister form that is isotropic is in fact hyperbolic \cite[Theorem~X.1.7]{Lam}.  
Thus an octonion algebra or a group of type $G_2$ is split over $F$ if and only if it becomes split over each $F_P$, for $P \in X$.  
(This can also be seen by using that $F$-groups of type $G_2$ are classified by 
$H^1(F,\Aut(G))=H^1(F,G)$ for $G$ any group of type $G_2$; and that if $G$ is split then it is rational and hence Corollary~\ref{Sha_P-graph} applies.)
\item \label{octonions_valuations}
If the characteristic of the residue field $k$ is also assumed to be unequal to two, then the assertions in part~(\ref{octonions_points}) of this remark also apply to the corresponding local-global principles with respect to discrete valuations, by citing~\cite[Theorem~3.1]{CPS} instead of Theorem~\ref{isotropy_pts_dim_not_2}.
\end{enumerate}
\end{example}

Recall that two quadratic forms $q,q'$ are \textit{Witt equivalent} if $q\perp
h\cong q' \perp h'$, where $h,h'$ are hyperbolic forms.  The Witt equivalence classes form the {\em Witt group} $W(F)$, which is also a ring under multiplication given by tensor product.
By Witt
decomposition (\cite[Theorem~I.4.1]{Lam}), every regular form $q$ is Witt equivalent to an anisotropic form; i.e.\ $q = q_a \perp q_h$ with $q_a$ anisotropic and $q_h$ hyperbolic.  Here the
\textit{Witt index} of $q$ is $i_W(q) = \frac{1}{2}\dim\,q_h$.

\begin{cor} \label{local_witt_index_not_2}
If $q$ is a regular quadratic form then
\[i_W(q) = \min\limits_{P \in X} i_W(q_{F_P})
         \eqno{(*)}\]
unless $q$ is Witt equivalent to an anisotropic binary form that becomes isotropic over each $F_P$.  In that exceptional case,
         \[i_W(q) = \min\limits_{P \in X} i_W(q_{F_P})-1.
\eqno{(**)}\]
\end{cor}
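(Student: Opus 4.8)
The plan is to reduce the whole statement to the behaviour of the anisotropic part of $q$ under base change to the fields $F_P$. By Witt decomposition write $q = q_a \perp q_h$ with $q_a$ anisotropic (hence regular, since $q$ is), $q_h$ hyperbolic, and $i_W(q) = \frac12 \dim q_h$. For each $P \in X$ the form $(q_h)_{F_P}$ is still hyperbolic, while $(q_a)_{F_P}$ has its own Witt decomposition $(q_a)_{F_P} = \phi_{a,P} \perp \phi_{h,P}$; splicing the hyperbolic summands together and using uniqueness in Witt decomposition gives $i_W(q_{F_P}) = i_W(q) + i_W\bigl((q_a)_{F_P}\bigr)$. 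Therefore
\[\min_{P \in X} i_W(q_{F_P}) = i_W(q) + \min_{P \in X} i_W\bigl((q_a)_{F_P}\bigr),\]
and everything comes down to showing that $\min_{P} i_W\bigl((q_a)_{F_P}\bigr)$ is $0$ in general and is $1$ precisely in the stated exceptional case.

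First I would dispose of the small-dimensional cases for $q_a$. If $\dim q_a \le 1$, then $(q_a)_{F_P}$ is anisotropic for every $P$ (a nonzero diagonal entry cannot vanish over a field extension; the zero form has Witt index $0$), so the minimum is $0$ and $(*)$ holds. If $\dim q_a = 2$, then either $q_a$ stays anisotropic over some $F_P$, in which case the minimum is again $0$ and $(*)$ holds; or $q_a$ becomes isotropic over every $F_P$, in which case each $(q_a)_{F_P}$ is a regular isotropic binary form, hence hyperbolic, so $i_W\bigl((q_a)_{F_P}\bigr) = 1$ for all $P$, the minimum is $1$, and $(**)$ holds. This last situation is exactly the exception in the statement, since $q$ is Witt equivalent to $q_a$, which is then an anisotropic binary form that is isotropic over each $F_P$.

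It remains to treat $\dim q_a \ge 3$, and here the one real input is Theorem~\ref{isotropy_pts_dim_not_2}: since $q_a$ is a regular form of dimension $\ne 2$, it cannot be isotropic over every $F_P$ without being isotropic over $F$, contradicting anisotropy. Hence there is at least one $P$ with $(q_a)_{F_P}$ anisotropic, so $\min_P i_W\bigl((q_a)_{F_P}\bigr) = 0$ and $(*)$ holds. Combining the three cases gives the corollary: $(*)$ holds unless $q_a$ is an anisotropic binary form that is isotropic over every $F_P$, and in that case $(**)$ holds.

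The routine checks — the additivity identity for $i_W$ above, and the fact that a regular isotropic binary form is hyperbolic — are standard and cause no trouble. The only genuine ingredient is the local-global principle for isotropy in dimension $\ne 2$ (Theorem~\ref{isotropy_pts_dim_not_2}), invoked exactly once; it is precisely this dimension restriction that leaves the binary forms as the sole source of an exception, so the "main obstacle" is really just bookkeeping: making sure the binary case is handled by hand rather than by that theorem.
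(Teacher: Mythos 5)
Your proof is correct and follows the same route as the paper: Witt decomposition $q = q_a \perp q_h$, the reduction $\min_P i_W(q_{F_P}) = i_W(q) + \min_P i_W((q_a)_{F_P})$, and then Theorem~\ref{isotropy_pts_dim_not_2} applied to $q_a$ when it is not binary, with the binary case handled directly. The paper states the same argument more tersely (folding the $\dim q_a \le 1$ case into the "not binary" case, where the cited theorem applies vacuously), but the mathematical content is identical.
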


\begin{proof}
Write $q=q_a \perp q_h$, as above.  If $q_a$ is not binary, then since it is anisotropic over $F$, it is also anisotropic over some $F_P$, by Theorem~\ref{isotropy_pts_dim_not_2}.
The equality $(*)$ now follows.

If $q_a$ is binary and some $(q_a)_P$ is anisotropic, then $i_W(q_a) = \min_{P
  \in X} i_W((q_a)_{F_P}) = 0$.  Thus $(*)$ holds in this case.  
In the remaining (exceptional) case, $i_W(q_a) =0$ and $\min_{P \in X} i_W((q_a)_{F_P}) = 1$, hence $(**)$ holds.
\end{proof}

For any rational linear algebraic group $G$ over $F$, the pointed set $\Sha_0(F,G)$ is equal to $\Sha_{\wh X,X}(F,G)$, for any regular model $\wh X$ of $F$ over $T$, with closed fiber $X$. 
Recall that under Hypothesis~\ref{notn_branched}, the fundamental group of the reduction graph of a regular model $\wh X$ also depends only on $F$, by
Corollary~\ref{fs_free}. As a result, we may simply write $\pi_1(\Gamma)$ for this common fundamental group, where
$\Gamma$ is the reduction graph of any regular model of $F$. 

\begin{thm} \label{Witt_loc-gl}
Let $F$ be a one variable function field over a complete discretely valued field $K$ of characteristic not equal to~$2$, and let $\wh X$ be a regular model of $F$.
Then the kernel of the natural homomorphism of Witt groups
$\pi:W(F) \to \prod\limits_{P \in X} W(F_P)$
is isomorphic to $\Sha_0(F,\mbb Z/2\mbb Z)$. Moreover, both groups are isomorphic to the elementary abelian two-group $\Hom(\pi_1(\Gamma),\mbb Z/2 \mbb Z)$, and each element in the kernel is represented by a quadratic form of dimension two.
\end{thm}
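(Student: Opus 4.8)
The plan is to split the statement into three linked claims: (a) $\Sha_0(F,\mathbb{Z}/2\mathbb{Z})$ is isomorphic to the elementary abelian group $\Hom(\pi_1(\Gamma),\mathbb{Z}/2\mathbb{Z})$; (b) $\ker\pi$ is isomorphic to $\Sha_0(F,\mathbb{Z}/2\mathbb{Z})$; (c) every class in $\ker\pi$ has a representative of dimension two. Throughout, scalar extension of quadratic forms gives ring homomorphisms $W(F)\to W(F_P)$, so $\pi$ is a group homomorphism and $\ker\pi$ is a subgroup of $W(F)$.

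\emph{Step 1 (the group $\Sha_0$).} I would first observe that $\mathbb{Z}/2\mathbb{Z}$, as a finite constant group scheme over $F$, is a rational linear algebraic group with trivial identity component, so that $G/G^0=\mathbb{Z}/2\mathbb{Z}$ and the conjugation action of $G$ on $\Hom(\pi_1(\Gamma),G)$ is trivial. Since the chosen $\wh X$ is regular we have $\Sha_{\wh X,X}(F,\mathbb{Z}/2\mathbb{Z})=\Sha_0(F,\mathbb{Z}/2\mathbb{Z})$, and Corollary~\ref{Sha_P-graph} then gives a natural bijection $\Sha_0(F,\mathbb{Z}/2\mathbb{Z})\cong\Hom(\pi_1(\Gamma),\mathbb{Z}/2\mathbb{Z})$. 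This is in fact an isomorphism of abelian groups, since via Theorem~\ref{sha_fs}, Corollary~\ref{fs_free}, and Corollary~\ref{graph_pi1} it is realized as the chain of group isomorphisms $\Sha_0(F,\mathbb{Z}/2\mathbb{Z})=H^1(\FS/F,\mathbb{Z}/2\mathbb{Z})=\Hom(\Gal(\FS/F),\mathbb{Z}/2\mathbb{Z})=\Hom(\pi_1^\s(\wh X),\mathbb{Z}/2\mathbb{Z})=\Hom(\pi_1(\Gamma),\mathbb{Z}/2\mathbb{Z})$ sitting compatibly inside $H^1(F,\mathbb{Z}/2\mathbb{Z})=F^\times/(F^\times)^2$. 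As $\pi_1(\Gamma)$ is free of finite rank $r$, this group is $(\mathbb{Z}/2\mathbb{Z})^r$, hence elementary abelian.

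\emph{Step 2 (the dimension-two lemma).} This is the substantive point. Given a class in $\ker\pi$, write it by Witt decomposition as $[q_a]$ with $q_a$ anisotropic over $F$. If $\dim q_a=1$, then $q_a$ remains anisotropic over every $F_P$, contradicting that the class dies in $W(F_P)$; if $\dim q_a\ge 3$, then $q_a$ is anisotropic over $F$ of dimension $\ne 2$, so by Theorem~\ref{isotropy_pts_dim_not_2} it is anisotropic over some $F_P$, again a contradiction. Hence $\dim q_a\in\{0,2\}$, so the class is represented by $q_a$ (if $\dim q_a=2$) or by the hyperbolic plane (if $q_a=0$); in either case a dimension-two form. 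In particular every class in $\ker\pi$ has even-rank anisotropic part, so $\ker\pi$ lies in the fundamental ideal $I(F)$.

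\emph{Step 3 (identifying $\ker\pi$ with $\Sha_0$).} I would then restrict the signed-discriminant homomorphism $d_{\pm}\colon I(F)\to F^\times/(F^\times)^2$, which has kernel $I^2(F)$ and is functorial in $F$, to $\ker\pi$, using Kummer theory $F^\times/(F^\times)^2=H^1(F,\mathbb{Z}/2\mathbb{Z})$ over $F$ and over each $F_P$. Injectivity: a class in the kernel of $d_\pm|_{\ker\pi}$ lies in $\ker\pi\cap I^2(F)$, so by Step~2 it is either $0$ or an anisotropic binary form $\<b,c\>$ with $-bc$ a square; but the latter forces $\<b,c\>$ to be isotropic, a contradiction. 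Image equals $\Sha_0(F,\mathbb{Z}/2\mathbb{Z})$: if $\xi\in\ker\pi$ then $\xi_{F_P}=0$, so $d_\pm(\xi)$ becomes a square in each $F_P^\times$, i.e.\ $d_\pm(\xi)\in\Sha_0(F,\mathbb{Z}/2\mathbb{Z})$; conversely, for $a\in\Sha_0(F,\mathbb{Z}/2\mathbb{Z})$ the form $\<1,-a\>$ is hyperbolic over every $F_P$ (as $a$ is a square there), hence lies in $\ker\pi$, and $d_\pm\<1,-a\>=a$. Thus $d_\pm$ restricts to a group isomorphism $\ker\pi\cong\Sha_0(F,\mathbb{Z}/2\mathbb{Z})$, and composing with Step~1 finishes the proof. (Equivalently, one may argue via the inverse map $a\mapsto[\<1,-a\>]$: it is a homomorphism because for $a,b\in\Sha_0(F,\mathbb{Z}/2\mathbb{Z})$ the $4$-dimensional form $\<1,-a\>\otimes\<1,-b\>$ is hyperbolic over every $F_P$, hence by Theorem~\ref{isotropy_pts_dim_not_2} isotropic over $F$ and therefore, being a Pfister form, hyperbolic, so that $[\<1,-a\>]+[\<1,-b\>]=[\<1,-ab\>]$.)

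\emph{Main obstacle.} The only non-formal input is Theorem~\ref{isotropy_pts_dim_not_2} (the closed-fiber Hasse--Minkowski statement), which is exactly what pins the anisotropic part down to dimension $\le 2$ in Step~2; everything else is bookkeeping with signed discriminants and the structure of $\Sha_0$ established earlier. The point to keep in view is the genuine dimension-two exception in that local--global principle: binary forms are not controlled by the $F_P$, which is precisely why $\ker\pi$ (equivalently $\Sha_0(F,\mathbb{Z}/2\mathbb{Z})$) can fail to vanish and why the theorem must identify the kernel rather than assert its triviality.
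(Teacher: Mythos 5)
Your proof is correct, and it arrives at the same conclusion by a genuinely different route from the paper's. The dimension-two reduction (your Step~2) is identical, and so, essentially, is the underlying map $\ker\pi\to F^\times/(F^\times)^2$: both are the signed discriminant. What differs is how the homomorphism/bijection properties are established. The paper identifies $\ker\pi$ with $\Sha_0(F,\Orth(h))$ via the classification of binary forms by $H^1(F,\Orth(h))$, transports along the pointed-set bijection $\Sha_0(F,\Orth(h))\to\Sha_0(F,\mbb Z/2\mbb Z)$ coming from Corollary~\ref{Sha_P-graph}, and then checks the composite is additive by the identity $\<1,-a\>\perp\<1,-b\>\sim_W\<1,-ab\>$ --- an identity which, as your parenthetical Pfister-form argument makes clear, is \emph{not} unconditional but holds here precisely because $\<\!\<a,b\>\!\>$ is forced hyperbolic by Theorem~\ref{isotropy_pts_dim_not_2}. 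Your main argument instead works intrinsically on the Witt side: since $\ker\pi\subseteq I(F)$ by Step~2, the restriction of $d_\pm\colon I(F)\to F^\times/(F^\times)^2$ is automatically a group homomorphism, its kernel $\ker\pi\cap I^2(F)$ vanishes because an anisotropic binary form with square signed discriminant is isotropic, and its image is pinned down by functoriality of $d_\pm$ together with the explicit inverse $a\mapsto[\<1,-a\>]$. This buys you a slightly more self-contained quadratic-forms argument that avoids setting up $H^1(F,\Orth(h))$ and sidesteps the subtle Witt-sum identity by making the $I(F)/I^2(F)$ structure do the work; the paper's version is more economical in its appeal to the $\Sha$-machinery already built for rational groups and makes the link to Corollary~\ref{Sha_P-graph} more transparent. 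Both depend on the same nontrivial input, namely Theorem~\ref{isotropy_pts_dim_not_2} and the earlier structure theory of $\Sha_0(F,\mbb Z/2\mbb Z)$.
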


\begin{proof}
Let $h$ denote a hyperbolic plane.  Then $\SOrth(h)$ is rational by \cite{HHK}, Remark~4.1; and hence so is $\Orth(h)$, since each of the two components has a rational point.  Now 
$H^1(F,\Orth(h))$ classifies the equivalence classes of regular two-dimensional quadratic forms over $F$, with the distinguished element corresponding to the quadratic form $h$ (see \cite[Proposition~VII.29.1 and VII.29.28]{BofInv}).
Each element in $\ker \pi \subseteq W(F)$ is represented by an anisotropic quadratic form that becomes hyperbolic over each $F_P$.  Since a non-trivial hyperbolic form is isotropic, it follows from
Theorem~\ref{isotropy_pts_dim_not_2} that such a form is binary.
Thus there is a natural bijection of pointed sets $\ker \pi \to
\Sha_0(F,\Orth(h))$. Since $\Orth(h)$ is rational, and its quotient by its
identity component is isomorphic to $\mbb Z/2 \mbb Z$, there is a bijection of
$\Sha_0(F,\Orth(h))$ to $\Sha_0(F,\mbb Z/2 \mbb Z)$, by
Corollary~\ref{Sha_P-graph}. We claim that the composition $\ker \pi\to
\Sha_0(F,\mbb Z/2 \mbb Z)$ is a homomorphism, and hence an isomorphism. It
suffices to show that the composition $\ker \pi\to \Sha_0(F,\mbb Z/2 \mbb Z)\subseteq
H^1(F,\mbb Z/2 \mbb Z) = F^\times/(F^\times)^2$ is.  But this composition
takes the diagonal quadratic form $\< 1, -a \>$ to the square class of $a$,
for $a \in F^\times$. Since $\<1,-a\>\perp\<1,-b\>$ is Witt equivalent to
$\<1,-ab\>$, the claim follows. 

The remaining assertion now follows directly from Corollary~\ref{Sha_P-finite_gp}.
\end{proof}

Thus, the local-global map $\pi:W(F) \to \prod\limits_{P \in X} W(F_P)$ on
Witt groups has trivial kernel if and only if $\pi_1(\Gamma)=1$.  More
generally, if $\pi_1(\Gamma)$ is free of rank~$r$, then $\ker(\pi) = (\mbb Z/2
\mbb Z)^r$. 

\begin{cor} \label{tree_implications}
In the above situation, the following are equivalent:
\renewcommand{\theenumi}{\roman{enumi}}
\renewcommand{\labelenumi}{(\roman{enumi})}
\begin{enumerate}
\item \label{all_isotropy}
The local-global principle for isotropy in Theorem~\ref{isotropy_pts_dim_not_2} holds for \textbf{all} binary quadratic forms over $F$.
\item \label{all_index_equality}
The equality $(*)$ of Corollary~\ref{local_witt_index_not_2} holds for \textbf{all} quadratic forms over $F$.
\item \label{tree_condition}
$F=F^{\split}$ or equivalently, the reduction graph of a regular model of $F$ is a tree.
\end{enumerate}
\end{cor}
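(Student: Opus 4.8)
The plan is to route all three conditions through the kernel of the local-global map $\pi: W(F) \to \prod_{P \in X} W(F_P)$ on Witt groups, whose structure was determined in Theorem~\ref{Witt_loc-gl}. The key observations are that the equality $(*)$ of Corollary~\ref{local_witt_index_not_2} fails exactly in the ``exceptional case'' described there, and that the ``exceptional case'' is precisely the phenomenon measured by $\ker(\pi)$.

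First I would prove (\ref{all_isotropy}) $\Leftrightarrow$ (\ref{all_index_equality}) directly from Corollary~\ref{local_witt_index_not_2}. By Witt decomposition, the local-global principle for isotropy fails for some binary form over $F$ if and only if there exists an anisotropic binary form $q_0$ over $F$ that becomes isotropic over every $F_P$ (a binary form already isotropic over $F$ is trivially isotropic over each $F_P$, so only anisotropic binary forms can witness a failure). On the other hand, Corollary~\ref{local_witt_index_not_2} says that $(*)$ fails for a regular quadratic form $q$ precisely when the anisotropic part of $q$ is an anisotropic binary form that becomes isotropic over every $F_P$. Hence the two failure conditions coincide: the negation of (\ref{all_isotropy}) is equivalent to the negation of (\ref{all_index_equality}), using that any anisotropic binary form $q_0$ witnessing the failure of (\ref{all_isotropy}) is itself (taking $q = q_0$) a form for which $i_W(q) = 0$ while $\min_{P \in X} i_W(q_{F_P}) = 1$.

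Next I would prove (\ref{all_isotropy}) $\Leftrightarrow$ (\ref{tree_condition}) via $\ker(\pi)$. Recall that a regular binary quadratic form is isotropic if and only if it is hyperbolic; hence an anisotropic binary form $q_0$ over $F$ becomes hyperbolic over $F_P$ exactly when it becomes isotropic there. By Theorem~\ref{Witt_loc-gl}, every nonzero element of $\ker(\pi)$ is represented by an anisotropic binary form that becomes hyperbolic over each $F_P$; conversely such a form gives a nonzero element of $\ker(\pi)$. Thus $\ker(\pi)$ is nontrivial if and only if there is an anisotropic binary form over $F$ that is isotropic over all $F_P$, i.e.\ if and only if (\ref{all_isotropy}) fails. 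But Theorem~\ref{Witt_loc-gl} also identifies $\ker(\pi)$ with $\Hom(\pi_1(\Gamma), \mbb Z/2 \mbb Z)$, and since $\pi_1(\Gamma)$ is free of finite rank (Corollary~\ref{graph_pi1}), this group is trivial precisely when $\pi_1(\Gamma) = 1$, i.e.\ when $\Gamma$ is a tree, which by Corollary~\ref{fs_free} is equivalent to $F = \FS$. Chaining these equivalences gives (\ref{all_isotropy}) $\Leftrightarrow$ $\ker(\pi) = 0$ $\Leftrightarrow$ $\Gamma$ is a tree $\Leftrightarrow$ (\ref{tree_condition}), completing the cycle.

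There is no analytic or geometric difficulty here; the only point requiring care is the bookkeeping around Corollary~\ref{local_witt_index_not_2}, namely verifying that the \emph{sole} obstruction to $(*)$ is an anisotropic binary form that becomes isotropic at every $F_P$, and conversely that each such form does produce both a failure of $(*)$ and a failure of (\ref{all_isotropy}). Once that dictionary is in place, the corollary is a formal consequence of Theorem~\ref{Witt_loc-gl}, Corollary~\ref{local_witt_index_not_2}, Corollary~\ref{graph_pi1}, and Corollary~\ref{fs_free}.
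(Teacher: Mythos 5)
Your proof is correct and relies on the same core ingredients as the paper's: Theorem~\ref{Witt_loc-gl}, Corollary~\ref{local_witt_index_not_2}, the fact that a regular binary form is isotropic iff hyperbolic, and the identification of $\ker(\pi)$ with $\Hom(\pi_1(\Gamma),\mbb Z/2\mbb Z)$. The only difference is a cosmetic rearrangement (you establish (i)$\Leftrightarrow$(ii) directly and then (i)$\Leftrightarrow$(iii), whereas the paper proves (i)$\Leftrightarrow$(iii) and (ii)$\Leftrightarrow$(iii) separately), so this is essentially the same argument.
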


\begin{proof}
A form of dimension two is isotropic if and only if it is hyperbolic.
So the equivalence of (\ref{all_isotropy}) and (\ref{tree_condition})
follows from Theorem~\ref{Witt_loc-gl}, since the reduction graph $\Gamma$ is a tree if and only if $\pi_1(\Gamma)$ is trivial.

By Corollary~\ref{local_witt_index_not_2}, the equality $(*)$ is equivalent to the vanishing of the kernel of the local-global map on Witt groups.  So the equivalence of (\ref{all_index_equality}) and (\ref{tree_condition}) follows from Theorem~\ref{Witt_loc-gl} and Corollary~\ref{fs_free}.
\end{proof}

\begin{remark}
As with Theorem~\ref{isotropy_pts_dim_not_2}, the other results above also
have analogs in terms of patches on a regular model; the analog for
Corollary~\ref{local_witt_index_not_2} is in fact Corollary~4.3 of~\cite{HHK}.
\end{remark}

We next turn to analogs of the above three results for the set of discrete valuations on $F$, thereby extending the results of \cite{CPS}.
We first prove preliminary results, using the theorem of Springer cited above.  For this reason we assume that the residue field $k$ of the discrete valuation ring has characteristic unequal to two, as in
\cite{CPS}.

\begin{lemma} \label{val_local}
Let $R$ be a regular complete local domain of dimension two, whose residue field $k$ has characteristic unequal to two.  Let $E$ be the fraction field of $R$; let $\{x,y\}$ be a generating set for the maximal ideal of $R$; and let $E_y$ be the completion of $E$ with respect to the $y$-adic valuation.
Let $q = \sum_{i=1}^n a_iZ_i^2$ be a diagonal quadratic form over $R$ such that each $a_i$ has the form $x^{r_i} y^{s_i} u_i$ for some $r_i,s_i \ge 0$ and some unit $u_i \in R^\times$.
\begin{enumerate}
\item \label{local_isotropy}
If $q$ is isotropic over $E_y$ then it is isotropic over $E$.
\item \label{local_hyperbolic}
If $q$ is hyperbolic over $E_y$ then it is hyperbolic over $E$.
\end{enumerate}
\end{lemma}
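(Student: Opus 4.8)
The plan is to descend to the residue field $k$ by applying Springer's theorem twice, and then to lift the resulting information back up over $R$ using that $R$ is Henselian. First I would normalize the form: replacing each variable $Z_i$ by $x^{\lfloor r_i/2\rfloor}y^{\lfloor s_i/2\rfloor}Z_i$ multiplies $a_i$ by a square in $E^\times$, hence changes $q$ only up to isometry both over $E$ and over $E_y$, so I may assume $r_i,s_i\in\{0,1\}$. Grouping the indices according to $(\epsilon,\delta):=(r_i\bmod 2,\,s_i\bmod 2)$ then gives $q\cong q_{00}\perp x\,q_{10}\perp y\,q_{01}\perp xy\,q_{11}$ over $E$, where each $q_{\epsilon\delta}=\langle u_i:(r_i,s_i)\equiv(\epsilon,\delta)\rangle$ is a unimodular quadratic form over $R$ of some rank $m_{\epsilon\delta}$. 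I would also record the valuation-theoretic data: since $\{x,y\}$ is a regular system of parameters for the regular local ring $R$, the ring $R/yR$ is a complete discrete valuation ring with uniformizer $\bar x$ and residue field $k$, so $\bar E:=\operatorname{Frac}(R/yR)$ is a complete discretely valued field with residue field $k$ (and $\operatorname{char}\bar E\ne 2$ because $\operatorname{char}k\ne 2$); moreover $E_y$ is a complete discretely valued field with uniformizer $y$ and residue field $\bar E$.

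Next I would apply Springer's theorem (\cite[Proposition~VI.1.9]{Lam}) over $E_y$ to the decomposition $q=(q_{00}\perp x\,q_{10})\perp y\,(q_{01}\perp x\,q_{11})$, in which the two inner forms have $y$-adic unit coefficients, and then again over $\bar E$ (uniformizer $\bar x$) to each of the resulting residue forms $\overline{q_{00}}\perp\bar x\,\overline{q_{10}}$ and $\overline{q_{01}}\perp\bar x\,\overline{q_{11}}$. Noting that the iterated residues are simply the reductions $\overline{\overline{q_{\epsilon\delta}}}$ of the $q_{\epsilon\delta}$ modulo the maximal ideal of $R$, this shows that $q$ is isotropic over $E_y$ if and only if some $\overline{\overline{q_{\epsilon\delta}}}$ is isotropic over $k$, and (using that over a complete discretely valued field a form is hyperbolic precisely when both of its residue forms are hyperbolic, i.e.\ Witt trivial) that $q$ is hyperbolic over $E_y$ if and only if all four $\overline{\overline{q_{\epsilon\delta}}}$ are hyperbolic over $k$.

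Finally I would transfer this back to $E$. Since $2\in R^\times$, the complete local ring $R$ is Henselian. For~(\ref{local_isotropy}): if $\overline{\overline{q_{\epsilon\delta}}}$ is isotropic over $k$, then the projective quadric cut out by the unimodular form $q_{\epsilon\delta}$ is smooth over $R$, so the $k$-point lifts by Hensel's Lemma to an $R$-point; extending the resulting isotropic vector of $q_{\epsilon\delta}$ by zeros gives an isotropic vector of $q$ over $E$. For~(\ref{local_hyperbolic}): if all four $\overline{\overline{q_{\epsilon\delta}}}$ are hyperbolic over $k$, then in particular each $q_{\epsilon\delta}$ is isotropic over $R$ by the previous argument, hence splits off a hyperbolic plane over $R$; iterating (the orthogonal complement of a split hyperbolic plane is again unimodular, with hyperbolic residue) yields $q_{\epsilon\delta}\cong\mathbb H^{m_{\epsilon\delta}/2}$ over $R$. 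Thus over $E$ the form $q$ is an orthogonal sum of the forms $x^{\epsilon}y^{\delta}\,\mathbb H^{m_{\epsilon\delta}/2}$, and these are hyperbolic because a hyperbolic plane stays hyperbolic under scaling by a nonzero field element; hence $q$ is hyperbolic over $E$.

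The main obstacle, and the only genuinely substantive point, is to arrange the final lifting over $R$ itself — exploiting that $R$ is complete — rather than merely over the discrete valuation rings $R/yR$ and $\widehat{R_{(y)}}$ that appear in the two Springer steps; that is exactly what converts information at the closed point of $\operatorname{Spec}R$ into an assertion over $E$. One also has to keep track of the degenerate ranks $m_{\epsilon\delta}\in\{0,1\}$, but these are handled automatically by the bookkeeping in Springer's theorem (the relevant $q_{\epsilon\delta}$ has rank at least $2$ whenever its residue is isotropic, and no rank is odd when $q$ is hyperbolic over $E_y$).
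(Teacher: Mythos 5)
Your proof is correct, and for part~(\ref{local_isotropy}) it is essentially the paper's argument: normalize the exponents to $\{0,1\}$, decompose $q$ into the four unimodular blocks $q_{\epsilon\delta}$, apply Springer's theorem twice to get down to $k$, and lift via Hensel/smoothness over the complete ring $R$. Your formulation is in fact a bit more careful on the lifting step: you lift against the smooth (projective) quadric of the unimodular block $q_{\epsilon\delta}$ and then extend by zeros, whereas the paper phrases the lift in terms of the affine quadric of the full form $q$ (whose special fiber need not be smooth at the relevant $k$-point once the isotropic vector comes from a block other than $q_{00}$); unwinding what the paper means amounts to exactly what you do. (Minor slip: the substitution that kills the even exponents should be $Z_i\mapsto x^{-\lfloor r_i/2\rfloor}y^{-\lfloor s_i/2\rfloor}Z_i$, i.e.\ a negative exponent; this has no effect on the argument.)

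For part~(\ref{local_hyperbolic}) you take a genuinely different route. The paper's proof is a short reduction to part~(\ref{local_isotropy}): write $q=q'\perp q''$ with $q'$ the anisotropic kernel over $E$, observe by Witt cancellation that $q'$ is hyperbolic (hence isotropic, if nonzero) over $E_y$, and invoke~(\ref{local_isotropy}) to derive a contradiction. This is slick, but it applies~(\ref{local_isotropy}) to $q'$, and the statement of~(\ref{local_isotropy}) is tied to a diagonal form whose coefficients have the prescribed shape $x^ry^su$; one has to justify that the anisotropic kernel can again be diagonalized that way (which does hold, essentially because the hyperbolic planes split off inside~(\ref{local_isotropy}) live entirely inside a single block $x^\epsilon y^\delta q_{\epsilon\delta}$, but the paper leaves this implicit). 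Your argument avoids this entirely: you use the Witt-ring decomposition for the twice-iterated complete discretely valued field to see that $q$ hyperbolic over $E_y$ forces every residue $\overline{\overline{q_{\epsilon\delta}}}$ to be hyperbolic over $k$, then lift hyperbolicity block by block over the henselian ring $R$ (split off hyperbolic planes iteratively, using Witt cancellation over $k$ for the inductive step), and conclude. This is more explicit and self-contained, gives the stronger statement that each block $q_{\epsilon\delta}$ is already hyperbolic over $R$, and dovetails naturally with the mechanics of part~(\ref{local_isotropy}) rather than appealing to it as a black box; the paper's version is shorter once one grants the point about the shape of the anisotropic kernel.
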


\begin{proof}
(\ref{local_isotropy}) We follow the strategy used in the proof of~\cite[Theorem~3.1]{CPS}.  After factoring out even powers of $x$ and $y$,
we may assume that the exponents of $x$ and $y$ in the coefficients $a_i$ of $q$ are each equal to $0$ or $1$.
We may now write
$q = q_1 \perp xq_2 \perp yq_3 \perp xyq_4$, where each $q_i$ is a regular quadratic form over $R$.
Since $q = (q_1 \perp xq_2) \perp y(q_3 \perp xq_4)$ is isotropic over the complete discretely valued field $E_y$, it follows from a theorem of Springer (\cite[Proposition~VI.1.9(2)]{Lam}) that $q_1 \perp xq_2$ or $q_3 \perp xq_4$ is isotropic over
the residue field of $E_y$, i.e.\ over
the fraction field of the discrete valuation ring $\bar R := R/yR$.
Applying Springer's theorem to that subform over $\bar R$ yields that one of the forms $q_i$ is isotropic over the residue field $k$ of $\bar R$, and hence so is $q$.  Thus there is a non-zero $k$-point on the affine quadric hypersurface $Q \subset \mbb A_R^n$ defined by $q$.
By the assumptions on $q$ and on $k$,
this point of $Q$ is smooth because it is not the origin. Since $k$ is also the residue field of the complete ring $R$, \cite[Lemma~4.5]{HHK} implies that this $k$-point lifts to an $R$-point of $Q$.  Hence $q$ is isotropic over $E$.

(\ref{local_hyperbolic}) By Witt decomposition, we may write $q = q' \perp q''$, where $q'$ is anisotropic over $E$ and $q''$ is hyperbolic over $E$.  Since $q$ and $q''$ are hyperbolic over $E_y$, so is $q'$, by Witt cancellation (\cite[Theorem~I.4.2]{Lam}).  If $q'$ is not the zero form, then it is isotropic over $E_y$, and hence it is isotropic over $E$ by part~(\ref{local_isotropy}).  This contradiction shows that $q'=0$ and hence $q=q''$ is hyperbolic.
\end{proof}

In this proposition we return to our standing notation of this section.

\begin{prop} \label{good_model}
Assume that the residue field $k$ of $T$ has characteristic unequal to two. Let $F$ be a one-variable function field over the fraction field of $T$ and let $q$ be a quadratic form over $F$. 
\renewcommand{\theenumi}{\alph{enumi}}
\renewcommand{\labelenumi}{(\alph{enumi})}
\begin{enumerate}
\item \label{good model exists}
Then there is a regular model $\wh X$ of $F$ such that for every point $P$ on the closed fiber of $\wh X$, $q$ is isotropic (resp.\ hyperbolic) over $F_P$ if and only if it is isotropic (resp.\ hyperbolic) over $(F_P)_v$ for every discrete valuation $v$ on $F_P$. 
\item \label{good model property} 
Moreover, for a model $\wh X$ as in part~(\ref{good model exists}), the given form 
$q$ is isotropic (resp.\ hyperbolic) over $F_P$ for
every point $P$ on the closed fiber of $\wh X$
if and only if it is isotropic (resp.\ hyperbolic) over 
$F_v$ for every discrete valuation $v$ on $F$.
\end{enumerate}
\end{prop}

\begin{proof}
(\ref{good model exists}) The forward implication is true for any regular model, so to prove that assertion it suffices to find a model for which the reverse implication holds. Let $\wh X'$ be any regular model for $F$.
Since $\cha(F) \ne 2$, after replacing $q$ by an equivalent form we may assume that $q$ is a diagonal form $\sum a_i Z_i^2$, with $a_i \in F^\times$.
Let $D'$ be the union of the supports of the divisors of the elements $a_i$ as rational functions on $\wh X'$.  For some blow-up $f:\wh X \to \wh X'$, the singularities of $D := f^{-1}(D')$ are normal crossings (e.g.\ see~\cite[Lemma~4.7]{HHK}).
Let $X$ be the closed fiber of $\wh X$, and let $P\in X$. If $P$ is the generic point of a component of $X$, then $F_P$ is a complete discretely valued field, and the statement is trivial. So let $P$ be a closed point.  By the above condition on the divisor $D$, we may choose local parameters $x,y\in \wh R_P$ such that the components of $D$ that meet $P$ lie in the zero locus of $xy$ on $\wh X$.
After multiplying $q$ by an element of the form $x^r y^s$, we may assume that $q$ is as in the hypothesis of Lemma~\ref{val_local}. If $q$ is isotropic (resp.\ hyperbolic) over each completion of $F_P$, it is in particular isotropic (resp.\ hyperbolic) over the completion of $F_P$ with respect to the $y$-adic valuation on $F_P$. Then the lemma implies that $q$ is isotropic (resp.\ hyperbolic) over $F_P$, as we needed to show.

(\ref{good model property}) The forward implication 
follows from Proposition~\ref{field_containment}.  For the reverse implication, suppose that $q$
is isotropic (resp.\ hyperbolic) over every $F_v$.
Every discrete valuation on $F_P$ restricts to a discrete valuation on $F$ (Proposition~\ref{restriction}), and so $q$ is isotropic (resp.\ hyperbolic) over all completions of $F_P$ with respect to its discrete valuations.  Hence by part~(\ref{good model exists}), $q$ is isotropic (resp.\ hyperbolic) over each $F_P$.
\end{proof}

We now can now state analogs of Corollary~\ref{local_witt_index_not_2},
Theorem~\ref{Witt_loc-gl}, and Corollary~\ref{tree_implications}, in the context of discrete valuations.
(As noted above, the corresponding analog of Theorem~\ref{isotropy_pts_dim_not_2} was proven in~\cite[Theorem~3.1]{CPS}.)

\begin{thm} \label{val_global}
Assume that the residue field $k$ of the complete discrete valuation ring $T$ has characteristic unequal to two.  Let $F$ be a one-variable function field over the field of fractions $K$ of $T$, and let $\Gamma$ be the reduction graph of a regular model.
\begin{enumerate}
\item \label{val_isotropy}
The quadratic forms on $F$ satisfy a local-global principle for isotropy with respect to the set of discrete valuations $\Omega_F$ (i.e.\ an arbitrary form $q$ is isotropic over $F$ if it is isotropic over $F_v$ for each $v\in \Omega_F$) if and only if $\Gamma$ is a tree.
\item \label{val_Witt_index}
For any regular quadratic form $q$ over $F$,
its Witt index $i_W(q)$ is equal to either
$\operatorname{min}_{v \in \Omega_F} i_W(q_{F_v})$ or
$\operatorname{min}_{v \in \Omega_F} i_W(q_{F_v})-1$.  The second case
occurs precisely for those forms that are Witt equivalent to an anisotropic binary form that becomes isotropic over each $F_v$.  Moreover the first case holds for {\bf all} regular quadratic forms $q$ over $F$ if and only if $\Gamma$ is a tree.
\item \label{val_Witt}
The kernel of the natural homomorphism of Witt groups
$\varpi:W(F) \to \prod\limits_{v \in \Omega_F} W(F_v)$ is
equal to the kernel of $\pi:W(F) \to \prod\limits_{P \in X} W(F_P)$ for any regular model of $F$.  Thus it is
isomorphic to the elementary abelian two-group $\Hom(\pi_1(\Gamma),\mbb Z/2 \mbb Z)$, and each element in the kernel is represented by a quadratic form of dimension two.
\renewcommand{\labelenumi}{(\alph{enumi})}
\end{enumerate}
\end{thm}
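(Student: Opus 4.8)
The plan is to deduce all three parts of Theorem~\ref{val_global} from their closed-fiber counterparts --- Theorem~\ref{isotropy_pts_dim_not_2}, Corollary~\ref{local_witt_index_not_2}, and Theorem~\ref{Witt_loc-gl} --- by moving information back and forth between the discrete valuations of $F$ and the points of a closed fiber. The transfer will rest on three facts already available: Proposition~\ref{field_containment} (for each $v\in\Omega_F$ there is a point $P$ on the closed fiber with $F_P\subseteq F_v$), Proposition~\ref{restriction} (a discrete valuation on $F_P$ restricts to one on $F$), and Proposition~\ref{good_model} (a regular model adapted to a given form $q$). The first thing I would record, essentially as a lemma, is the following ``model-adapted equivalence'': if $\wh X$ is chosen for $q$ as in Proposition~\ref{good_model}, then $q$ is isotropic (resp.\ hyperbolic) over $F_P$ for every point $P$ of the closed fiber $X$ if and only if $q$ is isotropic (resp.\ hyperbolic) over $F_v$ for every $v\in\Omega_F$.

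To prove that equivalence, one direction is immediate: given $v\in\Omega_F$, Proposition~\ref{field_containment} produces $P\in X$ with $F_P\subseteq F_v$, so isotropy (hyperbolicity) over $F_P$ descends to the property over $F_v$. For the converse, at a generic point $\eta$ of a component of $X$ one has $F_\eta=F_v$ for the discrete valuation $v$ attached to the codimension-one point $\eta$, so the hypothesis over all $F_v$ already gives the property over $F_\eta$; and at a closed point $P\in X$, Proposition~\ref{good_model} reduces the property over $F_P$ to the same property over every completion $(F_P)_v$, while by Proposition~\ref{restriction} each such $(F_P)_v$ contains $F_{v_0}$ with $v_0:=v|_F\in\Omega_F$, over which $q$ has the desired property by assumption.

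Granting this, part~(\ref{val_isotropy}) and part~(\ref{val_Witt_index}) follow by case analysis on $\dim q$. If $\dim q\neq 2$, combining the model-adapted equivalence with Theorem~\ref{isotropy_pts_dim_not_2} (resp.\ with Corollary~\ref{local_witt_index_not_2}) shows that the valuation-theoretic local-global principle for isotropy always holds and that $i_W(q)=\min_{v}i_W(q_v)$ --- recovering and extending \cite[Theorem~3.1]{CPS}. For $\dim q=2$, isotropy and hyperbolicity coincide, so only binary forms are at issue; writing $q=q_a\perp q_h$ by Witt decomposition \cite[Theorem~I.4.1]{Lam} and using $i_W(q_v)=i_W(q)+i_W((q_a)_v)$ together with the dimension-$\neq 2$ case applied to $q_a$ yields the dichotomy of~(\ref{val_Witt_index}), the exceptional ($-1$) case being exactly when $q_a$ is anisotropic binary and becomes isotropic over every $F_v$. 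The clause ``the first case, resp.\ the local-global principle for isotropy, holds for all $q$'' then reduces to the vanishing of the kernel studied in part~(\ref{val_Witt}).

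For part~(\ref{val_Witt}): a class in $\ker\varpi$ is represented by its anisotropic representative $q_a$, which becomes hyperbolic, hence isotropic if nonzero, over every $F_v$; by the dimension-$\neq 2$ case just proved, $q_a=0$ or $\dim q_a=2$. Applying the model-adapted equivalence (to a model chosen for $q_a$) shows $q_a$ is hyperbolic over every $F_P$ on the closed fiber of that model, so its class lies in the corresponding $\ker\pi$; conversely $\ker\pi\subseteq\ker\varpi$ for any regular model directly by Proposition~\ref{field_containment}. Since $\ker\pi\cong\Hom(\pi_1(\Gamma),\mbb Z/2\mbb Z)$ is independent of the regular model by Theorem~\ref{Witt_loc-gl} and Corollary~\ref{fs_free}, this gives $\ker\varpi=\ker\pi$ with every class represented by a binary form; and because $\pi_1(\Gamma)$ is free (Corollary~\ref{graph_pi1}), $\Hom(\pi_1(\Gamma),\mbb Z/2\mbb Z)=0$ iff $\pi_1(\Gamma)=1$ iff $\Gamma$ is a tree, completing the ``iff $\Gamma$ is a tree'' assertions in~(\ref{val_isotropy}) and~(\ref{val_Witt_index}) exactly as in Corollary~\ref{tree_implications}. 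I do not expect a genuine obstacle here; the one point demanding care is the bookkeeping around model-dependence --- the equivalence lemma uses a model tailored to each individual form, whereas $\ker\pi$ and $\pi_1(\Gamma)$ must be invoked as model-independent to splice the pieces together --- together with keeping the binary case strictly segregated, since Theorem~\ref{isotropy_pts_dim_not_2} truly fails in dimension two.
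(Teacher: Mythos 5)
Your proposal follows essentially the same route as the paper: move between the valuation setting and the closed-fiber setting via Propositions~\ref{field_containment}, \ref{restriction}, and \ref{good_model}, and then invoke the closed-fiber results (Theorem~\ref{isotropy_pts_dim_not_2}, Corollary~\ref{local_witt_index_not_2}, Theorem~\ref{Witt_loc-gl}, Corollary~\ref{tree_implications}). Packaging the transfer into a single ``model-adapted equivalence'' lemma rather than repeating the argument inline, as the paper does, is a harmless reorganization; the lemma as you state and prove it is correct, and your handling of parts~(a) and~(c) (including the observation that $\ker\pi$ must be invoked as a model-independent \emph{subset} of $W(F)$, via the explicit bijection with $\Sha_0(F,\mathbb Z/2\mathbb Z)$) matches what the paper does.

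There is, however, a genuine slip in your treatment of part~(b). You assert that $\dim q\neq 2$ forces $i_W(q)=\min_{v\in\Omega_F} i_W(q_v)$, and you place the exceptional case at $\dim q = 2$. Neither is right: take $q=q_a\perp\langle 1,-1\rangle$ where $q_a$ is an anisotropic binary form that becomes isotropic over every $F_v$ (such $q_a$ exist exactly when $\pi_1(\Gamma)\neq 1$, by part~(c)). Then $\dim q = 4$, but $i_W(q)=1$ while $i_W(q_v)=2$ for every $v$, so $\min_v i_W(q_v)=i_W(q)+1$. The dichotomy must be governed by $\dim q_a$, the dimension of the anisotropic kernel, not by $\dim q$; and the model must be adapted to $q_a$ as well as to $q$ (arranged by diagonalizing $q$ as $q_a\perp q_h$ with $q_h$ having entries $\pm 1$, so a model adapted to that presentation of $q$ is automatically adapted to $q_a$). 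With these two corrections your identity $i_W(q_v)=i_W(q)+i_W((q_a)_v)$, combined with the $\dim\neq 2$ isotropy local-global applied to $q_a$ when $\dim q_a\neq 2$, does yield the dichotomy, the exceptional case arising only when $\dim q_a=2$ and $q_a$ is everywhere locally isotropic. For what it is worth, the paper's own proof of~(b) makes the parallel assertion that $\dim q>2$ makes $\min_{P\in X}i_W(q_{F_P})$ equal to $i_W(q)$ for the adapted model, which fails on the same example; the surrounding argument (proving $\min_v=\min_P$) still goes through once this is repaired, so the theorem is unaffected, but it is a subtle spot deserving the extra care you flagged.
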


\begin{proof}
First observe that by Corollary~\ref{fs_free}, the choice of a regular model does not affect 
the isomorphism class of the reduction graph, and in particular does not affect
whether the reduction graph is a tree.  Thus we may pick a regular model of our choosing in 
parts~(\ref{val_isotropy}) and~(\ref{val_Witt_index}).  
This observation will also be useful in the proof of part~(\ref{val_Witt}).

(\ref{val_isotropy})
If the quadratic forms on $F$ satisfy a local-global principle with respect to discrete valuations, then they also satisfy such a principle with respect to the points $P$ on the closed fiber of any regular model, since every $F_v$ contains an $F_P$ by Corollary~\ref{field_containment}. Hence $\Gamma$ is a tree, by Corollary~\ref{tree_implications}.

For the converse direction, suppose that $\Gamma$ is a tree. Consider a quadratic form $q$ over $F$ that becomes isotropic over each $F_v$. We need to show it is isotropic over $F$. Let $\wh X$ be a model given by Proposition~\ref{good_model}(\ref{good model exists}).
By part~(\ref{good model property}) of that proposition,
$q$ is isotropic over $F_P$ for
every point $P$ on the closed fiber of $\wh X$.
But then $q$ is isotropic over $F$,
by Corollary~\ref{tree_implications}.

(\ref{val_Witt_index})
Given a regular quadratic form $q$ over $F$, let $\wh X$ be as in Proposition~\ref{good_model}(\ref{good model exists}), with closed fiber $X$. 
To prove the first assertion in part~(\ref{val_Witt_index}), 
it suffices by Corollary~\ref{local_witt_index_not_2} to show that
$\min\limits_{v \in \Omega} i_W(q_{F_v}) = \min\limits_{P \in X} i_W(q_{F_P})$.  The former expression is greater than or equal to the latter, since every $F_v$ contains an $F_P$.
By Proposition~\ref{good_model}(\ref{good model property}), those quantities
are equal if $q$ has dimension two.  It remains to show that the above inequality is actually an equality if $\dim(q) > 2$.  In this case the right hand side is equal to $i_W(q)$, by Corollary~\ref{local_witt_index_not_2}.
Write $q = q_a \perp q_h$ with $q_a$ anisotropic and $q_h$ hyperbolic.
Since $i_W(q) = \min\limits_{P \in X} i_W(q_P)$, the form $q_a$ is anisotropic
over $F_P$ for some point $P \in X$.  Given $P$, the form
$q_a$ remains anisotropic over the completion of $F_P$ with respect to some discrete valuation on $F_P$, by the defining property of $\wh X$ (as in Proposition~\ref{good_model}(\ref{good model exists})).  By Proposition~\ref{restriction} it also remains anisotropic over $F_v$ for the restriction $v$ of that valuation to $F$. Hence $i_W(q_a)=i_W((q_a)_{F_P})=i_W((q_a)_{F_v})=0$ and thus $i_W(q)=i_W(q_{F_v})$, as needed.

The second assertion in part~(\ref{val_Witt_index}) now follows from 
Corollary~\ref{local_witt_index_not_2} and Proposition~\ref{good_model}(\ref{good model property}); 
and the third assertion in~(\ref{val_Witt_index}) then follows from the assertion of part~(\ref{val_isotropy}) above. 

(\ref{val_Witt})
Given any  regular model $\wh X$ of $F$ with closed fiber $X$,  since every $F_v$ contains an $F_P$ by Corollary~\ref{field_containment}, it follows that $\ker(\pi)$ is contained in $\ker(\varpi)$.  

For the reverse containment, let $q$ be a regular quadratic form $q$ whose class lies in $\ker(\varpi)$.  First consider the case that $\wh X$ is a model as given in Proposition~\ref{good_model}(\ref{good model exists}), 
relative to $q$.  
Again using Proposition~\ref{restriction}, the class of $q$ also becomes trivial over each completion of a field $F_P$ (with respect to some discrete valuation), for every $P$ in the closed fiber $X$ of $\wh X$. Proposition~\ref{good_model}(\ref{good model exists}) then implies that the class of $q$ lies in
$\ker(\pi)$; implying $\ker(\varpi) \subseteq \ker(\pi)$ and hence equality. 

More generally, for an arbitrary regular model $\wh X'$, the reduction graphs of $\wh X$ and $\wh X'$ have isomorphic fundamental groups, as observed above.
Hence by Theorem~\ref{Witt_loc-gl}, $\ker(\pi)$ and the corresponding kernel $\ker(\pi')$ for $\wh X'$ are isomorphic and finite. But since $\ker(\pi')\subseteq\ker(\varpi)=\ker(\pi)$, this implies $\ker(\pi')=\ker(\varpi)$, proving the desired equality.

The remaining assertions follow from Theorem~\ref{Witt_loc-gl}.
\end{proof}

The above result also shows that the local-global maps $\varpi$ and $\pi$ in part~(\ref{val_Witt}) have kernel contained in the fundamental ideal $I(F) \subset W(F)$, and that the kernel meets $I^2(F)$ trivially.  Hence the restrictions of $\varpi$ and $\pi$ to $I^2(F)$ are injective, yielding a local-global principle for such classes of forms.  Compare the assertions of \cite[Theorem~3.10]{COP} and \cite[Lemma~4.10]{Hu}, in a more local situation.

\subsection{Applications to central simple algebras}\label{subsec_csa}

We next consider applications of our results to central simple algebras.  We first carry over a local-global assertion about the index of an algebra from the context of patches to the contexts of points on the closed fiber and of valuations.  Afterwards we prove a local-global result about central simple algebras with unitary involutions.

Recall that in \cite[Theorem~5.1]{HHK} we showed that under Notation~\ref{notn_patches},
the index of a central simple $F$-algebra $A$ is the least common multiple of the indices of the central simple $F_\xi$-algebras $A_\xi := A \otimes_F F_\xi$, for $\xi \in \mc P \cup \mc U$.  Here, we obtain an analogous local-global principle for points on the closed fiber:

\begin{thm} \label{lgp_index}
Under Notation~\ref{notn_fields}, let $A$ be a central simple $F$-algebra.  Then
$$\ind(A) = \mathop{\mathrm{lcm}}\limits_{P \in X} \ind(A_P),\eqno{(*)}$$
where $A_P := A \otimes_F F_P$ is viewed as a central simple $F_P$-algebra, for $P \in X$.  In particular, $A$ is split over $F$ if and only if each $A_P$ is split over $F_P$.
\end{thm}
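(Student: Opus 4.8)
The plan is to reduce the statement to the corresponding assertion for the finite system of patches, namely \cite[Theorem~5.1]{HHK}, after choosing the set $\mc P$ appropriately; the device for transferring information from the generic points of the components of $X$ to the patches $F_U$ is the approximation result Proposition~\ref{approx}. First observe that one divisibility is free: since $F\subseteq F_P$ for every $P\in X$, base change gives $\ind(A_P)\mid\ind(A)$, so, setting $d:=\lcm_{P\in X}\ind(A_P)$, we have $d\mid\ind(A)$; and since each $\ind(A_P)$ divides $\deg(A)$, so does $d$. The real content is the reverse divisibility $\ind(A)\mid d$.

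To obtain it, I would introduce the generalized Severi--Brauer variety $H:=\SB_d(A)$ (which is defined since $d\mid\deg(A)$), a projective $F$-variety characterized by the property that for every field extension $E/F$ one has $H(E)\neq\varnothing$ if and only if $\ind(A_E)\mid d$. For each irreducible component $X_i$ of $X$, with generic point $\eta_i\in X$, we have $\ind(A_{F_{\eta_i}})\mid d$ and hence $H(F_{\eta_i})\neq\varnothing$; Proposition~\ref{approx} then yields a non-empty affine open $U_i\subseteq X_i$, meeting no other component, with $H(F_{U_i})\neq\varnothing$, i.e.\ $\ind(A_{F_{U_i}})\mid d$. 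Put $\mc U:=\{U_i\}$ and $\mc P:=X\smallsetminus\bigcup_i U_i$. Then $\mc P$ is a finite set of closed points containing every point at which distinct components of $X$ meet, so $(\wh X,\mc P)$ is as in Notation~\ref{notn_patches} and $\mc U$ is precisely its associated set of complementary components. Since $\ind(A_\xi)\mid d$ for every $\xi\in\mc P\cup\mc U$ --- trivially when $\xi\in\mc P\subseteq X$, and by construction when $\xi=U_i$ --- \cite[Theorem~5.1]{HHK} gives $\ind(A)=\lcm_{\xi\in\mc P\cup\mc U}\ind(A_\xi)\mid d$. Together with $d\mid\ind(A)$ this proves $\ind(A)=d$, which is $(*)$. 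The last assertion is then immediate, since $A$ is split over $F$ precisely when $\ind(A)=1$, which by $(*)$ holds precisely when $\lcm_{P\in X}\ind(A_P)=1$, i.e.\ when $\ind(A_P)=1$ for every $P\in X$, i.e.\ when every $A_P$ is split.

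Most of this is bookkeeping, so I do not expect a genuine obstacle; the two points requiring care are both minor. First, one needs the stated representability property of generalized Severi--Brauer varieties (a standard fact), together with the observation that Proposition~\ref{approx} applies here: that proposition only requires $H$ to be an $F$-variety, with no smoothness or properness assumption, and $\SB_d(A)$ is in fact smooth and projective. Second, one should check that the set $\mc P$ constructed above really satisfies the conditions of Notation~\ref{notn_patches}: it is finite because the complement of a non-empty affine open subset of a one-dimensional irreducible projective variety is finite, and it contains the component-intersection points because any such point lies on some $X_i$ but not in $U_i$, as $U_i$ avoids the other components. With these checks in place the argument is complete.
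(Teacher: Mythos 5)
Your proof is correct, and it is essentially the paper's approach viewed from one rung lower on the ladder of abstraction. The paper's proof also works with the generalized Severi--Brauer variety $\SB_i(A)$, but instead of citing \cite[Theorem~5.1]{HHK} directly it observes that the connected rational group $\GL_1(A)$ acts transitively on the points of $\SB_i(A)$ and then invokes Theorem~\ref{homog_pts}, the general local-global principle for homogeneous spaces with respect to points on the closed fiber. Theorem~\ref{homog_pts} is itself proved by exactly the reduction you carry out by hand: apply Proposition~\ref{approx} at the generic points $\eta_i$ to produce suitable patches $U_i$, then use the patch-level local-global result. So the two arguments are really the same argument packaged differently: the paper's version goes through the abstract homogeneous-space machinery (which makes the "transitive action of a connected rational group" the visible ingredient and keeps the Artin-approximation step hidden inside Theorem~\ref{homog_pts}), while yours bypasses the abstraction and combines Proposition~\ref{approx} with the concrete index computation of \cite[Theorem~5.1]{HHK}. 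The paper's route is a little cleaner since it applies uniformly to all $i$ and avoids re-checking that $\mathcal P$ as constructed satisfies Notation~\ref{notn_patches}; your route has the advantage of making the reduction to patches completely explicit.
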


\begin{proof}
Let $\SB_i(A)$ be the generalized Severi-Brauer variety associated to $A$.  By 
\cite[Proposition~1.17]{BofInv}, 
if $E/F$ is a field extension, then
$\SB_i(A)(E) \neq \varnothing$ if
and only if $\ind(A_E)$ divides~$i$.  Moreover, the rational connected linear algebraic group $\GL_1(A)$ acts transitively on the points of $\SB_i(A)$.  So Theorem~\ref{homog_pts} implies that $\ind(A)\mid i$ if and only if $\ind(A_P)\mid i$ for each $P\in X$; this is equivalent to the desired assertion.
\end{proof}

We can also consider an analog of Theorem \ref{lgp_index} in terms of
discrete valuations on $F$, parallel to the result in the case of quadratic
forms that was given in \cite[Theorem~3.1]{CPS}.  The following result, which is a valuation version of the last assertion of the above theorem, also appeared in \cite[Theorem~4.3(ii)]{CPS}, and can be viewed as an analog of the Albert-Brauer-Hasse-Noether theorem:

\begin{cor} \label{lgp_csa_split_val}
Let $F$ be a one-variable function field over a complete discretely valued field, and let $A$ be a central simple $F$-algebra.  Then $A$ splits over $F$ if and only if it splits over $F_v$ for every discrete valuation $v$ on $F$.  Moreover two such algebras $A,A'$ are isomorphic over $F$ if and only if they become isomorphic over each $F_v$.
\end{cor}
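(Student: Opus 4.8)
The plan is to reduce both assertions to Theorem~\ref{lgp_index} (the local--global principle for splitting with respect to points of the closed fiber) by moving from discrete valuations of $F$ to points of the closed fiber of a \emph{regular} model, and then to deduce the isomorphism statement from the splitting statement by passing to the opposite algebra.

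First I would prove the first assertion. The ``only if'' direction is trivial by base change, so suppose $A$ splits over $F_v$ for every $v\in\Omega_F$. Fix a regular model $\wh X$ of $F$ over $T$, with closed fiber $X$, and set $n=\deg(A)$. By Theorem~\ref{lgp_index} it suffices to show that $A_P:=A\otimes_F F_P$ is split for every point $P$ of $X$. If $P$ is the generic point of an irreducible component of $X$, it is a codimension one point of $\wh X$ and $F_P=F_v$ for the associated $v\in\Omega_F$, so $A_P$ is split by hypothesis. If $P$ is a closed point, then $F_P$ is the fraction field of the two-dimensional complete regular local ring $\wh R_P$; for any $w\in\Omega_{F_P}$, its restriction $w_0:=w|_F$ lies in $\Omega_F$ by Proposition~\ref{restriction}, and $(F_P)_w$ contains the completion $F_{w_0}$, over which $A$ splits by hypothesis, so $A_P$ splits over $(F_P)_w$. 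Hence the class of $A_P$ in $H^1(F_P,\PGL_n)$ lies in $\Sha(F_P,\PGL_n)$, which is trivial by Lemma~\ref{local_sha_reductive_group} applied to the reductive group scheme $\PGL_n$ over $\wh R_P$. Thus $A_P\cong M_n(F_P)$ is split, and Theorem~\ref{lgp_index} gives that $A$ is split over $F$. (This first assertion is also \cite[Theorem~4.3(ii)]{CPS}.)

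Next I would treat the isomorphism statement. One direction is again trivial, so assume $A\otimes_F F_v\cong A'\otimes_F F_v$ for every $v\in\Omega_F$. Comparing $F_v$-dimensions shows $\deg(A)=\deg(A')=:n$. Set $B:=A\otimes_F {A'}^{\mathrm{op}}$, a central simple $F$-algebra of degree $n^2$. For each $v$, the hypothesis gives $[B\otimes_F F_v]=[A\otimes_F F_v]-[A'\otimes_F F_v]=0$ in $\Br(F_v)$, so $B$ splits over $F_v$; by the first assertion $B$ splits over $F$, whence $[A]=[A']$ in $\Br(F)$. Combined with $\deg(A)=\deg(A')$ this forces $A\cong A'$ over $F$, since both are isomorphic to $M_r(D)$ for the common underlying division algebra $D$ and the common value $r=n/\deg(D)$.

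The step I expect to be the main obstacle is the passage from discrete valuations of $F$ to points of the closed fiber in the first assertion: this is exactly what requires using a regular model and invoking the vanishing of $\Sha(F_P,\PGL_n)$ at closed points $P$, i.e.\ Lemma~\ref{local_sha_reductive_group} together with its local splitting inputs from \cite{Nis84}, \cite{Gille}, and \cite{CPS}. Once that local vanishing is available, the remainder is formal manipulation of Brauer classes and degrees.
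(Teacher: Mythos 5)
Your proof is correct and follows essentially the same route as the paper's: the paper invokes Theorem~\ref{sha_conditions}(\ref{reductive}) to conclude $\Sha(F,\PGL_d)=\Sha_{\wh X,X}(F,\PGL_d)$ and then applies Theorem~\ref{lgp_index}, whereas you unroll that citation into its constituent steps (Proposition~\ref{restriction} and Lemma~\ref{local_sha_reductive_group} at each closed point $P$) before applying Theorem~\ref{lgp_index}. The second assertion is handled identically in both (difference of Brauer classes plus equal degrees), with your version merely spelling out the Wedderburn step explicitly.
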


\begin{proof}
Let $\wh X$ be a regular model for $F$ over the valuation ring $T$, with closed fiber $X$.  The isomorphism classes of central simple $F$-algebras of degree $d$ are classified by $H^1(F,\PGL_d)$ (\cite{BofInv}, p.~396), with the isomorphism class of a split algebra corresponding to the trivial cohomology class.  Since the connected linear algebraic group $\PGL_d$ is defined and reductive over $\wh X$, Theorem~\ref{sha_conditions}(\ref{reductive}) implies that $\Sha(F,\PGL_d) = \Sha_X(F,\PGL_d)$.  But $\Sha_X(F,\PGL_d)$ is trivial by Theorem~\ref{lgp_index}.  Hence so is $\Sha(F,\PGL_d)$, and this implies the first assertion.
The second assertion now follows from the first by considering the difference of the two Brauer classes, and using that $A,A'$ have the same degree.
\end{proof}

\begin{remark}  
\begin{enumerate}
\item \label{Saltman pf rk}
We sketch another proof of the above corollary, using ideas of \cite{Sal:DA}, Section~3 (see also \cite{Sal:DAC}). Namely, letting $X_1,\dots,X_n$ be the irreducible components of the closed fiber $X$ of a regular model $\wh X$ of $F$, with generic points $\eta_i$ and residue fields $\kappa_i$, the composition
$\Br(\wh X) = \Br(X) \hookrightarrow \prod_i \Br(X_i) \hookrightarrow \prod_i
  \Br(\kappa_i)$ also factors as $\Br(\wh X) \to \prod \Br(\wh R_{\eta_i}) \to \prod_i \Br(\kappa_i)$.  
But any element $\alpha \in \Br(F)$ that splits over each $F_v$ must lie in $\Br(\wh X)$; and its image in $\prod_i \Br(\kappa_i)$ is trivial
because it splits over each $\wh R_{\eta_i}$ 
(using that we have an inclusion $\Br(\wh R_{\eta_i}) \hookrightarrow \Br(F_{\eta_i}) = \Br(F_{v_{\eta_i}})$).
Since the first composition above is an inclusion, $\alpha$ is trivial, as asserted.  The proof of the corollary above instead relies on  \cite{Nis84}
(by using Theorem~\ref{sha_conditions}(\ref{reductive})
and hence Lemma~\ref{local_sha_reductive_group}), which can be viewed as  generalizing this approach to more general reductive groups.
\item \label{Suresh-Reddy rk}
In the presence of sufficiently many roots of unity, a valuation analog of the first assertion in Theorem~\ref{lgp_index} can also be shown, again by drawing on \cite[Theorem~5.1]{HHK}.  See Theorem~2.6 of \cite{RS}.
\end{enumerate}
\end{remark}

Turning to the second application, we consider central simple $F$-algebras with involutions, where $F$ is as before.
Recall that an involution on an $F$-algebra $A$ is {\em unitary} if its restriction to the center of $A$ is an automorphism of order two.
Given a quadratic \'etale $F$-algebra $E$, a {\em central simple $F$-algebra with unitary involution} is a finite dimensional $F$-algebra $A$ with unitary involution such that $F$ is the subfield of the center of $A$ that consists of the elements fixed by the involution.  (See~\cite[pp.~20-21 and p.~400]{BofInv}.)

\begin{thm} \label{csa_involution}
Under Notation~\ref{notn_patches}, let $E/F$ be a quadratic \'etale algebra such that $E \otimes_F F_\wp
\cong F_\wp \times F_\wp$ for every $\wp \in \mc B$.  Let $(B, \tau),
(B', \tau')$ be two central simple $F$-algebras with unitary involutions
and centers isomorphic to $E$.
Then $(B, \tau)$ and $(B', \tau')$ are isomorphic as
algebras with involutions provided that they are locally isomorphic, in either of the following senses:

\begin{enumerate}
\item \label{unitary_patches}
They become isomorphic over each field $F_P$ for $P \in \mc P$, and for each field $F_U$ for $U \in \mc U$.

\item \label{unitary_points}
They become isomorphic over each field $F_P$, for $P$ a point on the closed fiber of $\wh X$.
\end{enumerate}
\end{thm}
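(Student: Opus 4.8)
The plan is to present the statement as a local-global principle for a torsor under a connected rational group, the only extra input being a correction coming from the action on the center.

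Put $n=\deg B=\deg B'$, and let $Z:=\underline{\mathrm{Iso}}\bigl((B,\tau),(B',\tau')\bigr)$ be the $F$-scheme of isomorphisms of $F$-algebras with involution. It is a torsor under the automorphism group scheme $G:=\Aut(B,\tau)$: indeed $Z$ has an $F^\sep$-point, since $E\otimes_F F^\sep\cong F^\sep\times F^\sep$ and both $(B,\tau)$ and $(B',\tau')$ become isomorphic over $F^\sep$ to $\bigl(M_n(F^\sep)\times M_n(F^\sep)\bigr)$ with the exchange involution; and $Z(F)\neq\varnothing$ is exactly the asserted conclusion. The identity component $G^0$ is the group $\PGU(B,\tau)$ of $E$-linear automorphisms commuting with $\tau$; it is connected, and it is a rational $F$-variety --- the unitary group $\U(B,\tau)$ is parametrized birationally by the $F$-vector space of $\tau$-skew elements via the Cayley transform (exactly as $\SOrth(q)$ is rational in Section~\ref{subsec_quadforms}), and one passes to the adjoint quotient. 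Consequently Corollary~\ref{Sha_P-graph} gives $\Sha_{\mc P}\bigl(F,\PGU(B,\tau)\bigr)=1$ for every admissible $\mc P$, and $\Sha_X\bigl(F,\PGU(B,\tau)\bigr)=1$; equivalently, simultaneous factorization holds for $\PGU(B,\tau)$ over the inverse system of patches, so local-global principles hold for $\PGU(B,\tau)$-torsors with respect to the $F_\xi$ and with respect to the points of $X$ (Corollary~\ref{homogeneous}, Theorem~\ref{homog_pts}).

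The remaining point is the component group $G/G^0\cong\mathbb Z/2\mathbb Z$ (a constant group, generated by a center-swapping, i.e.\ $\iota$-semilinear, automorphism), which records how an isomorphism acts on $E$; this is where the hypothesis $E\otimes_F F_\wp\cong F_\wp\times F_\wp$ is needed. Restriction to centers gives a map $Z\to\underline{\Aut}_F(E)=\mathbb Z/2\mathbb Z$. Choosing local isomorphisms $\phi_P\in Z(F_P)$ and $\phi_U\in Z(F_U)$, the transition automorphism $\phi_U^{-1}\phi_P$ of $(B,\tau)\otimes_F F_\wp$ acts on the now-split center $F_\wp\times F_\wp$ through the difference of the classes $\epsilon_P,\epsilon_U\in\mathbb Z/2\mathbb Z$ induced on the centers --- that is, through the coboundary of the $0$-cochain $(\epsilon_\xi)_{\xi\in\mc P\cup\mc U}$ on the reduction graph. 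Absorbing this coboundary by means of automorphisms of the split algebras with involution over the $F_\wp$, I would reduce to a patching problem (Theorem~\ref{patching_torsors}) whose transition data over the $F_\wp$ is valued in the \emph{connected} group $\PGU(B,\tau)$; since $\Sha_{\mc P}\bigl(F,\PGU(B,\tau)\bigr)=1$ this problem has a solution over $F$, which must be $(B',\tau')$. This proves case~(1). Case~(2) reduces to it: Proposition~\ref{approx}, applied at the generic point of each irreducible component of $X$, upgrades the isomorphisms over the $F_P$ to isomorphisms over suitable $F_U$, so one may take $\mc P$ as in Hypothesis~\ref{notn_branched} and invoke case~(1) --- formally, $\Sha_X(F,G)=\bigcup_{\mc P}\Sha_{\mc P}(F,G)$ by Corollary~\ref{Sha_union}.

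The hard part will be the center-action adjustment above: one must check that the splitting of $E$ over the branches genuinely permits replacing the given local isomorphisms by data with $\PGU(B,\tau)$-valued transitions, rather than transitions valued in the possibly non-rational group $\Aut(B,\tau)$, and that the replacement respects the compatibility conditions demanded by the Mayer-Vietoris machinery. A secondary point requiring an external reference is the rationality of $\PGU(B,\tau)$ for a non-split algebra $B$.
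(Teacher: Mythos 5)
Your overall framing — pose the problem as a local-global question for the $\Aut(B,\tau)$-torsor $Z=\underline{\mathrm{Iso}}((B,\tau),(B',\tau'))$ with identity component $\PGU(B,\tau)$, peel off the $\mathbb Z/2\mathbb Z$ component group, and reduce (b) to (a) via Proposition~\ref{approx} — matches the paper's strategy. But there is a genuine gap in how you dispose of $\Sha_{\mc P}(F,\PGU(B,\tau))$.

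You propose to show this set is trivial by arguing that $\PGU(B,\tau)$ is a \emph{rational} connected $F$-group and invoking Corollary~\ref{Sha_P-graph}. The rationality of $\U(B,\tau)$ via the Cayley transform is fine, but ``one passes to the adjoint quotient'' does not transfer rationality: the quotient map $\U(B,\tau)\to\PGU(B,\tau)$ has kernel the norm-one torus $R^1_{E/F}\mathbb G_m$, which is \emph{not} split over $F$, and rationality is not preserved under such quotients. You flag this yourself as ``a secondary point requiring an external reference,'' but it is in fact the crux of the matter, and the paper deliberately avoids relying on it. Instead, the paper exhibits the exact sequence $1 \to R^1_{E_\wp/F_\wp} \mbb G_m \to \U(B,\tau)_{F_\wp} \to \PGU(B,\tau)_{F_\wp} \to 1$ for each branch $\wp\in\mc B$; the hypothesis $E\otimes_F F_\wp\cong F_\wp\times F_\wp$ makes $R^1_{E_\wp/F_\wp}\mbb G_m\cong\mbb G_m$, so Hilbert 90 gives surjectivity of $\U(B,\tau)(F_\wp)\to\PGU(B,\tau)(F_\wp)$; then Corollary~\ref{abstract_Sha_quotient} (applied to $\U\to\PGU$) yields surjectivity of $\Sha_{\mc P}(F,\U(B,\tau))\to\Sha_{\mc P}(F,\PGU(B,\tau))$, and the former is trivial because $\U(B,\tau)$ is connected and rational. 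This is \emph{where} the splitting hypothesis on $E_\wp$ enters, not (as your sketch suggests) in the treatment of the component group.

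Relatedly, your handling of the $\mathbb Z/2\mathbb Z$ piece is more elaborate than necessary and slightly misplaces the hypothesis: since $(B,\tau)$ and $(B',\tau')$ are assumed to have centers both isomorphic to $E$ \emph{over $F$}, the class of $(B',\tau')$ already has trivial image in $H^1(F,S_2)$, so by Corollary~\ref{abstract_Sha_quotient} it lifts to $\Sha_{\mc P}(F,\PGU(B,\tau))$ without any branch-by-branch cocycle absorption. To repair your argument without an external rationality input, replace the ``$\PGU$ is rational, hence Corollary~\ref{Sha_P-graph} applies'' step with the $\U\to\PGU$ surjectivity-via-Hilbert-90 argument above.
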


\begin{proof}
(\ref{unitary_patches})
Let $\Aut(B, \tau)$ denote the automorphism group of $(B,\tau)$ over $F$, and let $\Aut_E(B, \tau)$ denote the automorphism group over $E$.  Then $\Aut_E(B, \tau)$ can be identified with $\PGU(B, \tau)$;
algebras with unitary involution and the same degree as $B$ are
classified by the cohomology set $H^1(F, \Aut(B, \tau))$; and there is an exact sequence
\[1 \to \PGU(B, \tau) \to \Aut(B, \tau) \to S_2 \to 1 \]
(see \cite[page
400]{BofInv}).
Here the map on the right is obtained by restricting the automorphism
to $E$, and the symmetric group $S_2$ is identified with the group of
automorphisms of $E/F$. In particular, the trivial element of $H^1(F, S_2)$
corresponds to the isomorphism class of $E$.

Suppose that $(B, \tau)$
and $(B', \tau')$ are as above and become isomorphic over each $F_\xi$ for $\xi \in \mc P \cup \mc U$.
Hence $(B', \tau')$ induces the trivial class in
$H^1(F_\xi, \Aut(B, \tau))$ for each $\xi \in \mc P \cup \mc U$, and
hence it lies in $\Sha_{\mc P}(F, \Aut(B, \tau))$.  Also, its image in $H^1(F, S_2)$ is trivial.  So by Corollaries~\ref{abstract_Sha_quotient}
and~\ref{patching_equivalence},
this class lies in the image of $\Sha_{\mc P}(F,\PGU(B, \tau))$.
To prove the assertion it thus suffices to show that $\Sha_{\mc P}(F,\PGU(B, \tau))$ is trivial.

To do this, we first observe that there is an exact sequence
\[1 \to R^1_{E_\wp/F_\wp} \mbb G_m \to \oper{U}(B, \tau)_{F_\wp} \to \PGU(B,
\tau)_{F_\wp} \to 1,\eqno{(*)}\]
where $R^1_{E_\wp/F_\wp} \mbb G_m$ denotes the elements 
of $R_{E_\wp/F_\wp} \mbb G_m$ of $\tau$-norm $1$.
This exact sequence is constructed as follows.  As in \cite[p.~401]{BofInv}, there is an exact sequence
\[1 \to R_{E/F} \mbb G_m \to \GU(B, \sigma) \to \PGU(B, \sigma) \to 1.\]
If we consider the $\tau$-norm $1$ subgroup of $\GU(B, \sigma)$, we
obtain the group $\U(B, \sigma)$; and by the
argument in \cite[bottom of page 346]{BofInv}, the
induced map $\U(B, \sigma) \to \PGU(B, \sigma)$ is surjective. Its
kernel, being the intersection of $R_{E/F} \mbb G_m$ with $\U(B,
\sigma)$, is equal to $R^1_{E/F} \mbb G_m$, consisting of the 
$\tau$-norm $1$ elements of $R_{E/F} \mbb G_m$.  This gives
us a short exact sequence
\[1 \to R^1_{E/F} \mbb G_m \to \U(B, \sigma) \to \PGU(B, \sigma) \to 1,\]
and hence the desired sequence by base change to $F_\wp$.

Now since $E_\wp =
F_\wp \times F_\wp$, we have $R^1_{E_\wp/F_\wp} \mbb G_m \cong \mbb G_m$,
via $(a,a^{-1}) \mapsto a$.
Hilbert's Theorem~90 together with the six-term cohomology sequence associated to the exact sequence $(*)$ tells us that the map $\oper{U}(B, \tau) \to \PGU(B,
\tau)$ is surjective on $F_\wp$-points.  So the map
$\Sha_{\mc P}(F, \oper{U}(B, \tau)) \to \Sha_{\mc P}(F, \PGU(B, \tau))$ is surjective by
Corollaries~\ref{abstract_Sha_quotient}
and~\ref{patching_equivalence}.  But the rational group $\oper{U}(B, \tau)$ is connected (\cite[VI.23.A]{BofInv}); and so
$\Sha_{\mc P}(F, \oper{U}(B, \tau))$ is trivial by Corollary~\ref{Sha_P-graph} (or by Theorem~3.7 of \cite{HHK}).  Hence so is $\Sha_{\mc P}(F,\PGU(B, \tau))$.

(\ref{unitary_points})
The class of $(B',\tau')$ defines an element of
$H^1(F, \Aut(B, \tau))$ that becomes trivial over each $F_P$.  Thus the corresponding $\Aut(B, \tau)$-torsor has an $F_P$-point for each $P \in X$.  In particular, taking $P$ to be the generic point $\eta_i$ of an irreducible component $X_i$ of $X$, Proposition~\ref{approx} implies that there is an affine dense open subset $U_i$ of $X_i$ that does not meet
any other component of $X$ and such that the torsor has an $F_{U_i}$-point.
Let $\mc U$ be the collection of these sets $U_i$, and let $\mc P$ be the complement of their union.  Then the torsor has an $F_\xi$-point for every $\xi \in \mc P \cup \mc U$; and hence $(B',\tau')$ becomes isomorphic to $(B,\tau)$ over each $F_\xi$.  The conclusion now follows from part~(\ref{unitary_patches}).
\end{proof}

\renewcommand{\labelenumi}{(\alph{enumi})}

\bigskip

\noindent {\bf Author Information:}\\

\noindent David Harbater\\
Department of Mathematics, University of Pennsylvania, Philadelphia, PA 19104-6395, USA\\
email: harbater@math.upenn.edu

\medskip

\noindent Julia Hartmann\\
Lehrstuhl f\"ur Mathematik (Algebra), RWTH Aachen University, 52056 Aachen, Germany\\
email: hartmann@matha.rwth-aachen.de\\
Current contact information:\\
Department of Mathematics, University of Pennsylvania, Philadelphia, PA 19104-6395, USA\\
email: hartmann@math.upenn.edu

\medskip

\noindent Daniel Krashen\\
Department of Mathematics, University of Georgia, Athens, GA 30602, USA\\
email: dkrashen@math.uga.edu

\medskip

\noindent The first author was supported in part by NSF grant DMS-0901164. 
The second author was supported by the German Excellence Initiative via RWTH Aachen University and by the German National Science Foundation (DFG).
The third author was supported in part by NSA grant H98130-08-0109
and NSF grant DMS-1007462.

\end{document}